\tikzset{->-/.style={decoration={
  markings,
  mark=at position #1 with {\arrow{>}}},postaction={decorate}}}
\tikzset{middlearrow/.style={
        decoration={markings,
            mark= at position 0.5 with {\arrow{#1}} ,
        },
        postaction={decorate}
    }
}
\theoremstyle{plain}
\newtheorem{theorem}{Theorem}
\newtheorem{corollary}[theorem]{Corollary}
\newtheorem{proposition}[theorem]{Proposition}
\newtheorem{lemma}[theorem]{Lemma}
\theoremstyle{definition}
\newtheorem{example}[theorem]{Example}
\newtheorem{definition}[theorem]{Definition}
\newtheorem{conjecture}[theorem]{Conjecture}
\theoremstyle{definition}
\newtheorem{remark}[theorem]{Remark}
\numberwithin{equation}{section}
\numberwithin{theorem}{section}
\newcommand{\refequal}[1]{\xy {\ar@{=}^{#1}
(-1,0)*{};(1,0)*{}};
\endxy}
\newcommand{\Hom}{{\rm Hom}}
\newcommand{\maps}{\colon}
\renewcommand{\to}{\rightarrow}
\newcommand{\op}{{\rm op}}
\def\1{\mathbf{1}}%
\def\id{\mathrm{id}}
\def\Id{\mathrm{Id}}
\newcommand{\und}[1]{\underline{#1}}
\def\fmod{{\mathrm{-fmod}}}   
\def\pmod{{\mathrm{-pmod}}}  
\def\gmod{{\mathrm{-gmod}}}
\def\mf{\mathfrak}
\numberwithin{equation}{section}
\def\b{$\blacktriangleright$}
\let\tilde=\widetilde
\let\epsilon=\varepsilon
\def\C{{\mathbb{C}}}
\def\cal#1{\mathcal{#1}}%
\def\st{\mathrm{st}}%
\def\nn{\notag}
\def\la{\langle}
\def\ra{\rangle}
\def\cal#1{\mathcal{#1}}
\newcommand\nc{\newcommand}
\nc\rnc{\renewcommand}
\nc\Kar{\operatorname{Kar}}
\nc\End{\operatorname{End}}
\newcommand{\infl}{{\rm infl}}
\newcommand{\pr}{{\rm pr}}
\nc\Sym{\operatorname{Sym}}
\DeclareMathOperator{\can}{can}
\DeclareMathOperator{\Ext}{Ext}
\DeclareMathOperator{\ext}{ext}
\DeclareMathOperator{\gldim}{gl.dim}
\DeclareMathOperator{\Grr}{Gr}
\DeclareMathOperator{\sta}{st}
\DeclareMathOperator{\supp}{supp}
\DeclareMathOperator{\var}{var}
\newcommand{\At}{\tilde{A}}
\newcommand{\Bt}{\tilde{B}}
\newcommand{\Pt}{\tilde{P}}
\newcommand{\Vt}{\tilde{V}}
\newcommand{\xx}{\mathbbm{x}}
\newcommand{\yy}{\mathbbm{y}}
\newcommand{\zz}{\mathbbm{z}}
\renewcommand{\a}{\alpha}
\renewcommand{\b}{\beta}
\newcommand{\A}{\mathcal{A}}
\newcommand{\B}{\mathcal{B}}
\newcommand{\V}{\mathcal{V}}
\newcommand{\F}{\mathbb{F}}
\newcommand{\gl}{\mathfrak{gl}}
\newcommand{\M}{\mathfrak{M}}
\newcommand{\Q}{\mathbb{Q}}
\newcommand{\R}{\mathbb{R}}
\newcommand{\Z}{\mathbb{Z}}
\newcommand{\Zc}{\mathcal{Z}}
\newcommand{\x}{\mathbf{x}}
\newcommand{\y}{\mathbf{y}}
\newcommand{\z}{\mathbf{z}}
\newcommand{\Ib}{\mathbf{I}}
\newcommand{\cV}{\mathcal{V}}
\newcommand{\cH}{\mathcal{H}}
\newcommand{\cF}{\mathcal{F}}
\newcommand{\vp}{\varphi}
\newcommand{\cP}{{\cal P}}
\newcommand{\cB}{{\cal B}}
 \newcommand{\ep}{\epsilon}
 \newcommand{\tPi}{\boldsymbol{\Pi}_\a^\bullet}
\newcommand{\gd}{\vee}
\newcommand{\ootimes}{
  \mathbin{
    \mathchoice
      {\buildcircleotimes{\displaystyle}}
      {\buildcircleotimes{\textstyle}}
      {\buildcircleotimes{\scriptstyle}}
      {\buildcircleotimes{\scriptscriptstyle}}
  }
}
\newcommand\buildcircleotimes[1]{%
  \begin{tikzpicture}[baseline=(X.base), inner sep=0, outer sep=0]
    \node[draw,circle] (X)  {$#1\otimes$};
  \end{tikzpicture}%
}
\title{Strands algebras and the affine highest weight property for equivariant hypertoric categories}
\begin{document}

\author{Aaron D. Lauda}
\email{lauda@usc.edu}
\address{Department of Mathematics\\ University of Southern California \\ Los Angeles, CA}
\thanks{Research was sponsored by the Army Research Office and was
accomplished under Grant Number W911NF-20-1-0075. The views and conclusions contained in this
document are those of the authors and should not be interpreted as representing the official policies, either
expressed or implied, of the Army Research Office or the U.S. Government. The U.S. Government is authorized to reproduce and distribute reprints for Government purposes notwithstanding any copyright
notation herein.  }

\author{Anthony M. Licata}
\email{anthony.licata@anu.edu.au}
\address{Mathematical Sciences Institute\\ Australian National University \\ Canberra, Australia}

\author{Andrew Manion}
\email{ajmanion@ncsu.edu}
\address{Department of Mathematics\\ North Carolina State University \\Raleigh, NC}

\begin{abstract}
We show that the equivariant hypertoric convolution algebras introduced by Braden--Licata--Proudfoot--Webster are affine quasi hereditary in the sense of Kleshchev and compute the Ext groups between standard modules.   Together with the main result of \cite{LLM}, this implies a number of new homological results about the bordered Floer algebras of Ozsv{\'a}th--Szab{\'o}, including the existence of standard modules over these algebras.  We prove that the Ext groups between standard modules are isomorphic to the homology of a variant of the Lipshitz--Ozsv{\'a}th--Thurston bordered strands dg algebras.

  \end{abstract}

\maketitle

\tableofcontents

\setcounter{tocdepth}{2}

\section{Introduction}

 \noindent \textbf{Background. }
In \cite{LLM} the authors observed a connection between hypertoric geometry and bordered Heegaard Floer theory.  In particular, it is shown there that the equivariant hypertoric convolution algebras $\Bt(\cal{V})$ introduced by Braden--Licata--Proudfoot--Webster in \cite{Gale}, for certain ``cyclic'' choices of combinatorial data $\cal{V}$, are isomorphic to algebras introduced by Ozsv{\'a}th--Szab{\'o} in their Kauffman states approach to knot Floer homology \cite{OSzNew}.   The proof of this isomorphism makes heavy use of Karp and Williams' cell decomposition~\cite{KarpWilliams} of the $m=1$ amplituhedron and the theory of total positivity.
More generally, \cite{LLM} gives a conjectural algebraic description of the  partially wrapped Fukaya categories associated to complexified hyperplane complements $\cal{X}_{\cal{V}}$ using these equivariant convolution algebras for general $\cal{V}$.
\medskip

 \noindent \textbf{Standard modules and LOT algebras.  }
This article concerns the homological properties of the algebras $\Bt(\cal{V})$ and further develops the connection between hypertoric geometry and bordered Floer theory.  We introduce and study ``standard" modules over the algebras $\Bt(\cal{V})$ for general $\cal{V}$, compute the $\Ext$ groups between them, and (for cyclic $\cal{V}$) relate these $\Ext$ groups with the homology of a variant of Lipshitz--Ozsv{\'a}th--Thurston's \cite{LOTBorderedOrig} bordered strands dg algebras $\A(\Zc^{\sta}_n)$ (see Theorem~\ref{thm:ExtStrands}). Projective resolutions of standard modules are shown to have natural Heegaard Floer theoretic interpretations via certain Heegaard diagrams; we conjecture that holomorphic disk counts in these Heegaard diagrams encode not just the projective resolutions but also an $A_{\infty}$ quasi-isomorphism from $\A(\Zc^{\sta}_n)$ to the endomorphism dg algebra of the projective resolutions. This conjecture would enable the $A_{\infty}$ structure on the sum of $\Ext$ groups between standard modules to be computed in terms of the diagrammatic algebra $\A(\Zc^{\sta}_n)$.

\medskip

 \noindent \textbf{Standard bases and higher tensor products.  }
Knot Floer homology is a categorification of the Alexander polynomial, which admits a description as a quantum invariant via the representation theory of the super algebra $U_q(\mf{gl}(1|1))$. Both the Ozsv{\'a}th--Szab{\'o} algebras and the bordered strands algebras $\A(\Zc^{\sta}_n)$ give rise to categorifications of tensor powers $V^{\otimes n}$ of the defining representation $V$ of $U_q(\mf{gl}(1|1))$ \cite{ManionDecat,ManionTrivalent}; see also \cite{EPV, TianUT}.  Various classes of modules, such as simples or indecomposable projectives, give rise to various bases for $V^{\otimes n}$.  In particular, $V^{\otimes n}$ has a canonical basis which is categorified by indecomposable projectives over the Ozsv{\'a}th--Szab{\'o} algebras \cite{ManionDecat} and a standard basis which is categorified by indecomposable projectives over $\A(\Zc^{\sta}_n)$ \cite{ManionTrivalent}.  We show in Theorem~\ref{thm:bases} that standard modules over Ozsv{\'a}th--Szab{\'o}'s algebras also categorify the standard basis of $V^{\otimes n}$, in line with the conjecture mentioned above.

The standard basis and the algebra $\A(\Zc^{\sta}_n)$ are especially relevant in connection with tensor products of higher representations and cornered Floer homology \cite{DM, DLM, ManionRouquier}; indeed, $\A(\Zc^{\sta}_n)$ arises directly as an $n$-fold higher tensor power of $\A(\Zc^{\sta}_1)$ via the ``$\mf{gl}(1|1)^+$'' tensor operation $\ootimes$ recently introduced by Rouquier and the third author in \cite{ManionRouquier}. To further develop the theory of $\ootimes$, and especially to help understand higher braidings, one approach is to use bordered Floer ideas as in \cite{ManionTrivalent}. The standard modules defined here should help connect more well-developed parts of Floer theory, such as Ozsv{\'a}th--Szab{\'o}'s computational techniques in knot Floer homology, to the study of higher tensor products.
\medskip

 \noindent \textbf{Affine quasi hereditary algebras.  }
The existence of standard modules for the algebras $\Bt(\cal{V})$ is reminiscent of the structure enjoyed by finite-dimensional quasi hereditary algebras.  However, the classical quasi hereditary theory does not directly apply to the infinite-dimensional algebras $\Bt(\cal{V})$ (indeed, this is one of the reasons why the algebras $\Bt(\cal{V})$ are not as well studied as their finite dimensional quasi hereditary cousins in \cite{Gale}).  Fortunately, Kleshchev has recently developed an infinite-dimensional analogue of quasi hereditary algebras, the so-called affine or polynomial quasi hereditary algebras~\cite{Klesh-affine} (generalizing other related work~\cite{KX,Maz-standardly}) which is perfectly suited to our setting.  In Theorem~\ref{thm:affine} we prove that for a general polarized hyperplane arrangement $\cal{V}$, the algebras $\Bt(\cal{V})$ are \emph{affine quasi hereditary} in the sense of Kleshchev, specifically, that they are polynomial quasi hereditary.

It follows from Theorem~\ref{thm:affine} that the representation categories of $\Bt(\cal{V})$ are affine highest weight categories; in particular, the standard modules that filter projective modules have an upper triangular structure with respect to a partial order, and descend to highly structured bases in the Grothendieck group, see Section~\ref{sec:can-aqh}.   This sort of structure is ubiquitous in modern representation theory: other examples of affine highest weight categories include module categories over finite type KLR algebras~\cite{KleshL, KleshLM}, finitely generated modules over current algebras~\cite{CI,CP,Klesh-affine}, and Kato's geometric extension algebras~\cite{Kato1,Kato2,Klesh-affine}.

 A further important consequence of the affine quasi hereditary structure of $\Bt(\cal{V})$ is that these algebras have finite projective and global dimension~\cite[Theorem B]{Klesh-affine}.  Applied to the case of cyclic hyperplane arrangements $\cal{V}$, it follows that the Ozsv{\'a}th--Szab{\'o} algebras are affine quasi hereditary (see Theorem~\ref{thm_OSz-aqh}) and have finite global dimension (Corollary~\ref{cor:OSz-finite-gd}).  Furthermore, we prove in Corollary~\ref{cor:hypSmooth} that, under mild assumptions satisfied by $\Bt(\cal{V})$, affine quasi hereditary algebras are homologically smooth.  This smoothness supports the conjecture from \cite{LLM} that the algebra $\Bt(\cal{V})$ is the homology of the endomorphism algebra of a canonical Lagrangian in a wrapped Fukaya category of the complexified hyperplane complement $\cal{X}_{\cal{V}}$.

The hypertoric convolution algebras $\Bt(\cal{V})$ arise as endomorphism algebras of projective generators of the deformed hypertoric category $\cal{O}$ introduced in \cite{HypertoricCatO}.  It is notable that the computation of $\Ext$ groups between standard modules in Corollary~\ref{cor:BTildeExtComputation} can be carried out  over the integers and the resulting groups have no torsion; by comparison, computing Ext groups between standards, even over a field, in BGG category $\cal{O}$ is a difficult problem of interest in Kazhdan-Lusztig theory.  Some partial results appear
appear in \cite{GJ,Ca}.  A similar situation arises for KLR algebras, where resolutions of standard modules can be computed~\cite{MR4245830}, but the computation of Ext groups appears difficult in general~\cite{MR4104496}. One of the important and interesting aspects of hypertoric representation theory is that it shares many of the basic structural properties of fundamental categories of Lie theory while remaining significantly less complicated from a computational perspective.

\medskip

 \noindent \textbf{Organization.} In Section~\ref{sec:def-affine-qh} we review definitions and prove some results about affine quasi hereditary algebras in general. Section~\ref{sec:HypertoricAlg} focuses on the case of equivariant hypertoric convolution algebras, introducing standard modules over these algebras and proving structural results. Section~\ref{sec:HypertoricAlgAreAQH} shows that these hypertoric algebras are affine quasi hereditary and deduces various consequences. In Section~\ref{sec:ProjRes} we give projective resolutions for standard modules over these algebras and compute $\Ext$ groups; we also define explicit chain maps between projective resolutions representing generators of the $\Ext$ groups. Sections~\ref{sec:Cyclic} and \ref{sec:BorderedFloerApps} focus further on hypertoric algebras in the cyclic case, where we use the above results to derive consequences for Ozsv{\'a}th--Szab{\'o} algebras; in Section~\ref{sec:ExtAndStrandsHomology} we introduce $\A(\Zc^{\sta}_n)$ and show that the $\Ext$ groups from Section~\ref{sec:ProjRes} are isomorphic to the homology of $\A(\Zc^{\sta}_n)$. Section~\ref{sec:HeegaardDiags}, written from more of a Heegaard Floer homology perspective, interprets projective resolutions of standard modules in terms of a Heegaard diagram and discusses further ramifications of this diagram, in particular the conjectured $\A_{\infty}$ quasi-isomorphism mentioned above.

\subsection*{Acknowledgements} The authors are grateful to Sasha Kleshchev, Walter Mazorchuck, Nick Proudfoot, Raphael Rouquier, Joshua Sussan, and Hugh Thomas for helpful conversations. The authors would like to extend special thanks to Hugh Thomas for his help with some of the proofs in Section~\ref{sec:HypertoricStandardStructure}. A.D.L. was partially supported by NSF grant DMS-1664240, DMS-1902092 and Army Research Office W911NF2010075.
A.M.L. was supported by an Australian Research Council Future fellowship.

\section{Affine quasi hereditary algebras and affine highest weight categories} \label{sec:def-affine-qh}

In this section we review the definitions of affine quasi hereditary algebras and affine highest weight categories following Kleshchev \cite{Klesh-affine}, then show that the tensor product of affine quasi hereditary algebras is affine quasi hereditary and prove some related results about Grothendieck groups.

\subsection{Noetherian Laurentian algebras and categories} \label{sec:Noetherian}

A graded vector space $V=\oplus_{n\in \Z}V_n$ over a field $\Bbbk$ is called \emph{Laurentian} if $V_{-n} = 0$ for $n \gg0$ and $\dim_{\Bbbk}V_n < \infty$ for all $n\in \Z$. Given a Laurentian graded vector space $V$, denote its graded dimension by $\dim_q V = \sum_{n \in \Z}(\dim V_n)q^n$, which is a Laurent series. A graded $\Bbbk$-algebra is a Laurentian graded algebra if its underlying graded vector space is Laurentian.  By \cite[Lemma 2.2]{Klesh-31} if $H$ is a Laurentian (graded) algebra then all irreducible $H$-modules (graded unless otherwise specified) are finite-dimensional, and $H$ is graded semiperfect so that there are only a finite number of simple $H$-modules up to isomorphism and grading shift.

In what follows we write $\hom_H(U,V)$ for degree preserving $H$-module homomorphisms and write
\[
\Hom_H(U,V):=\bigoplus_{n \in \Z} \hom_H(q^n U ,V) = \bigoplus_{n \in \Z} \hom_H(U,q^{-n} V)
\]
where $(qV)_n := V_{n-1}$. We define $\ext_H(U,V)$ and $\Ext_H(U,V)$ analogously.  We often drop the subscript of $H$ when no confusion is likely to arise.

\begin{definition}[Section 3 \cite{Klesh-affine}]
A graded $\Bbbk$-linear abelian category $\cal{C}$, with a (possibly infinite) complete and irredundant set of simple objects
$
\{ L(\alpha) \mid \alpha \in \Pi \}
$
up to isomorphism and grading shift, is called a \emph{Noetherian Laurentian category} if the following three conditions hold:
\begin{enumerate}[(i)]
  \item Every object $V \in \cal{C}$ is Noetherian and has a filtration $V \supset V_1 \supset V_2 \supset \dots $ which is separated $(\cap_{n=1}^{\infty} V_n=0)$ such that each quotient $V/V_n$ has finite length;

  \item Each simple object $L(\alpha)$ has a projective cover $P(\alpha) \twoheadrightarrow L(\alpha)$;

  \item For all $\alpha, \beta \in \Pi$, the graded vector space $\Hom_{\cal{C}}(P(\alpha),P(\beta))$ is Laurentian.
\end{enumerate}
\end{definition}

For a graded algebra $H$, let $H\gmod$ denote the category of finitely generated graded left
$H$-modules.
If $H$ is a left (graded) Noetherian Laurentian $\Bbbk$-algebra, then $H\gmod$ is a Noetherian Laurentian category with $\Pi$ finite. Any Noetherian Laurentian category with $\Pi$ finite is graded equivalent to $H\gmod$ for some left
(graded) Noetherian Laurentian algebra $H$~\cite[Theorem 3.9]{Klesh-affine}.

For a Noetherian Laurentian category $\cal{C}$, fix a projective cover $P(\alpha) \twoheadrightarrow L(\alpha)$ for each $\alpha \in \Pi$.  Assume that $\Pi$ is equipped with a partial order $\leq$.  Define the \emph{standard} and \emph{proper standard} objects $\Delta(\alpha)$ and $\bar{\Delta}(\alpha)$ as follows: $\Delta(\alpha)$ is the largest quotient of $P(\alpha)$ such that all composition factors (as defined in~\cite[Section 2.2]{Klesh-affine}) $L(\beta)$ of $\Delta(\alpha)$ satisfy $\beta \leq \alpha$, and $\bar{\Delta}(\alpha)$ is the largest quotient of $P(\alpha)$ which has $L(\alpha)$  as a composition factor with multiplicity 1 and such that all its other composition factors $L(\beta)$ satisfy $\beta < \alpha$.
More explicitly, for  $\alpha \in \Pi$, define the standard object $\Delta(\alpha)$ and the proper standard object $\bar{\Delta}(\alpha)$ as
\begin{align} \label{eq:affine-standard}
\Delta(\alpha) &:= P(\alpha) /\left( \sum_{\beta \nleq \alpha, f \in \Hom_{\cal{C}}(P(\beta),P(\alpha))} {\rm Im} f \right), \\
\bar{\Delta}(\alpha) &:= P(\alpha) /\left( \sum_{\beta \nless \alpha, f \in \Hom_{\cal{C}}(P(\beta),{\rm rad}P(\alpha))} {\rm Im} f \right).
\end{align}

\subsection{Affine highest weight categories}

If $\cal{C}$ is Noetherian Laurentian, we say that an object of $\cal{C}$ has a \emph{$\Delta$-filtration} if it has a separated filtration whose subquotients are isomorphic to grading shifts of $\Delta(\alpha)$ for various $\alpha \in \Pi$.

\begin{definition}[\cite{Klesh-affine}]
 A Noetherian Laurentian category $\cal{C}$ equipped with a partial order $\leq$ on $\Pi$ (written as $(\cal{C},\Pi,\leq)$) is a \emph{polynomial highest weight category}
if, for some (or any) choice of projective covers $P(\alpha) \twoheadrightarrow L(\alpha)$ and for each $\alpha \in \Pi$,
\begin{description}
  \item[(PHW1)]  the kernel of the natural quotient map $P(\alpha) \twoheadrightarrow \Delta(\alpha)$ has a $\Delta$-filtration  whose subquotients are isomorphic to grading shifts of $\Delta(\beta)$ with $\beta > \alpha$,

   \item[(PHW2)] the endomorphism $\Bbbk$-algebra $B_{\alpha}:= \End_{\cal{C}}(\Delta(\alpha))^{\rm op}$ is isomorphic to a graded polynomial ring $\Bbbk[z_1,\dots, z_{n_{\alpha}}]$ for some $n_{\alpha} \in \Z_{\geq 0} $ with $\deg z_i \in \Z_{>0}$.

    \item[(PHW3)] for all $\beta \in \Pi$, the $B_{\beta}$-module $\Hom_{\cal{C}}(P(\alpha), \Delta(\beta))$ is free of finite rank.
\end{description}
The category $\cal{C}$ is called a \emph{affine highest weight category} if $(\mathsf{PHW2})$ is replaced by the weaker condition $(\mathsf{AHW2})$ that $B_{\alpha}$ is an affine (i.e. finitely generated positively graded commutative) $\Bbbk$-algebra, for all $\alpha \in \Pi$.
\end{definition}
Given {\sf (PHW2)} or {\sf (AHW2)}, we need not distinguish between $B_{\alpha}$ and $B_{\alpha}^{\rm op}$.

\subsection{Affine quasi hereditary algebras}
Let $\mf{B}$ denote a class of connected algebras, such as affine algebras or positively graded polynomial algebras. Throughout this section let $H$ be a left Noetherian   Laurentian algebra with simples indexed by $\Pi$.  In particular, for each $\pi \in \Pi$ we have an indecomposable projective $P(\pi)$.

\begin{definition}
 A \emph{$\mf{B}$-hereditary ideal} in $H$ is a two-sided  ideal $J$ satisfying
\begin{description}
  \item[(H1)]  $J^2 = J$.
  \item[(H2)] As a left module $_HJ \cong m(q) P(\pi)$ for some graded multiplicities $m(q)\in\Z[q,q^{-1}]$ and some $\pi \in \Pi$, with $B_{\pi}:= \End(P(\pi))^{{\rm op}} \in \mf{B}$;
     \item[(H3)] $P(\pi)$ is a free finite rank right $B_{\pi}$-module.
\end{description}
The algebra $H$ is said to be \emph{$\mf{B}$-quasi hereditary} if there exists a finite chain of ideals
\begin{equation} \label{eq:hereditary-chain}
H  = J_0 \supsetneq J_1 \supsetneq \dots \supsetneq J_n = 0
\end{equation}
for some $n >0$, such that $J_{i-1}/J_{i}$ is a $\mf{B}$-hereditary ideal of $A/J_{i}$ for all $0<i \leq n$.
Such a chain of ideals is called a $\mf{B}$-hereditary chain.
\end{definition}

The projectivity of $J$ and the condition $J^2=J$ is equivalent to $J=HeH$ for some idempotent $e\in H$, since $H$ is Laurentian \cite[Lemma 2.8]{Klesh-affine}.  We may choose $e$ to be primitive so that $He \cong P(\pi)$ is indecomposable and $B_{\pi}\cong eHe$~\cite[Lemma 6.6]{Klesh-affine}.

\begin{theorem}[Theorem 6.7 of \cite{Klesh-affine}] \label{thm:highestwt-qh}
Let $H$ be a left
Noetherian Laurentian algebra. The category $H\gmod$ of finitely generated graded $H$-modules is affine/polynomial highest weight with respect to some partial order on the index set $\Pi$  of simple $H$-modules if and only if $H$ is affine/polynomial quasi-hereditary.
\end{theorem}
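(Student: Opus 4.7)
The plan is to prove the equivalence by a double induction on the length of the hereditary chain (for one direction) and on the cardinality of the weight poset (for the other), in both cases using the fact that a maximal weight in the partial order matches the top of a hereditary chain. The dictionary between the two structures is: a $\mathfrak{B}$-hereditary ideal $J=HeH$ in $H$ (with $e$ primitive and $He \cong P(\pi)$) corresponds to the maximal weight $\pi$ in $\Pi$ for the associated highest weight structure, and quotienting by $J$ removes $\pi$ from the weight poset without disturbing the structure on the remaining weights.

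For the forward direction ($\mathfrak{B}$-quasi hereditary $\Rightarrow$ highest weight), given a $\mathfrak{B}$-hereditary chain \eqref{eq:hereditary-chain} I would label the weight $\pi_i\in\Pi$ appearing at level $i$ (i.e.\ with $J_{i-1}/J_i$ a multiple of the projective cover of $L(\pi_i)$ in $(H/J_i)\gmod$) and declare $\pi_1>\pi_2>\cdots>\pi_n$, so that maximal weights correspond to the top of the chain. First I would check that the top piece $J_1=HeH$ satisfies $J_1 \cong m(q)\Delta(\pi_1)$ as left $H$-modules, because for the maximal weight, $\Delta(\pi_1)=P(\pi_1)=He$ by the definition \eqref{eq:affine-standard}, and there are no higher weights to quotient. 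Since $J_1$ is projective by (H1)–(H2) (using the Laurentian-idempotent identification and \cite[Lemma 2.8]{Klesh-affine}), this gives the first subquotient of a $\Delta$-filtration for $H$, and condition (H3) provides (PHW3)/(AHW3) for $\pi_1$; condition (H2) gives (PHW2)/(AHW2) for $\pi_1$. Inductively, $H/J_1$ is $\mathfrak{B}$-quasi hereditary with one fewer weight, so by induction its module category is highest weight with weights $\{\pi_2,\ldots,\pi_n\}$. One then inflates this structure along the quotient $H\to H/J_1$ and combines it with the top piece: the key compatibility is that $\Delta_{H}(\pi_j)$ for $j\geq 2$ equals the inflation of $\Delta_{H/J_1}(\pi_j)$, because the extra quotient in \eqref{eq:affine-standard} over $H$ coming from maps out of $P(\pi_1)$ is exactly the $J_1$-part. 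This gives (PHW1)/(AHW1) by splicing the top piece of the $\Delta$-filtration of $H$ with the inflations of those of $H/J_1$.

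For the backward direction, given a highest weight structure, choose a maximal weight $\pi\in\Pi$ and a primitive idempotent $e\in H$ with $He\cong P(\pi)$. I would set $J:=HeH$ and verify it is $\mathfrak{B}$-hereditary in $H$. Axiom (H1) is automatic from the idempotent presentation. For (H2), since $\pi$ is maximal, $\Delta(\pi)=P(\pi)=He$, so all composition factors of $He$ are $L(\pi)$; then the multiplication map $He\otimes_{B_\pi} eH\to HeH$ must be an isomorphism (using (PHW3)/(AHW3) applied to $\beta=\pi$ and all $\alpha$), which exhibits $_HJ$ as a direct sum of graded shifts of $P(\pi)$ with multiplicity $\dim_q eH$. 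Axioms (PHW2)/(AHW2) and (PHW3) applied with $\alpha=\beta=\pi$ furnish the polynomial/affine ring structure on $B_\pi=eHe$ and the freeness of $He$ as a right $B_\pi$-module, giving (H2)–(H3). Then $H/J$ is Noetherian Laurentian, has simples indexed by $\Pi\setminus\{\pi\}$, and one checks it inherits the highest weight structure (standard modules for $\beta\neq\pi$ in $H/J$ coincide with those in $H$ by the maximality of $\pi$ and (PHW1)/(AHW1), which ensures the kernel $P(\beta)\twoheadrightarrow\Delta(\beta)$ is killed in $H/J$). Induction on $|\Pi|$ produces a $\mathfrak{B}$-hereditary chain in $H/J$, which pulls back to $H$ and extends via $J$ to a $\mathfrak{B}$-hereditary chain in $H$.

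The main obstacle will be the backward direction, specifically proving that $J=HeH$ is projective as a left $H$-module and isomorphic to a direct sum of graded shifts of $P(\pi)$. This requires using all three highest-weight axioms in a coordinated way: (PHW1)/(AHW1) (to kill extensions between $\Delta(\pi)=P(\pi)$ and lower standards that would obstruct projectivity after summing over $e$), (PHW3)/(AHW3) (to control the $B_\pi$-module structure on $\Hom(P(\alpha),\Delta(\pi))=\Hom(P(\alpha),P(\pi))$ and hence split off $HeH$ as a direct sum of copies of $He$), and (PHW2)/(AHW2) (to stay inside the class $\mathfrak{B}$). The remaining bookkeeping—inflating and deflating standard modules along $H\to H/J$, and verifying the dictionary between orderings on $\Pi$ and filtration levels—is straightforward once this core projectivity statement is in hand.
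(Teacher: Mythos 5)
This statement is quoted verbatim from Kleshchev (Theorem 6.7 of \cite{Klesh-affine}); the paper gives no proof of its own, so the only meaningful comparison is with Kleshchev's argument, and your sketch does follow his strategy (itself the affine/graded version of the classical Cline--Parshall--Scott induction): match a heredity ideal $HeH$ with a maximal weight $\pi$, use $\Delta(\pi)=P(\pi)=He$, analyse the multiplication map $He\otimes_{B_\pi}eH\to HeH$, and induct on the quotient. The key difficulty you isolate --- injectivity of that multiplication map, equivalently projectivity of $_{H}(HeH)$, which needs all three axioms in concert --- is exactly where the real work sits in Kleshchev's Lemmas 6.4--6.6, so your plan is the right one even though that step remains a sketch.

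Two concrete slips to fix. First, in the forward direction your indexing confuses the two ends of the chain. With the convention of \eqref{eq:hereditary-chain} and Remark~\ref{rem:hereditary-po}, the heredity ideal attached to the \emph{maximal} weight is the smallest nonzero term $J_{n-1}$ (an honest ideal of $H$, with $J_{n-1}\cong m(q)\Delta(\alpha_n)$ and $\alpha_n$ maximal); quotienting by it removes one simple and leaves a chain of length $n-1$. By contrast $H/J_1$ carries a hereditary chain of length one and hence has a \emph{single} simple, so "$H/J_1$ is quasi-hereditary with one fewer weight" is false as written, and "$J_1\cong m(q)\Delta(\pi_1)$" conflicts with your own labelling convention $J_0/J_1\cong m(q)P(\pi_1)$. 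The induction must peel the chain from the bottom ($J_{n-1}$ first), not the top. Second, in the backward direction the assertion that $(\mathsf{PHW1})$ "ensures the kernel of $P(\beta)\twoheadrightarrow\Delta(\beta)$ is killed in $H/J$" is not correct: that kernel is filtered by $\Delta(\gamma)$ for all $\gamma>\beta$, and only the $\Delta(\pi)$-layers die in $H/J$. What one actually proves is that $JP(\beta)=HeHe_\beta$ coincides with the (top) part of the $\Delta$-filtration of $P(\beta)$ built from shifts of $\Delta(\pi)$, so that $P_{H/J}(\beta)=P(\beta)/JP(\beta)$ inherits a filtration by the remaining $\Delta(\gamma)$, and $\Delta_{H/J}(\gamma)=\Delta_H(\gamma)$ for $\gamma\neq\pi$ because $\pi\nleq\gamma$ forces the defining quotient for $\Delta_H(\gamma)$ to already contain $JP(\gamma)$. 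With these corrections your outline is the standard proof.
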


\begin{remark}[cf. Remark 1.5 of \cite{Fuj}] \label{rem:hereditary-po}
A $\mf{B}$-quasi hereditary chain of the form in \eqref{eq:hereditary-chain} determines a total order $\{ \alpha_1, \dots, \alpha_n\}$ on $\Pi$  by $J_{i-1}/J_i \cong m(q)\Delta(\alpha_i)$.  This defines a partial order $\leq$ on $\Pi$ by the condition that for $\alpha, \beta \in \Pi$ we have $\alpha < \beta$ if and only if for any $\mf{B}$-quasi hereditary chain we have $\alpha=\alpha_i$, $\beta=\alpha_j$ for some $i,j$ such that $1 \leq i < j \leq n$. This is the partial order that appears in Theorem~\ref{thm:highestwt-qh}.
\end{remark}

\subsection{Quasi hereditary plus involution}\label{subsec:QHinvolution}

Given an anti-involution $\psi \maps H \to H$ on the affine quasi hereditary algebra $H$,
we can consider any left $H$-module $V$ as a right module $V^{\psi}$ with action $v h:= \psi(h)v$ for all $v\in V$, $h \in H$.
 Given a left
 $H$-module $V$ with finite-dimensional graded components $V_n$, define its graded dual $(V^{\circledast})_n:=( V_{-n})^{\ast}$ for $n\in \Z$ with left $H$-action $h f(v) = f(\psi(h)v)$ for $f \in V^{\circledast}$, $v\in V$
and $h\in H$.  We have $(q^nV )^{\circledast}=q^{-n}(V^{\circledast})$ and $\dim_q V^{\circledast}=\dim_{q^{-1}}V$.

 An algebra anti-involution $\psi\maps H\to H$ is called \emph{balanced} if for all $\alpha \in \Pi$,
\begin{equation}
L(\alpha)^{\circledast} \cong q^{2n}L(\alpha)
\end{equation}
for some $n\in \Z$.

\begin{proposition}[Proposition 9.8 of \cite{Klesh-affine}]
If an affine quasi hereditary algebra $H$ has a balanced anti-involution then it is \emph{affine cellular} in the sense of \cite{KX}.
\end{proposition}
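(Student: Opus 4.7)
The plan is to use the $\mf{B}$-hereditary chain of $H$ together with the balanced anti-involution $\psi$ to construct a cell chain satisfying Koenig--Xi's axioms for an affine cellular algebra. I would proceed by induction on the length of a hereditary chain.

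First, I would show that one can choose a $\mf{B}$-hereditary chain $H = J_0 \supsetneq J_1 \supsetneq \cdots \supsetneq J_n = 0$ with $\psi(J_i) = J_i$ for all $i$. It suffices to produce a $\psi$-invariant $\mf{B}$-hereditary ideal at the bottom of the chain; then $H/J_{n-1}$ inherits a balanced anti-involution and a shorter hereditary chain, and the induction proceeds. Let $\pi$ be the corresponding element of $\Pi$, so $J_{n-1} = HeH$ for a primitive idempotent $e$ with $He \cong P(\pi)$. The image $\psi(e)$ is again a primitive idempotent, and since $\psi$ induces a contravariant equivalence sending simples to $(-)^{\circledast}$, the balanced condition $L(\pi)^{\circledast} \cong q^{2n_\pi} L(\pi)$ implies that the projective cover of $L(\pi)$ determined by $\psi(e)$ is again (a shift of) $P(\pi)$. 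Hence $\psi(e)$ and $e$ generate the same two-sided ideal, so $\psi(HeH)=HeH$.

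Next, I would identify each layer $\bar J := J_{i-1}/J_i$, viewed as an ideal in $\bar H := H/J_i$, with an affine cell ideal in the sense of Koenig--Xi. Write $\bar J = \bar H \bar e \bar H$ and $B_\pi := \bar e \bar H \bar e$, which lies in $\mf{B}$ by (H2) and makes $\bar H \bar e$ free of finite rank on the right by (H3). Choose a homogeneous $B_\pi$-basis $v_1,\dots,v_r$ of $\bar H \bar e$, giving $\bar H \bar e \cong V \otimes_{\Bbbk} B_\pi$ where $V := \bigoplus_i \Bbbk v_i$. Applying $\psi$ to this basis and using $\psi(\bar J)=\bar J$, the elements $w_i := \psi(v_i)$ form a $B_\pi$-basis of $\bar e \bar H$, so $\bar e \bar H \cong B_\pi \otimes_{\Bbbk} V$. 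Multiplication in $\bar H$ then induces an $\bar H$-bimodule isomorphism
\begin{equation*}
\bar J \;\cong\; \bar H\bar e \otimes_{B_\pi} \bar e\bar H \;\cong\; V \otimes_{\Bbbk} B_\pi \otimes_{\Bbbk} V,
\end{equation*}
with multiplication given by a ``swap-and-pair'' formula, matching the Koenig--Xi form of an affine cell ideal.

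Finally, I would verify that $\psi$ restricted to $\bar J$ is the Koenig--Xi swap anti-involution. By construction $\psi(v_i) = w_i$, so on simple tensors $\psi$ acts as $v_i \otimes b \otimes w_j \mapsto v_j \otimes \psi(b) \otimes w_i$; it remains to check that, after possibly modifying the basis, the induced anti-automorphism $\psi|_{B_\pi}$ of the commutative algebra $B_\pi$ is the identity. This is where the balanced hypothesis is essential: because $L(\pi)^{\circledast} \cong q^{2n_\pi}L(\pi)$, the self-duality of the simple quotient of $\bar H \bar e$ forces $\psi|_{B_\pi}$ to preserve the grading and act trivially on the generators, once $e$ is chosen so that $\psi(e)=e$ (possible since $\psi(e)$ is conjugate to $e$ up to shift). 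The main obstacle is precisely this last step, where the bookkeeping of grading shifts and the choice of $B_\pi$-basis must be compatible with $\psi$. Once this compatibility is established, the $\psi$-invariant chain $\{J_i\}$ is an affine cell chain in the sense of \cite{KX}, proving the proposition.
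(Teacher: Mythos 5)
The paper offers no proof of this statement---it is imported directly from \cite{Klesh-affine}---so the only meaningful comparison is with Kleshchev's argument, and your outline is essentially a reconstruction of it: use the balanced condition to make the hereditary chain $\psi$-stable, identify each layer with $\bar H\bar e\otimes_{B_\pi}\bar e\bar H\cong V\otimes_{\Bbbk}B_\pi\otimes_{\Bbbk}V$ by choosing a right $B_\pi$-basis of $\bar H\bar e$ and transporting it by $\psi$, and read off the Koenig--Xi swap condition. Your first step is correct (balancedness gives $H\psi(e)\cong q^mHe$, hence $H\psi(e)H=HeH$), as is the layer decomposition, whose injectivity rests on the projectivity of the heredity ideal.

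Where you go astray is the final step. Koenig and Xi do not require the induced involution on the commutative algebra to be the identity: an affine cell ideal in the sense of \cite{KX} comes equipped with a $\Bbbk$-involution $\sigma$ of $B$, and the cell-chain axiom is precisely that the anti-involution of the algebra acts on the layer by $v_i\otimes b\otimes w_j\mapsto v_j\otimes\sigma(b)\otimes w_i$. Your own computation $\psi\bigl(\sum v_i b_{ij} w_j\bigr)=\sum v_j\psi(b_{ij})w_i$ already delivers exactly this with $\sigma=\psi|_{B_\pi}$, which is an algebra involution of $B_\pi$ because $B_\pi$ is commutative and $\psi^2=\mathrm{id}$. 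So the step you single out as the ``main obstacle'' is unnecessary, and the justification you offer for it is not valid: the balanced condition constrains the simple head of $P(\pi)$, i.e.\ the degree-zero part of $eHe$, and says nothing about how $\psi$ acts on the positive-degree generators of $B_\pi$. The one place where you do owe a real argument is the normalization $\psi(e)=e$: knowing that $\psi(e)$ and $e$ generate the same two-sided ideal does not by itself produce a $\psi$-fixed primitive idempotent in that class, and this is where the graded-local structure of $eHe$ and the shift normalization $L(\pi)^{\circledast}\cong L(\pi)$ must be invoked (in the application to $\At(\cal{V})$ the issue evaporates, since $\psi^{\cal{V}}$ fixes every $e_{\alpha}$ on the nose). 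With these two repairs your argument goes through and agrees with the cited proof.
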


\subsection{Tensor products of affine quasi hereditary algebras}

The following proposition extends \cite[(1.3)]{Wiedemann} showing that the tensor product of ordinary (finite-dimensional) quasi hereditary algebras is quasi hereditary.

\begin{proposition} \label{prop:tensor}
Let $A$ and $B$ be $\mf{B}$-quasi hereditary with simples indexed by $\Pi_A$ and $\Pi_B$, and assume that $A \otimes B$ is left Noetherian. Then $A\otimes B$ is $\mf{B}$-quasi hereditary with simples indexed by $\Pi_A\sqcup \Pi_B$.
\end{proposition}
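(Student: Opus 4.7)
My plan is to invoke Theorem~\ref{thm:highestwt-qh} and argue at the level of module categories: it suffices to show that $(A\otimes B)\gmod$ is a $\mf{B}$-highest weight category with respect to the product of the partial orders on $\Pi_A$ and $\Pi_B$ coming from Remark~\ref{rem:hereditary-po}. Under the Noetherian Laurentian hypotheses, the simples of $A\otimes B$ are the external tensor products $L_A(\alpha)\otimes L_B(\beta)$ (up to grading shift), indexed by $(\alpha,\beta)\in\Pi_A\times\Pi_B$, with projective covers $P(\alpha,\beta)=P_A(\alpha)\otimes P_B(\beta)$.

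The first step is to identify $\Delta(\alpha,\beta)$ with $\Delta_A(\alpha)\otimes\Delta_B(\beta)$, which follows from the description of composition factors of an external tensor product combined with the universal property \eqref{eq:affine-standard}. Given this, $\End_{A\otimes B}(\Delta(\alpha,\beta))^{\rm op}\cong B_\alpha\otimes B_\beta$ lies in $\mf{B}$---the tensor product of two affine (respectively, positively graded polynomial) commutative algebras is affine (respectively, polynomial)---handling (AHW2)/(PHW2). For (PHW3), the $\Hom$-tensor factorization
\[
\Hom_{A\otimes B}(P(\alpha',\beta'),\Delta(\alpha,\beta))\cong\Hom_A(P_A(\alpha'),\Delta_A(\alpha))\otimes\Hom_B(P_B(\beta'),\Delta_B(\beta))
\]
combined with the fact that the tensor product of a free finite rank $B_\alpha$-module and a free finite rank $B_\beta$-module is free of finite rank over $B_\alpha\otimes B_\beta$ reduces (PHW3) to the corresponding statements for $A$ and $B$.

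The core verification is (PHW1). I would factor the quotient $P(\alpha,\beta)\twoheadrightarrow\Delta(\alpha,\beta)$ through $\Delta_A(\alpha)\otimes P_B(\beta)$ and use flatness of projectives to exhibit the kernel $K$ as an extension
\[
0\to K_A\otimes P_B(\beta)\to K\to\Delta_A(\alpha)\otimes K_B\to 0,
\]
where $K_A,K_B$ are the kernels appearing in (PHW1) for $A$ and $B$. By (PHW1) for $A$, $K_A$ admits a $\Delta$-filtration with factors $\Delta_A(\alpha')$ for $\alpha'>\alpha$, and $P_B(\beta)$ itself has a $\Delta$-filtration with $\Delta_B(\beta)$ on top and $\Delta_B(\beta'')$ for $\beta''>\beta$ below. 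Tensoring produces a $\Delta$-filtration on $K_A\otimes P_B(\beta)$ by external tensor products $\Delta_A(\alpha')\otimes\Delta_B(\beta'')$ with $\alpha'>\alpha$, all strictly above $(\alpha,\beta)$ in the product order; a symmetric argument handles $\Delta_A(\alpha)\otimes K_B$, and concatenation gives the required $\Delta$-filtration of $K$.

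The main obstacle I anticipate is not (PHW1) itself but the foundational facts used above: the identification of simples and projective covers of $A\otimes B$ as external tensor products, and the $\Hom$-tensor factorization, all in the graded Noetherian Laurentian setting. These depend on the hypothesis that $A\otimes B$ is left Noetherian together with the finite-dimensionality of simples over Laurentian algebras (and, for the identification of simples, on the simples being absolutely simple, which one may assume by enlarging $\Bbbk$ if necessary). Once those inputs are in place, the construction above proceeds routinely, and the partial order produced on $\Pi_A\times\Pi_B$ matches the product of the orders from Remark~\ref{rem:hereditary-po}.
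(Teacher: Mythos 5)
Your proposal is correct in outline but takes a genuinely different route from the paper. You work at the level of module categories, invoking Theorem~\ref{thm:highestwt-qh} and verifying $(\mathsf{PHW1})$--$(\mathsf{PHW3})$ for external tensor products of simples, projectives and standards; the paper instead stays entirely on the algebra side and builds an explicit $\mf{B}$-hereditary chain of ideals in $A\otimes B$ by interleaving the chains for $A$ and $B$ (first refining $J_{n-1}\otimes B$ by the $J_{n-1}\otimes L_j$, then inducting on the length of the chain for $A$), checking idempotency, projectivity of each layer, and the isomorphism $\End(P\otimes Q)\cong B_P\otimes B_Q$ directly via primitive idempotents $e\otimes e'$. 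The trade-off is real: the paper's idempotent bookkeeping never has to identify the simple $A\otimes B$-modules at all, whereas your route front-loads exactly those identifications (simples, projective covers, standard modules, and the $\Hom$/$\End$ factorizations for external tensor products in the graded Noetherian Laurentian setting), which you correctly flag as the main obstacle but which constitute most of the actual work --- roughly comparable in volume to the paper's entire proof. These lemmas are all true here, but one of your proposed justifications is off: ``enlarging $\Bbbk$'' is not a legitimate reduction, since $\mf{B}$-quasi-heredity of $A\otimes_{\Bbbk}B$ over the original $\Bbbk$ is the conclusion and you give no descent argument. The correct observation is that absolute simplicity is automatic: because every algebra in $\mf{B}$ is connected, $\End(L(\pi))$ is a nonzero quotient of the degree-zero part of $e_\pi Ae_\pi=\Bbbk$, so all simples are split and the external tensor products of simples are indeed the simples of $A\otimes B$. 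Similarly, the isomorphism $\End(\Delta_A(\alpha)\otimes\Delta_B(\beta))\cong B_\alpha\otimes B_\beta$ needed for $(\mathsf{PHW2})$ is not formal for general modules; it holds here because the standards are finitely presented (by Noetherianity), so one can compute $\Hom$ out of the tensor product of presentations and use that $\ker(f\otimes 1)\cap\ker(1\otimes g)=\ker f\otimes\ker g$ over a field. Your $(\mathsf{PHW1})$ argument --- splitting the kernel as an extension of $\Delta_A(\alpha)\otimes K_B$ by $K_A\otimes P_B(\beta)$ and tensoring the finite $\Delta$-filtrations --- is clean and correct, and the resulting factors do lie strictly above $(\alpha,\beta)$ in the product order, which is compatible with the order the paper's chain induces via Remark~\ref{rem:hereditary-po}. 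With the two justifications above repaired, your proof goes through.
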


\begin{proof}
 If $A$ and $B$ are $\mf{B}$-quasi hereditary then we have $\mf{B}$-hereditary chains
\begin{align}
   A = J_0 \supsetneq J_1 \supsetneq \dots \supsetneq J_n = 0,  \qquad
   B = L_0 \supsetneq L_1 \supsetneq \dots \supsetneq L_m = 0 .\nn
\end{align}
Consider the chain of ideals
\begin{align*}
  A \otimes B &\supsetneq \cdots \supsetneq J_{n-2} \otimes L_{m-2} + J_{n-1} \otimes B \supsetneq J_{n-2} \otimes L_{m-1} + J_{n-1} \otimes B \\
	&\supsetneq J_{n-1} \otimes B \supsetneq \cdots \supsetneq J_{n-1} \otimes L_{m-2} \supsetneq J_{n-1} \otimes L_{m-1} \supsetneq 0
\end{align*}
of $A \otimes B$. We claim that this chain is $\mf{B}$-hereditary; by induction on $n$, we may assume the claim holds for $(A',B)$ whenever $A'$ admits a $\mf{B}$-hereditary chain of length $n-1$. In particular, since $A/J_{n-1}$ admits a $\mf{B}$-hereditary chain of length $n-1$ and $(A/J_{n-1}) \otimes B \cong (A \otimes B)/(J_{n-1} \otimes B)$ is left Noetherian, it suffices to show that $(J_{n-1} \otimes L_{j-1})/(J_{n-1} \otimes L_j)$ is a $\mf{B}$-hereditary ideal of $(A \otimes B)/(J_{n-1} \otimes L_j)$ for $1 \leq j \leq m$.

Set
\[
H_{n-1,j} = (A \otimes B)/(J_{n-1} \otimes L_j), \quad  J_{n-1,j} := (J_{n-1} \otimes L_{j-1})/(J_{n-1} \otimes L_j), \quad 1 \leq j \leq m.
\]

Since $J_{n-1}$ and $L_{j-1}/L_j$ are idempotent ideals of $A$ and $B/L_j$ respectively, it follows that $J_{n-1,j}$ is an idempotent ideal of $H_{n-1,j}$. We argue that $J_{n-1,j}$ is also projective as an $H_{n-1,j}$-module.   Observe that
\[
J_{n-1,j} = (J_{n-1} \otimes L_{j-1})/(J_{n-1} \otimes L_j) \cong J_{n-1} \otimes (L_{j-1}/L_j).
\]
By assumption, $L_{j-1}/L_j\cong m(q)Q$ for some indecomposable projective $Q$ over $B/L_j$ with $B_Q:=\End_B(Q)^{{\rm op}} \in \mf{B}$ and $Q$ a finite rank $B_Q$-module. The projectivity of $L_{j-1}/L_j$ implies that it is
 a direct summand of finitely many copies of $B/L_j$ as a $B/L_j$-module, so that $J_{n-1} \otimes (L_{j-1}/L_j)$ is a direct summand of finitely many copies of $J_{n-1} \otimes (B/L_j) = (J_{n-1} \otimes B)/(J_{n-1} \otimes L_j)$ as an $A \otimes (B/L_j) = (A \otimes B)/(A \otimes L_j)$-module and thus as a module over the larger algebra $H_{n-1,j}$. We have
\begin{align*}
  J_{n-1}\otimes (B/L_j) &\cong (J_{n-1} \otimes B)/(J_{n-1} \otimes L_j)
\cong (J_{n-1} \otimes B)/[(J_{n-1} \otimes L_j)(J_{n-1} \otimes B) ] \\
&\cong \left( (A \otimes B)/(J_{n-1} \otimes L_j) \right) \otimes_{A\otimes B} (J_{n-1} \otimes B)
\end{align*}
as $A \otimes (B/L_j)$-modules and thus as $H_{n-1,j}$-modules. But   $J_{n-1} \otimes B$ being $A\otimes B$-projective implies the last expression is a direct summand of finitely many copies of
\[
(A\otimes B)/(J_{n-1} \otimes L_j) \otimes_{A \otimes B} (A\otimes B) \cong (A\otimes B)/(J_{n-1} \otimes L_j) = H_{n-1,j}
\]
as an $H_{n-1,j}$-module. Hence, $J_{n-1,j}$ is projective over $H_{n-1,j}$ for all $1 \leq j \leq m$.

We also have that $J_{n-1} = J_{n-1}/J_n \cong n(q)P$  for some $n(q) \in \Z[q,q^{-1}]$ and $P$ an indecomposable projective over $A/J_n=A$, where $B_P:= \End_A(P) \in \mf{B}$ and $P$ is a finite rank $B_P$-module.   Hence,
\[
J_{n-1,j} = (J_{n-1} \otimes L_{j-1})/(J_{n-1} \otimes L_j) \cong J_{n-1} \otimes (L_{j-1}/L_j) \cong n(q)m(q) P \otimes Q
\]
as $A \otimes (B/L_j)$-modules. Write $P = Ae$ and $Q = (B/L_j)e'$ for $e$ a primitive idempotent in $A$ and $e'$ a primitive idempotent in $B/L_{j}$; note that $P\otimes Q \cong (A \otimes (B/L_j))(e \otimes e')$ .

The natural map
\begin{align*}
B_P \otimes B_Q &= \End_{A}( Ae  ) \otimes \End_{  B/ L_j}( (B/L_j) e') \\
&\xrightarrow{\cong} \End_{A \otimes (B/L_j)}((A \otimes (B/L_j)) (e \otimes e')) \\
&= \End_{A \otimes (B/L_j)}( P \otimes Q) \\
&= \End_{H_{n-1,j}}( P \otimes Q) =: B_{PQ}
\end{align*}
is an isomorphism. It follows that $B_{PQ} \in \cal{B}$ and that $P \otimes Q$ is free of finite rank over $B_{PQ}$.
\end{proof}

\begin{corollary} \label{cor:affineQHisSmooth}
Let $H$ be a $\mf{B}$-quasi hereditary algebra such that $H \otimes H^{\op}$ is left Noetherian and assume that $H \cong H^{{\rm op}}$.  Then $H$ is homologically smooth.
\end{corollary}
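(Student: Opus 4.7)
The plan is to reduce the statement to a direct application of Proposition~\ref{prop:tensor} combined with \cite[Theorem B]{Klesh-affine}. Recall that $H$ is homologically smooth means that $H$, viewed as a module over its enveloping algebra $H^e := H \otimes H^{\op}$, admits a finite resolution by finitely generated projective $H^e$-modules.

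First I would observe that the hypothesis $H \cong H^{\op}$ implies that $H^{\op}$ is $\mf{B}$-quasi hereditary as well. Combined with the assumption that $H \otimes H^{\op}$ is left Noetherian, Proposition~\ref{prop:tensor} applies to give that $H^e = H \otimes H^{\op}$ is $\mf{B}$-quasi hereditary. Since $\mf{B}$ consists of (affine or polynomial) quasi hereditary data in the sense of Kleshchev, \cite[Theorem B]{Klesh-affine} then guarantees that $H^e$ has finite global dimension.

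Next I would verify that $H$ is a finitely generated $H^e$-module: indeed, $H$ is cyclic as an $H^e$-module, generated by the unit $1 \in H$ via the action $(a \otimes b) \cdot x = axb$. Since $H^e$ is left Noetherian and $H$ is a finitely generated $H^e$-module, every syzygy in any projective resolution of $H$ is again finitely generated, so one can construct a resolution of $H$ by finitely generated projective $H^e$-modules. Finite global dimension of $H^e$ then forces this resolution to terminate after finitely many steps, yielding the required finite-length finitely-generated projective $H^e$-resolution of $H$.

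I do not anticipate a serious obstacle; the only subtlety is making sure the hypotheses of Proposition~\ref{prop:tensor} are met (which is where $H \cong H^{\op}$ enters, in order to know that $H^{\op}$ and not just $H$ is $\mf{B}$-quasi hereditary) and that ``homologically smooth'' is being used in its standard sense of admitting a finite resolution by finitely generated projective bimodules, so that the Noetherian hypothesis on $H^e$ is precisely what is needed to upgrade ``finite global dimension'' to ``finite projective dimension with finitely generated terms.''
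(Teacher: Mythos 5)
Your proposal is correct and follows essentially the same route as the paper: apply Proposition~\ref{prop:tensor} to conclude $H\otimes H^{\op}\cong H\otimes H$ is $\mf{B}$-quasi hereditary, invoke Kleshchev's finite global dimension result, and then use left Noetherianity to produce a bounded resolution of the cyclic $H^e$-module $H$ by finitely generated projectives. The only cosmetic difference is that the paper phrases the final truncation step via the generalized Schanuel lemma (the $n$-th syzygy of a finitely generated module is projective once the projective dimension is at most $n$), which is the precise justification for your claim that the resolution ``terminates.''
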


\begin{proof}
Proposition~\ref{prop:tensor} implies that $H \otimes H^{{\rm op}} \cong H \otimes H$ is $\mf{B}$-quasi hereditary.  Then by \cite[Corollary 5.25]{Klesh-affine}, any $\mf{B}$-quasi hereditary algebra has finite global dimension. Any finitely generated left module over a left Noetherian algebra of finite global dimension has a bounded resolution by finitely generated projective modules (one can show this using e.g. the generalized Schanuel's lemma). In particular, $H$ thought of as an $H \otimes H^{{\rm op} }$-module has a bounded resolution by finitely generated projective modules.

\end{proof}

\subsection{Proper costandard and tilting modules}\label{sec:ProperCostandardTilting}

When looking at Grothendieck groups below, it will be useful to have some additional families of modules. For an affine quasi hereditary algebra $H$, let $I(\alpha)$ be the injective hull of $L(\alpha)$ in the category of all graded $H$-modules. In general $I(\alpha)$ need not be finitely generated or Laurentian.  Let $D(\alpha)$ be the largest submodule of $I(\alpha)/L(\alpha)$ among all whose composition factors are isomorphic to grading shifts of $L(\beta)$ for various $\beta < \alpha$.
 Define the \emph{proper costandard modules}  $\bar{\nabla}(\alpha) \subset I(\alpha)$ to be the preimage of $D(\alpha)$ under the quotient map $I(\alpha) \to I(\alpha)/L(\alpha)$.
With this class of modules there is a BGG type reciprocity~\cite[Theorem 7.6]{Klesh-affine}
\[
(P(\alpha): \Delta(\beta))_q = [\bar{\nabla}(\beta): L(\alpha)]_{q^{-1}}
\]
where $(-:-)_q$ denotes the $\Delta$-multiplicity defined in \cite[Section 5.4]{Klesh-affine} and $[-:-]_q$ denotes the graded multiplicities of simple modules in composition series.

We say that an object of $\cal{C}=H\gmod$ has a $\bar{\nabla}$-filtration if it has a separated filtration whose subquotients are isomorphic to grading shifts of $\bar{\nabla}( \beta)$ for $\beta \in \Pi$.  Let $\mathsf{F}(\Delta)$ (resp. $\mathsf{F}(\bar{\nabla})$) denote the full subcategory of $H\gmod$ consisting of all $\Delta$-filtered (resp. $\bar{\nabla}$-filtered) objects.  Write $\cal{T}=\mathsf{F}(\Delta)\cap \mathsf{F}(\bar{\nabla})$.

\begin{definition}
A module in $H\gmod$ is called a \emph{tilting module} if it  is both $\Delta$-filtered and $\bar{\nabla}$-filtered, i.e if it is in $\cal{T}$.
\end{definition}

Tilting modules are closely related to the theory of positively graded standardly stratified algebras in the sense of Mazorchuk \cite{Maz-standardly}; affine quasi hereditary algebras are positively graded standardly stratified and their associated categories of tilting modules can be described as above.

\subsection{Grothendieck groups of affine quasi hereditary algebras} \label{sec:Grothendieck-affine}

Let $H$ be an affine or polynomial quasi hereditary algebra with simples indexed by $\Pi$. Let $K_0(H) = K(\cal{D}_c(H))$ denote the Grothendieck group of the compact derived category of $H$ (equivalently, of the homotopy category of perfect complexes over $H$, or of finite complexes of finitely generated projective $H$-modules). Since $H$ is left Noetherian and has finite global dimension by \cite[Corollary 5.25]{Klesh-affine}, all finitely generated graded left $H$-modules are compact as objects of $\cal{D}(H)$ (as in the proof of Corollary~\ref{cor:affineQHisSmooth}).

There is a natural identification of $K_0(H)$ with $K(H\pmod)$, where $H\pmod$ is the category of finitely generated graded projective
left
modules over $H$ and $K$ denotes either the split Grothendieck group or the Grothendieck group of an exact category; both $K_0(H)$ and $K(H\pmod)$ are free $\Z[q,q^{-1}]$-modules with a basis given by classes $[P(\alpha)]$ of indecomposable projectives (in homological degree zero as objects of $\cal{D}_c(H)$) for $\alpha \in \Pi$.

Let $G_0(H)$ denote the Grothendieck group of the abelian category $H\fmod$ of finite-dimensional graded right modules over $H$. The group $G_0(H)$ is a free $\Z[q,q^{-1}]$-module with basis the classes of simple modules $[L(\alpha)]$ for $\alpha \in \Pi$. We also write $K_0^{\C(q)}(H):=K_0(H)\otimes_{\Z[q,q^{-1}]} \C(q)$ and $G_0^{\C(q)}(H):=G_0(H)\otimes_{\Z[q,q^{-1}]} \C(q)$; we define $K_0^{\Z((q))}(H)$ and $G_0^{\Z((q))}(H)$ similarly.

\begin{proposition}\label{prop:BasesFromStandardsProperCostandards}
Given an affine/polynomial quasi hereditary algebra $H$ with simples indexed by $\Pi$, the classes of
standard modules $[\Delta(\alpha)]$ for $\alpha \in \Pi$ give another $\Z[q,q^{-1}]$-basis for $K_0(H)$; the
proper costandard modules $[\bar{\nabla}(\alpha)]$ for $\alpha \in \Pi$ give another $\Z[q,q^{-1}]$-basis for $G_0(H)$.

\end{proposition}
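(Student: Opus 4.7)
The overall plan is to exhibit, in each case, a change-of-basis matrix which is unitriangular over $\Z[q,q^{-1}]$ with respect to a total refinement of the partial order on $\Pi$, hence invertible.

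For $K_0(H)$ I would start from the known basis $\{[P(\alpha)]\}_{\alpha\in\Pi}$ and express each projective cover in terms of standards. By condition (PHW1), the kernel of the canonical quotient $P(\alpha)\twoheadrightarrow\Delta(\alpha)$ admits a $\Delta$-filtration whose subquotients are grading shifts of $\Delta(\beta)$ with $\beta>\alpha$. Since $P(\alpha)$ is finitely generated and the $\Delta$-multiplicities $(P(\alpha):\Delta(\beta))_q$ are Laurent polynomials (cf.~\cite[\S5.4]{Klesh-affine}), only finitely many classes $\beta$ contribute. Taking classes in $K_0(H)$, where each $\Delta(\alpha)$ is compact because $H$ has finite global dimension (as in the proof of Corollary~\ref{cor:affineQHisSmooth}), yields
\[
[P(\alpha)] \;=\; [\Delta(\alpha)] \;+\; \sum_{\beta>\alpha}(P(\alpha):\Delta(\beta))_q\,[\Delta(\beta)].
\]
Refining $\leq$ to any total order on $\Pi$, the resulting transition matrix is upper unitriangular with entries in $\Z[q,q^{-1}]$, hence invertible over $\Z[q,q^{-1}]$, and $\{[\Delta(\alpha)]\}_{\alpha\in\Pi}$ is a $\Z[q,q^{-1}]$-basis.

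The second part is dual, starting from the basis $\{[L(\alpha)]\}_{\alpha\in\Pi}$ of $G_0(H)$. By construction, $\bar\nabla(\alpha)$ contains $L(\alpha)$ as its socle with multiplicity $1$, and its remaining composition factors are grading shifts of $L(\beta)$ with $\beta<\alpha$. The BGG reciprocity recalled in Section~\ref{sec:ProperCostandardTilting} forces the multiplicities $[\bar\nabla(\alpha):L(\beta)]_q$ to be Laurent polynomials; finite-dimensionality of $\bar\nabla(\alpha)$, so that $[\bar\nabla(\alpha)]\in G_0(H)$, follows from \cite[Theorem 7.6 and the surrounding discussion]{Klesh-affine}. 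One therefore obtains
\[
[\bar\nabla(\alpha)] \;=\; [L(\alpha)] \;+\; \sum_{\beta<\alpha}[\bar\nabla(\alpha):L(\beta)]_q\,[L(\beta)],
\]
which is lower unitriangular over $\Z[q,q^{-1}]$ with respect to the same total refinement of $\leq$, hence invertible.

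I expect the main obstacle to be the finiteness bookkeeping needed to keep the inversion inside $\Z[q,q^{-1}]$ rather than inside a completion like $\Z((q))$. Concretely one needs (i) that the $\Delta$-filtration of each $P(\alpha)$ involves only finitely many $\Pi$-classes, so that the row of the transition matrix is finitely supported, and (ii) that $\bar\nabla(\alpha)$ is genuinely finite-dimensional with finitely many composition factors. Both reduce to Laurentian finiteness combined with the finite global dimension of $H$ and should be extracted as citations to \cite{Klesh-affine}; once they are in place, the remaining argument is pure unitriangularity bookkeeping.
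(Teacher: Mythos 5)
Your proof is correct and takes essentially the same approach as the paper: a unitriangular (with respect to a total refinement of $\leq$) change of basis from $\{[P(\alpha)]\}$ to $\{[\Delta(\alpha)]\}$ via $(\mathsf{PHW1})$, and a unitriangular change of basis from $\{[L(\alpha)]\}$ to $\{[\bar\nabla(\alpha)]\}$. The only cosmetic difference is in the second half, where the paper deduces unitriangularity of the matrix $[\bar\nabla(\beta):L(\alpha)]_q$ from BGG reciprocity together with the first half, whereas you read it off directly from the defining property of $\bar\nabla(\alpha)$ (socle $L(\alpha)$ with multiplicity one, remaining factors indexed by $\beta<\alpha$); both are valid.
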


\begin{proof}
The existence of a $\Delta$-filtration for the kernel of $P(\alpha) \twoheadrightarrow \Delta(\alpha)$, with subquotients shifts of $\Delta(\beta)$ for $\beta > \alpha$, implies that in $K_0(H)$ we have $[P(\alpha)] = [\Delta(\alpha)]$ plus a $\Z[q,q^{-1}]$-linear combination of $[\Delta(\beta)]$ for $\beta > \alpha$. It follows that $\{[\Delta(\alpha)]\}$ gives a $\Z[q,q^{-1}]$-basis for $K_0(H)$; the entries for the change-of-basis matrix between this basis and $\{[P(\alpha)]\}$ are the $\Delta$-multiplicities $(P(\alpha) : \Delta(\beta))_q$. As above, we have
$
(P(\alpha): \Delta(\beta))_q = [\bar{\nabla}(\beta): L(\alpha)]_{q^{-1}}
$
since $H$ is affine quasi hereditary, so $\{[\bar{\nabla}(\alpha)]\}$ gives a $\Z[q,q^{-1}]$-basis for $G_0(H)$.
\end{proof}

For an affine quasi hereditary algebra $H$ with balanced involution, we can shift the simple modules so that
$L^{\circledast}(\alpha)\cong L(\alpha)$ for all $\alpha \in \Pi$.  Then, as in \cite[Section 9.1]{Klesh-affine}, there is an isomorphism $\bar{\Delta}(\alpha)^{\circledast} \cong \bar{\nabla}(\alpha)$ for all $\alpha \in \Pi$   and we have
\begin{equation} \label{eq:BGG-prop-standard}
  (P(\alpha) : \Delta(\beta))_q = [\bar{\Delta}(\beta) : L(\alpha)]_q.
\end{equation}
It follows that the classes of proper standard modules $\bar{\Delta}(\alpha)$, for $\alpha \in \Pi$, form a basis of $G_0(H)$ as a free $\Z[q,q^{-1}]$-module.

Now let $H$ be polynomial quasi hereditary; \cite[Proposition 5.7]{Klesh-affine} implies that
\[
[\bar{\Delta}(\alpha)] = \frac{1}{\dim_q B_{\alpha}}[\Delta(\alpha)]
\]
in $K_0(H)$, where $\frac{1}{\dim_q B_{\alpha}} \in \Z[q,q^{-1}]$ by the polynomial quasi hereditary condition. It follows that the classes $[\bar{\Delta}(\alpha)]$ are independent over $\Z[q,q^{-1}]$ in $K_0(H)$. Thus, if $H$ is polynomial quasi hereditary with a balanced anti-involution, we can identify $G_0(H)$ with the $\Z[q,q^{-1}]$-span of the classes $[\bar{\Delta}(\alpha)]$ in $K_0(H)$.

If we pass to $\C(q)$ or $\Z((q))$, then $\frac{1}{\dim_q B_{\alpha}}$ is invertible, so the above paragraph lets us identify $G_0^{\C(q)}(H)$ with $K_0^{\C(q)}(H)$ and $G_0^{\Z((q))}(H)$ with $K_0^{\Z((q))}(H)$. It follows that the classes of standard modules $\{\Delta(\alpha) \mid \alpha \in \Pi\}$ and the classes of indecomposable projectives $\{P(\alpha) \mid \alpha \in \Pi\}$ give bases for $G_0^{\C(q)}(H)$ over $\C(q)$, as well as for $G_0^{\Z((q))}(H)$ over $\Z((q))$.

Finally, if $H$ is polynomial quasi hereditary (and thus positively graded standardly stratified) with a balanced anti-involution, then $G_0^{\C(q)}(H)$ and $G_0^{\Z((q))}$ have bases given by classes of indecomposable tilting modules $T(\alpha)$ for $\alpha \in \Pi$. This follows from \cite{Fuj-tilt}, since $(T(\pi) :\Delta(\pi))_q=1$ and $(T(\pi):\Delta(\sigma))_q=0$ for any $\sigma \nleq \pi$, so that the change of basis from tilting modules to standard modules is lower triangular with ones on the diagonal.

\subsubsection{Bilinear form} \label{subsec:bilinearform}

Assuming that $H$ is affine quasi hereditary with a balanced anti-involution $\psi$ as in Section \ref{subsec:affine-cellular}, there is a $\Z((q))$-bilinear form   defined on $K_0(H)$ given by
\begin{equation} \label{eq:form-HOM}
([X],[Y]) := \sum_{j \in \Z} (-1)^j \dim_q(\Hom^j(X^{\#},Y))
\end{equation}
where, for $X$ finitely generated and projective, $X^{\#}$ is defined to be $\Hom_H(X,H)$ with
left action $(h f)(x) := f(\psi(h)x)$,
and the operation $\#$ is extended to finite complexes in the natural way.

If $X$ is any object of $\cal{D}^c(H)$ (in particular, any finitely generated $H$-module) and $N$ is a finite-dimensional $H$-module, the proof of \cite[Lemma 2.5]{BrundanKlesh} gives
\begin{equation} \label{eq:Ext-pairing-new}
([X],[N])  = \sum_{j \in \Z}(-1)^j\overline{\dim_q(\Ext^j(X,N^{\circledast}))} = \sum_{j \in \Z}(-1)^j \dim_{q^{-1}}(\Ext^j(X,N^{\circledast}))
\end{equation}
 where $\bar{}$ denotes the bar involution of $\Z[q,q^{-1}]$ with $\bar{q}=q^{-1}$ and $N^{\circledast}$ denotes the dual with respect to the antiautomorphism $\psi$ (note that to compute these $\Ext$ groups we may assume $X$ is a finite complex of finitely generated projective modules). We have
\[
([X^{\#}],[N]) = \sum_{j \in \Z} (-1)^j \dim_q(\Hom^j(X,N))
\]
by definition, while
\[
([X],[N^{\circledast}]) =  \sum_{j \in \Z}(-1)^j\overline{\dim_q(\Ext^j(X,N))}
\]
by equation~\eqref{eq:Ext-pairing-new}. Thus,
\begin{equation} \label{eq:PsiAdjointness}
([X^{\#}],[N]) = \overline{([X],[N^{\circledast}])}
\end{equation}
for $[X] \in K_0(H)$ and $[N] \in G_0(H)$.

We see that the form \eqref{eq:form-HOM} restricts to give a $\Z[q,q^{-1}]$ perfect pairing
\begin{align}
K_0(H) \times G_0(H)  &\longrightarrow \Z[q,q^{-1}] \label{eq:perfect-pairing}\\
[P], [M] &\mapsto
\dim_{q^{-1}}
\Hom_H(P, M^{\circledast})
=\dim_q
\Hom_H(P^{\#}, M); \nn
\end{align}
note that $L^{\circledast}(\beta)\cong L(\beta)$ and $\dim_q\Hom_H(P(\alpha), L(\beta)) = \delta_{\alpha \beta}$.
The matrix for this pairing is the identity in the basis of indecomposable projectives and simples, so we can identify $K_0(H)$ and $G_0(H)^{\ast}$ over $\Z[q,q^{-1}]$. Passing to $\C(q)$, the pairing $(,)$ on $K_0^{\C(q)}(H)$ becomes invertible, so we can also use it to identify $K_0^{\C(q)}(H)$ with $(K_0^{\C(q)}(H))^*$ and thereby with $G_0^{\C(q)}(H)$. Similarly, we can identify $K_0^{\Z((q))}(H)$ with $G_0^{\Z((q))}(H)$. These identifications agree with the ones given below Proposition~\ref{prop:BasesFromStandardsProperCostandards}.

\begin{proposition} \label{prop:pairing}
Let $H$ be an affine quasi hereditary algebra with balanced anti-involution. With respect to the form $(-,-)$ defined in \eqref{eq:Ext-pairing-new},
\begin{enumerate}
 \item  indecomposable projectives are dual to the simple modules:
$
([P(\alpha)], [L(\beta)]) = \delta_{\alpha \beta};
$
  \item   standard modules are dual to the proper standard modules:
\[
([\Delta(\alpha)], [\bar{\Delta}(\beta)]) = \delta_{\alpha \beta};
\]

  \item the pairing between projective and proper standards satisfies
$
([P(\alpha)],  [\bar{\Delta}(\beta)]) =  (P(\alpha) : \Delta(\beta))_q,
$
so that the matrix representing $([P(\alpha)], [ \bar{\Delta}(\beta)])$   is unipotent;

\item the pairing between projectives and proper costandards satisfies
$
([P(\alpha)],  [\bar{\nabla}(\beta)]) = (P(\alpha) : \Delta(\beta))_{q^{-1}},
$
so that the matrix representing $([P(\alpha)], [\bar{\nabla}(\beta)])$   is unipotent.
\end{enumerate}
\end{proposition}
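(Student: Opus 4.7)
The plan is to prove (1), (3), and (4) by direct computation from \eqref{eq:Ext-pairing-new} together with the projectivity of $P(\alpha)$, and then to deduce (2) from (3) by inverting the transition matrix between projectives and standards in $K_0(H)$. The main ingredients beyond \eqref{eq:Ext-pairing-new} are the self-duality $L(\alpha)^{\circledast} \cong L(\alpha)$ (available after the balanced shift of simples from Section~\ref{subsec:QHinvolution}), the duality $\bar{\Delta}(\beta)^{\circledast} \cong \bar{\nabla}(\beta)$ recalled in Section~\ref{sec:Grothendieck-affine} (which, applied twice, also yields $\bar{\nabla}(\beta)^{\circledast} \cong \bar{\Delta}(\beta)$), and the BGG-type reciprocity $(P(\alpha):\Delta(\beta))_q = [\bar{\nabla}(\beta):L(\alpha)]_{q^{-1}}$ of Section~\ref{sec:ProperCostandardTilting} together with its proper-standard counterpart \eqref{eq:BGG-prop-standard}.

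The first observation is that since $P(\alpha)$ is projective, \eqref{eq:Ext-pairing-new} collapses to the $j=0$ term and gives
\[
([P(\alpha)], [M]) \;=\; \overline{\dim_q \Hom_H(P(\alpha), M^{\circledast})} \;=\; [M^{\circledast} : L(\alpha)]_{q^{-1}}
\]
for any finite-dimensional $M$, where in the second equality I use the standard identity $\dim_q \Hom_H(P(\alpha), N) = [N : L(\alpha)]_q$ for the graded projective cover $P(\alpha) \twoheadrightarrow L(\alpha)$. Specialising: taking $M = L(\beta)$ and using $L(\beta)^{\circledast} \cong L(\beta)$ gives (1); taking $M = \bar{\Delta}(\beta)$ and using $\bar{\Delta}(\beta)^{\circledast} \cong \bar{\nabla}(\beta)$ yields $([P(\alpha)], [\bar{\Delta}(\beta)]) = [\bar{\nabla}(\beta) : L(\alpha)]_{q^{-1}} = (P(\alpha) : \Delta(\beta))_q$ by BGG reciprocity, proving (3); and taking $M = \bar{\nabla}(\beta)$ with $\bar{\nabla}(\beta)^{\circledast} \cong \bar{\Delta}(\beta)$ combined with \eqref{eq:BGG-prop-standard} gives (4).

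For (2), I would expand $[P(\alpha)] = \sum_\gamma (P(\alpha) : \Delta(\gamma))_q [\Delta(\gamma)]$ in $K_0(H)$ using the $\Delta$-filtration of $P(\alpha)$, whose subquotients are $\Delta(\alpha)$ with multiplicity $1$ and shifts of $\Delta(\gamma)$ for $\gamma > \alpha$. Substituting into the identity already established in (3) yields a unipotent system
\[
\sum_\gamma (P(\alpha) : \Delta(\gamma))_q \, ([\Delta(\gamma)], [\bar{\Delta}(\beta)]) \;=\; (P(\alpha) : \Delta(\beta))_q,
\]
whose coefficient matrix has $1$s on the diagonal and vanishing entries off the cone $\gamma \geq \alpha$; solving it by induction along the partial order on $\Pi$ forces $([\Delta(\gamma)], [\bar{\Delta}(\beta)]) = \delta_{\gamma\beta}$. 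The unipotency assertions in (3) and (4) follow immediately from the same $\Delta$-filtration data, since $(P(\alpha):\Delta(\alpha))_q = 1$ and $(P(\alpha):\Delta(\beta))_q = 0$ unless $\beta \geq \alpha$. The only step that requires genuine care is tracking the $\circledast$-duality and the bar-involution through each calculation; once the identifications of $L$, $\bar{\Delta}$, and $\bar{\nabla}$ under $\circledast$ are in hand, the rest of the proof is a formal consequence of \eqref{eq:Ext-pairing-new} and the two reciprocity identities.
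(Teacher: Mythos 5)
Your proof is correct, and for parts (1), (3) and (4) it follows essentially the same route as the paper: collapse \eqref{eq:Ext-pairing-new} to the $j=0$ term using projectivity of $P(\alpha)$, apply the identifications $L(\beta)^{\circledast}\cong L(\beta)$, $\bar{\Delta}(\beta)^{\circledast}\cong\bar{\nabla}(\beta)$, $\bar{\nabla}(\beta)^{\circledast}\cong\bar{\Delta}(\beta)$, and finish with the two reciprocity identities (your citation of BGG reciprocity for (3) is interchangeable with the paper's citation of \cite[Lemma 7.5]{Klesh-affine}, since that lemma is exactly the combination of reciprocity with $\dim_q\Hom(P(\alpha),N)=[N:L(\alpha)]_q$).

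Where you genuinely diverge is part (2). The paper proves $([\Delta(\alpha)],[\bar{\Delta}(\beta)])=\delta_{\alpha\beta}$ by feeding the homological orthogonality of \cite[Lemmas 7.2 and 7.3]{Klesh-affine} --- the vanishing of $\Ext^i(\Delta(\alpha),\bar{\nabla}(\beta))$ for $i>0$ together with $\Hom(\Delta(\alpha),\bar{\nabla}(\beta))=\delta_{\alpha\beta}\Bbbk$ --- directly into \eqref{eq:Ext-pairing-new}. You instead deduce (2) formally from (3): expand $[P(\alpha)]=\sum_{\gamma}(P(\alpha):\Delta(\gamma))_q[\Delta(\gamma)]$ in $K_0(H)$ (valid by Proposition~\ref{prop:BasesFromStandardsProperCostandards}, which guarantees the expansion is a finite $\Z[q,q^{-1}]$-combination), use bilinearity of the form, and invert the unipotent matrix of $\Delta$-multiplicities. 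Both arguments are valid. Your route is more self-contained at the level of Grothendieck groups --- it needs no Ext-vanishing input beyond what (3) already encodes --- but it only produces the Euler-characteristic orthogonality, which is all the proposition asserts. The paper's route imports the stronger module-level statement \eqref{eq:Ext-dual-standard} as a black box (note the paper's ``$i=1$'' there is a typo for $i=0$); that stronger fact is worth knowing exists even if your argument shows it is not needed here.
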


\begin{proof}
The first claim follows from the fact that $L^{\circledast}(\beta)\cong L(\beta)$ and $\dim_q\Hom(P(\alpha), L(\beta)) = \delta_{\alpha \beta}$.  The second claim follows from the isomorphism
$\bar{\Delta}(\beta)^{\circledast} \cong \bar{\nabla}(\beta)$
and \cite[Lemma 7.2 and Lemma 7.3]{Klesh-affine} showing that
\begin{equation} \label{eq:Ext-dual-standard}
\Ext^i(\Delta(\alpha), \bar{\nabla}(\beta))
=
\left\{
  \begin{array}{ll}
    \delta_{\alpha \beta}, & \hbox{if $i=1$;} \\
    0, & \hbox{otherwise.}
  \end{array}
\right.
\end{equation}
For the third claim, observe that
\begin{align*}
 ([P(\alpha)], [ \bar{\Delta}(\beta)]) = \overline{\dim_{q}\Hom(P(\alpha),\bar{\nabla}(\beta) )}
= \dim_{q^{-1}}\Hom_H(P(\alpha),\bar{\nabla}(\beta) )  = (P(\alpha): \Delta(\beta))_q
\end{align*}
where the last equality follows from \cite[Lemma 7.5]{Klesh-affine}.
We also have
\begin{align*}
 ([P(\alpha)], [ \bar{\nabla}(\beta)])
&=  \dim_{q^{-1}}\Hom(P(\alpha),\bar{\Delta}(\beta) )
 = [\bar{\Delta}(\beta):L(\alpha)]_{q^{-1}} = (P(\alpha) : \Delta(\beta))_{q^{-1}}
\end{align*}
where the second equality follows from \cite[Equation (3.5)]{Klesh-affine} and the last by \eqref{eq:BGG-prop-standard}.
\end{proof}
Similarly, it follows from \cite[Lemma 4.3 and Lemma 5.26]{Klesh-affine} that for $i \geq 0$
\begin{equation}
([\bar{\Delta}(\alpha)], [\bar{\nabla}(\beta)]) := \sum_{j \in \Z}(-1)^j \dim_{q^{-1}}\Ext^i(\bar{\Delta}(\alpha),\bar{\Delta}(\beta)))  =0
\end{equation}
if $\alpha \nleq \beta$.

\section{Hypertoric convolution algebras}\label{sec:HypertoricAlg}

In this section we review the definitions of, and establish some facts about, the ``deformed'' hypertoric convolution algebras $\At(\V)$ and $\Bt(\V)$ introduced in \cite{Gale}. In particular, we introduce standard modules over these algebras, which we will use in Section~\ref{sec:HypertoricAlgAreAQH} to show that $\At(\V)$ and $\Bt(\V)$ are affine quasi hereditary algebras.

\subsection{Basic definitions}

\begin{definition}
  A \emph{polarized arrangement} indexed by $I$ with $|I|=k$ is a triple $\cal{V} = (V, \eta, \xi)$
  consisting of
  \begin{itemize}
  \item a vector subspace $V \subset \R^k$,
  \item a vector $\eta \in \R^k/V$, and
  \item a covector $\xi \in V^*=(\R^k)^*/V^\perp$,
  \end{itemize}
such that
  \begin{enumerate}
 \item[(a)] every lift of $\eta$ to $\R^k$ has at least $|I|-\dim V$ non-zero entries, and
\item[(b)] every lift of $\xi$ to $(\R^k)^*$ has at least $\dim V$ non-zero entries.
\end{enumerate}
(Note that for $V$ fixed, a generic $\eta$ will satisfy (a), and a generic $\xi$ will
satisfy (b).)
If $V$, $\eta$, and $\xi$ are all defined over $\Q$, then $\cV$ is called
\emph{rational}.
\end{definition}

\begin{example}\label{ex:n4k2}
Let $I = \{1,2,3,4\}$, so that $k=4$. Take $V$ to be the column span of the matrix $\begin{bmatrix} 1 & 1 \\ 1 & 2 \\ 1 & 3 \\ 1 & 4 \end{bmatrix}$, so that $\dim V = 2$. Let $\eta$ be the equivalence class in $\R^4 / V$ of the vector $\begin{bmatrix} 1 \\ 4 \\ 9 \\ 16 \end{bmatrix}$. Let $\xi \in V^*$ be the linear functional on $V$ sending $\begin{bmatrix} 1 \\ 1 \\ 1 \\ 1 \end{bmatrix}$ to $1$ and sending $\begin{bmatrix} 1 \\ 2 \\ 3 \\ 4 \end{bmatrix}$ to $5$. This example was also mentioned in \cite[Section 3.9.3]{LLM}; it is right cyclic in the sense of Definition~\ref{def:LeftRightCyclic} below.
\end{example}


\begin{figure}
\includegraphics[scale=0.67]{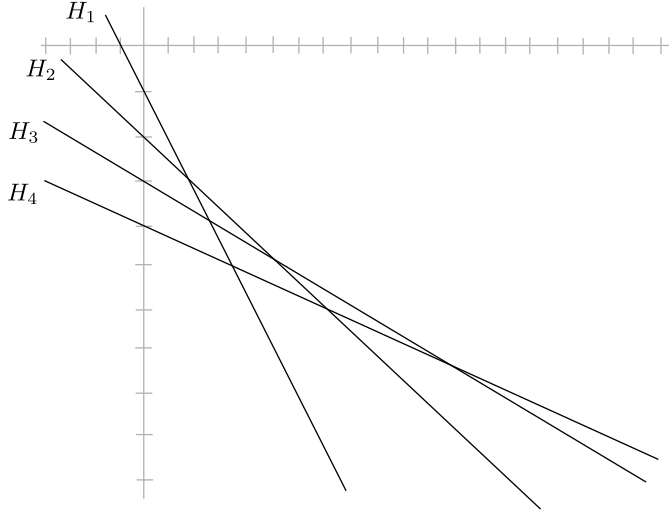}
\caption{The arrangement of hyperplanes in $\R^2$ associated to Example~\ref{ex:n4k2}.}
\label{fig:ArrangementExampleHyperplanesOnly}
\end{figure}

Associated to a (not necessarily rational) polarized arrangement
$\cV = (V,\eta,\xi)$ is an arrangement $\cal{H}$ of  $|I|$ hyperplanes
in the affine space
\[V_\eta = \{x \in \R^k \mid \eta = x + V\},\]
whose $i^\text{th}$
hyperplane is given by $$H_i = \{x \in V_\eta \mid x_i = 0\}.$$
Note that $H_i$ could be empty if $V$ is contained in the
coordinate hyperplane $\{x_i = 0\}$.

\begin{example}
The arrangement $\cal{H}$ associated to Example~\ref{ex:n4k2} is shown in Figure~\ref{fig:ArrangementExampleHyperplanesOnly}; we draw arrangements in $V_{\eta}$ as arrangements in $\R^2$ by using the identification
\[
(x,y) \in \R^2 \leftrightarrow \left( x \begin{bmatrix} 1 \\ 1 \\ 1 \\ 1 \end{bmatrix} + y \begin{bmatrix} 1 \\ 2 \\ 3 \\ 4 \end{bmatrix} + \begin{bmatrix} 1 \\ 4 \\ 9 \\ 16 \end{bmatrix} \right) \in V_{\eta}.
\]
The hyperplane $H_1$ is defined by the equation $x + y + 1 = 0$, the hyperplane $H_2$ is defined by the equation $x + 2y + 4 = 0$, the hyperplane $H_3$ is defined by the equation $x + 3y + 9 = 0$, and the hyperplane $H_4$ is defined by the equation $x + 4y + 16 = 0$.
\end{example}

For any subset $S\subset I$, denote the intersection of hyperplanes in $S$ by $$H_S = \bigcap_{i\in S} H_i.$$
Condition (a) implies that $\cH$ is simple, meaning that
$\operatorname{codim} H_S = |S|$
whenever $H_S$ is nonempty.
Condition (b) implies that $\xi$
is generic with respect to the arrangement,
in the sense that it is not constant on any positive-dimensional flat $H_S$.

Given a sign vector $\a \in \{\pm 1\}^I$, let
\[
	\Delta_\a = V_\eta \cap \{x \in \R^I \mid \a(i)x_i \ge 0 \;\text{for all $i$}\}
\]
and
$$\Sigma_\a = V \cap \{x \in \R^I \mid \a(i)x_i \ge 0 \;\text{for all $i$}\}.$$
In what follows we sometimes write $\alpha(i)=\alpha_i$ to denote the $i^{th}$ term of the sequence $\alpha$.
If $\Delta_\a$ is nonempty, it is the closed chamber of the arrangement
$\cH$ where the defining
equations of the hyperplanes are replaced by
inequalities according to the signs in $\a$.
The cone $\Sigma_\a$ is the corresponding chamber of the central arrangement
given by translating the hyperplanes of $\cH$ to the origin.
It is always nonempty, as it contains $0$. See Figure~\ref{fig:ArrangementExampleChamber} for an example of a chamber $\Delta_{\a}$.

\begin{figure}
\includegraphics[scale=0.67]{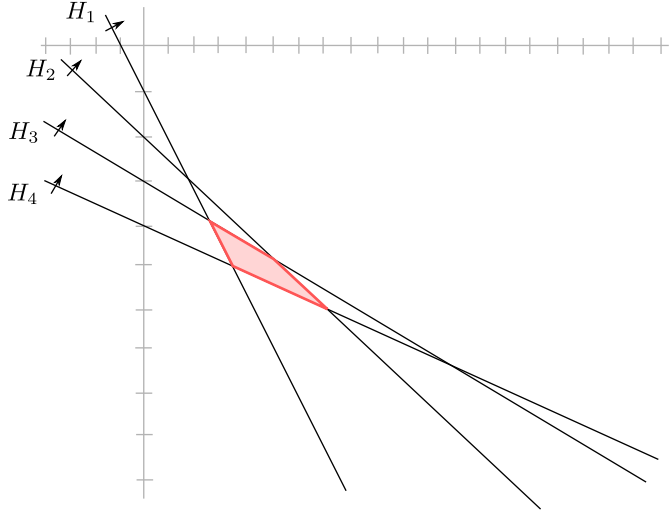}
\caption{The chamber $\Delta_{\a}$, for $\a = (+--+)$, in the arrangement of Example~\ref{ex:n4k2}. The co-orientations on the hyperplanes are determined by the matrix in that example.}
\label{fig:ArrangementExampleChamber}
\end{figure}

Introduce subsets of $\{ \pm 1\}^I$ of \emph{feasible} and \emph{bounded} regions defined by
\[
	\cF = \{\a \in \{\pm 1\}^I \mid \Delta_\a \ne \emptyset\}, \qquad \cal{B} = \{\a \in \{\pm 1\}^I \mid \xi(\Sigma_\a)\,\text{is bounded above}\}.
\]
 It is clear that $\cF$ depends only
on $V$ and $\eta$ and  that $\cal{B}$ depends
only on $V$ and $\xi$.  We let $\cal{K} = \cal{K}(\cal{V}) \subset \cal{F}(\cal{V})$ denote the set of feasible sign sequences $\alpha$ such that $\Delta_{\alpha}$ is compact.
Elements of the intersection
$$\cal{P}:= \cF\cap \cal{B} = \{\a \in \{\pm 1\}^I \mid \xi(\Delta_\a)\,\text{is
nonempty and bounded above}\}$$ are called  \emph{bounded feasible};
here $\xi(\Delta_\a)$ is regarded as a subset of the affine line.

\begin{example}
The union of the bounded feasible regions in Example~\ref{ex:n4k2} is shown as the shaded area in Figure~\ref{fig:ArrangementExample}. In this figure, $\xi$ is indicated by a dotted line whose co-orientation arrow indicates the direction of increasing $\xi$.
\end{example}

\begin{figure}
\includegraphics[scale=0.67]{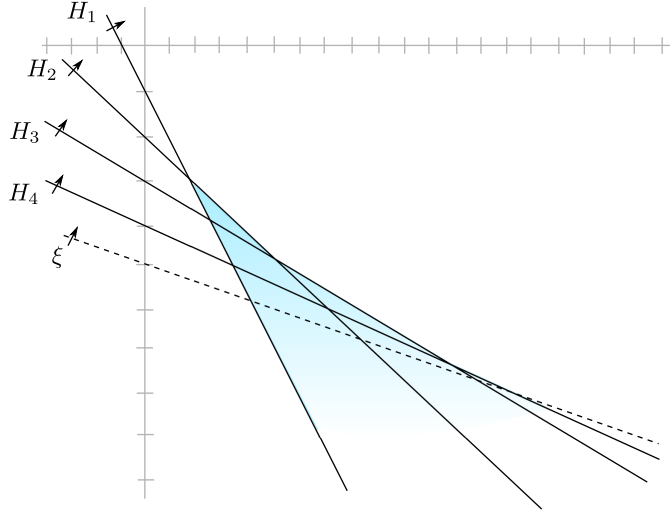}
\caption{The union of the bounded feasible regions in Example~\ref{ex:n4k2}.}
\label{fig:ArrangementExample}
\end{figure}

\subsubsection{Gale duality}
\begin{definition}
The \emph{Gale dual} $\cV^\gd$ of a polarized arrangement
$\cV=(V,\eta,\xi)$ is given by the triple $(V^\perp, -\xi, -\eta)$.
We denote by $\cF^\gd$, $\cB^\gd$, and $\cP^\gd$ the
feasible, bounded, and bounded feasible sign vectors for $\cV^\gd$,
and we denote by $V^\bot_{-\xi}$ the affine space for
the corresponding hyperplane arrangement $\cH^\vee$.
\end{definition}

By \cite[Theorem 2.4]{Gale} we have $\cF^\gd = \cB$, $\,\cB^\gd = \cF$, and therefore $\cP^\gd = \cP$.  We will sometimes use $\Delta_{\alpha}^{\gd}$ to denote the chamber  in $\cV^{\gd}$ associated to $\alpha$.  Likewise, we denote by $H_i^{\vee}$ the $i$th hyperplane in $\cH^{\vee}$ and for some subset $S \subset I$ we denote by $H_S^{\vee}$ the intersection of hyperplanes indexed by $S$.

\subsection{Partial order} \label{subsec:partial_order}
In this section we introduce various structures on polarized arrangements that will be needed for the construction of the affine quasi hereditary structure defined in Section~\ref{sec:HypertoricAlgAreAQH}.  Let $\cal{V}$ be a polarized arrangement. Let $\mathbb{B}$ denote the set of $k$-element subsets $\mathbbm{x}$ of $I=\{1,\dots, n\}$  such that
\[
H_{\mathbbm{x}} = \bigcap_{i \in \mathbbm{x}} H_i \neq \emptyset.
\]
Equivalently, $\mathbb{B}$ is the set of bases of the matroid associated to $\cal{V}$.
There is a bijection $\mu\maps \mathbb{B} \to \cal{P}$ sending $\mathbbm{x}$ to the unique sign sequence $\alpha_{\mathbbm{x}}$ such that $\xi$ obtains its maximum on $\Delta_{\alpha}$ at the point $H_{\mathbbm{x}}$. We write $\mathbbm{x}_{\beta} = \mu^{-1}(\beta)$ for the subset associated to a sign sequence $\beta$.
 The covector $\xi$ induces a partial order\footnote{This partial order is the transitive closure of the relation $\preceq$, where $\mathbbm{x} \preceq \mathbbm{x}'$ if $|\mathbbm{x}\cap \mathbbm{x}'| = |\mathbbm{x}|-1 = k-1$ and $\xi(H_{\mathbbm{x}})<\xi(H_{\mathbbm{x}'})$. The first condition ensures that $H_{\mathbbm{x}}$ and $H_{\mathbbm{x}'}$ lie on the same one dimensional flat, so that $\xi$ cannot take the same value at these two points. } $\leq $ on $\mathbb{B}\cong \cal{P}$.

Write $\mathbbm{x}^c$ for the complement in $I$ of the subset $\mathbbm{x}$.  Letting $\mathbb{B}^{\vee}$ denote the set of matroid bases of $\cal{V}^{\vee}$, the map $\mathbbm{x} \mapsto \mathbbm{x}^c$ defines a bijection from $\mathbb{B} \to \mathbb{B}^{\vee}$.  The bijection $\mu^{\vee} \maps \mathbb{B}^{\vee} \to \cal{P}^{\vee}$ is compatible with the equality $\cal{P}=\cal{P}^{\vee}$, so that $\mu(\mathbbm{x})=\mu^{\vee}(\mathbbm{x}^c)$~\cite[Lemma 2.9]{Gale}.

For $\mathbbm{x} \in \mathbb{B}$, define the \emph{bounded cone} of $\mathbbm{x}$ as
\begin{equation}
\cal{B}_{\mathbbm{x}} = \{ \alpha \in \{\pm\}^{n} \mid \alpha(i) = \mu(\mathbbm{x})(i) \; \text{for all $i \in \mathbbm{x}$} \}.
\end{equation}
Observe that $\cal{B}_{\mathbbm{x}}\subset \cal{B}$ and $\cal{B}_{\mathbbm{x}}$ depends only on $V$ and $\xi$.
Geometrically, the feasible sign sequences $\alpha$ in $\cal{B}_{\mathbbm{x}}$ are those such that $\Delta_{\alpha}$ lies in the negative cone defined by $\xi$ at the vertex $H_{\mathbbm{x}}$ as shown in Figure~\ref{fig:BoundedFeasible}.

\begin{figure}\label{fig:cones}
\includegraphics[scale=0.95]{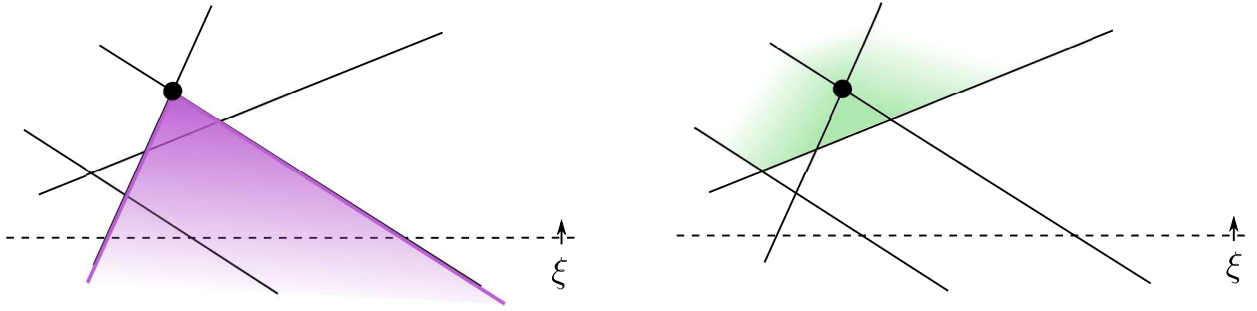}
\caption{Bounded cone $\cal{B}_{\xx}$ (left) and feasible cone $\cal{F}_{\xx}$ (right) of $\xx$; in each case the dot is the point $H_{\xx}$ and feasible regions in the bounded or feasible cone are shaded.}
\label{fig:BoundedFeasible}
\end{figure}

Dually, we have the \emph{feasible cone }
\begin{equation}
\cal{F}_{\mathbbm{x}} = \{ \alpha \in \{\pm\}^{n} \mid \alpha(i) = \mu(\mathbbm{x})(i) \; \text{for all $i \in \mathbbm{x}^c$} \}
\end{equation}
which denotes the set of sign vectors $\alpha$ such that $H_{\xx} \in \Delta_{\a}$ as shown in Figure~\ref{fig:BoundedFeasible} (equivalently, $\cal{F}_{\xx}$ consists of all feasible regions that one can reach from $\alpha_{\xx}$ by crossing hyperplanes incident to $H_{\xx}$). In particular, $\cal{F}_{\mathbbm{x}} \subset \cal{F}$. Under Gale duality we have
\begin{equation}
  \cal{F}_{\mathbbm{x}} = \cal{B}_{\mathbbm{x}^c}^{\vee}, \qquad
   \cal{B}_{\mathbbm{x}} = \cal{F}_{\mathbbm{x}^c}^{\vee},
\end{equation}
justifying the terminology ``feasible cone" as $\cal{F}_{\mathbbm{x}}$ describes a cone in the Gale dual arrangement $\cal{V}^{\vee}$.

\begin{example}
In Example~\ref{ex:n4k2} the bijection $\mu\maps \mathbb{B} \to \cal{P}$ is given by
\begin{alignat}{3}
&\mu(\{1,2 \}) = (+-++), \quad &\mu(\{1,3 \}) =  (+--+), \quad
&\mu(\{2,3\}) = (++-+) ,
\\
& \mu(\{ 1,4\}) = (+---), \quad
&\mu(\{2,4 \}) = (++--), \quad  &\mu(\{3,4 \}) = (+++-),
\end{alignat}
with partial order $\{3,4\} < \{2,4\} < \{1,4\} \& \{2,3\} < \{1,3\} < \{1,2\}$.  The bounded cone $\cal{B}_{\{1,3\}}$ consists of all those $\alpha \in \{\pm\}^{4}$ with $\alpha(1)= \mu(\{ 1,3\})(1) = +$ and $\alpha(3)= \mu(\{ 1,3\})(3) = -$.  In particular, the feasible regions in the bounded cone $\cal{B}_{\{1,3\}}$ are the regions above the hyperplane $H_1$ and below the hyperplane $H_3$, namely $(+--+)$, $(+---)$, $(++-+)$, and  $(++--)$.
\end{example}

For additional examples of polarized arrangements, their Gale duals, and associated partial orders see \cite[Examples 2.2, 2.5, 2.7, and 2.12]{Gale}.


The following is implied by \cite[Lemma 2.11]{Gale} together with Gale duality.

\begin{lemma}[cf. Lemma 2.11 of \cite{Gale}]\label{lem:OrderFromCones}
  If $\mu(\xx) \in \cal{B}_{\yy}$, then $\xx\leq \yy$ and if $\mu(\zz) \in \cal{F}_{\yy}$ then $\yy \leq \zz$.
\end{lemma}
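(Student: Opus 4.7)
The plan is to dispatch the second claim via Gale duality and prove the first claim by an LP-style pivot argument on a specific cone.

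For the second claim: using $\mu(\zz) = \mu^\gd(\zz^c)$ and $\cal{F}_\yy = \cal{B}^\gd_{\yy^c}$, the hypothesis $\mu(\zz) \in \cal{F}_\yy$ translates to $\mu^\gd(\zz^c) \in \cal{B}^\gd_{\yy^c}$. Applying the first claim inside the Gale dual arrangement $\cal{V}^\gd$ gives $\zz^c$ below $\yy^c$ in the partial order on $\mathbb{B}^\gd$. A separate and standard check — using that the Gale dual swaps the covector data $\xi \leftrightarrow -\eta$, and that the defining covering relation is determined by strict increase of the covector along one-dimensional flats — shows that the partial order on $\mathbb{B}^\gd$, transported through the complementation bijection $\xx \leftrightarrow \xx^c$, is the opposite of $\leq$ on $\mathbb{B}$. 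This translates $\zz^c \leq \yy^c$ (in $\mathbb{B}^\gd$) into $\yy \leq \zz$, giving the second claim.

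For the first claim, the main geometric reformulation is that $\mu(\xx) \in \cal{B}_\yy$ is equivalent to $\Delta_{\mu(\xx)} \subseteq C_\yy$, where $C_\yy := \{x \in V_\eta : \mu(\yy)(i) x_i \geq 0 \text{ for all } i \in \yy\}$ is a closed polyhedral cone with apex $H_\yy$. The extreme rays of $C_\yy$ at $H_\yy$ are the half-rays of the one-dimensional flats $H_{\yy \setminus \{i\}}$ for $i \in \yy$ in the direction entering $\Delta_{\mu(\yy)}$; by the defining property of $\mu(\yy)$, the functional $\xi$ strictly decreases along each such ray, so $\xi$ is maximized on $C_\yy$ at $H_\yy$. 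The genericity assumption that $\xi$ is nonconstant on positive-dimensional flats then forces $H_\yy$ to be the unique $\xi$-maximum of $C_\yy$, so that $\xi(H_\xx) < \xi(H_\yy)$ whenever $\xx \neq \yy$.

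The remainder of the argument is to construct a covering chain $\xx = \xx_0 \prec \xx_1 \prec \cdots \prec \xx_m = \yy$ in the poset with $\mu(\xx_k) \in \cal{B}_\yy$ at each step; its existence immediately gives $\xx \leq \yy$, and the induction terminates because the number of vertices $H_\zz \in C_\yy$ with $\xi(H_\zz) > \xi(H_{\xx_k})$ is finite and strictly decreases with $k$. For the inductive step, applying standard LP-pivot theory to the program $\max_{x \in C_\yy} \xi(x)$ at the non-optimal point $H_{\xx_k}$ produces a direction of strict $\xi$-ascent in the tangent cone of $C_\yy$ at $H_{\xx_k}$; refining to an extreme direction of the tangent cone forces it to be parallel to a one-dimensional flat $H_{\xx_k \setminus \{j\}}$, and following this flat in the ascent direction yields a covering pivot $\xx_k \prec \xx_{k+1}$ with $H_{\xx_{k+1}} \in C_\yy$. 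The main obstacle is then to check that also $\mu(\xx_{k+1}) \in \cal{B}_\yy$, i.e., $\Delta_{\mu(\xx_{k+1})} \subseteq C_\yy$: for $i \in \yy \setminus \xx_{k+1}$ the sign agreement is automatic since $H_{\xx_{k+1}}$ lies strictly on the $\mu(\yy)(i)$-side of $H_i$ and the chamber $\Delta_{\mu(\xx_{k+1})}$ is locally on the same side, but for $i \in \yy \cap \xx_{k+1}$ one must show that the local ``downward cone'' $\Delta_{\mu(\xx_{k+1})}$ at $H_{\xx_{k+1}}$ lies on the correct side of $H_i$ — a local condition forced by the LP pivot being computed within $C_\yy$ and by $H_\yy$ being the unique $\xi$-maximum of $C_\yy$.
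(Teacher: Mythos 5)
Your strategy is sound, and your Gale-duality reduction of the second claim to the first is exactly right (the order reversal under $\xx \mapsto \xx^c$ is \cite[Lemma 2.10]{Gale}); note that the paper offers no proof at all here, simply citing \cite[Lemma 2.11]{Gale}, so you are supplying an argument where the paper supplies a reference. Your reformulation of $\mu(\xx)\in\cal{B}_{\yy}$ as $\Delta_{\mu(\xx)}\subseteq C_{\yy}$ and your proof that $H_{\yy}$ is the unique $\xi$-maximum of the simplicial cone $C_{\yy}$ are both correct. The gap is precisely at the step you flag as ``the main obstacle'': you assert, but do not prove, that the pivot preserves the invariant $\mu(\xx_{k+1})\in\cal{B}_{\yy}$. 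This is not automatic: already in the plane one can place a vertex $H_{\zz}$ on a boundary ray of $C_{\yy}$ contained in $H_1$ so that the $\xi$-descending direction along the other flat through $H_{\zz}$ exits $C_{\yy}$ across $H_1$; then $H_{\zz}\in C_{\yy}$ but $\Delta_{\mu(\zz)}\not\subseteq C_{\yy}$. So membership of a vertex in $C_{\yy}$ does not force membership of its chamber, and an argument specific to your pivot would be required. A second, smaller defect: the tangent cone of $C_{\yy}$ at a non-apex point $H_{\xx_k}$ has positive-dimensional lineality space and hence no extreme rays, so ``refining to an extreme direction of the tangent cone'' does not literally parse.

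Both problems vanish if you weaken the inductive invariant to $H_{\xx_k}\in C_{\yy}$ (implied by your hypothesis, since $H_{\xx}\in\Delta_{\mu(\xx)}\subseteq C_{\yy}$) and replace the extreme-ray step by a direct decomposition. Write $H_{\yy}-H_{\xx_k}=\sum_{j\in\xx_k}c_j w^{(j)}$, where $w^{(j)}$ spans the direction of the one-dimensional flat $H_{\xx_k\setminus\{j\}}$ (these $k$ directions form a basis of $V$ by simplicity). For $j\in\yy\cap\xx_k$ both points have vanishing $j$-th coordinate, so $c_j=0$; since $\xi(H_{\yy}-H_{\xx_k})>0$, some $j\in\xx_k\setminus\yy$ has $c_j\xi(w^{(j)})>0$. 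Along the ray from $H_{\xx_k}$ in the direction $c_jw^{(j)}$ every active constraint of $C_{\yy}$ holds with equality (the coordinates indexed by $\xx_k\setminus\{j\}\supseteq\yy\cap\xx_k$ stay zero), $\xi$ strictly increases, and $\xi$ is bounded above on $C_{\yy}$, so the ray exits $C_{\yy}$ where some inactive constraint $x_{i_0}$, $i_0\in\yy\setminus\xx_k$, becomes tight --- i.e., at the vertex $H_{\xx_{k+1}}$ with $\xx_{k+1}=(\xx_k\setminus\{j\})\cup\{i_0\}$. This gives the covering relation $\xx_k\prec\xx_{k+1}$ with $H_{\xx_{k+1}}\in C_{\yy}$, and since each step trades an element of $\xx_k\setminus\yy$ for one of $\yy\setminus\xx_k$, the chain terminates at $\yy$ after exactly $|\xx\setminus\yy|$ steps. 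With this modification your argument is complete.
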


The following lemma identifies feasible chambers in the bounded cone $\cal{B}_{\mathbbm{x}}$ in $\cal{V}$ with  chambers in $\cal{V}^{\vee}$ having nonempty intersection with $\alpha_{\mathbbm{x}}=\mu(\mathbbm{x})$.

\begin{lemma} \label{lem:cone-adjacent}
Let $\gamma \in \cal{F}$ and $\mathbbm{x} \in \mathbb{B}$.  Then $\gamma \in \cal{F} \cap \cal{B}_{\mathbbm{x}}$ in $\cal{V}$ if and only  $H_{\mathbbm{x}^c}^{\vee} \cap \Delta_{\gamma}^{\vee} \neq \emptyset$ in $\cal{V}^{\vee}$.
\end{lemma}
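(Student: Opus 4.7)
The plan is to chain together three equivalences, the first invoking Gale duality and the remaining two unpacking definitions.

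First I would use the identity $\cal{B}_{\xx} = \cal{F}_{\xx^c}^{\vee}$ recorded just before the lemma, which itself is an immediate consequence of the matching $\mu(\xx) = \mu^{\vee}(\xx^c)$ combined with the combinatorial definitions of the bounded and feasible cones. Under the hypothesis $\gamma \in \cal{F}$, this reduces the condition $\gamma \in \cal{F} \cap \cal{B}_{\xx}$ to the single condition $\gamma \in \cal{F}_{\xx^c}^{\vee}$.

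Next I would unpack the geometric interpretation of the dual feasible cone. By the reformulation given immediately after the combinatorial definition of $\cal{F}_{(-)}$, the condition $\gamma \in \cal{F}_{\xx^c}^{\vee}$ is equivalent to $H_{\xx^c}^{\vee} \in \Delta_{\gamma}^{\vee}$. Concretely, at the flat $H_{\xx^c}^{\vee}$ the coordinate $x_i$ vanishes for every $i \in \xx^c$ so the inequality $\gamma(i) x_i \ge 0$ is automatic there, whereas for $i \in \xx$ the value of $x_i$ at $H_{\xx^c}^{\vee}$ is non-zero by simplicity of $\cal{V}^{\vee}$ with sign $\mu^{\vee}(\xx^c)(i) = \mu(\xx)(i)$; the inequality at this point then reduces to $\gamma(i) = \mu(\xx)(i)$ for all $i \in \xx$, which is the defining condition of $\cal{B}_{\xx}$.

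Finally, since $\xx^c$ is a basis of the matroid of $\cal{V}^{\vee}$, the flat $H_{\xx^c}^{\vee}$ has codimension $|\xx^c| = n - k = \dim V^{\perp}$ inside $V_{-\xi}^{\perp}$ by the simplicity afforded by condition (a) applied to the dual arrangement, so $H_{\xx^c}^{\vee}$ is a single point. Consequently $H_{\xx^c}^{\vee} \in \Delta_{\gamma}^{\vee}$ is equivalent to $H_{\xx^c}^{\vee} \cap \Delta_{\gamma}^{\vee} \neq \emptyset$, closing the chain. No real obstacle is anticipated; the only point to monitor is the duality bookkeeping — that $\cal{F}^{\vee} = \cal{B}$ by \cite[Theorem~2.4]{Gale}, so feasibility and boundedness are swapped under $\cal{V} \leftrightarrow \cal{V}^{\vee}$ — but this is precisely what is already packaged into the preceding identity $\cal{B}_{\xx} = \cal{F}_{\xx^c}^{\vee}$ that I would take as input.
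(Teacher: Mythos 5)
Your proof is correct and follows essentially the same route as the paper's: the paper also reduces the statement to the Gale-duality identity $\gamma \in \cal{F}\cap\cal{B}_{\xx} \Leftrightarrow \gamma \in \cal{B}^{\vee}\cap\cal{F}_{\xx^c}^{\vee}$ and then invokes the geometric description of the dual feasible cone as those $\gamma$ with $H_{\xx^c}^{\vee}\cap\Delta_{\gamma}^{\vee}\neq\emptyset$. Your version merely spells out the sign-checking and the fact that $H_{\xx^c}^{\vee}$ is a point, which the paper leaves implicit.
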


\begin{proof}
This is immediate since  $\gamma \in \cal{F} \cap \cal{B}_{\mathbbm{x}} \leftrightarrow
\gamma \in \cal{B}^{\vee} \cap \cal{F}_{\mathbbm{x}^c}^{\vee}$ and the feasible cone $\cal{F}_{\mathbbm{x}^c}^{\vee}$ consists of those sign sequences satisfying  $H_{\mathbbm{x}^c}^{\vee} \cap \Delta_{\gamma}^{\vee} \neq \emptyset$ (note that $\gamma \in \cal{B}^{\vee}$ since $\gamma \in \cal{F}$).

\end{proof}

\subsection{Convolution algebras}\label{sec:ABAlgebraDefs}

We now recall the algebras associated to a polarized arrangement $\cal{V}=(V,\eta,\xi)$.   Almost all of the material in this section is taken directly from the original sources \cite{Gale,HypertoricCatO,BLPPW}.

\subsubsection{The $A$ algebras} \label{subsec:defA}

For sign sequences $\alpha,\beta\in \{\pm\}^n$, we write
 \[
	\alpha \leftrightarrow \beta \iff \alpha \text{ and } \beta \text{ differ in exactly one entry.}
\]
If $\alpha , \beta \in \cal{F}$ this means that $\Delta_{\alpha}$ and $\Delta_{\beta}$ are related by crossing a single hyperplane $H_i$, in which case we write $\beta=\alpha^i$.

Define a quiver $Q = Q(\cal{V})$ whose vertex set is $\cal{F}$ and which has arrows $p(\alpha,\beta)$ from $\alpha$ to $\beta$ and $p(\beta,\alpha)$ from $\beta$ to $\alpha$ if and only if $\alpha \leftrightarrow \beta$. Let $P(Q)$ be the path algebra of this quiver over $\Z$; $P(Q)$ has a distinguished idempotent $e_{\alpha}$ for all $\alpha \in \cal{F}$.

\begin{definition}[Definition 3.1 and Remark 3.1 of \cite{Gale}]
The $\Z$-algebra $\tilde{A}(\cal{V})$ is defined to be $P(Q) \otimes_{\Z} \Z[t_1,\ldots,t_n]$ modulo the two-sided ideal generated by the following relations:
\begin{enumerate}
\item[A1]: $e_{\alpha}$ for all $\alpha \in \cal{F}\setminus \cal{B}$, that is those feasible $\alpha$ that are not bounded,
\item[A2]: $p(\alpha,\beta) p(\beta,\gamma) - p(\alpha,\delta) p(\delta,\gamma)$ for all distinct $\alpha,\beta,\gamma,\delta \in \cal{F}$ with $\alpha \leftrightarrow \beta \leftrightarrow \gamma \leftrightarrow \delta \leftrightarrow \alpha$,
\item[A3]: $p(\alpha,\beta,\alpha) - t_i e_{\alpha}$ for all $\alpha,\beta \in \cal{F}$ with $\alpha \leftrightarrow \beta$ via a sign change in coordinate $i$.
\end{enumerate}
We give $\tilde{A}(\cal{V})$ a grading by setting $\deg(p(\alpha,\beta)) = 1$ and $\deg(t_i) = 2$. While $\tilde{A}(\cal{V})$ is a $\Z$-algebra a priori, we can view it as a $\Z[t_1,\ldots,t_n]$-algebra.
\end{definition}

Over $\R$ (or $\Q$ given a rational arrangement), the infinite-dimensional algebra $\tilde{A}(\cal{V})$ can be viewed as the universal graded flat deformation in the sense of \cite{BLPPW} of a finite-dimensional quasi-hereditary Koszul algebra $A(\cal{V})$; see \cite[Remark 4.5]{Gale}. We briefly recall the definition of $A(\cal{V})$ below.

We have $\R[t_1,\ldots,t_n] \cong \Sym((\R^n)^*)$, where $\Sym()$ denotes the symmetric algebra of a vector space; the isomorphism identifies $t_i$ with the $i$-th coordinate function on $\R^n$.  We can then identify $V^*$ with $(\R^n)^*/V^{\perp}$. It follows that $\Sym(V^*)$ is the quotient of $\R[t_1,\ldots,t_n]$ by the ideal generated by all linear combinations of $t_1,\ldots,t_n$ whose coefficient vectors annihilate $V$; equivalently, we have $\Sym(V^*) \cong \R[t_1,\ldots,t_n] \otimes_{\Sym(V^{\perp})} \R$. The algebra $A(\cal{V})$ is defined similarly to $\tilde{A}(\cal{V})$, except that we take a quotient of $P(Q) \otimes_{\R} \Sym(V^*)$ instead of $P(Q) \otimes_{\Z} \Z[t_1,\ldots,t_n]$. The algebra $A(\cal{V})$ inherits a grading from $\At(\cal{V})$. It follows from \cite[Theorem 8.7]{BLPPW} that
\[
\Sym(V^{\perp}) \xrightarrow{j} \tilde{A}(\cal{V}) \xrightarrow{\pi} A(\cal{V})
\]
is a graded flat deformation that is universal in the sense of \cite[Remark 4.2]{BLPPW}, where $j$ includes an element of $\Sym(V^{\perp})$ into $\R[t_1,\ldots,t_n]$ and then multiplies by $1 \in \tilde{A}(\cal{V})$, while $\pi$ is the natural quotient map from $\tilde{A}(\cal{V})$ to $A(\cal{V})$.

\begin{example}
For the polarized arrangement in Example~\ref{ex:n4k2}, the quiver for $\tilde{A}(\cal{V})$ has vertices the feasible regions of the hyperplane arrangement
\[
\cal{F} = \left\{\begin{array}{c}
             (++++), (-+++), (--++), (---+), (----), \\
             (+-++),(+--+), (++-+), (+---), (++--), (+++-)
          \end{array} \right\}
\]
There is an edge between feasible regions whenever they differ by exactly one sign.
The $e_{\alpha}$ set equal to zero by A1 are the unbounded feasible regions  $\alpha \in \cal{F}\setminus \cal{B}$ listed in the top row in the list above.
\end{example}

\subsubsection{The $B$ algebras}\label{subsec:defB}

For $S = \{i_1, \ldots, i_m\} \subset \{1,\ldots n\}$, let $u_S := u_{i_1} \cdots u_{i_m} \in \Z[u_1, \ldots, u_n]$, and let $H_S \subset V + \eta$ denote the intersection of the hyperplanes corresponding to elements of $S$. For $\alpha, \beta \in \cal{P}$, set
\begin{equation} \label{eq:Rab}
\tilde{R}_{\alpha \beta} := \frac{\Z[u_1,\ldots,u_n]}{(u_S: S \subset \{1,\ldots,n\} \textrm{ with } \Delta_{\alpha} \cap \Delta_{\beta} \cap H_S = \emptyset)}.
\end{equation}
Let $f_{\alpha,\beta} \in \tilde{R}_{\alpha \beta}$ be the element corresponding to $1 \in \Z[u_1,\ldots,u_n]$. For $\alpha, \beta, \gamma \in \cal{P}$, let $S(\alpha \beta \gamma) = \{i \in \{1,\ldots,n\} : \alpha(i) = \gamma(i) \neq \beta(i)\}$, where $\alpha(i), \beta(i), \gamma(i)$ denote the $i^{th}$ sign of $\alpha, \beta, \gamma$ respectively.

\begin{example}
For the polarized arrangement in Example~\ref{ex:n4k2},
let $\alpha = (+--+)$, $\beta=(+---)$, and $\gamma = (++--)$.  To compute $\tilde{R}_{\alpha \beta}$ observe that $\Delta_{\alpha} \cap \Delta_{\beta}$ is the portion of the line on $H_4$ connecting the vertices $H_1 \cap H_4$ and $H_2 \cap H_4$. This line is disjoint from $H_3$ and $H_1\cap H_2$, so that $\tilde{R}_{\alpha \beta} = \Z[u_1,u_2, u_3, u_4] / (u_1u_2, u_3)$.   One can also check that $\tilde{R}_{\alpha \gamma} = \Z[u_1,u_2,u_3,u_4]/(u_1,u_3)$.
\end{example}

\begin{definition}\label{def:BStyle}
The $\Z$-algebra $\tilde{B}(\cal{V})$ is defined to be
$
\tilde{B}(\cal{V}):= \bigoplus_{\alpha,\beta \in \cal{P}} \tilde{R}_{\alpha,\beta}
$
with multiplication given by
\[
f_{\alpha,\beta} \cdot f_{\beta,\gamma} := u_{S(\alpha \beta \gamma)} f_{\alpha,\gamma}
\]
and extended bilinearly over $\Z[u_1,\ldots,u_n]$.
The algebra $\tilde{B}(\cal{V})$ admits a grading by setting $\deg(f_{\alpha,\beta}) = d_{\alpha,\beta}$, where $d_{\alpha,\beta}$ is the number of sign changes required to turn $\alpha$ into $\beta$, and $\deg(u_i) = 2$. We can view $\tilde{B}(\cal{V})$ as an algebra over $\Z[u_1,\ldots,u_n]$.
\end{definition}

To define the finite-dimensional version $B(\cal{V})$ over $\R$ (or $\Q$ if $\cal{V}$ is rational), write $\R[u_1,\ldots,u_n] = \Sym(\R^n)$ by identifying $u_i$ with the $i^{th}$ coordinate function on $(\R^n)^*$. The inclusion of $V$ into $\R^n$ gives us a ring homomorphism from $\Sym(V)$ into $\R[u_1,\ldots,u_n]$ and thus into the quotient $\tilde{R}^{\R}_{\alpha \beta}$. For $\alpha, \beta \in \cal{P}$ we set
\[
R_{\alpha \beta} := \tilde{R}^{\R}_{\alpha \beta} \otimes_{\Sym(V)} \R,
\]
where the action of $\Sym(V)$ on $\R$ has all elements of $V$ acting as zero, so that $R_{\alpha \beta}$ can be viewed as a further quotient of $\tilde{R}^{\R}_{\alpha \beta}$ by $(c_1 u_1 + \cdots + c_n u_n : (c_1,\ldots, c_n) \in V)$.
We define $B(\cal{V})$ using $R_{\alpha \beta}$ in place of $\tilde{R}_{\alpha \beta}$ in the definition of $\tilde{B}(\cal{V})$. By \cite[Theorem 8.7]{BLPPW},
\begin{equation} \label{eq:Bt-flat}
  \Sym(V) \xrightarrow{j} \tilde{B}(\cal{V}) \xrightarrow{\pi} B(\cal{V})
\end{equation}
is a universal graded flat deformation.

\begin{theorem}[Theorem 4.14 and Corollary 4.15 of \cite{Gale}]
For a polarized arrangement $\cal{V}$, we have graded algebra isomorphisms
$\tilde{B}(\cal{V}) \cong \tilde{A}(\cal{V}^{\vee})$ and $B(\cal{V}) \cong A(\cal{V}^{\vee})$.
\end{theorem}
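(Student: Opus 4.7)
The plan is to write down an explicit graded algebra homomorphism $\Phi \colon \tilde{A}(\cal{V}^\vee) \to \tilde{B}(\cal{V})$ on generators and verify it is an isomorphism; the finite-dimensional statement will then follow by reducing modulo the universal flat deformation parameter. Under Gale duality we have $\cal{F}^\vee = \cal{B}$ and $\cal{B}^\vee = \cal{F}$, so the quiver $Q(\cal{V}^\vee)$ has vertex set $\cal{B}$ and relation (A1) kills exactly those idempotents $e_\alpha$ with $\alpha \in \cal{B} \setminus \cal{F}$, leaving those indexed by $\cal{P} = \cal{F} \cap \cal{B} = \cal{P}^\vee$ --- matching the indexing of $\tilde{B}(\cal{V})$. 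This prompts the definition
\[
\Phi(e_\alpha) = f_{\alpha,\alpha}, \qquad \Phi(p(\alpha,\beta)) = f_{\alpha,\beta}, \qquad \Phi(t_i) = u_i
\]
for adjacent $\alpha, \beta \in \cal{P}$, with $\Phi(e_\alpha) = 0$ for $\alpha \in \cal{B} \setminus \cal{P}$.

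Verifying that $\Phi$ descends past the defining relations of $\tilde{A}(\cal{V}^\vee)$ reduces to direct combinatorics with $S(\alpha \beta \gamma) = \{k \colon \alpha(k) = \gamma(k) \neq \beta(k)\}$. Relation (A1) holds by construction. For the 4-cycle relation (A2), if $\alpha \leftrightarrow \beta \leftrightarrow \gamma \leftrightarrow \delta \leftrightarrow \alpha$ are distinct then $\alpha$ and $\gamma$ differ in exactly the two coordinates where $\beta$ and $\delta$ realize the two possible routes, so $\alpha, \beta, \gamma$ (resp.\ $\alpha, \delta, \gamma$) agree on every other coordinate, forcing $S(\alpha\beta\gamma) = S(\alpha\delta\gamma) = \emptyset$ and hence $f_{\alpha,\beta} f_{\beta,\gamma} = f_{\alpha, \gamma} = f_{\alpha,\delta} f_{\delta, \gamma}$. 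For (A3), $S(\alpha, \alpha^i, \alpha) = \{i\}$, matching $t_i e_\alpha \mapsto u_i f_{\alpha,\alpha}$. Surjectivity then follows because for any $\alpha, \beta \in \cal{P}$, along a shortest sign-change path within $\cal{P}$ (whose existence uses convexity of $\cal{P}$ under hyperplane crossings, standard for polarized arrangements) all consecutive triples have empty $S$, so the product of the corresponding arrows maps to $f_{\alpha, \beta}$ without extra $u_i$ factors; the polynomial part is covered by $\Phi(t_i) = u_i$.

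The main obstacle is injectivity. The cleanest approach uses the universal graded flat deformation structures $\Sym(V) \to \tilde{A}(\cal{V}^\vee) \to A(\cal{V}^\vee)$ (where $\Sym((V^\perp)^\perp) = \Sym(V)$) and $\Sym(V) \to \tilde{B}(\cal{V}) \to B(\cal{V})$ recorded in \eqref{eq:Bt-flat}. By construction, $\Phi$ intertwines both copies of the inclusion $j$ via $t_i \mapsto u_i$, so it descends to an induced map $\bar{\Phi} \colon A(\cal{V}^\vee) \to B(\cal{V})$ on special fibers; since both deformations are flat over $\Sym(V)$, a standard argument (Nakayama together with flatness) shows $\Phi$ is an isomorphism if and only if $\bar{\Phi}$ is. This reduces the problem to the second half of the theorem in the finite-dimensional setting, where one can exhibit explicit $\R$-bases --- a monomial basis of $B(\cal{V})$ of products $u_S f_{\alpha, \beta}$ with $S$ a face of the complex $\{T \colon \Delta_\alpha \cap \Delta_\beta \cap H_T \neq \emptyset\}$ (a Stanley--Reisner-type description of $R_{\alpha\beta}$), and a PBW-style basis of $A(\cal{V}^\vee)$ built from shortest paths times minimal polynomial monomials --- and directly verify that $\bar{\Phi}$ matches basis to basis. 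The delicate step is the Hilbert-series matching, which is encoded in the combinatorics of the oriented matroid of $\cal{V}$ and is where the identification $\mathbb{B} \leftrightarrow \mathbb{B}^\vee$ by complementation plays its essential role.
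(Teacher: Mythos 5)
First, a point of context: the paper does not prove this statement at all --- it is quoted directly from \cite{Gale} (Theorem 4.14 and Corollary 4.15), so there is no in-paper argument to compare against; your proposal is in effect reconstructing the proof from the original source, and its overall architecture (define $\Phi$ on quiver generators, check the relations, prove surjectivity via taut paths, prove injectivity via a basis/dimension count) is indeed the architecture used there. The parts you carry out in full are correct: the identification of the vertex sets via $\cal{F}^{\vee}=\cal{B}$, the assignments $e_{\alpha}\mapsto f_{\alpha,\alpha}$, $p(\alpha,\beta)\mapsto f_{\alpha,\beta}$, $t_i\mapsto u_i$, and the computations $S(\alpha\beta\gamma)=S(\alpha\delta\gamma)=\emptyset$ for a four-cycle and $S(\alpha,\alpha^i,\alpha)=\{i\}$ verifying (A2) and (A3).

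The two steps that carry the real content, however, are asserted rather than proved. For surjectivity you need, for every pair $\alpha,\beta\in\cal{P}$ with $\tilde{R}_{\alpha\beta}\neq 0$ (equivalently $\Delta_{\alpha}\cap\Delta_{\beta}\neq\emptyset$), a taut path from $\alpha$ to $\beta$ whose intermediate vertices all lie in $\cal{P}$. ``Convexity of $\cal{P}$'' is not the right statement (the union of bounded feasible chambers need not be convex); what is actually needed is an interpolation property: every $\gamma$ with $\gamma(i)\in\{\alpha(i),\beta(i)\}$ for all $i$ is again feasible (all such chambers contain a common point of $\Delta_{\alpha}\cap\Delta_{\beta}$) \emph{and} again bounded, and the boundedness half requires its own argument. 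More seriously, injectivity is deferred entirely. The graded-Nakayama reduction along the two flat deformations over $\Sym(V)$ is legitimate, but the resulting claim that $\bar{\Phi}\colon A(\cal{V}^{\vee})\to B(\cal{V})$ ``matches basis to basis'' is precisely the equality of graded dimensions $\dim_q e_{\alpha}A(\cal{V}^{\vee})e_{\beta}=\dim_q R_{\alpha\beta}$, i.e.\ the identification of the $h$-polynomial of the polytope $\Delta_{\alpha}\cap\Delta_{\beta}$ (counting vertices of the arrangement lying on it, weighted by ascending-edge counts) with the Hilbert series of the corresponding block of the path-algebra quotient. This shelling/$h$-vector computation is the combinatorial heart of the theorem --- it is exactly the machinery redeployed in the proof of Lemma~\ref{lem:Kfiltration} in this paper --- and labelling it ``the delicate step'' without supplying it leaves the proof incomplete at its most essential point.
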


As a consequence, we have the following description of $\tilde{B}(\cal{V})$.
\begin{proposition}\label{prop:BTildeGensRels}
For a polarized arrangement $\cal{V} = (V,\eta,\xi)$, let $Q$ be the quiver with vertices $e_{\alpha}$ given by $\alpha\in \cal{B}$
 and arrows $p(\alpha,\beta)$ from $\alpha$ to $\beta$ when $\alpha \leftrightarrow \beta$. The algebra $\tilde{B}(\cal{V})$ is $P(Q) \otimes_{\Z} \Z[u_1,\ldots,u_n]$ modulo the two-sided ideal generated by the following relations:
\begin{enumerate}
\item[B1]: $e_{\alpha}$ if $\alpha \in \cal{B}\setminus \cal{F}$, that is $\alpha$ bounded and infeasible,
\item[B2]: $p(\alpha,\beta) p(\beta,\gamma) - p(\alpha,\delta) p(\delta,\gamma)$ for all distinct bounded $\alpha,\beta,\gamma,\delta$ with $\alpha \leftrightarrow \beta \leftrightarrow \gamma \leftrightarrow \delta \leftrightarrow \alpha$,
\item[B3]: $p(\alpha,\beta,\alpha) - u_i e_{\alpha}$ for all bounded $\alpha,\beta$ with $\alpha \leftrightarrow \beta$ via a sign change in coordinate $i$.
\end{enumerate}
The grading on $\tilde{B}(\cal{V})$ defined above matches the one defined as for $\tilde{A}(\cal{V})$.
\end{proposition}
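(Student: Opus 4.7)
My plan is to deduce this proposition directly from the Gale duality isomorphism $\tilde{B}(\cal{V}) \cong \tilde{A}(\cal{V}^{\vee})$ stated immediately above, by unpacking the definition of $\tilde{A}$ from Section~\ref{subsec:defA} applied to the arrangement $\cal{V}^{\vee}$ and translating the result into the $\tilde{B}$-language using the standard Gale-duality dictionary $\cal{F}(\cal{V}^{\vee}) = \cal{B}(\cal{V})$, $\cal{B}(\cal{V}^{\vee}) = \cal{F}(\cal{V})$, $\cal{P}(\cal{V}^{\vee}) = \cal{P}(\cal{V})$.

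First, I would observe that the underlying quiver of $\tilde{A}(\cal{V}^{\vee})$ has vertex set $\cal{F}(\cal{V}^{\vee})$, which by Gale duality equals $\cal{B}(\cal{V})$; moreover, the adjacency relation $\alpha \leftrightarrow \beta$ on sign sequences is purely combinatorial and does not depend on which polarized arrangement supports them. Hence the quiver underlying $\tilde{A}(\cal{V}^{\vee})$ coincides with the quiver $Q$ described in the proposition, and the tensor-factor $\Z[t_1,\ldots,t_n]$ of $\tilde{A}(\cal{V}^{\vee})$ matches $\Z[u_1,\ldots,u_n]$ under $t_i \leftrightarrow u_i$. I would then run through the three relations: A1 for $\tilde{A}(\cal{V}^{\vee})$ kills $e_\alpha$ for $\alpha \in \cal{F}(\cal{V}^{\vee}) \setminus \cal{B}(\cal{V}^{\vee}) = \cal{B}(\cal{V}) \setminus \cal{F}(\cal{V})$, which is exactly B1; A2 is manifestly the same square commutation relation (the statement only refers to sign-sequence adjacency) and yields B2; and A3 becomes B3 after the identification $t_i \leftrightarrow u_i$, now indexed by bounded $\alpha \leftrightarrow \beta$.

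Finally, I would verify the grading assertion. Under the isomorphism $\tilde{B}(\cal{V}) \cong \tilde{A}(\cal{V}^{\vee})$, for adjacent $\alpha \leftrightarrow \beta$ in $\cal{P}$ the generator $f_{\alpha,\beta} \in \tilde{R}_{\alpha\beta}$ corresponds to the arrow $p(\alpha,\beta)$, and for general $\alpha,\beta \in \cal{P}$ the element $f_{\alpha,\beta}$ corresponds to a minimal path of length $d_{\alpha,\beta}$ (this is consistent with the multiplication rule $f_{\alpha,\beta}f_{\beta,\gamma} = u_{S(\alpha\beta\gamma)} f_{\alpha,\gamma}$, since $d_{\alpha,\gamma} + 2|S(\alpha\beta\gamma)| = d_{\alpha,\beta} + d_{\beta,\gamma}$). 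Thus $\deg(f_{\alpha,\beta}) = d_{\alpha,\beta}$ on the $\tilde{B}$-side agrees with assigning degree $1$ to each arrow $p(\alpha,\beta)$ on the path-algebra side, while $\deg(u_i) = \deg(t_i) = 2$ in both descriptions.

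The substantive input here is the Gale duality theorem itself, which is taken as given; the main (and quite minor) obstacle in the proof is purely bookkeeping, namely confirming that the multiplication in $\tilde{B}(\cal{V})$ defined via the $u_{S(\alpha\beta\gamma)}$ composition rule reproduces precisely the path-algebra composition subject to A2 and A3 under the identification described above. Once this dictionary is written out, the proposition becomes a direct rewriting of the presentation of $\tilde{A}(\cal{V}^{\vee})$ in the notation natural to $\tilde{B}(\cal{V})$.
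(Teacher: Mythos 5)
Your proposal is correct and matches the paper's approach: the paper states this proposition without a separate proof, introducing it with ``As a consequence'' of the Gale duality isomorphism $\tilde{B}(\cal{V}) \cong \tilde{A}(\cal{V}^{\vee})$, which is exactly the translation you carry out (including the degree bookkeeping $d_{\alpha,\gamma} + 2|S(\alpha\beta\gamma)| = d_{\alpha,\beta} + d_{\beta,\gamma}$, which checks out index by index).
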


\subsection{Taut paths} \label{sec:taut}
In this section we develop some helpful results on the structure of the algebras $\At(\cal{V})$, largely adapting techniques from \cite[Section 3.2]{Gale}.

\begin{definition}[Definition 3.6 of \cite{Gale}] \label{def:GDKD-3.6}
Given a sequence of elements $\und{\alpha} =(\alpha_1, \dots, \alpha_{\ell})$ of $\{\pm 1\}^n$ and an index $i \in \{1,2, \dots, n\}$,  define
\[
\theta_i(\und{\alpha})
= \left| \{ 1 < j <  \ell  \mid \alpha_j(i) \neq \alpha_{j+1}(i) = \alpha_1(i) \} \right|.
\]
\end{definition}
The number $\theta_i(\und{\alpha})$ counts the number of times a sequence crosses the $i$th hyperplane and returns to the original side.

Let $\cal{V}$ be a polarized arrangement. Let $Q = Q(\cal{V})$ be the quiver defined in Section~\ref{sec:ABAlgebraDefs} with vertex set the set $\cal{F}$ of feasible sign sequences for $\cal{V}$; all paths below will be paths in $Q$. Recall that $\cal{P} \subset \cal{F}$ denotes the set of bounded feasible sign sequences. For a path $\alpha_1 \leftrightarrow \alpha_2 \leftrightarrow \dots \leftrightarrow \alpha_{\ell}$, we write $p(\alpha_1,\ldots,\alpha_{\ell})$ for the element of $\tilde{A}$ represented by the path.

\begin{definition}[Definition 3.7 of \cite{Gale}]
A path $\alpha_1 \leftrightarrow \alpha_2 \leftrightarrow \dots \leftrightarrow \alpha_{\ell}$ is \emph{taut} if it has minimal length among paths from $\alpha_1$ to $\alpha_{\ell}$.  This is equivalent to saying that the sign vectors of $\alpha_1$ and $\alpha_{\ell}$ differ in exactly ${\ell}-1$ entries.
\end{definition}

The next two statements are proved in \cite{Gale} for the algebras $A(\cal{V})$, but the proofs adapt without modification to $\tilde{A}(\cal{V})$.

\begin{proposition}[Proposition 3.8 of \cite{Gale}] \label{prop:GDKD-Prop3.9}
Let $\alpha_1 \leftrightarrow \alpha_2 \leftrightarrow \dots \leftrightarrow \alpha_{\ell}$ be a path.  There is a taut path $\alpha_1 = \beta_1 \leftrightarrow \beta_2 \leftrightarrow \dots \leftrightarrow \beta_d = \alpha_{\ell}$ such that
\[
p(\alpha_1,\dots, \alpha_{\ell}) = p(\beta_1, \dots ,\beta_d) \cdot \prod_{i \in I} t_i^{\theta_i(\alpha_1,\dots, \alpha_{\ell})}
\]
in $\tilde{A}(\cal{V})$, with $\theta_i(\alpha_1,\dots, \alpha_{\ell})$ defined in Definition~\ref{def:GDKD-3.6}.
\end{proposition}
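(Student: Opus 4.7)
The plan is to induct on the excess length $\ell - d$, where $d$ is the Hamming distance between $\alpha_1$ and $\alpha_\ell$ (i.e., the length of any taut path between them). If $\ell - d = 0$ the path is already taut and we may take $\beta_j = \alpha_j$; note that in this case $\theta_i(\und{\alpha}) = 0$ for every $i$, so the equation holds trivially.

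For the inductive step, suppose $\ell - d > 0$. Then there must exist a coordinate $i \in I$ and indices $1 \le j < k < \ell$ such that the flip from $\alpha_j$ to $\alpha_{j+1}$ is in coordinate $i$, the flip from $\alpha_k$ to $\alpha_{k+1}$ is also in coordinate $i$, and no flip between these two is in coordinate $i$. The goal is to use relation A2 repeatedly to slide the coordinate-$i$ flip at position $k$ leftward until it sits immediately to the right of the coordinate-$i$ flip at position $j$. Each elementary swap is an application of A2 to a subpath $\gamma \leftrightarrow \gamma' \leftrightarrow \gamma''$ where the two sign changes are in distinct coordinates $i$ and $i'$; the required ``fourth vertex'' $\delta$ for A2 is the sign sequence obtained by performing the two flips in the opposite order, and one checks that $\delta \in \cF$ because a simple hyperplane arrangement locally looks like its central counterpart near any generic face, so both orders of crossings traverse feasible chambers. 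Once the two coordinate-$i$ flips are adjacent, the path contains a subpath $\gamma \leftrightarrow \gamma' \leftrightarrow \gamma$ and we apply A3 to replace $p(\gamma,\gamma',\gamma)$ by $t_i e_\gamma$, shortening the path by two and extracting one factor of $t_i$.

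The resulting shorter path $\und{\alpha}'$ from $\alpha_1$ to $\alpha_\ell$ has length $\ell - 2$ and satisfies $\theta_i(\und{\alpha}') = \theta_i(\und{\alpha}) - 1$ (we removed one return-crossing in coordinate $i$) and $\theta_{i'}(\und{\alpha}') = \theta_{i'}(\und{\alpha})$ for all $i' \neq i$. This last bookkeeping requires verifying that both A2-commutations and the A3 cancellation preserve the remaining return-crossing counts; the commutation case is straightforward since sliding past a coordinate-$i'$ flip does not change whether any coordinate-$i''$ flip is a return crossing, while the A3 case is immediate. By the inductive hypothesis applied to $\und{\alpha}'$, there is a taut path $\und{\beta}$ with $p(\und{\alpha}') = p(\und{\beta}) \cdot \prod_{i'} t_{i'}^{\theta_{i'}(\und{\alpha}')}$, and combining with the extracted $t_i$ yields the claim.

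The main obstacle is the feasibility justification for the A2 commutations: although the topological picture (locally one can swap the order of two transverse hyperplane crossings) is clear for $A(\cV)$, one must check that the feasibility arguments in \cite{Gale} carry over verbatim when we allow the additional polynomial generators $t_1,\dots,t_n$ in $\tilde A(\cV)$. This is painless since A2 and A3 are the same relations in both algebras and the bookkeeping of $\theta_i$ is a combinatorial fact about sign sequences independent of which relations generate the ideal.
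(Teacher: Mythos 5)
Your strategy---use A2 to slide one of two matched crossings of $H_i$ until they are adjacent, then cancel the adjacent pair with A3---is the natural first attempt, but the commutation step has a genuine gap, and the gap is in the algorithm itself, not merely in its justification. Relation A2 is only available when all four sign vectors of the square lie in $\cal{F}$, and for a feasible two-step subpath $\gamma \leftrightarrow \gamma^{i'} \leftrightarrow (\gamma^{i'})^{i}$ the fourth vertex $\gamma^{i}$ need not be feasible. Your appeal to the local central picture is not valid: that picture describes the four chambers adjacent to the codimension-two flat $H_i \cap H_{i'}$, but the two facets your path crosses need not lie anywhere near that flat (which may even be empty), so only three of the four chambers of the square need exist. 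Concretely, take $n=3$, $\dim V = 2$, and realize the arrangement in $V_\eta \cong \R^2$ by $H_1 = \{y=0\}$, $H_2 = \{x = y+10\}$, $H_3 = \{x=-y-10\}$, co-oriented so that the unique infeasible sign vector is $(++-)$. The path
\[
(+--) \leftrightarrow (---) \leftrightarrow (-+-) \leftrightarrow (-++) \leftrightarrow (+++)
\]
lies in $\cal{F}$, is not taut ($\theta_1 = 1$), and its only repeated coordinate is $1$, crossed at the first and last steps. To bring these two crossings together you must commute a coordinate-$1$ flip past its neighbor, and in either direction (sliding the last flip left past the coordinate-$3$ flip, or the first flip right past the coordinate-$2$ flip) the required fourth vertex is $(++-)$, which is infeasible. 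Your induction therefore cannot take even one step on this path.

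The proposition is still true here: one must first reroute the taut segment between the two $H_1$-crossings using A2 moves whose fourth vertices happen to be feasible (e.g.\ replace $(---) \leftrightarrow (-+-) \leftrightarrow (-++)$ by $(---) \leftrightarrow (--+) \leftrightarrow (-++)$, then continue), after which the two $H_1$-crossings can be made adjacent and cancelled. But showing that such a rerouting always exists is essentially the whole content of the proposition, and it is exactly what the inductive argument of \cite[Proposition 3.8]{Gale} is organized to produce. Note that the present paper does not reprove the statement; it cites \cite{Gale} and observes that the argument transfers verbatim to $\At(\cal{V})$ because A2 and A3 are literally the same relations --- your closing paragraph correctly identifies that this transfer is the painless part. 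The $\theta_i$ bookkeeping in your write-up is also fine as far as it goes; the missing ingredient is a correct mechanism for making the two crossings of $H_i$ adjacent inside $\cal{F}$.
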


\begin{corollary}[Corollary 3.9 of \cite{Gale}] \label{cor:GDKD-Cor3.9}
Let $\alpha=\alpha_1 \leftrightarrow \alpha_2 \leftrightarrow \dots \leftrightarrow \alpha_d = \beta$ and $\alpha=\beta_1 \leftrightarrow \beta_2 \leftrightarrow \dots \leftrightarrow \beta_d = \beta$ be two taut paths between fixed $\alpha,\beta \in \cal{P}$.  Then
\[
p(\alpha_1,\dots, \alpha_d) = p(\beta_1,\dots,\beta_d).
\]
\end{corollary}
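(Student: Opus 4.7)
The plan is to reduce the corollary to iterated applications of the square relation A2. A taut path from $\alpha$ to $\beta$ has length equal to $|S|$, where $S = \{i \in \{1,\ldots,n\} : \alpha(i) \neq \beta(i)\}$, and the path is completely determined by the order in which the coordinates of $S$ are flipped. So the two taut paths $(\alpha_1,\ldots,\alpha_d)$ and $(\beta_1,\ldots,\beta_d)$ correspond to two total orderings of $S$. Since any two orderings of a finite set are connected by a sequence of adjacent transpositions, it suffices to establish the corollary when the two orderings differ by swapping the entries in positions $k$ and $k+1$.

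In that situation the two paths agree everywhere except at the sign sequence in position $k+1$. Set $\gamma = \alpha_k = \beta_k$, $\epsilon = \alpha_{k+2} = \beta_{k+2}$, $\delta_1 = \alpha_{k+1}$, and $\delta_2 = \beta_{k+1}$; by construction $\delta_1$ is obtained from $\gamma$ by flipping some coordinate $i$ and $\delta_2$ is obtained from $\gamma$ by flipping some coordinate $j \neq i$, and the four sign sequences form a square $\gamma \leftrightarrow \delta_1 \leftrightarrow \epsilon \leftrightarrow \delta_2 \leftrightarrow \gamma$ in the hypercube $\{\pm 1\}^n$. If all four corners lie in $\cal{F}$, relation A2 immediately yields
\[
p(\gamma,\delta_1)\,p(\delta_1,\epsilon) = p(\gamma,\delta_2)\,p(\delta_2,\epsilon),
\]
and multiplying on the left by $p(\alpha_1,\ldots,\alpha_k)$ and on the right by $p(\alpha_{k+2},\ldots,\alpha_d)$ produces the desired equality in $\tilde{A}(\cal{V})$. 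Chaining these local equalities along the sequence of adjacent transpositions connecting the two orderings of $S$ then completes the argument.

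The main obstacle I expect is verifying that the fourth corner $\delta_2$ is feasible. Three of the four corners, namely $\gamma, \delta_1, \epsilon$, belong to $\cal{F}$ automatically since they are vertices of the taut path $(\alpha_1,\ldots,\alpha_d)$. The plan for handling $\delta_2$ is a short local-geometric argument using the simplicity condition (a) on $\cal{V}$: since $\gamma$ and $\epsilon$ are feasible chambers differing in exactly the coordinates $i$ and $j$, the hyperplanes $H_i$ and $H_j$ must meet (otherwise the sign pattern of $\gamma,\delta_1,\epsilon$ could not be realized by nonempty chambers), and condition (a) forces $H_i \cap H_j$ to be a codimension-$2$ affine flat; near any interior point of $H_i \cap H_j$ the arrangement restricts to two transverse lines in a $2$-plane with all four quadrants nonempty, so $\Delta_{\delta_2} \neq \emptyset$ and $\delta_2 \in \cal{F}$. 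This local step is the only place where any geometry of the arrangement (as opposed to the abstract combinatorics of taut paths) enters the proof.
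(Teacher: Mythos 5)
There is a genuine gap, concentrated in the final paragraph of your proposal: the fourth corner $\delta_2=\gamma^j$ of the square need \emph{not} be feasible, and the local argument near $H_i\cap H_j$ does not show that it is. What that local picture establishes is the feasibility of the four chambers whose closures meet the codimension-two flat $H_i\cap H_j$; those four sign vectors agree with one another in every coordinate other than $i$ and $j$, and there is no reason for $\gamma$ (hence $\delta_2$) to be among them, since $\Delta_{\gamma}$ need not come anywhere near $H_i\cap H_j$. Concretely, take three lines in the plane in general position with interior chamber $(+,+,+)$, and set $\gamma=(-,-,+)$, $\delta_1=\gamma^2=(-,+,+)$, $\epsilon=\delta_1^{3}=(-,+,-)$: all three are feasible (two vertex chambers and the edge chamber across $H_1$) and $H_2\cap H_3$ is a genuine vertex of the arrangement, yet $\delta_2=\gamma^3=(-,-,-)$ is infeasible. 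This failure also invalidates your opening reduction: since paths in $Q(\cal{V})$ may only pass through vertices in $\cal{F}$, not every ordering of $S$ yields an actual taut path, so ``any two orderings are connected by adjacent transpositions'' does not reduce the corollary to the single-transposition case --- the intermediate orderings in your chain may correspond to infeasible paths, at which point the chain of A2-applications breaks. (Had your feasibility claim been true, every ordering of $S$ would give a valid path and the reduction would go through; since it is false, both steps fail together.)

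What is actually needed is that any two taut paths from $\alpha$ to $\beta$ --- equivalently, minimal galleries from $\Delta_{\alpha}$ to $\Delta_{\beta}$ --- are connected by square moves \emph{all of whose intermediate paths remain inside} $\cal{F}$. For a simple arrangement this is the classical connectivity of minimal galleries under elementary homotopies across codimension-two faces; it is true, but it is a genuine theorem and is precisely the content your argument assumes rather than proves. Note that the present paper does not supply its own argument either: it quotes \cite[Corollary 3.9]{Gale} and records only that the proof given there carries over verbatim from $A(\cal{V})$ to $\tilde{A}(\cal{V})$, so the burden of this step is carried by the argument in \cite{Gale}.
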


\begin{corollary}\label{cor:GDKD-Cor3.10}
Consider an element
\[
a = p_{\alpha}^{\beta} \cdot \prod_{i \in I} t_i^{d_i} \in e_{\alpha} \tilde{A}(\cal{V}) e_{\beta}
\]
where $p_{\alpha}^{\beta}$ is represented by a taut path from $\alpha$ to $\beta$.
Suppose that $\gamma \in \cal{F}$ satisfies $\gamma(i)=\alpha(i)$ whenever $\alpha(i)=\beta(i)$ and $d_i=0$\footnote{Alternatively, for those hyperplanes where $d_i=0$, if $\alpha$ and $\beta$ are on the same side of the hyperplane, so is $\gamma$.}.  Then $a$ can be written as a monomial in the $t_i$ variables times an element represented by a path in $Q$ that passes through $\gamma$. In particular, if $\gamma \in \cal{F}\setminus \cal{P}$, then $a=0$.
\end{corollary}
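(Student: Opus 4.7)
The plan is to construct an explicit path through $\gamma$ whose associated element in $\tilde{A}(\cal{V})$, after stripping off a monomial in the $t_i$'s, recovers $a$. Since $\gamma \in \cal{F}$, it is a vertex of $Q$, so we can pick a taut path $\pi_1$ from $\alpha$ to $\gamma$ and a taut path $\pi_2$ from $\gamma$ to $\beta$. Let $\pi$ be the concatenation $\pi_1 \pi_2$, which is a path from $\alpha$ to $\beta$ passing through $\gamma$. By Proposition~\ref{prop:GDKD-Prop3.9} applied to $\pi$,
\[
p(\pi) = q_\alpha^\beta \cdot \prod_{i \in I} t_i^{\theta_i(\pi)},
\]
where $q_\alpha^\beta$ is represented by some taut path from $\alpha$ to $\beta$. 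By Corollary~\ref{cor:GDKD-Cor3.9}, $q_\alpha^\beta = p_\alpha^\beta$, so
\[
a \; = \; p_\alpha^\beta \cdot \prod_i t_i^{d_i} \; = \; p(\pi) \cdot \prod_i t_i^{d_i - \theta_i(\pi)},
\]
provided we verify $d_i \geq \theta_i(\pi)$ for every $i$; this exhibits $a$ as a monomial in the $t_i$'s times an element represented by a path through $\gamma$.

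The main technical step is thus the case analysis for $\theta_i(\pi)$. Because $\pi_1$ and $\pi_2$ are each taut, neither traverses $H_i$ more than once, and the crossings of $H_i$ by $\pi$ are determined entirely by the triple of signs $(\alpha(i),\gamma(i),\beta(i))$. Counting indices $1 < j < \ell$ with $\pi_j(i) \neq \pi_{j+1}(i) = \pi_1(i)$ directly from Definition~\ref{def:GDKD-3.6} gives $\theta_i(\pi) = 1$ precisely when $\alpha(i) = \beta(i)$ and $\gamma(i) \neq \alpha(i)$, and $\theta_i(\pi) = 0$ otherwise. The hypothesis on $\gamma$ says exactly that whenever $\alpha(i) = \beta(i)$ and $d_i = 0$ one has $\gamma(i) = \alpha(i)$; taking the contrapositive, whenever $\theta_i(\pi) = 1$ we must have $d_i \geq 1$. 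Hence $d_i - \theta_i(\pi) \geq 0$ for all $i$, and the product $\prod_i t_i^{d_i - \theta_i(\pi)}$ makes sense in $\tilde{A}(\cal{V})$.

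For the final assertion, if $\gamma \in \cal{F} \setminus \cal{P} = \cal{F} \setminus \cal{B}$, then relation A1 sets $e_\gamma = 0$ in $\tilde{A}(\cal{V})$. Since $p(\pi) = p(\pi_1)\, e_\gamma\, p(\pi_2)$ factors through $e_\gamma$, we get $p(\pi) = 0$, and therefore $a = 0$.

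The conceptually delicate point, and the one worth writing out carefully, is the case analysis of $\theta_i(\pi)$: one must make sure that each of the four sign configurations at position $i$ is treated consistently with the convention in Definition~\ref{def:GDKD-3.6} (which counts only returns to the $\alpha_1$-side strictly between the endpoints), and that the taut paths really do cross $H_i$ exactly when the endpoints of the corresponding leg differ at $i$. Everything else is essentially a routine concatenation and application of Proposition~\ref{prop:GDKD-Prop3.9} and Corollary~\ref{cor:GDKD-Cor3.9}; no new ideas about the algebra structure are needed.
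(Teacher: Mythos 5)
Your proof is correct and follows the same route as the paper: concatenate taut paths $\alpha \to \gamma \to \beta$, apply Proposition~\ref{prop:GDKD-Prop3.9} (and Corollary~\ref{cor:GDKD-Cor3.9}) to rewrite the concatenation as $p_{\alpha}^{\beta}$ times $\prod_i t_i^{\theta_i}$ with $\theta_i = 1$ exactly when $\alpha(i)=\beta(i)\neq\gamma(i)$, and use the hypothesis to check $d_i \geq \theta_i$. Your explicit case analysis of $\theta_i(\pi)$ and the vanishing argument via $e_\gamma = 0$ just spell out steps the paper leaves implicit.
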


\begin{proof}
This is analogous to \cite[Corollary 3.10]{Gale}; we include the proof here for completeness.  By Proposition~\ref{prop:GDKD-Prop3.9}, the composition of a taut path $p_{\alpha}^{\gamma}$ from $\alpha$ to $\gamma$ and $p_{\gamma}^{\beta}$ from $\gamma$ to $\beta$ is equal in $\tilde{A}(\cal{V})$ to $p_{\alpha}^{\beta} \cdot \prod_{i\in I}t_i^{d_i'}$, where $p_{\alpha}^{\beta}$ is a taut path and $d_i'=1$ if $\alpha(i)=\beta(i)\neq \gamma(i)$ and zero otherwise.  Then for $\gamma$ satisfying the hypothesis of the corollary, we always have $d_i \geq d_{i}'$ for all $i$, so that
\[
a = p_{\alpha}^{\beta} \cdot \prod_{i \in I} t_i^{d_i}
= p_{\alpha}^{\gamma} p_{\gamma}^{\delta} \cdot \prod_{i \in I} t_i^{d_i-d_i'}.
\]
\end{proof}

\subsection{Standard modules}\label{sec:HypertoricStandardMods}
%

In this section we write $\tilde{A}$ as shorthand for $\tilde{A}(\cal{V})$; we also write $\At_+$ for the strictly positive-degree elements of $\At$.

For $\alpha \in \cal{P}$, the simple $\At(\cal{V})$-module $L_{\alpha}$ is defined by
\begin{equation}
  L_{\alpha}:= \At/(\At_+ \oplus \Z[e_{\beta} \mid \beta \neq \alpha]).
\end{equation}
We denote by $\Pt_{\alpha}:=\At e_{\alpha}$
the projective cover of $L_{\alpha}$.
Define a submodule $K_{>\alpha}$ of $\Pt_{\alpha}$ by
\begin{equation} \label{eq:Kalpha}
K_{>\alpha} = \sum_{i \in \mathbbm{x}_{\alpha}} \tilde{A}\cdot p(\alpha^i,\alpha),
\end{equation}
where $\mathbbm{x}_{\alpha}=\mu^{-1}(\alpha)$.
Using the partial order defined above, consider the idempotents
\begin{equation}
  \varepsilon_{\alpha} := \sum_{\gamma > \alpha} e_{\gamma},
  \qquad \varepsilon_{\alpha}' = \varepsilon_{\alpha} + e_{\alpha}
\end{equation}
as well as
\begin{equation}
  \overline{\varepsilon}_{\alpha} := \sum_{\gamma \nleq \alpha} e_{\gamma},
  \qquad \overline{\varepsilon}_{\alpha}' = \overline{\varepsilon}_{\alpha} + e_{\alpha}.
\end{equation}

\begin{proposition} \label{prop:varepsilon}
We have
$
K_{>\alpha} =\At\varepsilon_{\alpha}\At e_{\alpha} =\At \overline{\varepsilon}_{\alpha}\At e_{\alpha}.
$
\end{proposition}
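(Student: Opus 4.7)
The equality to prove has three terms; I will establish the cycle of inclusions
$K_{>\alpha} \subseteq \At\varepsilon_\alpha\At e_\alpha \subseteq \At\overline{\varepsilon}_\alpha\At e_\alpha \subseteq K_{>\alpha}$. The middle inclusion is immediate from the set containment $\{\gamma : \gamma > \alpha\} \subseteq \{\gamma : \gamma \nleq \alpha\}$, so only the outer two require work.

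For $K_{>\alpha} \subseteq \At\varepsilon_\alpha\At e_\alpha$, I will show that each generator $p(\alpha^i,\alpha)$ with $i \in \xx_\alpha$ lies in $\varepsilon_\alpha\At e_\alpha$. Note $\alpha^i \in \cal{F}$ since $\Delta_{\alpha^i}$ is one of the local chambers at the simple vertex $H_{\xx_\alpha}$. If $\alpha^i \in \cal{F}\setminus\cal{P}$, relation A1 gives $e_{\alpha^i} = 0$ and hence $p(\alpha^i,\alpha) = 0$. If $\alpha^i \in \cal{P}$, the feasible-cone half of Lemma~\ref{lem:OrderFromCones} applied to $\alpha^i = \mu(\xx_{\alpha^i}) \in \cal{F}_{\xx_\alpha}$ — the containment holds because $\alpha^i(j)=\alpha(j)$ for every $j \in \xx_\alpha^c$ — yields $\xx_\alpha < \xx_{\alpha^i}$, i.e.\ $\alpha < \alpha^i$. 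Hence $e_{\alpha^i}$ is a summand of $\varepsilon_\alpha$ and $p(\alpha^i,\alpha) = e_{\alpha^i}\,p(\alpha^i,\alpha) \in \varepsilon_\alpha\At e_\alpha$.

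The bulk of the argument is the reverse containment $\At\overline{\varepsilon}_\alpha\At e_\alpha \subseteq K_{>\alpha}$. Because $K_{>\alpha}$ is a left $\At$-submodule of $\Pt_\alpha$ and $e_\gamma = 0$ for $\gamma \in \cal{F}\setminus\cal{P}$, it suffices to show $e_\gamma\At e_\alpha \subseteq K_{>\alpha}$ for each $\gamma \in \cal{P}$ with $\gamma \nleq \alpha$. The bounded-cone half of Lemma~\ref{lem:OrderFromCones} says $\gamma \in \cal{B}_{\xx_\alpha}$ would force $\gamma \leq \alpha$; contrapositively I obtain some $i \in \xx_\alpha$ with $\gamma(i) \neq \alpha(i)$. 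By Proposition~\ref{prop:GDKD-Prop3.9} any element $a \in e_\gamma\At e_\alpha$ is a sum of monomials $p_\gamma^\alpha\cdot\prod_j t_j^{d_j}$ along taut paths $p_\gamma^\alpha$, so I apply Corollary~\ref{cor:GDKD-Cor3.10} with intermediate vertex $\alpha^i$. Its hypothesis is automatic: at $j=i$ we have $\gamma(i)\neq\alpha(i)$ so there is no constraint, and for $j\neq i$ we have $\alpha^i(j)=\alpha(j)$. If $\alpha^i\notin\cal{P}$ the corollary forces $a = 0 \in K_{>\alpha}$. Otherwise the corollary rewrites $a$ as a central monomial in the $t_j$ times a path $\gamma \to \cdots \to \alpha^i \to \cdots \to \alpha$; splitting this product of arrows at $\alpha^i$ and invoking Proposition~\ref{prop:GDKD-Prop3.9} on the terminal segment (whose unique taut representative is the length-one path $p(\alpha^i,\alpha)$, since $\alpha^i$ and $\alpha$ differ only in coordinate $i$) exhibits $a$ as an element of $\At\cdot p(\alpha^i,\alpha) \subseteq K_{>\alpha}$.

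The main obstacle is the combinatorial step of guaranteeing a ``last coordinate flip'' $i \in \xx_\alpha$ at which $\gamma$ and $\alpha$ actually disagree; this is exactly what the bounded-cone half of Lemma~\ref{lem:OrderFromCones} provides, and without it there is no way to funnel an arbitrary taut path into $\alpha$ through the specific generators of $K_{>\alpha}$. Once such an $i$ is in hand, the corollary on modifying taut paths together with the centrality of the $t_j$ reduces the remainder of the argument to a routine factorisation.
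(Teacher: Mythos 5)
Your proof is correct and follows essentially the same route as the paper: both arguments hinge on the fact that $\gamma\nleq\alpha$ forces $\gamma\notin\cal{B}_{\xx_{\alpha}}$ (Lemma~\ref{lem:OrderFromCones}), producing an index $i\in\xx_{\alpha}$ with $\gamma(i)\neq\alpha(i)$ through which every path from $\gamma$ to $\alpha$ can be rerouted, landing it in $\At\cdot p(\alpha^i,\alpha)\subseteq K_{>\alpha}$. The only difference is that you spell out the inclusion $K_{>\alpha}\subseteq\At\varepsilon_{\alpha}\At e_{\alpha}$ (via $\alpha<\alpha^i$) and the reduction to taut paths via Corollary~\ref{cor:GDKD-Cor3.10}, which the paper treats as clear.
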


\begin{proof}
It is clear that
\[
K_{>\alpha} \subset\At\varepsilon_{\alpha}\At e_{\alpha} \subset\At \overline{\varepsilon}_{\alpha}\At e_{\alpha}.
\]
For $\gamma \nleq \alpha$, we have $\gamma \notin \B_{\mathbbm{x}_{\alpha}}$, so for at least one $i \in \mathbbm{x}_{\alpha}$, there is a sign change from $\alpha$ to $\gamma$ in position $i$. It follows that
$p_{\gamma}^{\a} = p^{\alpha^i}_{\gamma} p^{\alpha}_{\alpha^i} \in K_{> \alpha}$ and thus that $\At \overline{\varepsilon}_{\alpha}\At e_{\alpha} \subset K_{\alpha}$.
\end{proof}

The left
standard module associated to $\alpha$ is given by
\[
\tilde{V}_{\alpha}:= \tilde{P}_{\alpha}/ K_{>\alpha} =\At e_{\alpha}/\At\varepsilon_{\alpha} \At e_{\alpha} = \At e_{\alpha} / \At\overline{\varepsilon}_{\alpha} \At e_{\alpha}.
\]

\begin{remark}\label{rem:VtVersusDelta}
The standard module $\Vt_{\a}$ defined here agrees with the standard module $\Delta(\a)$ defined in \eqref{eq:affine-standard}. To see this, observe that
$\Hom_{\At}(\Pt_{\gamma},\Pt_{\a}) =e_{\gamma}\tilde{A}e_{\alpha}$
so that summing over all $\gamma \nleq \alpha$ produces
$\overline{\varepsilon}_{\alpha}\tilde{A}e_{\alpha}$
and the image in $\tilde{A}$ is just
$\tilde{A}\overline{\varepsilon}_{\alpha}\tilde{A}e_{\alpha}$.
\end{remark}

\begin{remark}[Geometric interpretation of modules]

When $\cal{V}$ is rational, the indecomposable projective, standard, and simple modules $P_{\alpha}$, $V_{\alpha}$, $L_{\alpha}$ over  $B(\cal{V}) \cong A(\cal{V}^{\vee})$ acquire natural geometric interpretations in the hypertoric variety $\M_{\cal{V}}$, see \cite[Proposition 5.22]{Gale}. The modules $\Pt_{\a}, \Vt_{\a}, \tilde{L}_{\a}$ for $\Bt(\cal{V}) \cong \At(\cal{V}^{\vee})$ have similar descriptions using $T^k$-equivariant cohomology; see \cite[Corollary 5.5 \& Section 8]{BLPPW}. In the case of standard modules, let $x_{\alpha}$ denote the toric fixed point whose image under the moment map $\bar{\mu}_{\R}$ is the vertex of $\Delta_{\alpha}$ on which $\xi$ attains its maximum. Then
\begin{align}
V_{\alpha} \cong \bigoplus_{\beta \in \cal{P}} H^{\ast}(\{x_{\alpha}\}\cap X_{\beta})[-d_{\alpha\beta}]
\qquad
\tilde{V}_{\alpha} \cong \bigoplus_{\beta \in \cal{P}} H^{\ast}_{T^k}(\{x_{\alpha}\}\cap X_{\beta})[-d_{\alpha\beta}] \nn
\end{align}
with action given by convolution.

\end{remark}

\subsection{Structure of standard modules}\label{sec:HypertoricStandardStructure}

The study of the infinite dimensional modules $\tilde{V}_{\alpha}$ for $\tilde{A}$ requires new techniques, not adapted from \cite{Gale}, which we develop in this subsection. 


For $\xx \in \mathbb{B}$, we let $\Z[t_{\xx}] := \Z[t_i : i \in \xx]$.

\begin{lemma} \label{lem:Xnozero}
Fix $\alpha \in\cal{P}$ and let $\mathbbm{x}=\mathbbm{x}_{\alpha}$ and $\mathbbm{y}=\mathbbm{x}^{c}$;  we have $\mathbbm{x},\mathbbm{y} \subset I = \{1,\ldots,n\}$ with $|\mathbbm{x}| = k$ and $|\mathbbm{y}| = n-k$.
Let $p^{\alpha}_{\gamma}$ be a taut path from $\gamma \in \cal{F}\cap \cal{B}_{\mathbbm{x}}$ to $\a$.  Then any $X\in \Z[t_{\xx}]$ acts trivially on $p^{\alpha}_{\gamma}$ and any  $Y \in \Z[t_{\yy}]$ acts nontrivially on $p^{\alpha}_{\gamma}$ in $\Vt_{\alpha}$.
\end{lemma}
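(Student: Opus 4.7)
The plan is to explicitly identify $e_{\gamma} \Vt_{\alpha}$ as a $\Z[t_1, \ldots, t_n]$-module, from which both statements follow immediately.

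For the first claim, by linearity it suffices to handle $X = t_i$ for a single $i \in \xx$. Since $\xx$ is a matroid basis and $H_{\xx}$ is a vertex of $\Delta_{\alpha}$, the hyperplane $H_i$ passes through that vertex, so $\alpha^i \in \cal{F}$. Relation $\mathsf{A3}$, together with centrality of $t_i$, rewrites $t_i p^{\alpha}_{\gamma}$ as $p^{\alpha}_{\gamma} \cdot p(\alpha, \alpha^i) \cdot p(\alpha^i, \alpha)$. If $\alpha^i \in \cal{P}$, the factor $p(\alpha^i, \alpha)$ is one of the generators of $K_{>\alpha}$ in~\eqref{eq:Kalpha}, so the product lies in $K_{>\alpha}$; if $\alpha^i \in \cal{F} \setminus \cal{B}$, then $\mathsf{A1}$ gives $e_{\alpha^i} = 0$ and hence $p(\alpha^i, \alpha) = 0$. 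Either way, $t_i p^{\alpha}_{\gamma}$ vanishes in $\Vt_{\alpha}$.

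For the second claim, we will establish the stronger fact that $e_{\gamma} \Vt_{\alpha} \cong \Z[t_{\yy}]$ as a $\Z[t_1, \ldots, t_n]$-module (with the $t_i$, $i \in \xx$, acting as zero) under $p^{\alpha}_{\gamma} \mapsto 1$. The Gale-dual isomorphism $\At(\cal{V}) \cong \Bt(\cal{V}^{\vee})$ from \cite{Gale} identifies $e_{\gamma} \At e_{\alpha}$ with $\Z[t_1,\ldots,t_n]/J$, where $J := (t_S : \Delta^{\vee}_{\gamma} \cap \Delta^{\vee}_{\alpha} \cap H^{\vee}_S = \emptyset \text{ in } \cal{V}^{\vee})$ and $p^{\alpha}_{\gamma}$ is sent to $1$. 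To compute $e_{\gamma} K_{>\alpha}$ inside this cyclic module, consider each $i \in \xx$: if $\alpha^i \in \cal{P}$, the composite $p^{\alpha^i}_{\gamma} \cdot p(\alpha^i, \alpha)$ crosses $H_i$ one extra pair of times compared to the taut path $p^{\alpha}_{\gamma}$, so Proposition~\ref{prop:GDKD-Prop3.9} and Corollary~\ref{cor:GDKD-Cor3.9} give $e_{\gamma} \At e_{\alpha^i} \cdot p(\alpha^i, \alpha) = t_i \Z[t_1,\ldots,t_n] \cdot p^{\alpha}_{\gamma}$; if $\alpha^i \in \cal{F} \setminus \cal{P}$, then Corollary~\ref{cor:GDKD-Cor3.10} applied with $\delta = \alpha^i$ forces $t_i p^{\alpha}_{\gamma} = 0$ already in $e_{\gamma} \At e_{\alpha}$, equivalently $t_i \in J$. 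These two cases combine to give $e_{\gamma} K_{>\alpha} = (t_{\xx}) \cdot p^{\alpha}_{\gamma}$ inside $\Z[t_1,\ldots,t_n]/J$, so $e_{\gamma} \Vt_{\alpha} \cong \Z[t_1,\ldots,t_n] / ((t_{\xx}) + J)$.

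The main obstacle is the last step: proving $J \subset (t_{\xx})$, which collapses the quotient to $\Z[t_1,\ldots,t_n]/(t_{\xx}) = \Z[t_{\yy}]$. Equivalently, we must show that for every $S \subset \yy$ the triple intersection $\Delta^{\vee}_{\gamma} \cap \Delta^{\vee}_{\alpha} \cap H^{\vee}_S$ is nonempty in $\cal{V}^{\vee}$. The Gale compatibility $\mu(\xx) = \mu^{\vee}(\xx^c)$ from Section~\ref{subsec:partial_order} shows that the Gale-dual basis at $\alpha$ is $\yy = \xx^c$, so the vertex $H^{\vee}_{\yy}$ is the point where $\xi^{\vee} = -\eta$ attains its maximum on $\Delta^{\vee}_{\alpha}$ and in particular lies in $\Delta^{\vee}_{\alpha}$. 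The hypothesis $\gamma \in \cal{F} \cap \cal{B}_{\xx}$ combined with Lemma~\ref{lem:cone-adjacent} gives $H^{\vee}_{\xx^c} = H^{\vee}_{\yy} \in \Delta^{\vee}_{\gamma}$. Since $S \subset \yy$ forces $H^{\vee}_{\yy} \subset H^{\vee}_S$, the vertex $H^{\vee}_{\yy}$ belongs to the desired triple intersection, completing the argument.
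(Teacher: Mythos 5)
Your proof is correct, and for the most part it parallels the paper's: the first claim is handled identically (rewrite $t_i p^{\alpha}_{\gamma}$ via relation $\mathsf{A3}$ as a path ending in $p(\alpha^i,\alpha)\in K_{>\alpha}$), and the key geometric input for the second claim — that $H^{\vee}_{\yy}$ lies in $\Delta^{\vee}_{\alpha}\cap\Delta^{\vee}_{\gamma}$ by $\mu(\xx)=\mu^{\vee}(\xx^c)$ and Lemma~\ref{lem:cone-adjacent}, so that no squarefree monomial in the $t_{\yy}$ variables dies in $e_{\gamma}\At e_{\alpha}\cong \Z[t_1,\dots,t_n]/J$ — is exactly the paper's argument. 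Where you genuinely diverge is in passing from nonvanishing in $e_{\gamma}\At e_{\alpha}$ to nonvanishing in $\Vt_{\alpha}$. The paper argues by contradiction: it writes a hypothetical expression of $Yp^{\alpha}_{\gamma}$ as an element of $\At\varepsilon_{\alpha}\At e_{\alpha}$, reduces each summand to a taut path times a monomial using Proposition~\ref{prop:GDKD-Prop3.9}, and observes that $\beta>\alpha$ forces a factor $t_i$ with $i\in\xx$. You instead compute the submodule $e_{\gamma}K_{>\alpha}$ outright: each generator $p(\alpha^i,\alpha)$ contributes exactly $t_i\Z[t_1,\dots,t_n]\cdot p^{\alpha}_{\gamma}$ (or $0$, consistently, when $\alpha^i\notin\cal{B}$), giving $e_{\gamma}K_{>\alpha}=(t_{\xx})\cdot p^{\alpha}_{\gamma}$, and then you check $J\subset(t_{\xx})$ to conclude $e_{\gamma}\Vt_{\alpha}\cong\Z[t_{\yy}]$. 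This is a cleaner and slightly stronger endpoint — it exhibits each $\gamma$-component of $\Vt_{\alpha}$ as a free rank-one $\Z[t_{\yy}]$-module, which is precisely what Proposition~\ref{prop:Vtaut} needs, rather than only the nonvanishing statement. The one step you leave implicit is that the taut path $p^{\alpha}_{\gamma}$ does not cross $H_i$ for $i\in\xx$ (so that the composite through $\alpha^i$ has $\theta_i=1$ and $\theta_j=0$ for $j\neq i$); this is immediate from $\gamma\in\cal{B}_{\xx}$, i.e.\ $\gamma(i)=\alpha(i)$ for $i\in\xx$, and is worth saying explicitly.
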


\begin{proof}
First we show that any $Y \in \Z[t_{\yy}]$ acts nontrivially on $p^{\alpha}_{\gamma}$ in
\[
e_{\gamma}\At(\cal{V})e_{\alpha} \cong e_{\gamma}\Bt(\cal{V}^{\vee}) e_{\alpha}
= \Z[t_1,t_2,\dots, t_n]/ ( t_S\mid S\subset I, \quad H_{S}^{\vee}\cap{\Delta}_{\alpha}^{\vee}\cap \Delta_{\gamma}^{\vee} = \emptyset )
\]
for all $\gamma \in \cal{F}\cap \cal{B}_{\mathbbm{x}}$. Since the ideal in the denominator is a squarefree monomial ideal, we can assume $Y$ is a squarefree monomial in the $t_{\yy}$ variables, i.e. that $Y = t_S$ for some $S \subset \yy$.

Lemma~\ref{lem:cone-adjacent} implies that the only $\gamma' \in \cal{F}$ such that $H^{\vee}_{\mathbbm{y}} \cap \Delta^{\vee}_{\gamma} \neq \emptyset$ are the $\gamma' \in \cal{F}\cap \cal{B}_{\mathbbm{x}}$.
 Since $H_{\mathbbm{y}}^{\vee}$ is the maximum of $\Delta_{\alpha}^{\vee}$ under $\xi^{\vee}$ in $\cal{H}^{\vee}$, this implies $H^{\vee}_{\mathbbm{y}} \cap \Delta_{\alpha}^{\vee} \cap \Delta_{\gamma'}^{\vee} \neq \emptyset$ if and only if $\gamma' \in \cal{F}\cap\cal{B}_{\mathbbm{x}}$,
so that each subset $S \subset \mathbbm{y}$ has $H_{S}^{\vee}\cap \Delta_{\alpha}^{\vee}\cap \Delta_{\gamma}^{\vee} \neq \emptyset$. Hence $Y p^{\alpha}_{\gamma} = t_S p^{\alpha}_{\gamma}$ is nonzero in $e_{\gamma}\At(\cal{V})e_{\alpha}$.

Now let $Y$ be an arbitrary nonzero polynomial in $\Z[t_{\mathbbm{y}(1)}, \dots, t_{\mathbbm{y}(n-k)}]$; it follows that $Y p^{\alpha}_{\gamma}$ is nonzero in $e_{\gamma}\At(\cal{V})e_{\alpha}$. To see that $Y p^{\alpha}_{\gamma}$ is nonzero in $\Vt_{\alpha}$, we show that $Y p^{\alpha}_{\gamma} \notin\At\varepsilon_{\alpha}\At e_{\alpha}$.  Suppose for contradiction that
\[
Y p^{\alpha}_{\gamma}   =\sum_{\beta > \a }Y'_{\beta} p_{\gamma}^{\beta} p_{\beta}^{\a}
=
 \sum_{\beta > \a }Y'_{\beta}\prod_i t_i^{d_i^{\beta}} p_{\gamma}^{\a},
\]
for taut paths $p^{\alpha}_{\beta}$, $p^{\beta}_{\gamma}$ and $Y_{\beta}' \in \Z[t_1,\dots, t_n]$.
 The assumption  $\beta > \a$ implies that for some $i \in \mathbbm{x}$ we have $\alpha(i)=\gamma(i) \neq \beta(i)$. In particular, for each $\beta$ appearing with nonzero coefficient $Y'_{\beta}$ on the right-hand-side, we have $d_i^{\beta} \neq 0$ for some $i\in \mathbbm{x}$.  Hence,
\[
\prod_{i\in I}  t_i^{d_i^{\beta}}  = \prod_{i\in \mathbbm{y}} t_i^{d_i^{\beta}}  \prod_{i\in \mathbbm{x}} t_i^{d_i^{\beta}}
\]
with $\prod_{i\in \mathbbm{x}} t_i^{d_i^{\beta}} \neq 1$. By the nonvanishing in $e_{\gamma}\At(\cal{V})e_{\alpha}$ proved above, we have
\[
Y =  \sum_{\beta > \gamma }Y'_{\beta}\prod_{i\in \mathbbm{y}} t_i^{d_i^{\beta}}  \prod_{i\in \mathbbm{x}} t_i^{d_i^{\beta}}
\]
with at least one $Y'_{\beta} \neq 0$, contradicting that $Y \in \Z[t_{\yy}]$.

Finally, to see that all $X\in \Z[t_{\xx}]$ act by zero, observe that for $i\in \mathbbm{x}$, $t_ip^{\alpha}_{\gamma} =p^{\alpha}_{\gamma}t_i = p^{\alpha}_{\gamma}p(\alpha,\alpha^i,\alpha)$, so that $t_ip^{\alpha}_{\gamma}=0$ in $\Vt_{\alpha}$ since $\alpha^i > \alpha$.
\end{proof}

\begin{proposition} \label{prop:Vtaut}
Let $\mathbbm{y}=\mathbbm{x}_{\alpha}^c$ for some $\alpha \in \cal{P}$.  The standard module $\tilde{V}_{\alpha}$ is a free module over $\Z[t_{\yy}]$ with basis consisting of taut paths to
$\alpha$ from
each element in $\cal{F}\cap \cal{B}_{\mathbbm{x}_{\alpha}}$.
\end{proposition}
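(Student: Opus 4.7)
The plan is to establish spanning and linear independence separately, combining the straightening results of Proposition~\ref{prop:GDKD-Prop3.9} with the annihilator computation already carried out in the proof of Lemma~\ref{lem:Xnozero}.

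\textbf{Spanning.} First I would observe, via Proposition~\ref{prop:GDKD-Prop3.9}, that $\At e_{\alpha}$ is generated as a $\Z[t_1,\ldots,t_n]$-module by the taut paths $p^{\alpha}_{\gamma}$ for $\gamma \in \cal{F}$. For $\gamma \in \cal{F} \setminus \cal{B}_{\xx_{\alpha}}$ there exists $i \in \xx_{\alpha}$ with $\gamma(i) \neq \alpha(i)$; using Corollary~\ref{cor:GDKD-Cor3.9} to reorder hyperplane crossings so that $H_i$ is crossed last, one can write $p^{\alpha}_{\gamma} = p^{\alpha^i}_{\gamma} \cdot p(\alpha^i, \alpha)$, which lies in $K_{>\alpha}$ by its defining formula \eqref{eq:Kalpha}. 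Hence modulo $K_{>\alpha}$ only the taut paths from $\gamma \in \cal{F} \cap \cal{B}_{\xx_{\alpha}}$ survive. Lemma~\ref{lem:Xnozero} then shows that $t_i$ acts as zero on each such $p^{\alpha}_{\gamma}$ in $\Vt_{\alpha}$ for every $i \in \xx_{\alpha}$, so the $\Z[t_1,\ldots,t_n]$-action on the spanning set factors through $\Z[t_{\yy}]$.

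\textbf{Linear independence.} Suppose $\sum_{\gamma \in \cal{F} \cap \cal{B}_{\xx_{\alpha}}} Y_{\gamma} p^{\alpha}_{\gamma} \in K_{>\alpha}$ with $Y_{\gamma} \in \Z[t_{\yy}]$. Fix $\gamma_0 \in \cal{F} \cap \cal{B}_{\xx_{\alpha}}$ and project onto the $e_{\gamma_0}$-component; using Proposition~\ref{prop:varepsilon} to identify $K_{>\alpha}$ with $\At \varepsilon_{\alpha} \At e_{\alpha}$, one obtains an equation in $e_{\gamma_0} \At e_{\alpha}$ expressing $Y_{\gamma_0} p^{\alpha}_{\gamma_0}$ as a sum of products $(e_{\gamma_0} a e_{\beta})(e_{\beta} b e_{\alpha})$ indexed by $\beta > \alpha$. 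Lemma~\ref{lem:OrderFromCones} (in its contrapositive form) forces each such $\beta$ to lie outside $\cal{B}_{\xx_{\alpha}}$, so some $i \in \xx_{\alpha}$ satisfies $\beta(i) \neq \alpha(i) = \gamma_0(i)$, and Proposition~\ref{prop:GDKD-Prop3.9} then straightens each summand to $p^{\alpha}_{\gamma_0}$ times a monomial divisible by at least one $t_i$ with $i \in \xx_{\alpha}$.

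Passing to the isomorphism $e_{\gamma_0} \At(\cal{V}) e_{\alpha} \cong \tilde{R}^{\vee}_{\gamma_0 \alpha}$ from Gale duality, the equation becomes a congruence in $\Z[t_1,\ldots,t_n]/I_{\gamma_0 \alpha}$, where $I_{\gamma_0 \alpha}$ is the squarefree monomial ideal of \eqref{eq:Rab}. I would then apply the surjection $\Z[t_1,\ldots,t_n] \twoheadrightarrow \Z[t_{\yy}]$ sending $t_i \mapsto 0$ for $i \in \xx_{\alpha}$: this kills the right-hand side while fixing $Y_{\gamma_0}$, and it annihilates the image of $I_{\gamma_0 \alpha}$, since any surviving generator $t_S$ has $S \subset \yy$, and for such $S$ the flat $H^{\vee}_S \supseteq H^{\vee}_{\yy}$ meets $\Delta^{\vee}_{\alpha} \cap \Delta^{\vee}_{\gamma_0}$ at the vertex $H^{\vee}_{\yy}$ (using $\mu^{\vee}(\yy) = \alpha$ together with Lemma~\ref{lem:cone-adjacent}). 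Thus $Y_{\gamma_0} = 0$ in $\Z[t_{\yy}]$, establishing independence.

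\textbf{Main obstacle.} The principal bookkeeping challenge is the claim that whenever a composed path from $\gamma_0$ to $\alpha$ factors through some $\beta > \alpha$, the straightened monomial coefficient of $p^{\alpha}_{\gamma_0}$ must pick up at least one variable $t_i$ with $i \in \xx_{\alpha}$; this is precisely where the dictionary between the partial order on $\cal{P}$ and the bounded cones $\cal{B}_{\xx}$ (Lemma~\ref{lem:OrderFromCones}) is essential. A secondary subtlety is verifying that the image of $I_{\gamma_0 \alpha}$ in $\Z[t_{\yy}]$ is trivial, which reduces to the geometric observation that $H^{\vee}_{\yy}$ is a common vertex of $\Delta^{\vee}_{\alpha}$ and $\Delta^{\vee}_{\gamma_0}$ whenever $\gamma_0 \in \cal{F} \cap \cal{B}_{\xx_{\alpha}}$.
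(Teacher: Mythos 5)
Your proposal is correct and follows essentially the same route as the paper: spanning via the straightening results (Proposition~\ref{prop:GDKD-Prop3.9}, Corollary~\ref{cor:GDKD-Cor3.9}) together with the fact that $t_i$ for $i\in\xx_{\alpha}$ kills everything in $\Vt_{\alpha}$, and independence by showing $Y p^{\alpha}_{\gamma_0}\notin K_{>\alpha}$ for $0\neq Y\in\Z[t_{\yy}]$. The paper delegates the independence step to Lemma~\ref{lem:Xnozero} (plus the idempotent trick), whose content you re-derive inline; your evaluation homomorphism $\tilde{R}^{\vee}_{\gamma_0\alpha}\to\Z[t_{\yy}]$ is a slightly cleaner packaging of the same monomial-divisibility argument used there.
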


\begin{proof}
By Lemma~\ref{lem:Xnozero}, these collections of paths are linearly independent; note that one can deduce independence for paths starting at different points $\gamma$ by left multiplying by idempotents $e_{\gamma}$. Any taut path which originates outside of $\cal{F}\cap \cal{B}_{\mathbbm{x}_{\alpha}}$ must cross a hyperplane $H_i$ for some $i \in \mathbbm{x}_{\alpha}$. By the proof of Corollary~\ref{cor:GDKD-Cor3.10} it can be replaced by a multiple of a path which crosses this hyperplane last, thus lies in $K_{>\alpha}$. Any non-taut path from  an element of $\cal{F}\cap \cal{B}_{\mathbbm{x}_{\alpha}}$ to $\alpha$ that does not cross a hyperplane $H_i$ with $i\in \mathbbm{x}_{\alpha}$, can be written as a $\Z[t_{\yy}]$-multiple of a taut path by Proposition~\ref{prop:GDKD-Prop3.9}.
\end{proof}

It follows that
\begin{equation}\label{eq:StandardMod}
\Vt_{\a} \cong \left( \bigoplus_{\beta \in \cal{F} \cap \cal{B}_{\xx_{\a}}} e_{\beta} \At(\cal{V}) e_{\alpha} \right) / (t_i : i \in \xx_{\a}).
\end{equation}

We want to show that the kernel of the natural map $\Pt_{\a} \to \Vt_{\a}$ has a $\Delta$-filtration whose subquotients are shifts of standard modules $\Vt_{\b}$; to do this, we need some preliminary definitions and results.\footnote{We would like to thank Hugh Thomas for his generous assistance with the following material.} Suppose $H_{\xx}$ and $H_{\xx'}$ are two vertices of the arrangement that are connected by an edge. The maximum of $\xi$ on this edge is attained at one of its two endpoints (say $H_{\xx}$). Then we must have $\xx' < \xx$ by Lemma~\ref{lem:OrderFromCones}, because $H_{\xx'}$ is in the negative cone of $H_{\xx}$ and we thus have $\cal{B}_{\xx} \cap \cal{F}_{\xx'} \neq \emptyset$. It follows that any two vertices connected by an edge correspond to elements of $\mathbb{B}$ that are comparable in the partial order.

\begin{definition}
Let $\gamma \in \cal{P}$, so that $H_{\xx_{\gamma}}$ is a vertex of the arrangement. We say an edge $E$ of the arrangement, with endpoints $\{H_{\xx_{\gamma}}, H_{\xx}\}$, is ascending with respect to $H_{\xx_{\gamma}}$ if $\xx_{\gamma} < \xx$ and descending if $\xx_{\gamma} > \xx$. We say an unbounded edge of the arrangement (with only one endpoint $H_{\xx_{\gamma}}$) is ascending if $\xi$ is unbounded on the edge, and descending otherwise.
\end{definition}

Since we are working with simple arrangements, $H_{\xx_{\gamma}}$ is an endpoint of $2k$ edges of the arrangement. For each $i \in \xx_{\gamma}$, there are two edges $E,E'$ containing $H_{\xx_{\gamma}}$ and contained in $H_i$, and one of $\{E,E'\}$ is ascending while the other is descending. We will say $E$ and $E'$ are opposite to each other.

For any edge $E$ of the arrangement, the two endpoints of $E$ lie on $k-1$ common hyperplanes; each vertex lies on one hyperplane that the other vertex does not lie on.
\begin{definition}
For an edge $E$ of the arrangement with endpoints $\{H_{\xx_{\gamma}},H_{\xx}\}$, let $i = \phi(E)$ denote the unique index such that $i \in \xx_{\gamma} \cap \xx^c$, i.e. such that $H_{\xx_{\gamma}}$ is contained in $H_i$ but $H_{\xx}$ is not.
\end{definition}

Note that $\phi$ restricts to a bijection from the set of ascending edges at $H_{\xx_{\gamma}}$ to the set $\xx_{\gamma}$.

\begin{lemma}\label{lem:SignChangesAscending}
Let $\a,\gamma \in \cal{P}$ and assume that $H_{\xx_{\gamma}} \in \Delta_{\alpha}$. Let $E$ be an edge of the arrangement with one endpoint $H_{\xx_{\gamma}}$; assume $E$ is ascending and let $i = \phi(E)$. Then $E \subset \Delta_{\a}$ if and only if $\alpha(i) \neq \gamma(i)$. Furthermore, any index $i$ with $\alpha(i) \neq \gamma(i)$ is equal to $\phi(E)$ for a unique ascending edge $E$ with one endpoint $H_{\xx_{\gamma}}$.
\end{lemma}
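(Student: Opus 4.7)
The plan is to analyze the local geometry of the arrangement at the vertex $H_{\xx_\gamma}$, combining two main observations that together give both halves of the lemma.

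First, since $H_{\xx_\gamma} \in \Delta_\alpha$ and for every $j \notin \xx_\gamma$ the point $H_{\xx_\gamma}$ lies strictly on the $\gamma(j)$-side of $H_j$ (this is just the fact that $H_{\xx_\gamma}$ is a vertex of $\Delta_\gamma$ not lying on $H_j$), the defining inequalities of $\Delta_\alpha$ force $\alpha(j) = \gamma(j)$ for all $j \notin \xx_\gamma$. In particular, any index $i$ with $\alpha(i) \neq \gamma(i)$ must lie in $\xx_\gamma$, and combining this with the bijectivity of $\phi$ from ascending edges at $H_{\xx_\gamma}$ to $\xx_\gamma$ (already noted in the text) yields claim (2) immediately.

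Second, I would analyze the tangent cone of $\Delta_\gamma$ at $H_{\xx_\gamma}$. Because the arrangement is simple, this tangent cone is a simplicial cone whose $k$ extreme rays are obtained, for each $i \in \xx_\gamma$, by relaxing the inequality coming from $H_i$ while keeping all others tight. Each such extreme ray is precisely the piece, on the $\gamma(i)$-side of $H_i$, of the edge at $H_{\xx_\gamma}$ with $\phi(E) = i$. Because $\xi$ attains its maximum on $\Delta_\gamma$ at $H_{\xx_\gamma}$, any motion into $\Delta_\gamma$ strictly decreases $\xi$; hence each of these extreme rays corresponds to the descending edge in direction $i$. Since the two edges in direction $i$ at $H_{\xx_\gamma}$ sit on opposite sides of $H_i$, it follows that the ascending edge $E$ with $\phi(E) = i$ lies on the $-\gamma(i)$-side of $H_i$ locally near $H_{\xx_\gamma}$.

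With these two inputs I would verify $E \subset \Delta_\alpha$ coordinate by coordinate. For $j \in \xx_\gamma \setminus \{i\}$ the edge satisfies $E \subset H_j$, so $x_j = 0$ and the inequality $\alpha(j) x_j \geq 0$ is trivial. For $j \notin \xx_\gamma$, simplicity of the arrangement prevents $E$ from crossing $H_j$, so $E$ remains on the $\gamma(j) = \alpha(j)$-side throughout (in the unbounded case as well, since no additional walls are crossed). For the remaining coordinate $i$, on $E \setminus \{H_{\xx_\gamma}\}$ the value of $x_i$ is nonzero with sign $-\gamma(i)$ by the second observation, so the condition $\alpha(i) x_i \geq 0$ is equivalent to $\alpha(i) = -\gamma(i)$, i.e.\ $\alpha(i) \neq \gamma(i)$. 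This gives the biconditional of claim (1). The main obstacle is the geometric identification of the descending edge with direction $i$ as the extreme ray on the $\gamma(i)$-side; this is the real content and depends on the fact that the $\xi$-maximum of $\Delta_\gamma$ is attained at $H_{\xx_\gamma}$. Everything else is bookkeeping with the sign inequalities defining $\Delta_\alpha$.
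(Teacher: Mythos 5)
Your proof is correct, and it reaches the conclusion by a somewhat different route than the paper. The shared key ingredient is the identification of the descending edges at $H_{\xx_{\gamma}}$ with the edges of $\Delta_{\gamma}$ emanating from that vertex, which holds because $\xi$ attains its maximum on $\Delta_{\gamma}$ there; the paper simply asserts this ("its opposite edge is descending and thus contained in $\Delta_{\gamma}$"), whereas you derive it from the tangent-cone description, which is cleaner. Where you diverge is in how the biconditional is then extracted: the paper argues by contradiction, looking at the \emph{other} endpoint of $E$ (resp.\ of the putative descending edge) and using convexity of $\Delta_{\a}$ to compare the sign of that endpoint with $\a(i)$ and $\gamma(i)$; you instead verify the defining inequalities $\a(j)x_j\ge 0$ of $\Delta_{\a}$ directly on $E$, coordinate by coordinate, after first noting $\a(j)=\gamma(j)$ for all $j\notin\xx_{\gamma}$ (the paper proves this same fact, but only inside the converse direction). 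Your version has two small advantages: it treats bounded and unbounded edges uniformly (the paper's forward direction tacitly assumes $E$ has a second endpoint $H_{\xx_{\gamma'}}$), and it makes explicit why exactly one of the two edges in direction $i$ lies in $\Delta_{\a}$, which the paper's converse uses without comment. One point of hygiene: what prevents $E$ from crossing $H_j$ for $j\notin\xx_{\gamma}$ is not simplicity per se but the fact that the relative interior of an edge of the arrangement meets no hyperplane not containing it; simplicity is what guarantees $H_{\xx_{\gamma}}\notin H_j$ (so that $E\not\subset H_j$ and the endpoint sign is $\gamma(j)$). With that phrasing tightened, the argument is complete.
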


\begin{proof}
First assume $E \subset \Delta_{\alpha}$, and let $H_{\xx_{\gamma'}}$ be the other endpoint of $E$. If $\gamma(i) = \gamma'(i)$, then $H_{\xx_{\gamma'}}$ and thus $E$ are on the same side of the hyperplane $H_i$ as is the region $\Delta_{\gamma}$. But since $E$ is ascending, its opposite edge is descending and thus contained in $\Delta_{\gamma}$, so since $E \cup E'$ is a subset of a straight line, it must be contained in $H_i$. This contradicts the definition of $i = \phi(E)$, so we have $\gamma'(i) \neq \gamma(i)$. Since the convex polytope $\Delta_{\alpha}$ is contained on one side of $H_i$ and has a vertex $H_{\xx_{\gamma'}}$ with $\gamma'(i) \neq \gamma(i)$, we must also have $\alpha(i) \neq \gamma(i)$.

Conversely, assume $\alpha(i) \neq \gamma(i)$ for some index $i$. If $i \notin \xx_{\gamma}$, i.e. $H_i$ is not one of the hyperplanes intersecting to form $H_{\xx_{\gamma}}$, then $H_{\xx_{\gamma}}$ lies on the strictly positive side or strictly negative side of $H_i$. For all points in $\Delta_{\alpha}$ that do not lie on $H_i$, the sign of the point with respect to $H_i$ is $\alpha(i)$, so we have $\gamma(i) = \alpha(i)$, a contradiction. It follows that $i \in \xx_{\gamma}$, so that $i = \phi(E)$ for some edge $E \subset \Delta_{\alpha}$ with one endpoint $H_{\xx_{\gamma}}$.

If $E$ is descending, let its endpoints be $\{H_{\xx_{\gamma}}, H_{\xx_{\gamma'}}\}$. The point $H_{\xx_{\gamma'}}$ is on the strictly positive or strictly negative side of $H_i$, and the sign is given by $\alpha(i)$ since $H_{\xx_{\gamma'}} \in \Delta_{\alpha}$. On the other hand, the sign is also given by $\gamma(i)$ since $E$ is descending and thus contained in $\Delta_{\gamma}$. We get $\alpha(i) = \gamma(i)$, a contradiction, so $E$ is ascending. Uniqueness in the statement of the lemma follows from the injectivity of $\phi$ on ascending edges.
\end{proof}

\begin{corollary}\label{cor:AscendingEdgesAlphaBeta}
Let $\a,\b,\gamma \in \cal{P}$ with $H_{\xx_{\gamma}} \in \Delta_{\alpha} \cap \Delta_{\beta}$. The number of ascending edges with respect to $H_{\xx_{\gamma}}$ that are contained in $\Delta_{\alpha} \cap \Delta_{\beta}$ is equal to $S_{\a \gamma \b}$ as defined in Section~\ref{subsec:defB}, i.e. the number of indices $i$ with $\alpha(i) = \beta(i) \neq \gamma(i)$.
\end{corollary}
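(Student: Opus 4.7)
The plan is to deduce this corollary as a direct consequence of Lemma~\ref{lem:SignChangesAscending}, which already does the hard work of setting up the bijection between ascending edges and sign changes. First, I would observe that an ascending edge $E$ with endpoint $H_{\xx_{\gamma}}$ is contained in $\Delta_{\alpha} \cap \Delta_{\beta}$ if and only if it is contained in both $\Delta_{\alpha}$ and $\Delta_{\beta}$ separately; this intersects the two sets of ascending edges produced by Lemma~\ref{lem:SignChangesAscending} applied once to $\alpha$ and once to $\beta$.

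Next, I would invoke Lemma~\ref{lem:SignChangesAscending} twice to give the characterizations
\[
E \subset \Delta_{\alpha} \iff \alpha(\phi(E)) \neq \gamma(\phi(E)), \qquad
E \subset \Delta_{\beta} \iff \beta(\phi(E)) \neq \gamma(\phi(E)),
\]
valid for every ascending edge $E$ at $H_{\xx_{\gamma}}$. Since $\phi$ restricts to a bijection from the set of ascending edges at $H_{\xx_{\gamma}}$ onto $\xx_{\gamma}$, counting ascending edges contained in $\Delta_{\alpha} \cap \Delta_{\beta}$ is the same as counting indices $i \in \xx_{\gamma}$ satisfying both $\alpha(i) \neq \gamma(i)$ and $\beta(i) \neq \gamma(i)$.

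Finally, because the signs take values in $\{\pm 1\}$, the conjunction $\alpha(i) \neq \gamma(i)$ and $\beta(i) \neq \gamma(i)$ is equivalent to $\alpha(i) = \beta(i) \neq \gamma(i)$; note also that any such $i$ must automatically lie in $\xx_{\gamma}$, since if $i \notin \xx_{\gamma}$ then the whole region $\Delta_{\alpha}$ lies on the side $\alpha(i)$ of $H_i$ and contains $H_{\xx_{\gamma}}$, forcing $\gamma(i) = \alpha(i)$. The resulting index set is exactly $S(\alpha \gamma \beta)$ as defined in Section~\ref{subsec:defB}, yielding the claimed equality. The only potential subtlety — which is genuinely handled by Lemma~\ref{lem:SignChangesAscending} rather than the corollary itself — is ensuring that descending edges do not accidentally contribute; once this is settled upstream, the corollary reduces to set-theoretic bookkeeping with no serious obstacle.
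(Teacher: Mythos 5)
Your proposal is correct and follows essentially the same route as the paper: the paper's proof is precisely a double application of Lemma~\ref{lem:SignChangesAscending} to $(\alpha,\gamma)$ and $(\beta,\gamma)$, using that $\phi$ bijects ascending edges at $H_{\xx_{\gamma}}$ with the relevant indices. Your extra observations (that $\pm$-valued signs make the two inequalities equivalent to $\alpha(i)=\beta(i)\neq\gamma(i)$, and that such indices automatically lie in $\xx_{\gamma}$) are just explicit restatements of content already contained in the lemma and its proof.
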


\begin{proof}
By Lemma~\ref{lem:SignChangesAscending}, the ascending edges with one vertex $H_{\xx_{\gamma}}$ that are contained in both $\Delta_{\alpha}$ and $\Delta_{\beta}$ are in bijection with the set of all indices $i$ such that $\alpha(i) = \beta(i) \neq \gamma(i)$.

\end{proof}

\begin{lemma}\label{lem:Kfiltration}
The kernel of the map $\Pi_{\alpha} \maps \Pt_{\alpha} \to \Vt_{\alpha}$ has a filtration whose subquotients are isomorphic to $\Vt_{\beta}$ for those $\beta = \beta_{\mathbbm{x}}$ such that $\alpha \in \cal{B}_{\mathbbm{x}}$ (note that $\beta > \alpha$ for all such $\beta$), each with multiplicity one, and with grading shifts given by the number $d_{\alpha, \beta}$ of sign changes between $\alpha$ and $\beta$.

\end{lemma}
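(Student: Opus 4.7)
The plan is to construct an explicit filtration of $K_{>\alpha}$ indexed by the finite set $S := \{\beta \in \cal{P} : \beta > \alpha \text{ and } \alpha \in \cal{B}_{\xx_\beta}\}$, enumerated as $\beta_1, \ldots, \beta_N$ in a linear order refining the partial order on $\Pi$. Using Corollary~\ref{cor:GDKD-Cor3.9} I would fix a taut path $p(\beta_j, \alpha) \in e_{\beta_j}\tilde{A}e_\alpha$ for each $\beta_j \in S$, which lies in $K_{>\alpha}$ because $\beta_j > \alpha$ gives $\beta_j \nleq \alpha$.

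The geometric backbone of the argument is Lemma~\ref{lem:cone-adjacent}: for each $\gamma \in \cal{F}$ with $\gamma \nleq \alpha$, the set $S_\gamma := \{\beta_j \in S : \gamma \in \cal{F} \cap \cal{B}_{\xx_{\beta_j}}\}$ is in bijection with the vertices of the Gale-dual intersection $\Delta^\vee_\alpha \cap \Delta^\vee_\gamma$. Combined with the explicit free $\Z[t_{\xx_{\beta_j}^c}]$-basis of $\Vt_{\beta_j}$ from Proposition~\ref{prop:Vtaut}, this predicts a $\Z[t_1,\ldots,t_n]$-module decomposition of each component $e_\gamma K_{>\alpha} = e_\gamma \tilde{A}e_\alpha$ into pieces indexed by $S_\gamma$, where the piece corresponding to $\beta_j$ realizes the $e_\gamma$-component of $q^{d_{\alpha,\beta_j}} \Vt_{\beta_j}$. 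The filtration $F^j$ is then defined by collecting these pieces for $\beta_1, \ldots, \beta_j$ across all $\gamma$, and the isomorphism $F^j/F^{j-1} \cong q^{d_{\alpha,\beta_j}}\Vt_{\beta_j}$ is verified by matching basis elements using Proposition~\ref{prop:Vtaut}, with Proposition~\ref{prop:GDKD-Prop3.9} (reducing paths to taut form modulo monomials in the $t_i$) and Corollary~\ref{cor:GDKD-Cor3.10} (rerouting taut paths through intermediate vertices $H_{\xx_{\beta_\ell}}$) as the essential tools.

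The main obstacle is ensuring that each $F^j$ is genuinely an $\tilde{A}$-submodule rather than only a $\Z[t]$-submodule, and that the filtration exhausts $K_{>\alpha}$. The naive attempt $F^j := \sum_{\ell \leq j} \tilde{A}\cdot p(\beta_\ell, \alpha)$ does not produce the claimed subquotients: a composition $p(\beta_m^i, \beta_m)\cdot p(\beta_m,\alpha)$ for $i \in \xx_{\beta_m}$ can yield a taut path from $\beta_m^i$ to $\alpha$ with $\beta_m^i \in \cal{P}\setminus S$, which sits inside $\tilde{A}\cdot p(\beta_m,\alpha)$ but must be absorbed into the piece of a distinct $\beta_\ell \in S$ to make the subquotients match. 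Resolving this requires the ascending-edge analysis at the vertex $H_{\xx_{\beta_j}}$ developed in Section~\ref{sec:HypertoricStandardStructure}, in particular Corollary~\ref{cor:AscendingEdgesAlphaBeta}, to assign each taut path $p(\gamma, \alpha)$ to its correct layer according to which vertex of $\Delta^\vee_\alpha$ corresponds to $\gamma$ via Lemma~\ref{lem:cone-adjacent}, together with a careful check that left multiplication by $\tilde{A}$ can only move basis elements within a layer or into layers corresponding to $\beta_\ell$ that are higher in the partial order (hence already absorbed by the time we reach level $j$).
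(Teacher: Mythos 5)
Your proposal stalls exactly at the point you flag as the ``main obstacle,'' and the fix you sketch is not the one that works; the paper's proof dissolves that obstacle by a different choice of filtration. Instead of trying to build $\At$-submodules out of cyclic modules $\At\cdot p(\beta_\ell,\alpha)$ or out of hand-assigned ``layers'' of taut paths, the paper filters $K_{>\alpha}$ by the two-sided pieces $\At\bigl(\sum_{i\geq j} e_{\gamma_i}\bigr)\At e_{\alpha}$, where $\{\gamma_1,\ldots,\gamma_\ell\}$ runs over \emph{all} $\gamma>\alpha$ (not just your set $S$) in a total order refining the partial order. These are automatically submodules, so there is nothing to check about closure under left multiplication. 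Your worry about a composition $p(\beta_m^i,\beta_m)\cdot p(\beta_m,\alpha)$ landing at some $\beta_m^i\notin S$ evaporates: since $\beta_m^i>\beta_m>\alpha$, that path factors through $e_{\beta_m^i}=e_{\gamma_{j'}}$ for some later $j'$ and simply lives deeper in the filtration; the corresponding subquotient $M_\alpha^{\gamma_{j'}}$ is then shown to be \emph{zero} whenever $\alpha\notin\cal{B}_{\xx_{\gamma_{j'}}}$ (reroute any path from $\gamma_{j'}$ to $\alpha$ through $\gamma_{j'}^i$ with $i\in\xx_{\gamma_{j'}}$, $\alpha(i)\neq\gamma_{j'}(i)$). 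Restricting the index set to $S$ at the outset is what creates the absorption problem you then cannot solve.

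The second gap is in how you certify the subquotients. You propose to ``verify by matching basis elements,'' which presupposes an explicit $\Z[t_1,\ldots,t_n]$-module decomposition of each $e_\gamma\At e_\alpha$ into pieces indexed by $S_\gamma$; Lemma~\ref{lem:cone-adjacent} only ``predicts'' this, as you say, and you never establish it. The paper does not attempt such a decomposition. It defines the comparison map $q^{d_{\alpha,\gamma_j}}\Vt_{\gamma_j}\to M_\alpha^{\gamma_j}$ by postcomposition with a taut path, proves surjectivity via the taut-path normalization (Proposition~\ref{prop:GDKD-Prop3.9} and Corollary~\ref{cor:GDKD-Cor3.9}), and then gets injectivity for free from a \emph{global} graded dimension count: summing over all $\alpha$, the identity $\dim_q\At=\sum_{\alpha}\sum_{\xx:\,\alpha\in\cal{B}_{\xx}}q^{d_{\alpha,\mu(\xx)}}\dim_q\Vt_{\mu(\xx)}$ is proved using the freeness of $\tilde{R}^{\vee}_{\alpha\beta}$ over $\Sym(V^{\perp})$ (\cite[Lemma 4.2]{Gale}) together with the shelling/$h$-vector computation for the dual polytope of $\Delta_\alpha^\vee\cap\Delta_\beta^\vee$ — this is where Corollary~\ref{cor:AscendingEdgesAlphaBeta} actually enters, to identify the degrees of the Stanley--Reisner generators, not to assign paths to layers. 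Without either that dimension count or a genuinely proved direct-sum decomposition, your argument does not establish multiplicity one or injectivity of the identification of subquotients with $\Vt_{\beta}$.
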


\begin{proof}
The kernel of $\Pi_{\alpha} \maps \Pt_{\alpha} \to \Vt_{\alpha}$ is given by $K_{>\alpha}$ from \eqref{eq:Kalpha}.  We define a filtration on $K_{>\alpha}$. Choose a total ordering $\{ \gamma \in \cal{P} : \gamma > \alpha \} = \{\gamma_1, \ldots, \gamma_{\ell}\}$ such that $\gamma_i < \gamma_j$ only if $i < j$. Write
\[
K_{\alpha} = \At \sum_{i=1}^{\ell} e_{\gamma_i} \At e_{\alpha} \supset \At \sum_{i=2}^{\ell} e_{\gamma_i} \At e_{\alpha} \supset \cdots \supset \At e_{\gamma_{\ell}} \At e_{\alpha} \supset 0.
\]
Let
\[
M_{\alpha}^{\gamma_j} := \frac{\At \sum_{i=j}^{\ell} e_{\gamma_i} \At e_{\alpha}}{\At \sum_{i=j+1}^{\ell} e_{\gamma_i} \At e_{\alpha}}
\]
be the subquotients of this filtration for $1 \leq j \leq \ell$. It suffices to show that for each $j$, $M_{\alpha}^{\gamma_j}$ is zero when $\alpha \notin \cal{B}_{\xx_{\gamma_j}}$ and isomorphic to $q^{d_{\alpha,\gamma_j}} \Vt_{\gamma_j}$ otherwise.

Fix $j$ between $1$ and $\ell$ and let $\mathbbm{x}=\mathbbm{x}_{\gamma_j}=\mu^{-1}(\gamma_j)$. If $\alpha$ is not in the bounded cone $\cal{B}_{\mathbbm{x}}$, then there exists $i\in \mathbbm{x}$ such that $\alpha(i)\neq \gamma_j(i)$.  For this $i$, we have $\gamma_j<\gamma_j^{i}$ and any path from $\gamma_j$ to $\a$ can be written as a path that passes through $\gamma_j^i$. Since $\gamma_j < \gamma_j^i$, we have $\gamma_j^i = \gamma_{j'}$ for some $j' > j$, so $M_{\alpha}^{\gamma_j} = 0$.

If $\alpha \in\cal{F} \cap \cal{B}_{\mathbbm{x}}$, then postcomposition with a taut path $p_{\gamma_j}^{\a}$ from $\gamma_j$ to $\a$ defines a degree-zero map
\[
q^{d_{\alpha, \gamma_j}} \Pt_{\gamma_j} \rightarrow \At \sum_{i=j}^{\ell} e_{\gamma_i} \At e_{\alpha} \twoheadrightarrow M_{\alpha}^{\gamma_j}.
\]
This map annihilates $\At\varepsilon_{\gamma_j}\At e_{\gamma_j}$ since for any path passing through $\gamma' > \gamma_j$ and ending at $\gamma_j$, we must have $\gamma' = \gamma_{j'}$ for some $j' > j$. Thus, we get a map on the quotient
\begin{align}
 \phi\maps q^{d_{\alpha, \gamma_j}} \Vt_{\gamma_j} = &\Pt_{\gamma_j}/\At\varepsilon_{\gamma_j}\At e_{\gamma_j}
 \longrightarrow M_{\alpha}^{\gamma_j} \nn
\\
& [\delta \to\gamma_j] \mapsto [ \delta \to\gamma_j \stackrel{p_{\gamma_j}^{\a}}{\to}\alpha] , \qquad \text{$\delta \in \cal{F} \cap \cal{B}_{\mathbbm{x}}$.}   \nn
\end{align}

Now, $M_{\alpha}^{\gamma_j}$ is spanned as a $\Z[t_1,\dots, t_n]$-module by paths through $\gamma_j$ ending at $\a$, and by Proposition~\ref{prop:GDKD-Prop3.9} any such path can be written as $p'' p'$ where $p'$ is a taut path from $\gamma_j$ to $\a$. By Corollary~\ref{cor:GDKD-Cor3.9} we can assume the taut path $p'$ is $p_{\gamma_j}^{\a}$.  Hence, the map $\phi$ is surjective.

For injectivity, it is enough to show that
\[
\dim_q \ker \Pi_{\alpha} = \sum_{\gamma>\alpha \,\, : \,\, \a \in \cal{B}_{\xx_{\gamma}}}\dim_q q^{d_{\alpha, \gamma}} \Vt_{\gamma},
\]
or
\begin{align*}
\dim_q \Pt_{\alpha}
    &= \dim_q {\rm im} \Pi_{\alpha} \oplus \dim_q \ker\Pi_{\alpha}
    = \dim_q \Vt_{\alpha} \oplus \sum_{\gamma>\alpha \,\, : \,\, \a \in \cal{B}_{\xx_{\gamma}}} q^{d_{\alpha, \gamma}} \dim_q\Vt_{\gamma} \\
    &= \sum_{\mathbbm{x} : \alpha \in \cal{B}_{\mathbbm{x}}} q^{d_{\alpha, \mu(\xx)}} \dim_q\Vt_{\mu(\mathbbm{x})}
\end{align*}
(we implicitly pass to $\Q$ before taking graded dimensions).
Since surjectivity establishes one inequality, it is sufficient to show we have an equality when we sum over $\alpha$,
\[
\dim_q \At =  \sum_{\alpha \in \cal{P}} \sum_{\mathbbm{x}: \alpha \in \cal{B}_{\mathbbm{x}}} q^{d_{\alpha, \mu(\xx)}} \dim_q\Vt_{\mu(\mathbbm{x})}.
\]
Note that
\[
\dim_q \At = \sum_{\alpha,\beta \in \cal{P}^{\vee}} \dim_q \tilde{R}^{\vee}_{\alpha,\beta}.
\]
By \cite[Lemma 4.2]{Gale}, $\tilde{R}_{\alpha,\beta}^{\vee}$ is a free ${\rm Sym}(V^{\perp})$-module whose rank is the number of $\mathbbm{x} \in \mathbb{B}$ such that $H_{\mathbbm{x}^c}^{\vee} \subset \Delta_{\alpha}^{\vee} \cap \Delta_{\beta}^{\vee}$. We have $\dim_q {\rm Sym}(V^{\perp}) = \frac{1}{(1-q^2)^{n-k}}$; the generator of the Stanley--Reisner ring $\tilde{R}_{\alpha,\beta}^{\vee}$ over $\Sym(V^{\perp})$ corresponding to a given $\xx$ has degree given as follows.

Let $D$ be the dual polytope of $\Delta_{\a}^{\vee} \cap \Delta_{\b}^{\vee}$ (called the polar polytope in \cite{ZieglerPoly}). By \cite[Exercise 8.10]{ZieglerPoly}, if we make a small modification to our objective function $\xi$ so that the partial order is preserved but the modified $\xi$ now takes distinct values on all vertices of $\Delta_{\a}^{\vee} \cap \Delta_{\b}^{\vee}$, we get a shelling of $D$ such that the ordering on facets of $D$ (vertices of $\Delta_{\a}^{\vee} \cap \Delta_{\b}^{\vee}$) is compatible with our usual partial order.

Now, the degree of the generator of $\tilde{R}_{\alpha,\beta}^{\vee}$ corresponding to $\xx$, or to the vertex $H_{{\xx}^c}^{\vee}$, is encoded in the contribution of this vertex (or facet $F$ of $D$) to the $h$-vector of $D$. By \cite[Theorem 8.19]{ZieglerPoly} this contribution is the size of the ``restriction'' of $F$ as defined in \cite[Section 8.3]{ZieglerPoly}, so the degree of the corresponding generator is $d_{\a, \b}$ plus the size of this restriction (note that the minimal-degree element of $\tilde{R}_{\alpha,\beta}^{\vee}$ has degree $d_{\a,\b}$).

Since the polytope $\Delta_{\a}^{\vee} \cap \Delta_{\b}^{\vee}$ is simple, its dual polytope $D$ is simplicial (see \cite[Proposition 2.16]{ZieglerPoly}). Thus, by \cite[Exercise 8.10]{ZieglerPoly}, the degree of the generator corresponding to $\xx$ is $d_{\a,\b}$ plus the number of ascending edges in $\Delta_{\a}^{\vee} \cap \Delta_{\b}^{\vee}$ with respect to $H_{{\xx}^c}^{\vee}$ (note that since $\Delta_{\a}^{\vee} \cap \Delta_{\b}^{\vee}$ is simple, the number of ascending edges in $\Delta_{\a}^{\vee} \cap \Delta_{\b}^{\vee}$ and the number of descending edges in $\Delta_{\a}^{\vee} \cap \Delta_{\b}^{\vee}$ at any vertex add up to the dimension of $\Delta_{\a}^{\vee} \cap \Delta_{\b}^{\vee}$). Corollary~\ref{cor:AscendingEdgesAlphaBeta} then implies that the degree of the generator corresponding to $\xx$ is $d_{\a,\b} + S_{\a \gamma \b}$.

We see that
\[
\dim_q \At = \frac{1}{(1-q^2)^{n-k}} \sum_{ \left\{ (\alpha, \beta, \mathbbm{x}) \in \cal{P} \times \cal{P} \times \mathbb{B} \,\, | \,\, H_{\mathbbm{x}^c}^{\vee} \subset \Delta_{\alpha}^{\vee} \cap \Delta_{\beta}^{\vee} \right\} } q^{d_{\a,\b} + S_{\a \gamma \b}}.
\]
On the other hand, for $\mathbbm{x} \in \mathbb{B}$, $\Vt_{\mu(\mathbbm{x})}$ is a free $\Z[t_{\yy}]$-module whose rank is the number of elements $\beta \in \cal{F} \cap \cal{B}_{\mathbbm{x}}$ by Proposition~\ref{prop:Vtaut}, and $\dim_q \Z[t_{\yy}]$ is also $\frac{1}{(1-q^2)^{n-k}}$. Thus,
\begin{align*}
\sum_{\alpha \in \cal{P}} \sum_{\mathbbm{x} : \alpha \in \cal{B}_{\mathbbm{x}}} q^{d_{\alpha, \mu(\xx)}} \dim_q\Vt_{\mu(\mathbbm{x})}
&= \frac{1}{(1-q^2)^{n-k}}  \sum_{ \left\{ (\alpha, \beta, \mathbbm{x}) \in \cal{P} \times \cal{P} \times \mathbb{B} \,\, |\,\, \alpha, \beta \in \cal{B}_{\mathbbm{x}} \right\} } q^{d_{\alpha, \mu_{\xx}}} q^{d_{\mu_{\xx},\beta}} \\
&= \frac{1}{(1-q^2)^{n-k}}  \sum_{ \left\{ (\alpha, \beta, \mathbbm{x}) \in \cal{P} \times \cal{P} \times \mathbb{B} \,\, |\,\, \alpha, \beta \in \cal{B}_{\mathbbm{x}} \right\} } q^{d_{\alpha, \beta} +  S_{\a \gamma \b}}.
\end{align*}
Injectivity follows from Lemma~\ref{lem:cone-adjacent}.
\end{proof}

\begin{proposition} \label{prop:endV}
Let $\mathbbm{y}=\mathbbm{x}_{\alpha}^c$ for some $\alpha \in \cal{P}$.  The endomorphism algebra $\End_{\At}(\tilde{V}_{\alpha})$ is the graded polynomial ring $\Z[t_{\yy}]$.
\end{proposition}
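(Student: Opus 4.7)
The plan is to leverage the free $\Z[t_\yy]$-module structure of $\Vt_\alpha$ obtained in Proposition~\ref{prop:Vtaut} together with the fact that each $t_i$ is a central element of $\At(\cal{V})$ (it commutes with every idempotent $e_\beta$ and every path element $p(\beta,\gamma)$, being a generator of the polynomial factor in $P(Q)\otimes_{\Z}\Z[t_1,\dots,t_n]$).

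First I would observe that any $\phi \in \End_{\At}(\Vt_\alpha)$ is determined by $\phi(\bar e_\alpha)$, where $\bar e_\alpha$ is the image of $e_\alpha$ in $\Vt_\alpha$. Since $e_\alpha \bar e_\alpha = \bar e_\alpha$ and $\phi$ is left $\At$-linear, $\phi(\bar e_\alpha)$ lies in the subspace $e_\alpha \Vt_\alpha$. Applying Proposition~\ref{prop:Vtaut} and then left-multiplying by $e_\alpha$, the only taut path from an element of $\cal{F}\cap \cal{B}_{\xx_\alpha}$ to $\alpha$ that survives is the length-zero path at $\alpha$. Hence $e_\alpha \Vt_\alpha$ is a free rank-one $\Z[t_\yy]$-module with generator $\bar e_\alpha$, so $\phi(\bar e_\alpha) = f(t_\yy)\,\bar e_\alpha$ for a unique $f \in \Z[t_\yy]$.

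Next I would check that for every $f \in \Z[t_\yy]$ the prescription $\phi_f(a \bar e_\alpha) := a\, f(t_\yy)\,\bar e_\alpha$ ($a \in \At e_\alpha$) gives a well-defined $\At$-module endomorphism. Well-definedness amounts to showing that $a e_\alpha \in K_{>\alpha}$ implies $a\,f(t_\yy)\,e_\alpha \in K_{>\alpha}$. Because $t_i$ is central in $\At(\cal{V})$ and $K_{>\alpha}$ is a (left) submodule, we have $a\,f(t_\yy)\,e_\alpha = f(t_\yy)\,(a e_\alpha) \in K_{>\alpha}$. Left $\At$-linearity of $\phi_f$ then follows from centrality of $f(t_\yy)$. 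This produces a ring homomorphism $\Z[t_\yy] \to \End_{\At}(\Vt_\alpha)$ sending $f \mapsto \phi_f$.

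Finally I would verify that this map is a bijection. Injectivity is immediate: if $\phi_f = 0$ then $f(t_\yy)\,\bar e_\alpha = 0$ in $e_\alpha \Vt_\alpha$, but $\{\bar e_\alpha\}$ is a $\Z[t_\yy]$-basis of this module by the previous step, so $f = 0$. Surjectivity follows from the first paragraph: any $\phi$ satisfies $\phi(\bar e_\alpha) = f(t_\yy)\,\bar e_\alpha$ for some $f$, and then $\phi = \phi_f$ because both sides agree on the generator $\bar e_\alpha$ of $\Vt_\alpha$ as a (left) $\At$-module.

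The potentially delicate point is the identification $e_\alpha \Vt_\alpha \cong \Z[t_\yy]\,\bar e_\alpha$; this is where Proposition~\ref{prop:Vtaut} does the real work, since without the explicit taut-path basis one could worry about additional elements of $e_\alpha \At e_\alpha$ surviving in the quotient by $K_{>\alpha}$. Lemma~\ref{lem:Xnozero} further ensures that the $\Z[t_\yy]$-action on $\bar e_\alpha$ is faithful, so no nontrivial polynomial relations are introduced, which is precisely what gives the polynomial ring (rather than a proper quotient) as the endomorphism algebra.
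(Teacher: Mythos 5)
Your proof is correct, and it reaches the conclusion by a slightly more direct route than the paper. The paper's proof also starts from the free $\Z[t_{\yy}]$-basis of taut paths from Proposition~\ref{prop:Vtaut}, but it pins down an endomorphism $f$ by its values on \emph{all} basis elements: left multiplication by idempotents forces $f(p^{\alpha}_{\delta}) = c_{\delta}\,p^{\alpha}_{\delta}$, and then the composition identity $p^{\delta}_{\gamma}\cdot p^{\alpha}_{\delta} = \prod_i t_i^{\theta_i}\, p^{\alpha}_{\gamma}$ together with freeness is used to show all the coefficients $c_{\delta}$ coincide. You instead exploit that $\Vt_{\alpha}$ is cyclic on $\bar e_{\alpha}$, so an endomorphism is determined by its value in $e_{\alpha}\Vt_{\alpha}$, which Proposition~\ref{prop:Vtaut} identifies as the free rank-one module $\Z[t_{\yy}]\,\bar e_{\alpha}$ (the only basis path surviving left multiplication by $e_{\alpha}$ is the trivial one). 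This bypasses the consistency check across different starting chambers and the $\theta_i$ computation entirely; what it costs you is the explicit well-definedness verification for $\phi_f$, which you handle correctly via centrality of the $t_i$ and the fact that $K_{>\alpha}$ is a left submodule. Both arguments rest on the same two inputs — freeness of $\Vt_{\alpha}$ over $\Z[t_{\yy}]$ and faithfulness of the $\Z[t_{\yy}]$-action from Lemma~\ref{lem:Xnozero} — so the difference is one of packaging rather than substance, but yours is a clean and slightly leaner packaging.
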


\begin{proof}
   For each $\gamma \in \cal{F}\cap \cal{B}_{\mathbbm{x}}$,  fix a choice of taut path $p^{\alpha}_{\gamma}$ from $\gamma$ to $\alpha$. By  Lemma~\ref{lem:Xnozero},
 the set $\{ p^{\alpha}_{\gamma} \mid \gamma \in \cal{F}\cap \cal{B}_{\mathbbm{x}}\}$ forms a basis for $\tilde{V}_{\alpha}$ as a free module over  $\Z[t_{\yy}]$ and any $t_i$ for $i \in \mathbbm{x}_{\alpha}$ acts by zero.
Any $\At$-module homomorphism $f\maps \tilde{V}_{\alpha} \to \tilde{V}_{\alpha}$ is determined by the image of these basis elements.

Let $f \maps \tilde{V}_{\alpha} \to \tilde{V}_{\alpha}$ be an $\At$-module map, so that for $\delta \in \cal{F}\cap B_{\mathbbm{x}_{\alpha}}$ we have
\[
f(p^{\alpha}_{\delta}) = \sum_{\beta \in \cal{F}\cap B_{\mathbbm{x}_{\alpha}}}c_{\beta}p^{\alpha}_{\beta},
\]
for some $c_{\beta} \in \Z[t_{\yy}]$.
Acting on the left by $e_{\beta}$, we see that $c_{\beta}=0$ unless $\beta = \delta$, so that any $\At$-module map must send $p^{\alpha}_{\delta}$ to a multiple of itself.
For $\delta, \gamma \in \cal{F}\cap B_{\mathbbm{x}_{\alpha}}$ let $p^{\delta}_{\gamma}$ be a taut path from $\gamma$ to $\delta$.  Then $f$ being an $\At$ module map implies the following are equal:
\begin{align}
  f\left(p^{\delta}_{\gamma} \cdot p^{\alpha}_{\delta}\right )
    &= f\left(\prod_{i \in I} t_i^{\theta_i} p^{\alpha}_{\gamma} \right)
    = c_{\gamma} \prod_{i \notin \mathbbm{x}_{\alpha}} t_i^{\theta_i} p^{\alpha}_{\gamma}
    \\
    p_{\gamma}^{\delta} \cdot f(p^{\alpha}_{\delta}) &= c_{\delta}p^{\delta}_{\gamma}p^{\alpha}_{\delta}
= c_{\delta} \prod_{i \notin \mathbbm{x}_{\alpha}} t_i^{\theta_i}p^{\alpha}_{\gamma}
\end{align}
with $\theta_i$ as in Proposition~\ref{prop:GDKD-Prop3.9}. Proposition~\ref{prop:Vtaut} then implies that $c_{\delta}=c_{\gamma}$ for all $\delta, \gamma \in \cal{F}\cap B_{\mathbbm{x}_{\alpha}}$.  In particular, $f$ is given by multiplication by an element in $\Z[t_{\yy}]$, all of which act nontrivially.

\end{proof}

\section{Hypertoric algebras are affine quasi hereditary}\label{sec:HypertoricAlgAreAQH}

In this section we show that the algebras $\At(\cal{V})$ are affine (polynomial) quasi hereditary, giving their Grothendieck groups the rich structure of Section~\ref{sec:Grothendieck-affine}. We also show that the distinguished classes of modules arising from the affine quasi hereditary structure allow us to  define a ``canonical'' and ``dual canonical'' basis for $K_0(\At(\cal{V}))$.

\subsection{Hypertoric convolution algebras are affine quasi hereditary}
We show that the categories $\At(\cal{V})\gmod$ of finitely generated graded left
$\At(\cal{V})$-modules are polynomial highest weight categories. Since $\At(\cal{V})$ has a finite number of simple objects, this is equivalent to $\At(\cal{V})$ being affine (polynomial) quasi hereditary. Recall that in \cite[Section 5]{Gale} it is shown that the undeformed algebras $A(\cal{V})$ are quasi-hereditary algebras~\cite[Theorem 5.23]{Gale}, so that their categories of modules are highest weight categories. The results in this section can be viewed as an infinite-dimensional analogue of this result.

For this section we work over an algebraically closed field $\Bbbk$, rather than over $\Z$.

\begin{lemma}
The algebra $\At(\cal{V})$ is (graded) Noetherian Laurentian, so that $\At(\cal{V})\gmod$ is a Noetherian Laurentian category.
\end{lemma}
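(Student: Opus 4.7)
The plan is to establish the two conditions making $\tilde{A}(\cal{V})$ a graded Noetherian Laurentian $\Bbbk$-algebra, and then invoke \cite[Theorem 3.9]{Klesh-affine} (quoted in the excerpt just after the definition of a Noetherian Laurentian category) to conclude that $\tilde{A}(\cal{V})\gmod$ is a Noetherian Laurentian category with finite $\Pi$.

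First I would verify the Laurentian property. The algebra $\tilde{A}(\cal{V})$ is non-negatively graded since all its generators $e_{\alpha}$, $p(\alpha,\beta)$, $t_i$ sit in degrees $0$, $1$, $2$ respectively, so $\tilde{A}(\cal{V})_n = 0$ for $n < 0$. To control $\dim_{\Bbbk} \tilde{A}(\cal{V})_n$, I would use the fact that the $t_i$ are central (they are the loops at each idempotent via relation A3, and squares of these loops equal products of loops at adjacent vertices by A2, so they commute with all $p(\alpha,\beta)$). Combined with Proposition~\ref{prop:GDKD-Prop3.9}, every element $e_\gamma \tilde{A}(\cal{V}) e_\alpha$ is spanned over $\Bbbk[t_1,\ldots,t_n]$ by the (finitely many, up to $\cal{F}$-symmetry) taut paths from $\alpha$ to $\gamma$, of which there is essentially a unique one by Corollary~\ref{cor:GDKD-Cor3.9}. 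Since $|\cal{F}|$ is finite, this exhibits $\tilde{A}(\cal{V})$ as a finitely generated module over the central polynomial subring $\Bbbk[t_1,\ldots,t_n]$. As $\Bbbk[t_1,\ldots,t_n]$ has finite-dimensional graded pieces and the finitely many taut-path generators have bounded degrees, each $\tilde{A}(\cal{V})_n$ is finite-dimensional.

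For the Noetherian property I would use the same finite generation over $\Bbbk[t_1,\ldots,t_n]$. Since $\Bbbk[t_1,\ldots,t_n]$ is commutative Noetherian (Hilbert basis theorem), and since $\tilde{A}(\cal{V})$ is a finitely generated module over this central subring, standard arguments (every left ideal of $\tilde{A}(\cal{V})$ is in particular a $\Bbbk[t_1,\ldots,t_n]$-submodule of a Noetherian $\Bbbk[t_1,\ldots,t_n]$-module, hence finitely generated over $\Bbbk[t_1,\ldots,t_n]$ and a fortiori over $\tilde{A}(\cal{V})$) show that $\tilde{A}(\cal{V})$ is left Noetherian. The same argument works in the graded setting.

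With both conditions established, Kleshchev's theorem immediately gives that $\tilde{A}(\cal{V})\gmod$ is a Noetherian Laurentian category; the finiteness of the index set $\Pi$ follows from the fact that simples are in bijection with the primitive idempotents $e_{\alpha}$ for $\alpha \in \cal{P}$ and $\cal{P}$ is finite. The main potential obstacle is verifying the centrality of the $t_i$ and the uniform finite generation as a module over $\Bbbk[t_1,\ldots,t_n]$ cleanly from relations A1--A3; once those are in hand, the Laurentian and Noetherian conclusions are routine.
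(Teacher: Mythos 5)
Your proof is correct, and it follows the same basic strategy as the paper's: exhibit $\At(\cal{V})$ as a finitely generated module over a Noetherian subring, using the taut-path decomposition (Proposition~\ref{prop:GDKD-Prop3.9} together with Corollary~\ref{cor:GDKD-Cor3.9}) to produce the finite generating set, and deduce both the Laurentian and the Noetherian conditions from this. The one genuine difference is the choice of subring. The paper takes $S = \prod_{\a \in \cal{P}} e_{\a}\At(\cal{V})e_{\a}$, a product of quotients of polynomial rings, and then invokes a result on module-finite extensions of (noncommutative) Noetherian rings from Goodearl--Warfield; you instead take the image of $\Bbbk[t_1,\ldots,t_n]$, which is central, so that the Noetherian step reduces to the elementary fact that left ideals are submodules of a finitely generated module over a commutative Noetherian ring. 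Your route is thus marginally more self-contained. One small remark: the centrality of the $t_i$ does not need to be extracted from relations A2--A3 as your parenthetical suggests; it is built into the definition, since $\At(\cal{V})$ is a quotient of $P(Q)\otimes_{\Z}\Z[t_1,\ldots,t_n]$ and the paper explicitly views $\At(\cal{V})$ as a $\Z[t_1,\ldots,t_n]$-algebra. Everything else (finiteness of $\cal{P}$, the degree bound on the taut-path generators, and the appeal to Kleshchev's theorem to pass from the algebra to the category) is as in the paper.
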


\begin{proof}
Since $\At(\cal{V})$ is positively graded and finite-dimensional in each degree, it is a Laurentian algebra; to show $\At(\cal{V})$ is left (graded) Noetherian, it suffices to show $\At(\cal{V})$ is left Noetherian as an ungraded ring. Indeed, consider the subring $S = \Pi_{\a \in \cal{P}} e_{\a} \At(\cal{V}) e_{\a}$ of $\At(\cal{V})$. Each factor $e_{\a} \At(\cal{V}) e_{\a}$ is a quotient of a polynomial ring in $n$ variables and is thus Noetherian, so the direct product $S$ is Noetherian. Furthermore, $\At(\cal{V})$ is finitely generated (by a set of chosen taut paths from $\a$ to $\b$ for all $\a,\b \in \cal{P}$) as a left $S$-module. By \cite[Corollary 1.5]{GoodearlWarfield}, $\At(\cal{V})$ is left Noetherian.
\end{proof}

 Recall that for $\alpha , \beta \in \cal{F}$ such that $\Delta_{\alpha}$ and $\Delta_{\beta}$ are related by crossing a single hyperplane $H_i$,  we write $\beta=\alpha^i$.

\begin{lemma} \label{lem:find-neighbor}
Let $\mathbbm{x}=\mu^{-1}(\beta)$  for $\beta \in \cal{P}$.  Suppose that for $\alpha \in \cal{P}$ there does not exist an $i \in \mathbbm{x}^c$ such that $\alpha^i \in \cal{F}$ and either $\alpha^i$ is unbounded or $\alpha < \alpha^i$. Then $\alpha=\beta$.
\end{lemma}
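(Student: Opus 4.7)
The plan is to prove the contrapositive: assuming $\alpha \neq \beta$, I will exhibit an index $i \in \mathbbm{x}^c$ such that $\alpha^i \in \cal{F}$ and $\alpha^i$ is either unbounded or satisfies $\alpha < \alpha^i$. Since $\mu \maps \mathbb{B} \to \cal{P}$ is a bijection with $\alpha = \mu(\mathbbm{x}_\alpha)$ and $\beta = \mu(\mathbbm{x})$, the assumption $\alpha \neq \beta$ forces $\mathbbm{x}_\alpha \neq \mathbbm{x}$, and as both are $k$-element subsets of $I$, I can pick any $j \in \mathbbm{x}_\alpha \setminus \mathbbm{x} \subset \mathbbm{x}^c$ and argue that $i := j$ has the required properties.

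The key geometric input is the behavior of $\Delta_\alpha$ at its $\xi$-maximal vertex $v_\alpha := H_{\mathbbm{x}_\alpha}$. The one-dimensional flat $L := H_{\mathbbm{x}_\alpha \setminus \{j\}}$ passes through $v_\alpha$ and, since $\mathbbm{x}_\alpha$ is a basis in a simple arrangement, meets $H_j$ transversely there. Genericity of $\xi$ forces $\xi|_L$ to be non-constant, so one ray of $L$ at $v_\alpha$ is $\xi$-descending and the other $\xi$-ascending. Because $v_\alpha$ realizes the maximum of $\xi$ on $\Delta_\alpha$, the $k$ edges of $\Delta_\alpha$ at $v_\alpha$ must all be descending; hence the descending ray of $L$ is an edge of $\Delta_\alpha$ on the $\alpha(j)$-side of $H_j$, while the ascending ray is an edge of $\Delta_{\alpha^j}$ at $v_\alpha$ on the $-\alpha(j)$-side. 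In particular $\Delta_{\alpha^j} \neq \emptyset$, so $\alpha^j \in \cal{F}$.

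I then split on whether this ascending edge is bounded. If it is unbounded, $\xi$ is unbounded above on $\Sigma_{\alpha^j}$, so $\alpha^j \notin \cal{B}$ and the unbounded alternative holds. If the edge is bounded, its other endpoint is $v' = H_{\mathbbm{x}'}$ for $\mathbbm{x}' = (\mathbbm{x}_\alpha \setminus \{j\}) \cup \{j'\}$; these two bases differ in exactly one element with $\xi(v_\alpha) < \xi(v')$, so $\mathbbm{x}_\alpha \prec \mathbbm{x}'$ in the cover relation generating the partial order. Since $v'$ is a vertex of the simple bounded polytope $\Delta_{\alpha^j}$, I walk along ascending edges of $\Delta_{\alpha^j}$ from $v'$ to its unique $\xi$-maximizer $v_{\alpha^j}$; each such step swaps one element of the associated basis and strictly increases $\xi$, yielding a chain $\mathbbm{x}_\alpha \prec \mathbbm{x}' \preceq \cdots \preceq \mathbbm{x}_{\alpha^j}$ and hence $\alpha < \alpha^j$.

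The main obstacle I anticipate is this last partial-order step: the raw inequality $\xi(v_{\alpha^j}) > \xi(v_\alpha)$ does not by itself give $\alpha < \alpha^j$, because the order on $\mathbb{B}$ is the transitive closure of a cover relation comparing $\xi$-values only between bases differing in a single element. Producing an explicit ascending path inside the single chamber $\Delta_{\alpha^j}$, rather than across several chambers, is what makes each step a genuine cover in the sense of Section~\ref{subsec:partial_order}; this in turn relies on the standard fact that for a generic linear functional on a simple bounded polytope every non-maximal vertex admits at least one ascending edge, so the greedy ascending walk terminates at $v_{\alpha^j}$.
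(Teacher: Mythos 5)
Your argument is correct in substance but follows a genuinely different route from the paper. The paper argues directly: if no such $i$ exists, then the image of $\alpha$ in the deletion arrangement $\cal{V}_{\xx^c}$ of \cite[Section 2.4]{Gale} stays bounded, which forces $\alpha \in \cal{F} \cap \cal{B}_{\xx}$, and $\beta$ is then identified as the unique element of $\cal{F} \cap \cal{B}_{\xx}$ admitting no ascent $\gamma < \gamma^i$ with $i \in \xx^c$. Your contrapositive argument instead works entirely inside $\cal{V}$ at the $\xi$-maximal vertex $H_{\xx_{\alpha}}$ of $\Delta_{\alpha}$: choosing $j \in \xx_{\alpha} \setminus \xx \subset \xx^c$, the ascending ray of the line $H_{\xx_{\alpha}\setminus\{j\}}$ lands in $\Delta_{\alpha^j}$, giving feasibility, and a greedy $\xi$-ascending walk produces the chain of covering relations needed for $\alpha < \alpha^j$. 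What the paper's approach buys is brevity, at the cost of importing the deletion-arrangement machinery and an unproved uniqueness claim; yours is longer but self-contained, using only the simplicity of the arrangement, genericity of $\xi$, and the definition of the partial order as a transitive closure of single-swap covers. Your care with the last step is warranted: the cover-relation bookkeeping is exactly where a naive comparison of $\xi$-values would not suffice.

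One imprecision to fix: in your second case you call $\Delta_{\alpha^j}$ a ``simple bounded polytope,'' but boundedness of the single edge out of $H_{\xx_{\alpha}}$ does not imply boundedness of $\Delta_{\alpha^j}$, nor even that $\alpha^j \in \cal{B}$, and without $\alpha^j \in \cal{B}$ the $\xi$-maximizer $v_{\alpha^j}$ need not exist. The clean split is on whether $\alpha^j \in \cal{B}$: if not, the unbounded alternative of the lemma holds and you are done; if so, $\xi$ is bounded above on $\Delta_{\alpha^j}$ (its recession cone is $\Sigma_{\alpha^j}$), the maximum is attained at a vertex, and every ascending edge met during the walk must be bounded, so the walk terminates. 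With that adjustment the proof is complete.
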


\begin{proof}
If there is no $i \in \mathbbm{x}^c$ such that $\alpha^i \in \cal{F}$ and $\alpha^i$ is unbounded or $\alpha < \alpha^i$, then passing to the deletion arrangement $\cal{V}_{\mathbbm{x}^c}$, defined in \cite[Section 2.4]{Gale}, the projection of $\alpha$ to   $\cal{V}_{\mathbbm{x}^c}$ remains bounded.  But this means that $\alpha \in \cal{F}\cap \cal{B}_{\mathbbm{x}}$;  since the unique $\gamma \in \cal{F}\cap \cal{B}_{\mathbbm{x}}$ such that $\gamma \nless \gamma^i$ for all $i \in \xx^c$ is $\gamma = \beta$, this implies $\alpha=\beta$.
\end{proof}

\begin{theorem} \label{thm:affine}
Let    $\Pi=\cal{P}$
with partial order defined in Section~\ref{subsec:partial_order}.
The data $(\tilde{A}(\cal{V})\gmod, \Pi,\leq)$
defines a polynomial highest weight category with standard modules $\{\Vt_{\alpha}\}$.
\end{theorem}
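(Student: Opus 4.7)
The plan is to verify the three axioms \textsf{(PHW1)}, \textsf{(PHW2)}, \textsf{(PHW3)} of a polynomial highest weight category directly, using the results already established in Sections~\ref{sec:HypertoricStandardMods} and \ref{sec:HypertoricStandardStructure}. The Noetherian Laurentian property has already been verified for $\At(\cal{V})\gmod$, and by Remark~\ref{rem:VtVersusDelta} the standard modules $\Vt_\alpha$ defined geometrically in Section~\ref{sec:HypertoricStandardMods} coincide with the abstract standard objects $\Delta(\alpha)$ of \eqref{eq:affine-standard}. So it remains only to check the three axioms with respect to the partial order $\leq$ on $\Pi = \cal{P}$ defined in Section~\ref{subsec:partial_order}. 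Base change to $\Bbbk$ is harmless here because all the relevant freeness and filtration statements in Section~\ref{sec:HypertoricStandardStructure} were proved over $\Z$.

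For \textsf{(PHW1)} I would invoke Lemma~\ref{lem:Kfiltration}: the kernel of the surjection $\Pt_\alpha \twoheadrightarrow \Vt_\alpha$ carries a filtration whose subquotients are grading shifts $q^{d_{\alpha,\beta}} \Vt_\beta$ indexed precisely by those $\beta = \mu(\xx) \in \cal{P}$ with $\alpha \in \cal{B}_\xx$ and $\beta \neq \alpha$. By the first half of Lemma~\ref{lem:OrderFromCones}, $\alpha \in \cal{B}_\xx$ forces $\alpha \leq \mu(\xx) = \beta$, and since we exclude $\beta = \alpha$ we get $\alpha < \beta$ strictly, as required.

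For \textsf{(PHW2)}, Proposition~\ref{prop:endV} computes
\[
\End_{\At}(\Vt_\alpha)^{\op} \;\cong\; \Z[t_i : i \in \xx_\alpha^c],
\]
a polynomial ring in $n-k$ generators of degree $2$. Tensoring with $\Bbbk$, $B_\alpha$ is a graded polynomial ring over $\Bbbk$ in $n_\alpha := n-k$ generators of strictly positive degree, which is exactly what \textsf{(PHW2)} demands.

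For \textsf{(PHW3)} I would use Proposition~\ref{prop:Vtaut}: $\Vt_\beta$ is free over $\Z[t_{\xx_\beta^c}] = B_\beta$ with basis given by a choice of taut path into $\beta$ from each element of $\cal{F} \cap \cal{B}_{\xx_\beta}$. Applying the idempotent $e_\alpha$ isolates the taut paths beginning at $\alpha$, and by Corollary~\ref{cor:GDKD-Cor3.9} any two such are equal in $\At(\cal{V})$. Consequently
\[
\Hom_{\At}(\Pt_\alpha, \Vt_\beta) \;\cong\; e_\alpha \Vt_\beta \;\cong\; \begin{cases} q^{d_{\alpha,\beta}} B_\beta & \alpha \in \cal{F} \cap \cal{B}_{\xx_\beta}, \\ 0 & \text{otherwise,} \end{cases}
\]
which is free of rank $0$ or $1$ over $B_\beta$, verifying \textsf{(PHW3)}.

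None of these steps require new arguments beyond carefully packaging the results from Section~\ref{sec:HypertoricStandardStructure}; the main work was done there, with Lemma~\ref{lem:Kfiltration} being the genuinely nontrivial ingredient. The only place I would be careful is in confirming that the subquotients in Lemma~\ref{lem:Kfiltration} are indexed by strictly larger elements (which follows from Lemma~\ref{lem:OrderFromCones}) so that the $\Delta$-filtration satisfies the strict inequality required in \textsf{(PHW1)}.
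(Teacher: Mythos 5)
Your proposal is correct and follows essentially the same route as the paper: \textsf{(PHW1)} via Lemma~\ref{lem:Kfiltration} (with the strict inequality $\beta>\alpha$ coming from Lemma~\ref{lem:OrderFromCones}, exactly as the paper notes parenthetically), \textsf{(PHW2)} via Proposition~\ref{prop:endV}, and \textsf{(PHW3)} by identifying $\Hom_{\At}(\Pt_\alpha,\Vt_\beta)$ with $e_\alpha\Vt_\beta$ and using the taut-path basis of Proposition~\ref{prop:Vtaut} together with Corollary~\ref{cor:GDKD-Cor3.9} to see it is free of rank $0$ or $1$ over $B_\beta$. No gaps.
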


\begin{proof}
$(\mathsf{PHW1})$:   This is proven in Lemma~\ref{lem:Kfiltration}.
\smallskip

\noindent $(\mathsf{PHW2})$: Proposition~\ref{prop:endV} implies that $\End(\Vt_{\alpha})$ is a positively graded polynomial ring.

\smallskip
\noindent $(\mathsf{PHW3})$:
Let $\mathbbm{y}=\mathbbm{x}_{\beta}^c$.
Fix a choice of taut path $p^{\alpha}_{\gamma}$ from $\gamma$ to $\a$ for each $\alpha,\gamma \in \cal{P}$ (some of which will be zero in $\At(\cal{V})$). The set $\{ p^{\beta}_{\delta} \mid \delta \in \cal{F}\cap\cal{B}_{\mathbbm{x}_{\beta}}\}$ forms a basis of $\Vt_{\beta}$ as a free $\Z[t_{\yy}]$-module and any $t_i$ for $i \in \mathbbm{x}_{\beta}$ acts by zero on $\Vt_{\beta}$.

Now, any $\At$-module map $f\maps \Pt_{\alpha} \to \Vt_{\beta}$ sends the generator $e_{\alpha}$ of $\Pt_{\alpha}$ to some element of $\Vt_{\beta}$ which is equal to its product on the left with $e_{\alpha}$, and any such element of $\Vt_{\beta}$ uniquely determines an $\At$-module map $f\maps \Pt_{\alpha} \to \Vt_{\beta}$. If $\alpha \notin \cal{F}\cap \cal{B}_{\mathbbm{x}_{\beta}}$, all such elements of $\Vt_{\beta}$ are zero; otherwise, these elements $c p_{\alpha}^{\beta}$ for $c \in \Z[t_1,\ldots,t_n]$ form a rank one $\Z[t_1,\ldots,t_n]$-submodule of $\Vt_{\beta}$.

\end{proof}

Theorem~\ref{thm:highestwt-qh} and the fact that $\Bt(\cal{V}) \cong \At(\cal{V}^{\vee})$ then give us the following corollary.

\begin{corollary}\label{cor:ABPQH}
For a polarized arrangement $\cal{V}$, the algebras $\At(\cal{V})$ and $\Bt(\cal{V})$ are polynomial quasi hereditary.
\end{corollary}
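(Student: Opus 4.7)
The plan is to assemble the corollary from the three ingredients already in hand, with essentially no additional work required. First, I would apply Theorem~\ref{thm:affine} to the polarized arrangement $\cal{V}$ itself: this tells us that the category $\At(\cal{V})\gmod$ is a polynomial highest weight category, with the weight poset $\Pi=\cal{P}$ ordered as in Section~\ref{subsec:partial_order} and with standard objects the modules $\Vt_\alpha$.

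Next, I would invoke Theorem~\ref{thm:highestwt-qh} (Kleshchev's equivalence), together with the fact established earlier in this section that $\At(\cal{V})$ is a left Noetherian Laurentian algebra. This equivalence converts the polynomial highest weight structure on the module category into the assertion that $\At(\cal{V})$ is itself polynomial quasi hereditary. This settles the claim for $\At(\cal{V})$.

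For $\Bt(\cal{V})$, I would use the graded algebra isomorphism $\Bt(\cal{V}) \cong \At(\cal{V}^{\vee})$ recalled from \cite[Corollary 4.15]{Gale}. Applying the previous two steps to the Gale dual $\cal{V}^{\vee}$ (which is itself a polarized arrangement) gives that $\At(\cal{V}^{\vee})$, and hence $\Bt(\cal{V})$, is polynomial quasi hereditary with standard modules indexed by $\cal{P}^{\vee}=\cal{P}$ and given by $\Vt_\alpha^{\vee}$. There is no real obstacle here; the entire content has been packaged into the theorem, the abstract equivalence, and Gale duality, so the proof is just a citation chain.
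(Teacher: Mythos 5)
Your proposal is correct and matches the paper's argument exactly: the paper also deduces the corollary from Theorem~\ref{thm:affine} via Kleshchev's equivalence (Theorem~\ref{thm:highestwt-qh}), handling $\Bt(\cal{V})$ through the isomorphism $\Bt(\cal{V}) \cong \At(\cal{V}^{\vee})$.
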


The proof of Theorem~\ref{thm:affine} also implies the following corollary that is slightly stronger than the usual affine highest weight condition of $\Hom(\Vt_{\alpha},\Vt_{\beta})=0$ for $\alpha \nleq \beta$.
\begin{corollary}
For any $\beta \in \cal{P}$ and $\alpha\in \cal{F}\setminus \cal{B}_{\mathbbm{x}_{\beta}}$, then
$
\Hom(\Vt_{\alpha},\Vt_{\beta})=0.
$
\end{corollary}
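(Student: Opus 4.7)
The plan is to reduce the statement to a fact that essentially appears already inside the proof of $(\mathsf{PHW3})$ in Theorem~\ref{thm:affine}: namely, that $e_\alpha \Vt_\beta = 0$ whenever $\alpha \notin \cal{F}\cap\cal{B}_{\mathbbm{x}_\beta}$. Indeed, by Proposition~\ref{prop:Vtaut} the standard module $\Vt_\beta$ is free over $\Z[t_{\yy}]$ (with $\yy = \mathbbm{x}_\beta^c$) on the taut paths $p^{\beta}_{\delta}$ indexed by $\delta \in \cal{F}\cap\cal{B}_{\mathbbm{x}_\beta}$. Left multiplication by the primitive idempotent $e_\alpha$ acts as $e_\alpha \cdot p^{\beta}_\delta = \delta_{\alpha,\delta}\, p^{\beta}_\delta$, so the submodule $e_\alpha \Vt_\beta$ is zero unless $\alpha$ itself lies in $\cal{F}\cap\cal{B}_{\mathbbm{x}_\beta}$.

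Next I would observe that any $\At$-module homomorphism $f \colon \Vt_\alpha \to \Vt_\beta$ pulls back along the natural quotient $\Pt_\alpha \twoheadrightarrow \Vt_\alpha$ to an $\At$-module map $\tilde{f} \colon \Pt_\alpha \to \Vt_\beta$. As in the proof of $(\mathsf{PHW3})$, such a map is determined by $\tilde{f}(e_\alpha) = e_\alpha \tilde{f}(e_\alpha) \in e_\alpha \Vt_\beta$. By the previous paragraph, for $\alpha \in \cal{F}\setminus\cal{B}_{\mathbbm{x}_\beta}$ this group is zero, so $\tilde{f} = 0$ and consequently $f = 0$.

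The only edge case to mention is when $\alpha \in \cal{F}\setminus\cal{P}$, so that $\alpha$ is feasible but unbounded; here $e_\alpha = 0$ in $\At(\cal{V})$ by relation $\mathsf{A1}$, so $\Pt_\alpha = 0$ and hence $\Vt_\alpha = 0$, making the statement trivial. There is no substantive obstacle in this argument: the real content is in Proposition~\ref{prop:Vtaut} (already established) identifying the taut-path basis of $\Vt_\beta$, and in the idempotent decomposition used to extract the $e_\alpha$-component. The corollary is therefore essentially a one-line consequence of material already in place, and the proof fits on a few lines.
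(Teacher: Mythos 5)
Your proposal is correct and follows exactly the route the paper intends: the corollary is stated as a consequence of the proof of Theorem~\ref{thm:affine}, specifically the computation in $(\mathsf{PHW3})$ showing that $\Hom(\Pt_{\alpha},\Vt_{\beta})=e_{\alpha}\Vt_{\beta}=0$ when $\alpha\notin\cal{F}\cap\cal{B}_{\mathbbm{x}_{\beta}}$, combined with the surjection $\Pt_{\alpha}\twoheadrightarrow\Vt_{\alpha}$. Your handling of the unbounded case via relation $\mathsf{A1}$ is also fine.
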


By Kleshchev~\cite[Prop 5.6]{Klesh-affine}, in a highest weight category the proper standard module can be computed as
\[
\bar{\Delta}_{\alpha} \cong \Delta(\alpha)/\Delta(\alpha) {\rm rad}(B_{\alpha}).
\]
For $\At(\cal{V})$ the endomorphism algebra $B_{\alpha}$ of $\Vt_{\alpha}$ is isomorphic to the polynomial ring $\Z[t_{\yy}]$ for $\mathbbm{y} = \mathbbm{x}_{\alpha}^c$ by Proposition~\ref{prop:endV}.  Hence, ${\rm rad}(B_{\alpha})$ is the ideal of strictly positive degree polynomials.  This implies
$
\bar{\Delta}_{\alpha}
= \Vt_{\alpha} / \Vt_{\alpha} \cdot (\Z[t_{\yy}])_{>0}.
$
Proposition~\ref{prop:Vtaut} then implies that the proper standard module $\bar{\Delta}_{\alpha}$ for $\At(\cal{V})$ is a free $\Z$-module with basis the taut paths to $\alpha$ from elements of $\cal{F}\cap \cal{B}_{\mathbbm{x}_{\alpha}}$.  Then \cite[Lemma 5.21]{Gale} implies the following.

\begin{corollary}
After passing to $\R$ or $\Q$ as appropriate, the proper standard module $\bar{\Delta}_{\alpha}$ of $\At(\cal{V})$ is isomorphic to the inflation of the standard module $V_{\alpha}$ for $A(\cal{V})$   along the projection map
$
\At(\cal{V}) \to A(\cal{V}).
$
\end{corollary}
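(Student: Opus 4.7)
The plan is to combine the explicit description of $\bar{\Delta}_\alpha$ just established with the description of $V_\alpha$ from \cite[Lemma 5.21]{Gale}, verifying that the two agree after pulling the $A(\cal{V})$-action back along $\pi$.

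First, I would note that the discussion preceding the corollary shows that, after passing to $\R$ or $\Q$ as appropriate, $\bar{\Delta}_\alpha$ is a free module with basis given by the images of the taut paths $p^\alpha_\delta$ for $\delta \in \cal{F} \cap \cal{B}_{\xx_\alpha}$. Crucially, every $t_i$ acts as zero on $\bar{\Delta}_\alpha$: for $i \in \xx_\alpha$ this is the first half of Lemma~\ref{lem:Xnozero}, and for $i \in \yy := \xx_\alpha^c$ it follows from the defining quotient $\bar{\Delta}_\alpha = \Vt_\alpha/\Vt_\alpha \cdot (\Z[t_\yy])_{>0}$.

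Next, the kernel of $\pi \maps \At(\cal{V}) \to A(\cal{V})$ is the two-sided ideal generated by $j(\Sym(V^\perp)_{>0})$, whose generators are linear combinations $\sum_i c_i t_i$ with $(c_i) \in V^\perp$. Since each $t_i$ individually acts as zero on $\bar{\Delta}_\alpha$, so does this ideal, and hence the $\At(\cal{V})$-action factors uniquely through $\pi$. Thus $\bar{\Delta}_\alpha$ is the inflation along $\pi$ of a well-defined $A(\cal{V})$-module $M$.

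Finally, I would identify $M$ with $V_\alpha$ using \cite[Lemma 5.21]{Gale}, which describes $V_\alpha$ as the free module spanned by taut paths from $\cal{F} \cap \cal{B}_{\xx_\alpha}$ to $\alpha$, with each idempotent $e_\beta$ acting by projection onto the source $\beta$ and each arrow generator acting by path-prepending. Sending each basis taut path in $M$ to the corresponding basis element of $V_\alpha$ yields a degree-preserving bijection; its $\At(\cal{V})$-equivariance reduces, via Corollary~\ref{cor:GDKD-Cor3.9} and Proposition~\ref{prop:GDKD-Prop3.9}, to checking that the arrow generators act identically on both sides. The main subtlety will be handling non-taut compositions, whose taut reductions produce residual $t_i$ factors; these must act as zero on both $M$ and $V_\alpha$, which holds on $M$ by construction and on $V_\alpha$ by the parallel argument in \cite[Lemma 5.21]{Gale}.
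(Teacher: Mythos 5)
Your proposal is correct and follows essentially the same route as the paper, which derives the explicit description $\bar{\Delta}_{\alpha} = \Vt_{\alpha}/\Vt_{\alpha}\cdot(\Z[t_{\yy}])_{>0}$ with free basis the taut paths from $\cal{F}\cap\cal{B}_{\xx_{\alpha}}$ and then simply cites \cite[Lemma 5.21]{Gale} for the matching description of $V_{\alpha}$. The only difference is that you make explicit the (correct, and worth recording) verification that every $t_i$, and hence the kernel of $\pi$ generated by $j(\Sym(V^{\perp})_{>0})$, annihilates $\bar{\Delta}_{\alpha}$, a step the paper leaves implicit.
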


\subsection{Further properties of hypertoric convolution algebras} \label{subsec:affine-cellular}

\begin{definition} \label{def:hypertoric-involution}
The algebra $\At(\cal{V})$  has a canonical anti-automorphism $\psi^{\cal{V}}$ fixing idempotents and taking $p(\alpha,\beta)$ to $p(\beta,\alpha)$ for all $\alpha \leftrightarrow \beta$ in $\cal{P}$.  Under the identification $\At(\cal{V}) \cong \Bt(\cal{V}^{\vee})$ this corresponds to identifying $\tilde{R}_{\alpha\beta}^{\vee}$ with $\tilde{R}_{\beta\alpha}^{\vee}$ for all $\alpha,\beta \in \cal{P}=\cP^{\vee}$, where $\tilde{R}_{\alpha\beta}^{\vee}$ is defined as in \eqref{eq:Rab} using $\cV^{\vee}$.
\end{definition}

The following is immediate from Corollary~\ref{cor:affineQHisSmooth} and Corollary~\ref{cor:ABPQH}.
\begin{corollary} \label{cor:hypSmooth}
The algebras $\At(\cal{V})$ and $\Bt(\cal{V})$ are homologically smooth.
\end{corollary}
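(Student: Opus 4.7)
The plan is to verify the three hypotheses of Corollary~\ref{cor:affineQHisSmooth} for both $\At(\cV)$ and $\Bt(\cV)$, the first of which is essentially already recorded. First, Corollary~\ref{cor:ABPQH} says that both algebras are polynomial (hence affine) quasi hereditary, which in particular makes them $\mf{B}$-quasi hereditary for $\mf{B}$ the class of finitely generated positively graded commutative algebras. Second, the anti-automorphism $\psi^{\cV}$ of Definition~\ref{def:hypertoric-involution} provides an isomorphism $\At(\cV)\cong \At(\cV)^{\op}$, and similarly for $\Bt(\cV)$. So the only thing to check is that $H\otimes H^{\op}$ is left Noetherian for $H=\At(\cV)$ and for $H=\Bt(\cV)$.

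For this I would repeat the argument used earlier to prove that $\At(\cV)$ is left Noetherian. Consider the commutative subring
\[
S \;:=\; \prod_{\a\in \cP} e_{\a}\At(\cV)e_{\a} \;\subset\; \At(\cV).
\]
Each factor is a quotient of the polynomial ring $\Bbbk[t_1,\dots,t_n]$, hence Noetherian, and $\cP$ is finite, so $S$ is Noetherian. Then $S\otimes S^{\op}$ is again a finite direct product of tensor products of finitely generated commutative $\Bbbk$-algebras, hence a finitely generated commutative $\Bbbk$-algebra, hence Noetherian by the Hilbert basis theorem. Finally, $\At(\cV)\otimes \At(\cV)^{\op}$ is finitely generated as a left module over $S\otimes S^{\op}$, because $\At(\cV)$ is finitely generated as a left $S$-module by a fixed choice of taut paths $p_{\a}^{\b}$ for $\a,\b\in\cP$. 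By \cite[Corollary 1.5]{GoodearlWarfield}, $\At(\cV)\otimes \At(\cV)^{\op}$ is therefore left Noetherian. The identical argument applies to $\Bt(\cV)$ (alternatively, invoke the isomorphism $\Bt(\cV)\cong \At(\cV^{\vee})$).

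With all three hypotheses of Corollary~\ref{cor:affineQHisSmooth} verified, we conclude that $\At(\cV)$ and $\Bt(\cV)$ are homologically smooth. There is no real obstacle here: the non-trivial inputs (finite global dimension, existence of a balanced anti-involution, polynomial quasi hereditary structure) are all established before the corollary, and the only small point to spell out is Noetherianity of the enveloping algebra, which reduces to Noetherianity of a finite product of finitely generated commutative rings exactly as in the earlier lemma.
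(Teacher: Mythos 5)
Your proof is correct and follows the same route as the paper, which simply declares the corollary ``immediate from Corollary~\ref{cor:affineQHisSmooth} and Corollary~\ref{cor:ABPQH}'' (with $H\cong H^{\op}$ supplied by $\psi^{\cal{V}}$). The only difference is that you explicitly verify left Noetherianity of $H\otimes H^{\op}$ by rerunning the $S=\prod_{\a}e_{\a}\At(\cal{V})e_{\a}$ argument on $S\otimes S^{\op}$, a detail the paper leaves implicit; your verification is valid.
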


Note that because all the simple $\At(\cal{V})$-modules $L(\alpha)$ are one dimensional, concentrated in degree zero, the anti-automorphism $\psi^{\cal{V}}$ is balanced in the sense of Section~\ref{subsec:affine-cellular}.

\begin{corollary}\label{cor:hyp-affine-cellular}
The algebras $\At(\cal{V})$ and $\Bt(\cal{V})$ are affine cellular.
\end{corollary}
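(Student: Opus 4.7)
The plan is to simply combine the results already assembled in the paper: Kleshchev's Proposition~\ref{prop:highestwt-qh}-adjacent statement (Proposition 9.8 of \cite{Klesh-affine}, reproduced above) says that any affine quasi hereditary algebra equipped with a balanced anti-involution is affine cellular in the sense of \cite{KX}. Thus the task reduces to verifying two hypotheses for $\At(\cal{V})$: affine quasi heredity and existence of a balanced anti-involution. The first is Corollary~\ref{cor:ABPQH}, which gives the stronger polynomial quasi hereditary property. For the second, Definition~\ref{def:hypertoric-involution} introduces the anti-automorphism $\psi^{\cal{V}}$ fixing idempotents and swapping $p(\alpha,\beta) \leftrightarrow p(\beta,\alpha)$, and the paragraph immediately preceding the corollary observes that this $\psi^{\cal{V}}$ is balanced because each simple module $L(\alpha)$ is one-dimensional and concentrated in degree zero, so $L(\alpha)^{\circledast} \cong L(\alpha) = q^0 L(\alpha)$.

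For $\Bt(\cal{V})$, I would then apply the isomorphism $\Bt(\cal{V}) \cong \At(\cal{V}^{\vee})$ stated in the excerpt, which transports both the polynomial quasi hereditary structure and the balanced anti-involution from $\At(\cal{V}^{\vee})$ to $\Bt(\cal{V})$. Invoking Proposition 9.8 of \cite{Klesh-affine} one more time then gives affine cellularity of $\Bt(\cal{V})$.

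There is essentially no obstacle here; the work has all been done in the preceding sections, and the corollary is a one-line consequence. If any subtlety arises, it would be in confirming that the isomorphism $\Bt(\cal{V}) \cong \At(\cal{V}^{\vee})$ actually intertwines the anti-involutions on the two sides, but this is explicitly recorded in Definition~\ref{def:hypertoric-involution} via the identification of $\tilde{R}_{\alpha\beta}^{\vee}$ with $\tilde{R}_{\beta\alpha}^{\vee}$.
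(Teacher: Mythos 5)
Your proof is correct and follows exactly the route the paper intends: the corollary is an immediate consequence of Kleshchev's Proposition 9.8 (affine quasi hereditary plus balanced anti-involution implies affine cellular), combined with Corollary~\ref{cor:ABPQH} and the observation that $\psi^{\cal{V}}$ is balanced because every simple is one-dimensional and concentrated in degree zero. The transfer to $\Bt(\cal{V})$ via $\Bt(\cal{V}) \cong \At(\cal{V}^{\vee})$, including the compatibility of the anti-involutions recorded in Definition~\ref{def:hypertoric-involution}, is likewise exactly what the paper does.
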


Another consequence of Theorem~\ref{thm:affine} is that our algebras are positively graded standardly stratified algebras in the sense of \cite{Maz-standardly} (see Section~\ref{sec:ProperCostandardTilting}). This implies the following result.

\begin{theorem}[cf. Theorem 14 of \cite{Maz-standardly}]
Consider the positively graded standardly stratified algebra $\At(\cal{V})$ with associated category $\cal{T}$ of tilting modules.
 \begin{enumerate}
   \item The category $\cal{T}$ is closed with respect to direct sums and direct summands.
   \item For every  $\alpha \in \cal{P}$
   there is a unique indecomposable object $T(\alpha) \in \cal{T}$ such that there is a short exact sequence
\[
\Delta(\alpha) \hookrightarrow T(\alpha) \twoheadrightarrow {\rm Coker}
\]
with ${\rm Coker}\in \mathsf{F}(\Delta)$.
\item Every indecomposable object in $\cal{T}$ has the form $q^s T(\alpha)$ for some  $\alpha \in \cal{P}$
and $s \in \Z$.
 \end{enumerate}
 In particular,   $(T(\alpha):\Delta(\alpha))_q=1$ and $(T(\alpha):\Delta(\beta))_q=0$ for any $\beta \nleq \alpha$.
\end{theorem}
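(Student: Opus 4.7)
The plan is to reduce this to Mazorchuk's Theorem 14 of \cite{Maz-standardly} by verifying that $\At(\cal{V})$ satisfies the hypotheses of that theorem, and then extract the multiplicity statements from the structure of tilting modules in an affine highest weight category.

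First I would note that Theorem~\ref{thm:affine} together with Corollary~\ref{cor:ABPQH} establishes that $\At(\cal{V})\gmod$ is a polynomial highest weight category and that $\At(\cal{V})$ is polynomial quasi hereditary. As remarked in Section~\ref{sec:ProperCostandardTilting}, polynomial quasi hereditary algebras are positively graded standardly stratified in the sense of Mazorchuk, and the class of tilting modules $\cal{T} = \mathsf{F}(\Delta) \cap \mathsf{F}(\bar{\nabla})$ defined here coincides with Mazorchuk's class of tilting modules. In particular, the Noetherian Laurentian condition on $\At(\cal{V})$ ensures we work in a setting where Mazorchuk's construction of tilting modules and the associated uniqueness argument apply.

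Statements (1), (2), and (3) are then direct applications of \cite[Theorem 14]{Maz-standardly}. For (1), closure under sums and summands is formal from the definition of $\cal{T}$ together with the Krull-Schmidt property for Noetherian Laurentian categories. For (2), one constructs $T(\alpha)$ inductively, starting from $\Delta(\alpha)$ and iteratively extending by $\Delta(\beta)$ for $\beta > \alpha$ to kill obstructions to lying in $\mathsf{F}(\bar{\nabla})$; uniqueness up to isomorphism comes from $\Ext^1(\mathsf{F}(\Delta), \mathsf{F}(\bar{\nabla}))=0$ (which in our setting follows from \cite[Lemma 7.3]{Klesh-affine} and equation~\eqref{eq:Ext-dual-standard}). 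For (3), any indecomposable object of $\cal{T}$ must have a unique minimal standard submodule $\Delta(\alpha)$ (for some $\alpha$, up to shift), and the uniqueness in (2) then identifies it with $q^s T(\alpha)$.

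For the multiplicity assertions, the short exact sequence in (2) immediately gives $(T(\alpha):\Delta(\alpha))_q \geq 1$. To show the multiplicity is exactly $1$ and that $(T(\alpha):\Delta(\beta))_q=0$ for $\beta \nleq \alpha$, I would use the BGG-type reciprocity from Section~\ref{sec:ProperCostandardTilting} and the Ext pairing: by \cite[Lemma 7.5]{Klesh-affine}, standard filtration multiplicities in any $\Delta$-filtered module $M$ are computed as $(M:\Delta(\beta))_q = \dim_{q^{-1}} \Hom(M,\bar{\nabla}(\beta))$. Since $T(\alpha) \in \mathsf{F}(\bar{\nabla})$ starts (as a submodule) with $\Delta(\alpha)$, and the remaining $\Delta$-filtration subquotients $\Delta(\beta)$ arise from the inductive construction only for $\beta > \alpha$, one obtains that $\Hom(T(\alpha), \bar{\nabla}(\beta))$ vanishes unless $\beta \leq \alpha$, and equals $\Bbbk$ (concentrated in degree $0$) when $\beta = \alpha$. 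The main obstacle, as elsewhere in this theory, is keeping track of the interplay between the order on $\Pi$ and the direction of the filtration in a graded setting; however, since everything here is inherited from Mazorchuk's general results once positivity of the grading and polynomial quasi hereditariness have been established, no new combinatorial input about the hyperplane arrangement is needed beyond what is already proved in Section~\ref{sec:HypertoricStandardStructure}.
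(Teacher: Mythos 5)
Your overall route is the same as the paper's: the paper offers no argument beyond observing that Theorem~\ref{thm:affine} makes $\At(\cal{V})$ positively graded standardly stratified in Mazorchuk's sense, so that parts (1)--(3) are literally Theorem 14 of \cite{Maz-standardly}; your verification of the hypotheses, and your appeal to $\Ext^1(\mathsf{F}(\Delta),\mathsf{F}(\bar{\nabla}))=0$ for uniqueness, match this.

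There is, however, a direction error in your account of the universal-extension construction, and your derivation of the multiplicity statements is internally inconsistent as written. With the conventions of this paper (see $(\mathsf{PHW1})$: the kernel of $P(\alpha)\twoheadrightarrow\Delta(\alpha)$ is filtered by shifts of $\Delta(\beta)$ with $\beta>\alpha$), one has $\Ext^1(\Delta(\gamma),\Delta(\alpha))=0$ unless $\gamma<\alpha$. The construction of $T(\alpha)$ therefore extends $\Delta(\alpha)$ by copies of $\Delta(\beta)$ with $\beta<\alpha$ --- these are the only possible obstructions to membership in $\mathsf{F}(\bar{\nabla})$ --- not $\beta>\alpha$ as you write. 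If the added subquotients really were $\Delta(\beta)$ with $\beta>\alpha$, then $(T(\alpha):\Delta(\beta))_q=\dim_{q^{-1}}\Hom(T(\alpha),\bar{\nabla}(\beta))$ would be nonzero for some $\beta>\alpha$, contradicting exactly the assertion $(T(\alpha):\Delta(\beta))_q=0$ for $\beta\nleq\alpha$ that you are trying to prove; in particular your stated conclusion that $\Hom(T(\alpha),\bar{\nabla}(\beta))$ vanishes unless $\beta\leq\alpha$ does not follow from your premise. Once the inequality is reversed, the multiplicity claims do follow as you indicate from \cite[Lemma 7.5]{Klesh-affine}.
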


Tilting modules for affine highest weight categories satisfying an additional assumption are studied in \cite[Lemma 3.8]{Fuj-tilt}.  By the results above, the categories $\At(\cal{V})\gmod$ satisfy this additional assumption, so that the results in that paper also apply to $\At(\cal{V})$.

\subsection{Grothendieck groups of algebras \texorpdfstring{$\Bt(\cal{V})$}{B-tilde(V)} and \texorpdfstring{$B(\cal{V})$}{B(V)}}
Recall that the algebras $\Bt(\cal{V})$ are graded flat deformations
\[
S:={\rm Sym}(V) \longrightarrow \Bt(\cal{V}) \longrightarrow B(\cal{V})
\]
of finite-dimensional algebras $B(\cal{V})$ over ${\rm Sym}(V)$.   In \cite[Section 5]{Gale} it is shown that the algebras $B(\cal{V})$ are quasi hereditary   with finite-dimensional projective, standard, costandard, and simple modules
$(P_{\alpha}, V_{\alpha}, \Lambda_{\alpha}, L_{\alpha})$ for $\alpha \in \cal{P}$.
Then we have functors
\[
\pr \maps \Bt(\cal{V})\gmod \to B(\cal{V})\gmod, \qquad \infl \maps B(\cal{V})\gmod \to \Bt(\cal{V})\gmod
\]
where, for $V\in \Bt(\cal{V})\gmod$, we write $\pr(V)$ (the projection of $V$) for the $B(\cal{V})$-module
$V \otimes_{\Bt(\cal{V})} B(\cal{V})$, and for $W\in B(\cal{V})\gmod$, we write $\infl(W)$ (the inflation of $W$) for the $\Bt(\cal{V})$-module with underlying vector space $W$ on which elements of $\Bt(\cal{V})$ act by their images under the quotient map $\Bt(\cal{V}) \to \B(\cal{V})$.  We have that $\pr \circ \infl = \Id$ and that $\pr$ is left adjoint to $\infl$; the functors restrict to give
\[
\pr \maps \Bt(\cal{V})\pmod \to B(\cal{V})\pmod, \qquad \infl \maps B(\cal{V})\fmod \to \Bt(\cal{V})\fmod.
\]
The functor $\pr$ commutes with the duality $\#$ from Section~\ref{subsec:bilinearform}, while $\infl$ commutes with the duality $\circledast$ from  Section~\ref{subsec:QHinvolution}.

The simple modules for $\Bt(\cal{V})$ and $B(\cal{V})$, denoted $L_{\alpha}$, are both one-dimensional. Let $\bar{\Vt}_{\alpha}$ and $\bar{\tilde{\nabla}}_{\alpha}$ denote the proper standard and proper costandard modules over $\Bt(\cal{V})$ associated to $\alpha$. We have that
\[
\pr \Pt_{\alpha} = P_{\alpha},
\quad \pr \Vt_{\alpha} = \pr \bar{\Vt}_{\alpha} = V_{\alpha},
\quad \pr \bar{\tilde{\nabla}}_{\alpha} = \Lambda_{\alpha},
\quad \pr L_{\alpha} = L_{\alpha},
\]
\[
\infl P_{\alpha} =\Pt_{\alpha}/{\rm rad}({\rm Sym}(V)),
\quad \infl V_{\alpha} = \bar{\Vt}_{\alpha},
\quad \infl \Lambda_{\alpha} = \bar{\tilde{\nabla}}_{\alpha},
\quad \infl L_{\alpha} = L_{\alpha}.
\]
The above discussion implies the following proposition.
\begin{proposition} \label{prop:pr-infl}
The functors $\pr \maps \Bt(\cal{V}) \to B(\cal{V})$ and $\infl \maps B(\cal{V}) \to \Bt(\cal{V})$ induce isomorphisms
\begin{equation}
[\pr] \maps K_0(\Bt(\cal{V})) \to K_0(B(\cal{V})), \qquad
[\infl] \maps G_0(B(\cal{V})) \to G_0(\Bt(\cal{V})).
\end{equation}
Under this isomorphism,
\[
(-,-)_{\Bt(\cal{V})} = \dim_q({\rm Sym}(V))\cdot  (-,-)_{B(\cal{V})} = \frac{1}{(1-q^2)^k}\cdot  (-,-)_{B(\cal{V})}
\]
where $(-,-)_{\Bt(\cal{V})}$ denotes the form \eqref{eq:form-HOM} on $K_0(\Bt(\cal{V}))$  and $(-,-)_{B(\cal{V})}$ the analogous form on $K_0(B(\cal{V}))$.
\end{proposition}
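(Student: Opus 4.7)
The plan is to address the three assertions separately. The two Grothendieck group isomorphisms follow directly from comparing preferred bases on each side, after which the form identity reduces by bilinearity to a single graded dimension calculation that invokes the flat deformation structure of $\Bt(\cal{V})$ over $\Sym(V)$.

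For $[\pr]$, I would combine Corollary~\ref{cor:ABPQH} with the discussion of Section~\ref{sec:Grothendieck-affine} to identify $K_0(\Bt(\cal{V}))$ as a free $\Z[q,q^{-1}]$-module on the classes $\{[\Pt_\alpha] \mid \alpha\in \cal{P}\}$, and use that $B(\cal{V})$ is finite-dimensional quasi hereditary so that $K_0(B(\cal{V}))$ is free on $\{[P_\alpha]\mid\alpha\in \cal{P}\}$. Since the paper already records $\pr \Pt_\alpha = P_\alpha$, $[\pr]$ visibly sends basis to basis and is an isomorphism. The same bookkeeping handles $[\infl]$: both $G_0$'s are free on the classes of (common) simples $\{[L_\alpha]\}$, and $\infl L_\alpha = L_\alpha$ shows the induced map is an isomorphism.

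For the form identity, both forms are $\Z((q))$-bilinear, so it suffices to check the equality on the basis $\{[\Pt_\alpha]\}$ transported via $[\pr]$. Because $\Pt_\alpha$ is projective only the $j=0$ term survives in the alternating sum \eqref{eq:form-HOM}, and because $\psi^{\cal{V}}$ fixes each primitive idempotent $e_\alpha$ (Definition~\ref{def:hypertoric-involution}), there is a natural isomorphism $(\Pt_\alpha)^{\#} \cong \Pt_\alpha$ of left $\Bt$-modules. This reduces the calculation to
\[
([\Pt_\alpha], [\Pt_\beta])_{\Bt} = \dim_q\bigl(e_\alpha\,\Bt(\cal{V})\,e_\beta\bigr), \qquad ([P_\alpha], [P_\beta])_B = \dim_q\bigl(e_\alpha\,B(\cal{V})\,e_\beta\bigr),
\]
so the claim becomes a ratio identity between these two graded dimensions.

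The main obstacle is establishing that $e_\alpha \Bt(\cal{V}) e_\beta = \tilde{R}_{\alpha\beta}$ is a graded free $\Sym(V)$-module whose reduction modulo the irrelevant ideal equals $e_\alpha B(\cal{V}) e_\beta = R_{\alpha\beta}$. I would deduce this from the flat deformation statement \eqref{eq:Bt-flat} of \cite[Theorem 8.7]{BLPPW} combined with the Stanley--Reisner presentation of Definition~\ref{def:BStyle}, arguing as in the Gale dual of \cite[Lemma 4.2]{Gale} (finite generation in each degree plus flatness over a nonnegatively graded polynomial ring forces graded freeness). Once the freeness is in hand, $\dim_q(e_\alpha \Bt(\cal{V}) e_\beta) = \dim_q(\Sym(V)) \cdot \dim_q(e_\alpha B(\cal{V}) e_\beta)$, and $\dim_q \Sym(V) = 1/(1-q^2)^k$ since $\Sym(V)$ is polynomial in $k = \dim V$ generators of degree $2$, completing the proof.
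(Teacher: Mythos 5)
Your proposal is correct and follows essentially the same route as the paper, which simply asserts that the proposition follows from the preceding discussion (the identities $\pr \Pt_{\alpha} = P_{\alpha}$, $\infl L_{\alpha} = L_{\alpha}$, and the flat deformation \eqref{eq:Bt-flat}). You merely make explicit the one point the paper leaves implicit, namely that $e_{\alpha}\Bt(\cal{V})e_{\beta} = \tilde{R}_{\alpha\beta}$ is graded free over $\Sym(V)$ with reduction $R_{\alpha\beta}$ (the Gale dual of \cite[Lemma 4.2]{Gale}, already invoked in the proof of Lemma~\ref{lem:Kfiltration}), which gives the factor $\dim_q\Sym(V) = 1/(1-q^2)^k$.
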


\begin{remark}
The inflation functor produces $\Bt(\cal{V})$-modules from $B(\cal{V})$-modules, but it cannot produce the standard modules over $\Bt(\cal{V})$ (standard modules over $\Bt(\cal{V})$ are infinite-dimensional whereas the corresponding modules over $B(\cal{V})$ are finite-dimensional, and the inflation functor preserves the underlying vector spaces of the modules it acts on).
\end{remark}

\begin{remark} \label{rem:form-compare}
In Section~\ref{sec:bases-gl} we identify $K_0(\Bt(\cal{V}))$ for a certain choice of polarized arrangement with a weight space of the $U_q(\mf{gl}(1|1))$-module $V^{\otimes n}$.  Under this identification, the form coming from the finite-dimensional algebra $B(\cal{V})$ agrees with the form on $V^{\otimes n}$ coming from the Hecke algebra under super Schur-Weyl duality, see for example \cite[Section 3]{Sar-tensor}.
\end{remark}

\subsubsection{Grothendieck groups and cohomology}\label{sec:BtGGCohomology}

It follows from the above discussion that $K_0(\Bt(\cal{V}))$ and $K_0(B(\cal{V}))$ are free $\Z[q,q^{-1}]$-modules of the same rank, with bases given by $\{[\Pt_{\alpha}]\}$ and $\{[P_{\alpha}]\}$ respectively. Similarly, the Grothendieck group of finitely generated (equivalently, finite-dimensional) ungraded $B(\cal{V})$-modules is a free $\Z$-module with basis given by $\{[P_{\alpha}]\}$. Identifying this latter Grothendieck group with $K_0(\Bt(\cal{V})) \otimes_{\Z[q,q^{-1}]} \Z$ where $q$ acts as $1$ on $\Z$ and we identify $[\Pt_{\alpha}]$ with $[P_{\alpha}]$, it follows from \cite[Theorem 1.2(5)]{HypertoricCatO} that if the hypertoric variety $\M_{\cal{V}}$ is smooth then
\[
K_0(\Bt(\cal{V})) \otimes_{\Z[q,q^{-1}]} \C \cong H^{2k}_{\mathbb{T}}(\M_{\cal{V}}; \C).
\]
The class of $[\Pt_{\alpha}]$ is identified with the equivariant cohomology class of $X_{\alpha}$ for $\alpha \in \cal{P}$.

\subsection{Canonical bases for hypertoric convolution algebras} \label{sec:can-aqh}
Recall that $\bar{}$ denotes the bar involution of $\Z[q,q^{-1}]$ with $\bar{q}=q^{-1}$.
Following Webster~\cite{Web-canonical}\footnote{Webster works with sesquilinear forms $\la,\ra$.  Here we prefer to use bilinear forms, so our presentation is adapted accordingly.  The sesquilinear form $\la-,-\ra$ is given from the bilinear form $(-,-)$ by \[
\la u,v \ra  = ( \psi(u),v )  = \overline{( u,\psi^{\ast}(v) ) }
\]

} , let $V$ be a free $\Z[q,q^{-1}]$-module. A \emph{pre-canonical structure} on $V$ is a choice of
\begin{enumerate}[a.)]
  \item a ``bar involution" $\psi \maps V \to V$ which is $\Z[q,q^{-1}]$-antilinear (so that $\psi(fv) = \bar{f}\psi(v)$ for $f\in \Z[q,q^{-1}]$ and $v\in V$) and satisfies $\psi^2=1$,
  \item \label{eq:sesqi-flip} a bilinear symmetric inner product $( -, - ) \maps V \times V \to \Z((q^{}))$,

 \item a partially ordered index set $(C,\leq)$ indexing a ``standard basis'' $\{v_c\}$   such that
\[
 \psi(v_c) \in v_c + \sum_{c' < c} \Z[q,q^{-1}]\cdot v_{c'}.
\]
\end{enumerate}

We call a basis $\{b_c\}$ of $V$ \emph{canonical} if
\begin{enumerate}[I.)]
  \item \label{pcan-invariant}each vector $b_c$ is invariant under $\psi$;

  \item \label{pcan-lower}each vector $b_c$ in the basis satisfies
$
b_c - v_c \in \sum_{c'<c} \Z[q,q^{-1}]\cdot v_{c'};
$

\item \label{pcan-orthog} the vectors $b_c$ are almost orthogonal, i.e. $(b_c,b_{c'}) \in \delta_{cc'} + q\Z[[q]]$.
\end{enumerate}
By \cite{Web-canonical}, a precanonical structure admits at most one canonical basis.

\subsubsection{Dual canonical basis}
Webster considers dual canonical bases using certain assumptions that do not apply for the algebras $\Bt(\cal{V})$, while noting that his results hold under more general situations. Here we spell out the setting for the notion of dual canonical basis considered here.

Given a free $\Z[q,q^{-1}]$-module $V$, let $W$ be a $\Z[q,q^{-1}]$-submodule of $V$.
Let $V$ have a canonical basis $\{(-,-), \psi, \{v_c\}, \{b_c\} \}$ on an indexing set $\Pi$
with partial order $\preceq$.

Assume that the bilinear form on $V$ restricts to give a perfect pairing
\[
(- , - ) \maps V \times W \to \Z[q,q^{-1}].
\]
This perfect pairing define a $\Z[q,q^{-1}]$-antilinear involution $\psi^{\ast} \maps W \to W$ by the formula
$
(  \psi v, w ) = \overline{( v, \psi^{\ast} w )},
$
and bases $\{v_c^{\ast}\}$ and $\{b_c^{\ast}\}$ for $W$ dual to the standard and canonical bases, i.e.
\[
(v_c,v_{c'}^{\ast}) = \delta_{cc'}, \qquad \quad (b_c,b_{c'}^{\ast}) = \delta_{cc'}.
\]
Restricting the form from $V$  to $W$ gives a symmetric bilinear form $(-,-)  \maps W \times W \to \Z[q,q^{-1}]$.

\begin{proposition}\label{prop:DualCanonical}
Under the assumptions above, the $\Z[q,q^{-1}]$-module $W$ acquires the structure of a canonical basis $\{  (-,-), \psi^{\ast}, \{v_c^{\ast}\}, \{b_c^{\ast}\} \}$ with index set  $\Pi$
and opposite partial order $\succeq$.
\end{proposition}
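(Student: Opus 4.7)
The plan is to verify in turn each axiom of the canonical basis structure on $W$, transferring data from $V$ via the perfect pairing. First, I would check that $\psi^{\ast}$ defined by $(\psi v, w) = \overline{(v, \psi^{\ast} w)}$ is a well-defined $\Z[q,q^{-1}]$-antilinear involution on $W$: the perfect pairing $V \times W \to \Z[q,q^{-1}]$ uniquely determines $\psi^{\ast} w$, antilinearity of $\psi^{\ast}$ follows by combining antilinearity of $\psi$ with the bar involution on the coefficient ring, and $(\psi^{\ast})^2 = \Id$ follows from $\psi^2 = \Id$ together with a short manipulation of the defining identity.

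Next I would verify the pre-canonical axiom for the dual standard basis $\{v_c^{\ast}\}$ of $W$ with respect to the opposite order $\succeq$. Writing $\psi(v_c) = v_c + \sum_{c' < c} a_{c,c'} v_{c'}$ and evaluating the defining identity on the pair $(v_d, v_c^{\ast})$ yields
\[
\overline{(v_d, \psi^{\ast}(v_c^{\ast}))} \;=\; (\psi v_d, v_c^{\ast}) \;=\; \delta_{dc} + a_{d,c},
\]
and expanding $\psi^{\ast}(v_c^{\ast})$ in the dual basis gives a unitriangular expression supported on indices $d > c$, which is exactly the required form in the opposite order. For the canonical basis $\{b_c^{\ast}\}$, condition $\mathsf{I}$ is immediate: $\psi(b_c) = b_c$ combined with $(b_d, b_c^{\ast}) = \delta_{dc}$ forces $\psi^{\ast}(b_c^{\ast}) = b_c^{\ast}$ by perfectness of the pairing. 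For condition $\mathsf{II}$, I would expand $b_c = \sum_e K_{c,e} v_e$ with $K$ lower unitriangular in the original order; inverting the relation $(b_c, b_d^{\ast}) = \delta_{cd}$ then shows the change-of-basis matrix from $\{v_c^{\ast}\}$ to $\{b_c^{\ast}\}$ is $(K^{-1})^T$, which is lower unitriangular in the opposite order and has $1$s on the diagonal.

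The main obstacle is the almost-orthogonality condition $\mathsf{III}$, which I would handle by a Gram matrix inversion argument. Let $M_{cd} = (b_c, b_d) \in \delta_{cd} + q\Z[[q]]$ denote the Gram matrix of the canonical basis on $V$. After base-changing to $\Z((q))$, where $M$ is invertible, writing $b_c^{\ast} = \sum_e \sigma_{e,c} b_e$ and using $(b_c, b_d^{\ast}) = \delta_{cd}$ identifies $\sigma$ as $M^{-1}$. Since $M$ is symmetric, the Gram matrix of $\{b_c^{\ast}\}$ on $W$ computes as $\sigma^T M \sigma = M^{-1} M M^{-1} = M^{-1}$, which again lies in $I + q\,\mathrm{Mat}(\Z[[q]])$ via the geometric series for $(I+qN)^{-1}$. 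This yields $(b_c^{\ast}, b_d^{\ast}) \in \delta_{cd} + q\Z[[q]]$ and completes the verification.
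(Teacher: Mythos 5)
Your proposal is correct and follows essentially the same route as the paper: the pre-canonical axiom and conditions $\mathsf{I}$, $\mathsf{II}$ are deduced from the defining identity $(\psi v, w) = \overline{(v,\psi^{\ast}w)}$ and unitriangularity exactly as in the paper, and condition $\mathsf{III}$ is obtained by identifying the Gram matrix of $\{b_c^{\ast}\}$ with the inverse of the Gram matrix of $\{b_c\}$. The only cosmetic difference is that you invert $I+qN$ by a geometric series where the paper uses Cramer's rule and evaluation at $q=0$; both give the same conclusion.
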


\begin{proof}
To show we have a pre-canonical structure, we show that $\psi^{\ast}(v_{c}^{\ast}) \in v_{c}^{\ast} + \sum_{c \prec c'} \Z[q,q^{-1}] v_{c'}^*$ (here $\preceq$ refers to the order on  $\Pi$
rather than its reverse). Indeed, the $v_{c'}^*$-coefficient of the dual standard basis expansion of $\psi^{\ast}(v_{c}^{\ast})$ is $(v_{c'},\psi^{\ast}(v_{c}^{\ast}))$ and we have
\begin{align*}
&\overline{(v_{c'}, \psi^{\ast}(v_{c}^{\ast}))} \\
 &= (\psi(v_{c'}), v_{c}^{\ast}) \\
 &= (v_{c'},v_c^{\ast}) + \sum_{c'' \prec c'} \pi_{c''} ( v_{c''}, v_{c}^{\ast})
\end{align*}
for some elements $\pi_{c''} \in \Z[q,q^{-1}]$. Thus, the coefficient under consideration is $1$ if $c' = c$ and is zero unless $c \preceq c'$.

Now we establish the canonical basis conditions for $\{b_c^{\ast}\}$. Since $\psi(b_c) = b_c$, we have
\[
\overline{(b_c, \psi^{\ast}(b_{c'}^{\ast}))} = (\psi(b_c), b_{c'}^{\ast}) = ( b_c , b_{c'}^{\ast}) =
\overline{(b_c,  b_{c'}^{\ast})}
\]
so that $\psi^{\ast}(b_{c'}^{\ast}) = b_{c'}^{\ast}$ and condition~\eqref{pcan-invariant} holds.

For all  $c \in \Pi$,
$v_{c} \in b_{c} + \sum_{c' \prec c} \Z[q,q^{-1}] b_{c'}$ which follows by induction over the partial order $\prec$. We can thus compute the coefficient of $v_{c'}^{\ast}$ in $b_c^{\ast}$ as follows:
\[
(v_{c'}, b_c^{\ast}) = (b_{c'}, b_c^{\ast}) + \sum_{c'' \prec c'} \pi_{c''} (b_{c''},b_c^{\ast})
\]
for some elements $\pi_{c''} \in \Z[q,q^{-1}]$, so that the coefficient is $1$ if $c' = c$ and is zero unless $c \preceq c'$. Thus, condition~\eqref{pcan-lower} holds.

Finally, for condition~\eqref{pcan-orthog}, it suffices to show that the inverse of the matrix $A := [(b_c, b_{c'})]_{c,c'}$ has entries in $\Z[[q]]$ and reduces to the identity matrix when $q$ is set to zero. Since power series with constant term $1$ (such as the determinant of $A$) are invertible in $\Z[[q]]$, Cramer's rule shows that the entries of the inverse matrix (call it $B$) are in $\Z[[q]]$. We have $AB = BA = \id$ over $\Z[[q]]$ so the same equation holds when setting $q = 0$, but since $A$ becomes the identity matrix when $q=0$, so must $B$.

\end{proof}

In this case, we say that the canonical basis $\{  (-,-)  , \psi^{\ast}, \{v_c^{\ast}\}, \{b_c^{\ast}\} \}$ on $W$ is a dual canonical basis to the canonical basis $\{(-,-), \psi, \{v_c\}, \{b_c\} \}$ on $V$.

\subsubsection{Canonical bases for finite-dimensional hypertoric algebras}

Webster has observed in \cite[Theorem 2.6]{Web-canonical} that hypertoric category $\cal{O}$, and the associated finite-dimensional standardly Koszul algebras $B(\cal{V})$, endow $K_0(B(\cal{V}))$ with a canonical and dual canonical basis such that
\[
\psi = [\#], \quad \psi^{\ast} = [\circledast], \quad  ([M],[N] ) = \sum_{j \in \Z}(-1)^j\overline{\dim_q(\Ext^j(M,N^{\circledast}))} = \sum_{j \in \Z}(-1)^j \dim_{q^{-1}}(\Ext^j(M,N^{\circledast})),
\]
 where $v_c=[V_c]$ are the classes of standard $B(\cal{V})$-modules. The canonical basis corresponds to the classes of $\#$-self dual indecomposable projective modules and dual canonical basis is given by the classes of $\circledast$-self dual simple modules.  Further, the canonical basis of the Ringel dual pre-canonical structure (see \cite[Section 2.1]{Web-canonical}) is given by the classes of $\circledast$-self dual indecomposable tilting $B(\cal{V})$-modules from  \cite[Section 5]{Gale}.

\subsubsection{Equivariant canonical bases for deformed hypertoric algebras}

The following result is essentially contained in \cite[Theorem 2.6]{Web-canonical}, though that proof assumes properties that hold for $B(\cal{V})\pmod$ and $B(\cal{V})\fmod$ that do not hold for $\Bt(\cal{V})\pmod$ or $\Bt(\cal{V})\fmod$.  However, the natural affine analogues of these results do hold and suffice to establish the theorem suitably modified.

\begin{theorem}\label{thm:canonical}
Let $\cal{V}$ be a polarized arrangement. Let $V:=K_0(\Bt(\cal{V}))$, let $\psi = [\#]\maps K_0(\Bt(\cal{V})) \to K_0(\Bt(\cal{V}))^{{\rm op}}$, and let $v_{\alpha}= [\Vt_{\alpha}]$ and $b_{\alpha}= [\Pt_{\alpha}]$ for
 $\alpha \in \Pi := \cal{P}$.
 Give  $\Pi$
 the partial order $\preceq$ from \eqref{subsec:partial_order}. Let $
 ( - , -)  \maps V \times V \to \Z((q))
$
be the bilinear form from \eqref{eq:form-HOM}.
The data $((-,-),\psi,\{v_{\alpha}\}, \{b_{\alpha}\})$ define a canonical basis on $V=K_0(\Bt(\cal{V}))$.
\end{theorem}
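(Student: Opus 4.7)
The plan is to verify the three conditions defining a pre-canonical structure and then the three conditions $(\mathsf{I})$--$(\mathsf{III})$ defining the canonical basis for $\{b_\alpha = [\Pt_\alpha]\}$, using the structural results already established for $\Bt(\cal V)$.

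\emph{Pre-canonical structure.} Antilinearity of $\psi = [\#]$ is immediate from $(q^n X)^\# = q^{-n} X^\#$ built into the definition in Section~\ref{subsec:bilinearform}, while $\psi^2 = \id$ reduces on the generators $[\Pt_\alpha]$ of $K_0$ to the biduality isomorphism $\Pt_\alpha^{\#\#}\cong \Pt_\alpha$ for finitely generated projectives over an algebra with anti-involution. Symmetric bilinearity of $(-,-)$ is part of its definition. For bar-triangularity of $\psi$ on the standard basis, Lemma~\ref{lem:Kfiltration} provides a unitriangular expansion $[\Pt_\alpha] = [\Vt_\alpha] + \sum_{\beta > \alpha} m_{\alpha\beta}(q)[\Vt_\beta]$ with $m_{\alpha\beta}(q)\in\Z[q,q^{-1}]$. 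Inverting this unitriangular change of basis expresses $[\Vt_\alpha]$ in the projectives with the same triangular shape, and since each $[\Pt_\gamma]$ is $\psi$-invariant (verified next), applying the antilinear $\psi$ only conjugates the scalar coefficients of that expansion. Re-expanding the projectives back in standards then yields $\psi([\Vt_\alpha]) \in [\Vt_\alpha] + \sum \Z[q,q^{-1}]\,[\Vt_\delta]$, with the sum over $\delta$ running in the relevant direction of the partial order.

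\emph{Canonical basis axioms.} For $\psi$-invariance of $b_\alpha = [\Pt_\alpha]$, the standard identification $\Hom_{\Bt}(\Bt e_\alpha,\Bt)\cong e_\alpha \Bt$ of right $\Bt$-modules, pulled back along the balanced anti-involution $\psi^{\cal V}$ of Definition~\ref{def:hypertoric-involution} (which fixes all idempotents), gives an isomorphism $\Pt_\alpha^{\#}\cong \Bt e_\alpha = \Pt_\alpha$ of left $\Bt$-modules, so $\psi(b_\alpha)=b_\alpha$. The triangular expansion $b_\alpha - v_\alpha \in \sum \Z[q,q^{-1}]\,v_\beta$ is precisely Lemma~\ref{lem:Kfiltration} translated into $K_0$. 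For almost orthogonality, projectivity of $\Pt_\beta$ makes all higher $\Ext$-groups vanish, so $(b_\alpha,b_\beta) = \dim_q \Hom_{\Bt}(\Pt_\alpha^{\#},\Pt_\beta) = \dim_q(e_\alpha \Bt e_\beta)$, using $\Pt_\alpha^{\#}\cong\Pt_\alpha$. Since $\Bt$ is positively graded and Laurentian and the degree-zero component of $e_\alpha \Bt e_\beta$ is $\Bbbk$ when $\alpha=\beta$ and zero otherwise, this graded dimension lies in $\delta_{\alpha\beta} + q\Z[[q]]$.

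\emph{Main obstacle.} The principal subtlety I anticipate is reconciling the direction of the partial order in Webster's pre-canonical axioms, which require $\psi(v_c)\in v_c+\sum_{c'<c}\Z[q,q^{-1}]\,v_{c'}$ and $b_c - v_c \in \sum_{c' < c}\Z[q,q^{-1}]\,v_{c'}$, with the order $\preceq$ from Section~\ref{subsec:partial_order}, for which Lemma~\ref{lem:Kfiltration} produces terms with $\beta > \alpha$ rather than $\beta < \alpha$. The theorem must therefore be understood with the opposite partial order on $\cal{P}$ (equivalently, both triangularity conditions hold in the reversed direction). Once this convention is settled, all the required axioms follow from the results already established in Sections~\ref{sec:def-affine-qh} and \ref{sec:HypertoricAlg}, and uniqueness of the canonical basis attached to a pre-canonical structure (as in \cite{Web-canonical}) identifies $\{[\Pt_\alpha]\}$ as the canonical basis of the stated pre-canonical structure on $K_0(\Bt(\cal V))$.
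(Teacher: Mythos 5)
Your verification follows essentially the same route as the paper's proof: unitriangularity of $[\Pt_\alpha]$ in the $[\Vt_\beta]$ from Lemma~\ref{lem:Kfiltration} (invoked via \cite[Lemma 1.6]{Web-canonical} for the pre-canonical structure), $\#$-self-duality of indecomposable projectives from the fact that $\psi^{\cal V}$ fixes idempotents, and almost-orthogonality from $(b_\alpha,b_\beta)=\dim_q(e_\alpha\Bt(\cal V)e_\beta)=\dim_q\tilde R_{\alpha\beta}$ together with the nonnegative grading. All of these steps are correct.

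The one place you go astray is the ``main obstacle.'' No reversal of the stated partial order is needed: the theorem concerns $\Bt(\cal V)\cong\At(\cal V^\vee)$, and by \cite[Lemma 2.10]{Gale} (recalled at the start of Section~\ref{sec:BSide}) the partial order induced by $\cal V^\vee$ is the \emph{opposite} of the one induced by $\cal V$. So when Lemma~\ref{lem:Kfiltration} (stated for $\At$) is transported to $\Bt(\cal V)$, the kernel of $\Pt_\alpha\twoheadrightarrow\Vt_\alpha$ is filtered by $\Vt_\beta$ with $\beta<\alpha$ in the order $\preceq$ of Section~\ref{subsec:partial_order}, which is exactly the direction Webster's axioms require. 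Your conclusion that the triangularity conditions hold ``in the reversed direction'' and that the theorem must be read with the opposite order therefore misstates the convention, even though the substance of your verification is unaffected.
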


\begin{proof}
Recall from Section~\ref{subsec:bilinearform} that the anti-automorphism $\#$ on $\Bt(\cal{V})\pmod$ satisfies $(qP)^{\#} = q^{-1}(P^{\#})$, so it induces an anti-linear automorphism on $K_0$. Since $\Bt(\cal{V})$ is affine quasi hereditary (see Lemma~\ref{lem:Kfiltration} in particular), the projectives $\Pt_{\alpha}$ are filtered by standards $q^j \Vt_{\beta}$ with $\beta \leq \alpha$ and $j=0$ when $\beta = \alpha$. We get $[\Pt_{\alpha}] \in [\Vt_{\alpha}] + \sum_{\beta < \alpha} \Z[q,q^{-1}] [\Vt_{\beta}]$, establishing condition~\ref{pcan-lower} for a canonical basis, and we can deduce that $[\Vt_{\alpha}] \in [\Pt_{\alpha}] + \sum_{\beta < \alpha} \Z[q,q^{-1}] [\Pt_{\beta}]$. By \cite[Lemma 1.6]{Web-canonical}, adapted to the affine highest weight setting, the free $\Z[q,q^{-1}]$-module $V$ acquires a pre-canonical structure with
 $(\psi,(-,-),\cal{P},\preceq)$.
Furthermore, $\#$ fixes indecomposable projectives, establishing condition \eqref{pcan-invariant} for a canonical basis. Condition \eqref{pcan-orthog} follows because $\Bt(\cal{V})$ is nonnegatively graded and the minimal degree element $f_{\alpha \beta}$ of $\tilde{R}_{\alpha\beta}$ has strictly positive degree except when $\alpha=\beta$, in which case it is the unique degree zero map.
\end{proof}

Let $W:=G_0(\Bt(\cal{V}))$, viewed as a $\Z[q,q^{-1}]$ submodule of $V$ as in Section~\ref{sec:Grothendieck-affine}. The form $(-,-) \maps V \times V \to \Z((q))$ restricts to a perfect pairing
$
( - , -)  \maps V  \times W \to \Z[q,q^{-1}]
$
as in \eqref{eq:perfect-pairing}. Under the resulting identification of $W$ with the $\Z[q,q^{-1}]$-linear dual of $V$, Proposition~\ref{prop:pairing} implies that the dual basis elements $v_{\alpha}^{\ast}$ and $b_{\alpha}^{\ast}$ in $V^*$ correspond to $[\bar{\Vt}_{\alpha}]$ and $[L_{\alpha}]$ in $W$ respectively. Give the index set
$\Pi = \cal{P}$
for these bases the reverse of the partial order from \eqref{subsec:partial_order}. By equation~\eqref{eq:PsiAdjointness}, we have $(\psi([X]),[N]) = \overline{([X],[N^{\circledast}])}$ for all $[X] \in V$ and $[N] \in W$, so that $\psi^{\ast} \maps V^* \to V^*$ corresponds to the anti-linear automorphism $\circledast$ of $W = G_0(\Bt(\cal{V}))$. Proposition~\ref{prop:DualCanonical} then implies the following corollary.

\begin{corollary}\label{cor:dualcanonical}
The data $(\{v_{\alpha}^{\ast}\}, \{b_{\alpha}^{\ast}\}, \psi^{\ast} )$ define a dual canonical basis on $W = G_0(\Bt(\cal{V}))$.
\end{corollary}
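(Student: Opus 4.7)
The plan is to apply Proposition~\ref{prop:DualCanonical} directly, taking $V = K_0(\Bt(\cal{V}))$ equipped with the canonical basis structure of Theorem~\ref{thm:canonical} and $W = G_0(\Bt(\cal{V}))$ sitting inside $V$ via the identifications of Section~\ref{sec:Grothendieck-affine}. Essentially all the verifications have already been carried out in the paragraph preceding the corollary; the task is to assemble them and check that the hypotheses of Proposition~\ref{prop:DualCanonical} are met.

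First, I would confirm that the bilinear form $(-,-)\maps V \times V \to \Z((q))$ restricts to a perfect pairing $V \times W \to \Z[q,q^{-1}]$. This is exactly \eqref{eq:perfect-pairing} and uses $L(\beta)^{\circledast} \cong L(\beta)$ together with $\dim_q \Hom(P(\alpha), L(\beta)) = \delta_{\alpha\beta}$, i.e.\ the matrix of the pairing is the identity in the bases $\{[\Pt_\alpha]\}$ and $\{[L_\alpha]\}$. Next, I would identify the dual basis elements in $V^\ast$ with specific classes in $W$. By Proposition~\ref{prop:pairing}(1)--(2), $([\Pt(\alpha)],[L(\beta)])=\delta_{\alpha\beta}$ and $([\Vt(\alpha)],[\bar{\Vt}(\beta)])=\delta_{\alpha\beta}$, so under the identification $W \cong V^\ast$ we get $b_\alpha^\ast = [L_\alpha]$ and $v_\alpha^\ast = [\bar{\Vt}_\alpha]$ as claimed.

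Then I would identify the involution $\psi^\ast$ produced by Proposition~\ref{prop:DualCanonical} with the operation $[\circledast]$ on $W$. Indeed, Proposition~\ref{prop:DualCanonical} defines $\psi^\ast$ by $(\psi v, w) = \overline{(v, \psi^\ast w)}$, while equation~\eqref{eq:PsiAdjointness} (the adjointness between $\#$ and $\circledast$) asserts exactly $([X^\#],[N]) = \overline{([X],[N^{\circledast}])}$ for $[X]\in V$ and $[N]\in W$; since $\psi = [\#]$ on $V$, these two formulas force $\psi^\ast[N] = [N^\circledast]$.

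With these three ingredients in place, Proposition~\ref{prop:DualCanonical} applies, with index set $\Pi = \cal{P}$ equipped with the reverse of the partial order from Section~\ref{subsec:partial_order}, and yields that $((-,-), \psi^\ast, \{v_\alpha^\ast\}, \{b_\alpha^\ast\})$ is a canonical basis structure on $W$ dual to the one on $V$ given by Theorem~\ref{thm:canonical}. There is no real obstacle here, only bookkeeping: the mildest point of care is tracking that the partial order gets reversed in passing from $V$ to $W$ (so that projectives/standards, which are ``lower triangular with respect to $\leq$'', become simples/proper standards, which are ``upper triangular with respect to $\leq$''), but this is automatic from the statement of Proposition~\ref{prop:DualCanonical}.
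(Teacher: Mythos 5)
Your proposal is correct and follows essentially the same route as the paper: the paragraph preceding the corollary establishes exactly the three points you list (restriction of the form to a perfect pairing via \eqref{eq:perfect-pairing}, identification of $v_\alpha^{\ast}$, $b_\alpha^{\ast}$ with $[\bar{\Vt}_\alpha]$, $[L_\alpha]$ via Proposition~\ref{prop:pairing}, and identification of $\psi^{\ast}$ with $[\circledast]$ via \eqref{eq:PsiAdjointness}) before invoking Proposition~\ref{prop:DualCanonical} with the reversed partial order. Nothing is missing.
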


\begin{remark}
Observe that the results of Theorem~\ref{thm:canonical} and Corollary~\ref{cor:dualcanonical} apply more generally in the context of a polynomial quasi hereditary algebra with balanced anti-involution.
\end{remark}

\section{Projective resolutions of standard modules}\label{sec:ProjRes}

In this section we describe projective resolutions for standard modules over the hypertoric algebras $\At(\V)$ and $\Bt(\V)$ and use these resolutions to compute the Ext groups between the standard modules. In Section~\ref{sec:HeegaardDiags} we will revisit these projective resolutions in the case of cyclic $\V$ (see Section~\ref{sec:Cyclic}), but for now we discuss them in general.

\subsection{The \texorpdfstring{$\At$}{A-tilde} side}\label{sec:ASide}

We start with a technical lemma.

\begin{lemma}\label{lem:Technical}
Let $\cal{V}$ be a polarized arrangement. If $\alpha,\beta \in \cal{P}$ with $\alpha(i) = \beta(i)$ for all $i \in \xx_{\a} \cap \xx_{\b}^c$, then $\beta \leq \alpha$.
\end{lemma}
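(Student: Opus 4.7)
I would argue by induction on $d := |\xx_\alpha \setminus \xx_\beta|$, after first reformulating the hypothesis geometrically. For $i \in \xx_\alpha \cap \xx_\beta$ one has $H_{\xx_\beta} \in H_i$; for $i \in \xx_\alpha \cap \xx_\beta^c$ the hypothesis $\alpha(i) = \beta(i)$ (combined with $i \notin \xx_\beta$) puts $H_{\xx_\beta}$ strictly on the $\alpha(i)$-side of $H_i$. Together these say precisely that $H_{\xx_\beta}$ lies in the closed tangent cone
\[
\overline{C}_\alpha := \{x \in V_\eta : \alpha(i)\, x_i \ge 0 \text{ for all } i \in \xx_\alpha\}
\]
of $\Delta_\alpha$ at its $\xi$-maximum vertex $H_{\xx_\alpha}$. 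This is a simplicial cone with $k$ rays $r_i \subset H_{\xx_\alpha \setminus \{i\}}$ emanating from $H_{\xx_\alpha}$ in the $\alpha(i)$-direction, along each of which $\xi$ strictly decreases (combining genericity condition (b) with uniqueness of the $\xi$-maximum on $\Delta_\alpha$).

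The base case $d = 0$ forces $\xx_\alpha = \xx_\beta$ and $\alpha = \beta$. The crucial base case is $d = 1$: writing $\xx_\alpha = S \cup \{i\}$ and $\xx_\beta = S \cup \{j\}$ with $|S| = k-1$, both $H_{\xx_\alpha}$ and $H_{\xx_\beta}$ lie on the one-dimensional flat $H_S$, and the hypothesis places $H_{\xx_\beta}$ on the $\alpha(i)$-directed half-line from $H_{\xx_\alpha}$, along which $\xi$ strictly decreases. Thus $\xi(H_{\xx_\beta}) < \xi(H_{\xx_\alpha})$, which combined with $|\xx_\alpha \cap \xx_\beta| = k-1$ is exactly the covering relation $\xx_\beta \prec \xx_\alpha$, yielding $\beta < \alpha$.

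For the inductive step $d \ge 2$, the plan is to produce an intermediate basis $\xx' \in \mathbb{B}$ with (i) $\xx' \prec \xx_\alpha$ a covering relation, (ii) $|\xx' \setminus \xx_\beta| < d$, and (iii) the hypothesis of the lemma satisfied by $(\mu(\xx'), \beta)$, so that induction and transitivity yield $\beta \le \mu(\xx') \le \alpha$. The candidate is the terminal vertex $H_{\xx'}$ of an edge of $\Delta_\alpha$ at $H_{\xx_\alpha}$ along some ray $r_i$. Boundedness of such an edge is forced as follows: writing $H_{\xx_\beta} - H_{\xx_\alpha} = \sum_{i \in \xx_\alpha} c_i w_i$ with $c_i \ge 0$ (where $w_i$ denotes the direction of $r_i$) and picking $j \in \xx_\beta \setminus \xx_\alpha$ (nonempty for $d \ge 1$), the identity $\sum_i c_i x_j(w_i) = -x_j(H_{\xx_\alpha}) \ne 0$ forces some $i \in \xx_\alpha$ with $c_i > 0$ and $x_j(w_i)$ of sign opposite to $\alpha(j)$, so $r_i$ meets $H_j$ at finite positive distance and the edge of $\Delta_\alpha$ along $r_i$ terminates at $H_{\xx'}$ with $\xx' = (\xx_\alpha \setminus \{i\}) \cup \{j'\}$ for some $j' \in \xx_\alpha^c$.

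\textbf{The main obstacle} will be verifying (iii) while arranging (ii). The natural candidate claim is the cone-inheritance property $H_{\xx_\beta} \in \overline{C}_{\mu(\xx')}$, which would immediately give the hypothesis for $(\mu(\xx'), \beta)$ via the reformulation in the first paragraph. Verifying this requires analyzing the signs of $\mu(\xx')$ on $\xx'$: for the new index $j'$, geometric considerations show $\mu(\xx')(j') = -\alpha(j')$, since the downward edge of $\Delta_{\mu(\xx')}$ at $H_{\xx'}$ along $H_{\xx' \setminus \{j'\}}$ must continue past $H_{\xx'}$ in the direction opposite to the path from $H_{\xx_\alpha}$; for $\ell \in \xx' \cap \xx_\alpha = \xx_\alpha \setminus \{i\}$ the sign $\mu(\xx')(\ell)$ is determined by the downward direction at $H_{\xx'}$ along a different one-dimensional flat from the one used to determine $\alpha(\ell)$ at $H_{\xx_\alpha}$, and one needs a careful analysis to see that $\mu(\xx')(\ell) = \alpha(\ell)$ so that cone-membership is preserved on these indices. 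In the case $j' \in \xx_\beta$ the condition at $j'$ is automatic and (ii) is simultaneously achieved, while in the case $j' \notin \xx_\beta$ one additionally needs $\beta(j') = -\alpha(j')$; handling the latter case may require choosing $i$ and $j'$ jointly, or switching the induction parameter to $\xi(H_{\xx_\alpha}) - \xi(H_{\xx_\beta})$, which takes finitely many nonnegative values and strictly decreases under any edge move compatible with inheritance, obviating the need to track $d$ directly.
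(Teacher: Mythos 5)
Your opening reformulation --- that the hypothesis says exactly that the point $H_{\xx_{\b}}$ lies in the closed cone $\overline{C}_\a = \{x \in V_\eta : \a(i)\,x_i \ge 0 \text{ for all } i \in \xx_{\a}\}$ at the $\xi$-maximal vertex of $\Delta_\a$ --- is correct, and it is precisely the first step of the paper's proof. But from there you launch an induction on $|\xx_{\a} \setminus \xx_{\b}|$ whose inductive step you do not complete: you yourself flag the cone-inheritance property $H_{\xx_{\b}} \in \overline{C}_{\mu(\xx')}$ as ``the main obstacle,'' leave the verification that $\mu(\xx')(\ell) = \a(\ell)$ for $\ell \in \xx_{\a} \setminus \{i\}$ as ``a careful analysis,'' and for the case $j' \notin \xx_{\b}$ only list possible fixes (``may require choosing $i$ and $j'$ jointly, or switching the induction parameter''). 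Since the inductive step is the entire content of the lemma, what you have is a plan with an acknowledged hole rather than a proof.

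The gap closes immediately if, instead of re-deriving the partial order by chaining covering relations, you use the cone machinery already in place. The cone $\overline{C}_\a$ is exactly the union of the closed chambers $\Delta_\gamma$ over $\gamma \in \cal{B}_{\xx_{\a}}$ (the signs at indices outside $\xx_{\a}$ are unconstrained), so the point $H_{\xx_{\b}}$, lying in $\overline{C}_\a$, lies in $\Delta_\gamma$ for some $\gamma \in \cal{B}_{\xx_{\a}}$ which is then feasible, hence in $\cal{P}$. Since $H_{\xx_{\b}} \in \Delta_\gamma$, we also have $\gamma \in \cal{F}_{\xx_{\b}}$ by definition of the feasible cone. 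Lemma~\ref{lem:OrderFromCones} then gives $\gamma \le \a$ from $\gamma \in \cal{B}_{\xx_{\a}}$ and $\b \le \gamma$ from $\gamma \in \cal{F}_{\xx_{\b}}$, so $\b \le \a$. This is the paper's argument: no induction, no edge analysis, and no need to track how the sign vector $\mu(\xx')$ behaves under moving to an adjacent vertex.
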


\begin{proof}
It suffices to show that the point $H_{\xx_{\b}}$ lies in the negative cone of the point $H_{\xx_{\a}}$, i.e. in the union of regions $\Delta_{\gamma}$ for $\gamma \in \cal{B}_{\xx_{\a}}$. Indeed, if $H_{\xx_{\b}} \cap \Delta_{\gamma} \neq 0$ for some $\gamma \in \cal{B}_{\xx_a}$, then we also have $\gamma \in \cal{F}_{\xx_{\b}}$ by definition of $\cal{F}_{\xx_{\b}}$, so $\beta \leq \gamma \leq \alpha$ by Lemma~\ref{lem:OrderFromCones}.

Let $\sigma(H_{\xx_{\b}})$ be the sequence of elements of $\{+,-,0\}$ associated to the point $H_{\xx_{\b}}$ (at an index $i$, this sequence is $0$ if $H_{\xx_{\b}}$ lies on hyperplane $i$ and it is $\b(i)$ otherwise). To show that $H_{\xx_{\b}}$ lies in the negative cone of $H_{\xx_{\a}}$, it suffices to show that
\[
\sigma(H_{\xx_{\b}})(i) \in \{0,\a(i)\}
\]
for all $i \in \xx_{\a}$. We know that $\sigma(H_{\xx_{\b}})(i) = 0 \in \{0,\a(i)\}$ for all $i \in \xx_{\b}$ and thus for all $i \in \xx_{\a} \cap \xx_{\b}$. For $i \in \xx_{\b}^c$, we have $\sigma(H_{\xx_{\b}})(i) = \b(i)$, so for $i \in \xx_{\a} \cap \xx_{\b}^c$ the assumption $\a(i) = \b(i)$ gives us $\sigma(H_{\xx_{\b}})(i) = \a(i) \in \{0,a(i)\}$, proving the lemma.
\end{proof}

Recall that a linear projective resolution of a graded module is one in which each projective summand appearing in homological degree $i$ of the resolution is degree-shifted upwards by $i$ in its internal grading (see \cite[Definition 5.15]{Gale}).

\begin{theorem}\label{thm:proj-resolution}
Let $\cal{V}$ be a polarized arrangement.   Then the standard module $\Vt_{\alpha}$ for $\At(\cal{V})$ has a linear projective resolution by projectives $\Pt_{\beta}$ for $\beta\in \cal{F}_{\mathbbm{x}_{\alpha}} \cap \cal{B}$.
\end{theorem}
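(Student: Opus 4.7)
My plan is to construct an explicit Koszul-type complex and prove it resolves $\Vt_{\alpha}$. For $S \subseteq \xx_{\alpha}$, write $\alpha^S$ for the sign sequence obtained from $\alpha$ by flipping every sign indexed by $S$; by construction $\alpha^S \in \cal{F}_{\xx_{\alpha}}$. Define
\[
C_i := \bigoplus_{\substack{S \subseteq \xx_{\alpha} \\ |S| = i}} q^{i}\,\Pt_{\alpha^S},
\]
with the convention $\Pt_{\alpha^S} = 0$ whenever $\alpha^S \notin \cal{B}$ (equivalently $e_{\alpha^S} = 0$ by A1); the shift by $q^i$ makes the resolution linear because each arrow $p(\cdot,\cdot)$ has internal degree $1$. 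After fixing a total order on $\xx_{\alpha}$, the differential $d_i\colon C_i \to C_{i-1}$ is the signed Koszul differential whose component $\Pt_{\alpha^S} \to \Pt_{\alpha^{S \setminus \{s\}}}$ is right multiplication by $p(\alpha^S, \alpha^{S \setminus \{s\}})$ with sign $(-1)^{|\{s' \in S\, :\, s' < s\}|}$. That $d^2 = 0$ follows from A2 applied to the four-cycle $\alpha^S \leftrightarrow \alpha^{S \setminus \{s\}} \leftrightarrow \alpha^{S \setminus \{s,s'\}} \leftrightarrow \alpha^{S \setminus \{s'\}} \leftrightarrow \alpha^S$ for distinct $s, s' \in S$, combined with standard Koszul sign cancellation. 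Identifying $H_0(C_\bullet)$ with $\Vt_{\alpha}$ is then immediate from \eqref{eq:Kalpha}: the image of $d_1$ equals $\sum_{j \in \xx_{\alpha}} \At \cdot p(\alpha^j, \alpha) = K_{>\alpha}$, since summands for $j$ with $\alpha^j \notin \cal{B}$ vanish automatically.

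The main step is exactness in positive homological degrees, which I would prove by decreasing induction on $\alpha$ in the partial order on $\cal{P}$. For the base case, let $\alpha$ be maximal. Any $\alpha^S \in \cal{P}$ with $S \subseteq \xx_{\alpha}$ lies in $\cal{F}_{\xx_{\alpha}}$, so Lemma~\ref{lem:OrderFromCones} forces $\alpha \leq \alpha^S$; maximality gives $S = \emptyset$, whence $C_i = 0$ for $i \geq 1$ and $\Pt_{\alpha} = \Vt_{\alpha}$. For the inductive step, Lemma~\ref{lem:Kfiltration} provides a filtration of $K_{>\alpha}$ by grading shifts $q^{d_{\alpha, \beta}}\Vt_{\beta}$ for $\beta > \alpha$ with $\alpha \in \cal{B}_{\xx_{\beta}}$; by induction each such $\Vt_{\beta}$ has a linear Koszul resolution by projectives $\Pt_{\gamma}$ for $\gamma \in \cal{F}_{\xx_{\beta}} \cap \cal{B}$. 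Applying the horseshoe lemma iteratively assembles these into a linear projective resolution of $K_{>\alpha}$, which spliced with $\Pt_{\alpha}$ in degree $0$ yields a linear resolution of $\Vt_{\alpha}$.

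The main obstacle will be matching this inductively-built resolution term-by-term with $C_\bullet$. Given a summand $q^{|S|}\Pt_{\alpha^S}$ of $C_{|S|}$ with $S \neq \emptyset$ and $\alpha^S \in \cal{B}$, I must locate the unique $\beta > \alpha$ in the filtration whose Koszul resolution contributes this summand: concretely, $\xx_{\beta}$ is characterized by $\alpha \in \cal{B}_{\xx_{\beta}}$ together with $\alpha^S \in \cal{F}_{\xx_{\beta}}$, and I need the filtration shift $d_{\alpha, \beta}$ plus the homological degree of $\Pt_{\alpha^S}$ in the Koszul resolution of $\Vt_{\beta}$ to sum to $|S|$. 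This combinatorial bookkeeping is the heart of the proof; it can be handled using Lemmas~\ref{lem:OrderFromCones}, \ref{lem:Technical}, and \ref{lem:cone-adjacent} together with a small computation about which indices of $S$ lie in $\xx_{\beta}$ versus $\xx_{\beta}^c$. Once the terms and grading shifts match, the fact that the differentials of $C_\bullet$ strictly increase internal degree shows that $C_\bullet$ is minimal, and uniqueness of minimal linear resolutions forces an isomorphism $C_\bullet \cong \tilde{C}_\bullet$, completing the proof. As a cross-check I would verify the graded Euler characteristic identity $\dim_q \Vt_{\alpha} = \sum_{S \subseteq \xx_{\alpha},\, \alpha^S \in \cal{B}} (-q)^{|S|}\dim_q \Pt_{\alpha^S}$ directly via Proposition~\ref{prop:Vtaut} and the Stanley--Reisner formula from the proof of Lemma~\ref{lem:Kfiltration}.
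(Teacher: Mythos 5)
Your complex $C_\bullet$ is precisely the complex $\tPi$ that the paper uses, and your computation of $H_0$, the $d^2=0$ argument via (A2), and the base case of the induction are all fine. The problem is the inductive step. The term-matching you describe as ``the heart of the proof'' does not work: the horseshoe-lemma resolution of $K_{>\alpha}$ assembled from the filtration of Lemma~\ref{lem:Kfiltration} is not minimal and contains strictly more summands than $C_\bullet$. Concretely, in Example~\ref{ex:n4k2} take $\alpha=(+++-)$, so $\xx_{\alpha}=\{3,4\}$ and $C_\bullet$ is $q^2\Pt_{(++-+)}\to q\Pt_{(++--)}\to\Pt_{(+++-)}$. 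The filtration of $K_{>\alpha}$ has subquotients $q\,\Vt_{(++--)}$ and $q^2\,\Vt_{(+---)}$ (the latter because $\alpha\in\cal{B}_{\{1,4\}}$ and $d_{\alpha,(+---)}=2$), so the spliced horseshoe resolution acquires summands $q^2\Pt_{(+---)}$ in homological degrees $1$ and $2$ and $q^3\Pt_{(+--+)}$ in degrees $2$ and $3$. Neither $(+---)$ nor $(+--+)$ lies in $\cal{F}_{\xx_{\alpha}}$, so these summands have no counterpart in $C_\bullet$ and must cancel in pairs; your bookkeeping identity $d_{\alpha,\beta}+|T|=|S|$ forces $T\subseteq S$ and hence only accounts for the summands that survive this cancellation, not for the cancellation itself.

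There is a second, more structural gap: even if you pass to the minimal resolution and verify that its terms coincide with those of $C_\bullet$, this does not show that $C_\bullet$, with its particular Koszul differential, is exact. The appeal to ``uniqueness of minimal linear resolutions'' is circular, because minimality is a property of resolutions and exactness of $C_\bullet$ is exactly what is at stake; a minimal complex of projectives can share all its terms with the minimal resolution of its $H_0$ and still have nonzero higher homology (likewise, the graded Euler-characteristic cross-check cannot rule out homology cancelling between adjacent degrees). The paper avoids both issues by proving acyclicity of $\tPi$ directly: it filters the complex itself by the ideals $\At\bigl(\sum_{i\geq j}e_{\beta_i}\bigr)\At e_{\alpha^S}$ for a total order refining $\leq$, identifies each associated graded piece with $q^{h+d_{\alpha^S,\beta_j}}\Vt_{\beta_j}$ or $0$ using the same computation as in Lemma~\ref{lem:Kfiltration}, and for $\beta_j\neq\alpha$ exhibits an index $i\in\xx_{\alpha}\cap\xx_{\beta_j}^c$ with $\alpha(i)\neq\beta_j(i)$ along which the degree $-\epsilon_i$ Koszul differential becomes an isomorphism, so each associated graded complex is acyclic and the spectral sequence collapses. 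To rescue your induction-on-$\alpha$ strategy you would have to replace the term count by an honest comparison of complexes (e.g.\ an explicit homotopy equivalence between $C_\bullet$ and the horseshoe resolution), which in effect redoes the cancellation that the paper's filtration argument handles implicitly.
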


\begin{proof} This proof is a straightforward extension of \cite[Theorem 5.24]{Gale} replacing projectives and standards for $A(\cal{V})$ with their deformed analogs $\Pt_{\alpha}$ and $\Vt_{\beta}$ for $\At(\cal{V})$.

Let $\mathbbm{x}_{\alpha} = \mu^{-1}(\alpha)$ be the basis associated with the sign vector
$\alpha$.  For any subset $S\subset \mathbbm{x}_{\alpha}$, let $\alpha^S$ be the sign vector that
differs from $\alpha$ in exactly the indices in $S$.  Thus, for example,
$\alpha^\emptyset = \alpha$, and $\alpha^{\{i\}} = \alpha^i$ for all $i\in \mathbbm{x}_{\alpha}$.
(Note that the sign vectors that arise this way are exactly those in the set $\cal{F}_{\mathbbm{x}_{\alpha}}$.)
If $S = S' \sqcup \{i\} \subset \mathbbm{x}_{\alpha}$, then we have a degree-preserving map
$\vp_{S,i}\colon q\Pt_{\alpha^{S}}\to \Pt_{\alpha^{S'}}$ given by  right
multiplication by
the element $p(\alpha^{S}, \alpha^{S'})$.  We adopt the convention that
$\Pt_{\alpha^S} = 0$ if $\alpha^S\notin\cal{P}$ and $\vp_{S,i}=0$ if $i\notin S$.

Let
\[
\Pi_\alpha=\bigoplus_{S\subset \xx_{\alpha}} q^{|S|} \Pt_{\alpha^S}
\]
be the sum of all of
the projective modules associated to the sign vectors $\alpha^S$, with indicated grading shifts. Besides the internal grading, this module also has a multi-grading
by the abelian group $\Z^{\mathbbm{x}_{\alpha}} = \Z\{\ep_i\mid i\in \mathbbm{x}_{\alpha}\}$,
with the summand $q^{|S|} \Pt_{\alpha^S}$ sitting in multi-degree $\ep_S = \sum_{i\in S}\ep_i$.
For each $i\in \xx_{\alpha}$, we define a differential
\[
\partial_i = \sum_{i\in S\subset \xx_{\alpha}}\vp_{S,i}
\]
of degree $-\ep_i$.
These differentials commute because of the relation (A2), and thus
define a multi-complex structure on $\Pi_\alpha$.  The total complex $\tPi$
of this multi-complex is linear and projective; we claim that it is a resolution
of the standard module $\Vt_{\alpha}$.

Choose an ordering $(\beta_1, \ldots,\beta_{\ell})$ of $\cal{P}$ such that if $\beta_i < \beta_j$ then $i < j$. We may filter each projective module $\Pt_{\alpha^S}$ by submodules
\[
\Pt_{\alpha^S} = \At e_{\alpha^S} = \At \left( \sum_{i=1}^{\ell} e_{\beta_i} \right) \At e_{\alpha^S} \supset \cdots \supset \At e_{\beta_{\ell}} \At e_{\alpha^S};
\]
correspondingly, we can filter $\Pi_{\alpha}$ by letting
\[
\Pi_{\alpha}^{h, \geq j} := \bigoplus_{S \subset \xx_{\a}, |S| = h} q^h \At \left( \sum_{i=j}^{\ell} e_{\beta_i} \right) \At e_{\alpha^S}
\]
where $h$ denotes the homological degree and $\geq j$ denotes the filtration level for $1 \leq j \leq \ell$. The differentials $\partial_i$ on $\Pi_{\alpha}$ (and thus the differential on the total complex $\tPi$) respect the filtration, i.e. they send $\Pi_{\alpha}^{h, \geq j}$ to $\Pi_{\alpha}^{h-1, \geq j}$ for all $h,j$.

Writing $\widetilde{\boldsymbol{\Pi}}_{\alpha}^{\bullet,\bullet}$ for the associated graded complex, we have
\[
\widetilde{\boldsymbol{\Pi}}_{\alpha}^{h,j} = \frac{\bigoplus_{S \subset \xx_{\a}, |S| = h} q^h \At \left( \sum_{i=j}^{\ell} e_{\beta_i} \right) \At e_{\alpha^S}}{\bigoplus_{S \subset \xx_{\a}, |S| = h} q^h \At \left( \sum_{i=j+1}^{\ell} e_{\beta_i} \right) \At e_{\alpha^S}} = \bigoplus_{S \subset \xx_{\a}, |S| = h} q^h Q_{\alpha^S}^{h,j}
\]
where
\[
Q_{\alpha^S}^{h,j} := \frac{\At \left( \sum_{i=j}^{\ell} e_{\beta_i} \right) \At e_{\alpha^S}}{\At \left( \sum_{i=j+1}^{\ell} e_{\beta_i} \right) \At e_{\alpha^S}}.
\]
For a given $S$ (with $h := |S|$) and $j$, if $\alpha^S \notin \cal{B}_{\xx_{\beta_j}}$ then there exists some index $i \in \xx_{\beta_j}$ with $\alpha^S(i) \neq \beta_j(i)$.  Since $\beta_j^i > \beta_j$, we have $\beta_j^i = \beta_{j'}$ for some $j' > j$, and any path from $\beta_j$ to $\alpha^S$ is equal to a path that goes through $\beta_{j'}$, so the quotient $Q_{\alpha^S}^{h,j}$ vanishes.

On the other hand, if $\alpha^S \in \cal{B}_{\xx_{\beta_j}}$, then postcomposition with a taut path $p_{\beta_j}^{\alpha^S}$ from $\beta_j$ to $\alpha^S$ defines a degree-zero map
\[
q^{d_{\alpha^S,\beta_j}} \Pt_{\beta_j} \rightarrow \At \left( \sum_{i=j}^{\ell} e_{\beta_i} \right) \At e_{\alpha^S} \twoheadrightarrow Q_{\alpha^S}^{h,j}
\]
which annihilates $\At \varepsilon_{\beta_j} \At e_{\beta_j} \subset \Pt_{\beta_j} = \At e_{\beta_j}$, so we get a map from $q^{d_{\alpha^S,\beta_j}} \Vt_{\beta_j}$ to $Q_{\alpha^S}^{h,j}$. As in Lemma~\ref{lem:Kfiltration}, this map is surjective, and injectivity follows from
\[
\dim_q \Pt_{\alpha^S} = \sum_{\xx: \alpha^S \in \beta_{\xx}} q^{d(\alpha^S,\mu_{\xx})} \dim_q \Vt_{\mu(\xx)}
\]
which was proved in Lemma~\ref{lem:Kfiltration}. We get that
\[
\widetilde{\boldsymbol{\Pi}}_{\alpha}^{h,j} \cong \bigoplus_{S \subset \xx_{\a}, |S| = h, \alpha^S \in \cal{B}_{\xx_{\beta_j}}} q^{h + d_{\alpha^S,\beta_j}} \Vt_{\beta_j}.
\]

Now, if $\beta_j = \alpha$, only the $S = \emptyset$ term contributes to $\widetilde{\boldsymbol{\Pi}}_{\alpha}^{h,j}$ (with $h = 0$); we have $\widetilde{\boldsymbol{\Pi}}_{\alpha}^{0,j} \cong \Vt_{\alpha}$ and $\widetilde{\boldsymbol{\Pi}}_{\alpha}^{h,j} = 0$ for $h \neq 0$.

For all other $\beta_j$, we will show the complex $\widetilde{\boldsymbol{\Pi}}_{\alpha}^{\bullet,j}$ is acyclic.  Indeed, if $\beta_j \neq \alpha$ and $\alpha(i) = \beta_j(i)$ for all $i \in \xx_{\a} \cap \xx_{\beta_j}^c$, then by Lemma~\ref{lem:Technical} we have $\beta_j < \alpha$. It follows that $\cal{F}_{\xx_{\alpha}} \cap \cal{B}_{\xx_{\beta}} = \emptyset$, so that for all $S \subset \xx_{\a}$ we have $\alpha^S \notin \cal{B}_{\xx_{\beta}}$. In this case, $\widetilde{\boldsymbol{\Pi}}_{\alpha}^{\bullet,j}$ is the zero complex.

 Now assume that $\beta_j \neq \alpha$ and we have $i \in \xx_{\a} \cap \xx_{\b_j}^c$ with $\alpha(i) \neq \beta_j(i)$.
For $S \subset \xx_{\a}$ with $i \in S$, write $S = S' \sqcup \{i\}$; since $i \in \xx_{\beta_j}^c$, we have $\alpha^S \in \cal{B}_{\xx_{\beta_j}}$ if and only if $\alpha^{S'} \in \cal{B}_{\xx_{\beta_j}}$.

The complex $\widetilde{\boldsymbol{\Pi}}_{\alpha}^{\bullet,j}$ is the total complex of a multi-complex $\widetilde{\Pi}_{\alpha}^{\bullet,j}$ whose differential has degree $-\varepsilon_i$ part $\tilde{\partial}_i^{\beta_j}$ induced by $\partial_i$. For $S \subset \xx_{\a}$ (with $|S| = h$ and $\alpha^S \in \cal{B}_{\xx_{\beta_j}}$), the differential $\tilde{\partial}_i^{\beta_j}$ is only nonzero on the corresponding summand $q^h Q_{\alpha^S}^{h,j}$ of $\widetilde{\Pi}_{\alpha}^{h,j}$ if $i \in S$, so that $S = S' \sqcup \{i\}$. If neither of $\{\alpha^S, \alpha^{S'}\}$ is in $\cal{B}_{\xx_{\beta_j}}$, then $\tilde{\partial}_i^{\beta_j}$ on this summand maps $0 \to 0$. If both of $\{\alpha^S, \alpha^{S'}\}$ are in $\cal{B}_{\xx_{\beta_j}}$, then $\tilde{\partial}_i^{\beta_j}$ on this summand is the isomorphism
\[
q^h Q_{\alpha^S}^{h,j} \cong q^{h + d_{\alpha^S,\beta_j}} \Vt_{\beta_j} \cong q^{h-1} Q_{\alpha^{S'}}^{h-1,j}
\]
given by right multiplication by $p_{\alpha^S}^{\alpha^{S'}}$ (the map is an isomorphism since $\beta_j(i) = \alpha_S(i)$ ($\neq (\alpha_{S'}(i) = \alpha(i))$ by assumption).

It follows that the homology of $\widetilde{\Pi}_{\alpha}^{\bullet,j}$ with respect to its degree $-\varepsilon_i$ differential vanishes, so the total complex $\widetilde{\boldsymbol{\Pi}}_{\alpha}^{\bullet,j}$ is acyclic. In other words, in the unique filtration degree $j$ such that $\beta_j = \alpha$, the associated graded complex of $\tPi$ has homology $\Vt_{\alpha}$ concentrated in homological degree zero, while in all other filtration degrees the associated graded complex of $\tPi$ is acyclic. For degree reasons, the spectral sequence induced by the filtration collapses at this level. We conclude that $\tPi$ has homology $\Vt_{\alpha}$ as desired.
\end{proof}

\subsection{The \texorpdfstring{$\Bt$}{B-tilde} side}\label{sec:BSide}

In this section, for $\alpha \in \cal{P}$ let $\Pt_{\a}$ and $\Vt_{\a}$ denote the indecomposable projective and standard modules over $\Bt(\cal{V}) \cong \At(\cal{V}^{\vee})$ associated to $\alpha$. By \cite[Lemma 2.10]{Gale}, the partial order on $\cal{P} = \cal{P}^{\vee}$ induced by $\cal{V}^{\vee}$ is the opposite of the one induced by $\cal{V}$, so for $\a \in \cal{P}$ we have
\[
\Vt_{\a} = (\Bt(\cal{V}) e_{\a}) / (\Bt(\cal{V}) \cal{I}_{\a} \Bt(\cal{V}) e_{\a})
\]
by definition, where $\cal{I}_{\a} := \sum_{\b < \a} e_{\b}$.

For $\xx \in \mathbb{B}$, write $\Z[u_{\xx}]:=\Z[u_i: i \in \xx]$. By Gale duality and Proposition~\ref{prop:Vtaut}, $\Vt_{\a}$ is a free $\Z[u_{\xx_{\a}}]$-module with basis indexed by the set $\cal{F}_{\xx_{\a}} \cap \cal{B}$. As in equation~\eqref{eq:StandardMod}, we can write
\[
\Vt_{\a} = \left(\bigoplus_{\beta \in \cal{F}_{\xx_{\a}} \cap \cal{B}} e_{\b} \Bt(\cal{V}) e_{\a} \right)/\left( u_i \mid i \notin \xx_{\a} \right).
\]
Proposition~\ref{prop:endV} implies that
\begin{equation}
\End_{\Bt(\cal{V})}(\Vt_{\a}) = \Z[u_{\xx_{\a}}]
 \end{equation}
and that $\Z[u_{\xx_{\a}^c}]$ acts trivially on $\Vt_{\a}$. The proof of Theorem~\ref{thm:affine} implies that
\begin{equation} \label{eq:HomPV}
\Hom_{\Bt(\cal{V})}(\Pt_{\a},\Vt_{\b})
\cong
\left\{
  \begin{array}{ll}
    0, & \hbox{if $\a \notin \cal{F}_{\b}$;} \\
    \Z[u_{\xx_{\b}}], & \hbox{if $\a \in \cal{F}_{\b}$,}
  \end{array}
\right\},
 \end{equation}
where if $\a \in \cal{F}_{\xx_{\b}} \cap \cal{B}$, then $1 \in \Z[u_{\xx_{\b}}]$ corresponds to a map from $\Pt_{\a}$ to $\Vt_{\b}$ of degree $q^{d_{\a,\b}}$. Theorem~\ref{thm:proj-resolution} implies that the standard $\Bt(\cal{V})$-module $\Vt_{\a}$ has a linear projective resolution with projectives indexed by the set $\B_{\xx_{\a}} \cap \cal{F}$; indeed,
\[
\Vt_{\a} \simeq \bigoplus_{\beta \in \B_{\xx_{\a}}} q^{d_{\a,\b}} \tilde{P}_{\beta} = \bigoplus_{S \subset \mathbbm{x}_{\alpha}^c} q^{|S|} \tilde{P}_{\alpha^S}
\]
with a differential defined by, for $S = S' \sqcup \{i\} \subset \xx_{\a}^c$, mapping $q^{|S|} \tilde{P}_{\alpha^S}$ to $q^{|S|-1}\tilde{P}_{\alpha^{S'}}$ by $\varphi_{S,i}$ (as in Section~\ref{sec:ASide} we set $\tilde{P}_{\b} = 0$ when $\b \notin \cal{P}$).

\subsection{Ext groups between standard modules}

In this section we continue to work over $\Bt(\cal{V})$.   The material in this section departs from the discussion in \cite{Gale}, as we are able to leverage the analysis of standard modules in the previous sections to make explicit computations of Ext-groups. 

\begin{lemma}\label{lem:BFIntersection}
If $\cal{B}_{\xx_{\a}}\cap \cal{F}_{\xx_{\b}} \neq \emptyset$ then
\[
\cal{B}_{\xx_{\a}}\cap \cal{F}_{\xx_{\b}}  =
\{
\gamma \in \{\pm\}^n \mid \gamma(i) = \alpha(i) \quad  \forall i \in\xx_{\a} , \;\;  \text{and } \gamma(i) = \beta(i) \quad \forall i \in \xx_{\b}^c
\} \subset \cal{P}.
\]
In particular, $ \gamma(i)$ can take either value $\pm$ for all $i \in \xx_{\a}^c \cap \xx_{\b}$.
\end{lemma}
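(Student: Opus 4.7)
The plan is to verify the lemma directly from the definitions of $\cal{B}_{\xx}$ and $\cal{F}_{\xx}$ as sign-constraint subsets of $\{\pm\}^n$, using the nonemptiness hypothesis only to ensure that the constraints coming from the two cones are mutually consistent.

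First I would unwind the definitions. Since $\mu(\xx_{\a}) = \alpha$ and $\mu(\xx_{\b}) = \beta$, we have
\[
\cal{B}_{\xx_{\a}} = \{\gamma \in \{\pm\}^n \mid \gamma(i) = \alpha(i) \text{ for all } i \in \xx_{\a}\}, \qquad \cal{F}_{\xx_{\b}} = \{\gamma \in \{\pm\}^n \mid \gamma(i) = \beta(i) \text{ for all } i \in \xx_{\b}^c\}.
\]
Their intersection in $\{\pm\}^n$ therefore consists precisely of those $\gamma$ satisfying both sign conditions, which is exactly the set on the right-hand side of the claimed equality. Since $|\xx_{\a}| = |\xx_{\b}| = k$, the union $\xx_{\a} \cup \xx_{\b}^c$ has complement $\xx_{\a}^c \cap \xx_{\b}$, and on these indices $\gamma$ is left free, giving the final ``in particular'' assertion.

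Next I would use the hypothesis $\cal{B}_{\xx_{\a}} \cap \cal{F}_{\xx_{\b}} \neq \emptyset$ to verify consistency of the two sign conditions on the overlap $\xx_{\a} \cap \xx_{\b}^c$. Choosing any witness $\gamma_0$ in the intersection, the first defining condition gives $\gamma_0(i) = \alpha(i)$ and the second gives $\gamma_0(i) = \beta(i)$ for every $i \in \xx_{\a} \cap \xx_{\b}^c$, hence $\alpha(i) = \beta(i)$ there. Without this compatibility the intersection would be empty, so the hypothesis is precisely what is needed to make the displayed set nonempty.

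Finally, for the inclusion $\cal{B}_{\xx_{\a}} \cap \cal{F}_{\xx_{\b}} \subset \cal{P}$, I would invoke the containments $\cal{B}_{\xx_{\a}} \subset \cal{B}$ and $\cal{F}_{\xx_{\b}} \subset \cal{F}$ recorded right after the definitions of the bounded and feasible cones, together with $\cal{P} = \cal{F} \cap \cal{B}$. No new geometric input is required.

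The proof is essentially a bookkeeping exercise, so I do not expect a substantive obstacle; the only mildly subtle point is the consistency check on $\xx_{\a} \cap \xx_{\b}^c$, which is immediate from picking a witness element.
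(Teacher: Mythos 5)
Your proof is correct and follows essentially the same route as the paper's: both unwind the sign-constraint definitions of $\cal{B}_{\xx_{\a}}$ and $\cal{F}_{\xx_{\b}}$, note that the overlap $\xx_{\a}\cap\xx_{\b}^c$ forces $\alpha(i)=\beta(i)$ there (which the nonemptiness hypothesis guarantees), leave $\xx_{\a}^c\cap\xx_{\b}$ free, and conclude membership in $\cal{P}$ from $\cal{B}_{\xx_{\a}}\subset\cal{B}$ and $\cal{F}_{\xx_{\b}}\subset\cal{F}$.
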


\begin{proof}
For $\gamma \in \cal{B}_{\xx_{\a}}\cap \cal{F}_{\xx_{\b}}$ we have
\[
\gamma(i) =
\left\{
  \begin{array}{ll}
    \alpha(i), & \hbox{$i \in \xx_{\a} \cap \xx_{\b}$;} \\
\alpha(i)=\beta(i), & \hbox{$i \in \xx_{\a}  \cap \xx_{\b}^c$;} \\
      \beta(i), & \hbox{$i \in \xx_{\a}^c \cap \xx_{\b}^c$;} \\
     \pm , & \hbox{$i \in \xx_{\a}^c \cap \xx_{\b}$.}
  \end{array}
\right.
\]
All such $\gamma$ are in $\cal{P}$ since $\gamma \in \cal{B}_{\xx_{\a}} \subset \cal{B}$ and $\gamma \in \cal{F}_{\xx_{\b}}\subset \cal{F}$.
\end{proof}

\begin{proposition}\label{prop:ExtComplexResolveLeft}
Let $S_{\min} = \{i \in \xx_{\a}^c \cap \xx_{\b}^c: \alpha(i) \neq \beta(i)\}$. The group $\Ext_{\Bt(\cal{V})}(\Vt_{\a},\Vt_{\b})$ is the homology of the total complex of the multicomplex formed by
\[
\mf{X}_{\a} = \bigoplus_{S \textrm{  with  } S_{\min} \subset S \subset S_{\min} \cup (\xx_{\a}^c \cap \xx_\b)} q^{d_{\a^S, \b} - |S|} \Z[u_{\xx_{\b}}]
\]
with differential from the summand indexed by $S'$ to the summand indexed by $S = S'\sqcup \{ j\}$ given by multiplication by
$\begin{cases}
u_j, & \alpha(j) \neq \beta(j) \\
1, & \alpha(j) = \beta(j).
\end{cases}$
\end{proposition}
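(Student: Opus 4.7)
The plan is to apply $\Hom_{\Bt(\cal{V})}(-,\Vt_{\b})$ to the linear projective resolution of $\Vt_{\a}$ constructed in Section~\ref{sec:BSide} and identify the resulting complex term by term and differential by differential with $\mf{X}_{\a}$.

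First I would recall that, by Theorem~\ref{thm:proj-resolution} applied via Gale duality, $\Vt_{\a}$ admits a projective resolution of the form $\bigoplus_{S \subset \xx_{\a}^c} q^{|S|}\Pt_{\a^S}$, where the summand indexed by $S$ sits in homological degree $|S|$, and where the differential from the summand $q^{|S|}\Pt_{\a^S}$ to $q^{|S|-1}\Pt_{\a^{S'}}$ (for $S = S' \sqcup \{j\}$) is given by right multiplication by $p(\a^S,\a^{S'})$, with the convention $\Pt_{\gamma}=0$ when $\gamma \notin \cal{P}$. Applying $\Hom(-,\Vt_{\b})$ produces a cochain multicomplex whose summand labelled by $S$ is $q^{-|S|}\Hom(\Pt_{\a^S},\Vt_{\b})$.

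Next I would identify the nonzero summands using equation~\eqref{eq:HomPV}: $\Hom(\Pt_{\a^S},\Vt_{\b})$ vanishes unless $\a^S \in \cal{F}_{\xx_{\b}}$, and equals $\Z[u_{\xx_{\b}}]$ with its generator in internal degree $d_{\a^S,\b}$ otherwise. Unpacking $\a^S \in \cal{F}_{\xx_{\b}}$ requires $\a^S(i)=\b(i)$ for every $i \in \xx_{\b}^c$; since $\a^S$ differs from $\a$ in precisely the coordinates in $S$ and $S \subset \xx_{\a}^c$, a short case analysis shows this is equivalent to
\[
S \cap \xx_{\b}^c = S_{\min}, \qquad\text{i.e.,}\qquad S_{\min}\subset S\subset S_{\min}\cup(\xx_{\a}^c\cap\xx_{\b}).
\]
This matches the indexing set of $\mf{X}_{\a}$, and the grading shift $q^{d_{\a^S,\b}-|S|}$ on each summand is now transparent.

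Finally I would identify the differential. For $S = S' \sqcup \{j\}$ with both $S,S'$ in the indexing set (forcing $j \in \xx_{\a}^c \cap \xx_{\b}$), the induced map on Hom spaces sends a homomorphism $f\maps \Pt_{\a^{S'}}\to \Vt_{\b}$ to $f$ precomposed with right multiplication by $p(\a^S,\a^{S'})$. Identifying the generator of $\Hom(\Pt_{\a^{S'}},\Vt_{\b})$ with the class of a taut path $p_{\a^{S'}}^{\b}$ in $\Vt_{\b}$, the image of this generator is the class of the path $\a^S \to \a^{S'} \to \b$, where the second leg is taut. I would then apply Proposition~\ref{prop:GDKD-Prop3.9} to rewrite this concatenation as $\bigl(\prod_i u_i^{\theta_i}\bigr)\,p_{\a^S}^{\b}$. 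Since the only hyperplane crossed by the initial step is $H_j$, we have $\theta_i=0$ for $i\neq j$, and $\theta_j$ equals $1$ if the taut segment from $\a^{S'}$ to $\b$ also crosses $H_j$ (which happens precisely when $\a(j)\neq \b(j)$, since $\a^{S'}(j)=\a(j)$) and $0$ otherwise. This is exactly the rule stated for the differential, and $u_j \in \Z[u_{\xx_{\b}}]$ since $j\in \xx_{\b}$, so nothing is annihilated when passing from $\Bt(\cal{V})$ to $\Vt_{\b}$. The multicomplex structure is inherited from the multi-grading on the resolution of Theorem~\ref{thm:proj-resolution}, whose commutativity follows from relation (A2) in the $\At$-side description.

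The main subtlety (rather than a true obstacle) is the bookkeeping in the last step: one must verify that the concatenated path acquires exactly one factor of $u_j$ precisely when $\a(j)\neq \b(j)$, and no other $u_i$ factors appear, by a careful application of Definition~\ref{def:GDKD-3.6} and Proposition~\ref{prop:GDKD-Prop3.9} to the length-$(1+d_{\a^{S'},\b})$ path obtained from the composition. Once this is in place, together with Corollary~\ref{cor:GDKD-Cor3.9} to justify that the choice of taut representative is immaterial, the identification of the Hom complex with $\mf{X}_{\a}$ is complete.
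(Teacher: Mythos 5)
Your proposal is correct and follows essentially the same route as the paper's proof: apply $\Hom_{\Bt(\cal{V})}(-,\Vt_{\b})$ to the projective resolution from Section~\ref{sec:BSide}, use \eqref{eq:HomPV} to see that only the summands with $S_{\min}\subset S\subset S_{\min}\cup(\xx_{\a}^c\cap\xx_{\b})$ survive, identify each surviving generator with $-\cdot p_{\a^S}^{\b}$, and compute the induced differential by rewriting the composite $p(\a^S,\a^{S'})\,p_{\a^{S'}}^{\b}$ as $u_j^{\theta_j}$ times a taut path via Proposition~\ref{prop:GDKD-Prop3.9}. Your $\theta_i$ bookkeeping in the last step matches the paper's (slightly terser) case split on whether $\a(j)=\b(j)$, so no gap.
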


\begin{proof}
By \eqref{eq:HomPV}, for each term $\Pt_{\gamma}$ in the projective resolution of $\Vt_\a$ with $\gamma\in \cal{B}_{\xx_{\a}}\cap \cal{F}_{\xx_{\b}}$), the space of maps from $\Pt_{\gamma}$ to $\Vt_\b$ is a free module of rank one over $\Z[u_{\xx_{\b}}]$ generated by a map of degree $d_{\gamma, \b}$. Hence, for each term in the submodule
\[
\mf{X}_{\a}' = \bigoplus_{S \textrm{  with  } S_{\min} \subset S \subset S_{\min} \cup (\xx_{\a}^c \cap \xx_\b)} q^{|S|} \Pt_{\alpha^{S}}
\]
of the resolution of $\Vt_{\a}$ there is a free rank-one $\Z[u_{\xx_{\b}}]$-module of maps to $\Vt_{\b}$ generated by a map of degree $d_{\alpha^S,\b} - |S|$. This module $\mf{X}_{\a}'$ also has a multi-grading by $\Z\{\epsilon_i \mid i \in \xx_{\a}^c \cap \xx_{\b} \}$, with summand $q^{|S|}\Pt_{\a^{S}}$ sitting in multi-degree $\epsilon_{S}=\sum_{i\in S \setminus S_{\min}} \epsilon_i$.  We write $\kappa_S$ for the minimal map in $\Hom(\mf{X}_{\a}',\Vt_{\b})$ mapping $q^{|S|} \Pt_{\alpha^{S}}$ to $\Vt_{\b}$. We have $\kappa_S = - \cdot p_{\alpha^S}^{\b}$ where $p_{\alpha^S}^{\b}$ denotes a taut path from $\alpha^S$ to $\b$ in $Q(\cal{V}^{\vee})$ (see Section~\ref{sec:ABAlgebraDefs}; we are using the identification $\Bt(\cal{V}) \cong \At(\cal{V}^{\vee})$ to view $p_{\alpha^S}^{\b}$ as an element of $\Bt(\cal{V})$).

In $\mf{X}_{\a}'$, the component of the differential mapping $q^{|S|} \Pt_{\alpha^{S}} \to q^{|S|-1} \Pt_{\alpha^{S'}}$ with $S=S'\sqcup \{ j\}$ is given by right multiplication by $p_{\alpha^S}^{\alpha^{S'}}$ (again this denotes a taut path in $Q(\cal{V}^{\vee})$). Hence, the differential of the map $\kappa_{S'}\maps \Pt_{\alpha^{S'}} \to \Vt_{\b}$ is given by
\[
\bigoplus_{\stackrel{j \in (\xx_{\a}^c \cap \xx_{\b}) \setminus S'}{S_j:=S'\sqcup \{ j\}}} - \cdot p_{\alpha^{S_j}}^{\alpha^{S'}} \cdot p_{\alpha^{S'}}^{\b}.
\]
If $\alpha(j) \neq \beta(j)$, then
\[
\bigoplus_{\stackrel{j \in (\xx_{\a}^c \cap \xx_{\b}) \setminus S'}{S_j:=S'\sqcup \{ j\}}} - \cdot p_{\alpha^{S_j}}^{\alpha^{S'}} \cdot p_{\alpha^{S'}}^{\b}
=
\bigoplus_{\stackrel{j \in (\xx_{\a}^c \cap \xx_{\b}) \setminus S'}{S_j:=S'\sqcup \{ j\}}}   - \cdot u_j p_{\alpha^{S_j}}^{\b},
\]
while if $\alpha(j) = \beta(j)$ then
\[
\bigoplus_{\stackrel{j \in (\xx_{\a}^c \cap \xx_{\b}) \setminus S'}{S_j:=S'\sqcup \{ j\}}} - \cdot p_{\alpha^{S_j}}^{\alpha^{S'}} \cdot p_{\alpha^{S'}}^{\b}
=
\bigoplus_{\stackrel{j \in (\xx_{\a}^c \cap \xx_{\b}) \setminus S'}{S_j:=S'\sqcup \{ j\}}}   - \cdot p_{\alpha^{S_j}}^{\b}.
\]
\end{proof}

\begin{corollary}\label{cor:BTildeExtComputation}
$
\Ext^i_{\Bt(\cal{V})}(\Vt_{\a},\Vt_{\b})
=0
$ unless $\cal{B}_{\xx_{\a}}\cap \cal{F}_{\xx_{\b}}\neq \emptyset$, $\alpha(j) \neq \beta(j)$ for all $j \in \xx_{\a}^c \cap \xx_{\b}$, and $i=|\{i \in \xx_{\a}^c \cap \xx_{\b}^c: \alpha(i) \neq \beta(i)\}| + |\xx_{\a}^c \cap \xx_{\b}|$, in which case
\[
\Ext^i_{\Bt(\cal{V})}(\Vt_{\a},\Vt_{\b})\cong q^i \Z[u_{\xx_{\b}}]/( u_j \mid j \in \xx_{\a}^c \cap \xx_{\b}) \cong q^i \Z[u_{\xx_{\a}\cap\xx_{\b}}].
\]
In particular, $\Hom_{\Bt(\cal{V})}(\Vt_{\alpha},\Vt_{\beta}) = \Ext^0_{\Bt(\cal{V})}(\Vt_{\a},\Vt_{\b}) = \delta_{\a\b}$.
\end{corollary}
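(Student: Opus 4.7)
\smallskip\noindent\textit{Proof plan.} My plan is to feed the complex $\mf{X}_{\a}$ from Proposition~\ref{prop:ExtComplexResolveLeft} into the standard Koszul machinery. I first dispose of the case $\cal{B}_{\xx_{\a}} \cap \cal{F}_{\xx_{\b}} = \emptyset$: by Lemma~\ref{lem:BFIntersection} this forces some $i \in \xx_{\a} \cap \xx_{\b}^c$ with $\alpha(i) \neq \beta(i)$, and since the sign of $\a^S$ at index $i$ remains $\alpha(i)$ whenever $S \subset \xx_{\a}^c$, no such $\a^S$ lies in $\cal{F}_{\xx_{\b}}$; consequently $\Hom(\Pt_{\a^S},\Vt_{\b}) = 0$ for every summand of the resolution of $\Vt_{\a}$, and all Ext groups vanish. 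For the remainder of the argument I assume $\cal{B}_{\xx_{\a}} \cap \cal{F}_{\xx_{\b}} \neq \emptyset$.

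The key observation will be that parametrizing $S = S_{\min} \sqcup S'$ with $S' \subset T := \xx_{\a}^c \cap \xx_{\b}$ identifies $\mf{X}_{\a}$, up to an overall cohomological and internal grading shift, with the tensor product of two-term cochain complexes $\bigotimes_{j \in T} C_j$ of $\Z[u_{\xx_{\b}}]$-modules, where
\[
C_j = \bigl[\, \Z[u_{\xx_{\b}}] \xrightarrow{\delta_j} \Z[u_{\xx_{\b}}] \,\bigr], \qquad \delta_j = \begin{cases} 1, & \alpha(j) = \beta(j), \\ u_j, & \alpha(j) \neq \beta(j), \end{cases}
\]
with shifts dictated by the formula $q^{d_{\a^S,\b}-|S|}$ of Proposition~\ref{prop:ExtComplexResolveLeft}. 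If any $j \in T$ has $\alpha(j) = \beta(j)$, the factor $C_j$ is the identity and hence contractible, so the whole tensor product is contractible and $\Ext = 0$; this produces the second vanishing condition. When $\alpha(j) \neq \beta(j)$ for every $j \in T$, the tensor product is exactly the cohomological Koszul complex of the sequence $\{u_j\}_{j \in T}$ in $\Z[u_{\xx_{\b}}]$; since these variables are distinct polynomial generators they form a regular sequence, so the Koszul cohomology is concentrated in top cohomological degree $|T|$ and equals $\Z[u_{\xx_{\b}}]/(u_j : j \in T) \cong \Z[u_{\xx_{\a} \cap \xx_{\b}}]$.

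Combining the two paragraphs, the surviving cohomology lives in total cohomological degree $|S_{\min}| + |T| = i$, matching the statement. The claimed internal $q$-shift $q^i$ will then be read off by combining the shift $q^{d_{\a^{S_{\min}},\b}-|S_{\min}|}$ of the initial summand with the cumulative shift contributed by the Koszul differentials; this bookkeeping is the one delicate point of the argument and the main potential obstacle, as each step through a $j \in T$ with $\alpha(j) \neq \beta(j)$ both decreases $d_{\a^S,\b}$ by one and increases $|S|$ by one, so one must carefully account for the resulting internal shifts. Once it is done, the final assertion $\Hom(\Vt_{\a},\Vt_{\b}) = \Ext^0 = \delta_{\a\b}$ is automatic: for $\a \neq \b$ at least one of $S_{\min}$ or $T$ must be nonempty, forcing $i > 0$; and for $\a = \b$ all three conditions hold with $i = 0$ and the formula specializes to $\End(\Vt_{\a}) \cong \Z[u_{\xx_{\a}}]$, in accordance with Proposition~\ref{prop:endV}.
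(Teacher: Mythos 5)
Your reading of Corollary~\ref{cor:BTildeExtComputation} as an immediate Koszul-complex consequence of Proposition~\ref{prop:ExtComplexResolveLeft} is exactly the intended derivation (the paper states the corollary with no separate proof), and all the substantive steps check out: the case $\cal{B}_{\xx_{\a}}\cap\cal{F}_{\xx_{\b}}=\emptyset$ is correctly disposed of via \eqref{eq:HomPV} (indeed your argument, which works directly from the definitions of the cones rather than from the conclusion of Lemma~\ref{lem:BFIntersection}, is cleaner than invoking the proposition, whose statement tacitly assumes nonemptiness); the factorization of $\mf{X}_{\a}$ as $\bigotimes_{j\in T}C_j$ shifted by $|S_{\min}|$ is right; contractibility when some $\delta_j=1$ gives the second vanishing condition; and when all $\delta_j=u_j$ the regular-sequence Koszul computation gives cohomology $\Z[u_{\xx_{\b}}]/(u_j: j\in T)\cong\Z[u_{\xx_{\a}\cap\xx_{\b}}]$ concentrated in degree $i=|S_{\min}|+|T|$. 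Your argument that $\a\neq\b$ forces $T\neq\emptyset$ (since $\mu$ is a bijection, $\xx_{\a}\neq\xx_{\b}$, and the two sets have equal cardinality) correctly yields the $\delta_{\a\b}$ statement.

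The one step you defer --- the internal $q$-shift --- is genuinely the delicate point, and you should be aware that a literal reading of the shifts in Proposition~\ref{prop:ExtComplexResolveLeft} does \emph{not} produce $q^i$: the surviving class is the cokernel sitting in the top summand $S=S_{\min}\cup T$, whose stated shift is $q^{d_{\a^{S_{\min}\cup T},\b}-i}=q^{d_{\a,\b}-2i}$, and $d_{\a,\b}-2i$ equals $|\{j\in\xx_{\a}\cap\xx_{\b}:\a(j)\neq\b(j)\}|-i$ rather than $i$. (For instance, with $n=2$, $k=1$, $\a=(+-)$, $\b=(--)$, one computes directly that $\Ext^1_{\Bt(\cal{V})}(\Vt_{\a},\Vt_{\b})$ is generated by the image of the identity of $\Pt_{\b}$ sitting in internal degree $-1$ under the stated conventions, versus the claimed $q^{+1}$.) So completing your proof requires either resolving a grading-convention discrepancy between the proposition and the corollary or correcting the exponent; the ungraded content of the corollary, which is all that is used later (e.g.\ in Theorem~\ref{thm:ExtStrands}, which works over $\F_2$ and disregards gradings), follows exactly as you argue.
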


\begin{corollary} The Ext groups for standard modules of the undeformed algebras $B(\cal{V})$ satisfy
$
\Ext^i_{B(\cal{V})}(V_{\a},V_{\b})
=0
$ unless $\cal{B}_{\xx_{\a}}\cap \cal{F}_{\xx_{\b}}\neq \emptyset$, $\alpha(j) \neq \beta(j)$ for all $j \in \xx_{\a}^c \cap \xx_{\b}$, and $i=|\{i \in \xx_{\a}^c \cap \xx_{\b}^c: \alpha(i) \neq \beta(i)\}| + |\xx_{\a}^c \cap \xx_{\b}|$.
\end{corollary}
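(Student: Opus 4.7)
The plan is to derive this vanishing directly from Corollary~\ref{cor:BTildeExtComputation} by base-changing along the graded flat deformation $\Sym(V) \to \Bt(\cal{V}) \to B(\cal{V})$ of equation~\eqref{eq:Bt-flat}. Since $V_{\a} = \pr \Vt_{\a} = \Vt_{\a} \otimes_{\Sym(V)} \Bbbk$ and analogously for $V_{\b}$, the same combinatorial cube that produced $\Ext^{\bullet}_{\Bt(\cal{V})}(\Vt_{\a},\Vt_{\b})$ should, after tensoring with $\Bbbk$ over $\Sym(V)$, compute $\Ext^{\bullet}_{B(\cal{V})}(V_{\a},V_{\b})$.

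First, I would verify that the (Gale-dual of the) projective resolution $\mathbf{P}_{\a}^{\bullet} \to \Vt_{\a}$ from Theorem~\ref{thm:proj-resolution} base-changes to a projective resolution $\pr \mathbf{P}_{\a}^{\bullet} \to V_{\a}$ over $B(\cal{V})$. This uses that each summand $\Pt_{\alpha^{S}}$ is free over the central subalgebra $\Sym(V)$, and that $\Vt_{\a}$ itself is flat over $\Sym(V)$; the latter follows from Proposition~\ref{prop:Vtaut} combined with the observation that, because $\xx_{\a}$ is a matroid basis of $\cal{V}$, the composition $\Sym(V) \hookrightarrow \Bbbk[u_{1},\dots,u_{n}] \twoheadrightarrow \Bbbk[u_{\xx_{\a}}]$ is an isomorphism. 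The adjunction $\pr \dashv \infl$ applied to projectives then identifies the Hom complex computing $\Ext^{\bullet}_{B(\cal{V})}(V_{\a},V_{\b})$ with $\mf{X}_{\a} \otimes_{\Sym(V)} \Bbbk$, where $\mf{X}_{\a}$ is the complex of Proposition~\ref{prop:ExtComplexResolveLeft}.

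From here the argument becomes a direct analysis of $\mf{X}_{\a} \otimes_{\Sym(V)} \Bbbk$. If $\cal{B}_{\xx_{\a}} \cap \cal{F}_{\xx_{\b}} = \emptyset$, the complex is already zero, handling the first necessary condition. Otherwise the tensored complex inherits the same cube structure indexed by subsets $S$ with $S_{\min} \subseteq S \subseteq S_{\min} \cup (\xx_{\a}^{c} \cap \xx_{\b})$: each summand becomes a one-dimensional $\Bbbk$-vector space, and each cube edge specializes either to $0$ (when it was multiplication by $u_{j}$ with $\alpha(j) \neq \beta(j)$) or to $1$ (when $\alpha(j) = \beta(j)$). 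The cube then factors as a tensor product over $j \in \xx_{\a}^{c} \cap \xx_{\b}$ of two-term complexes $\Bbbk \to \Bbbk$, and any factor with differential $1$ is acyclic, yielding the required vanishing when some $j \in \xx_{\a}^{c} \cap \xx_{\b}$ satisfies $\alpha(j) = \beta(j)$. Since the whole complex is concentrated in cohomological degrees between $|S_{\min}|$ and $|S_{\min}| + |\xx_{\a}^{c} \cap \xx_{\b}|$, the Ext groups automatically vanish outside this range, furnishing the bound on $i$.

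The main obstacle I anticipate is the base-change step itself: carefully justifying the identification of the Hom complex with $\mf{X}_{\a} \otimes_{\Sym(V)} \Bbbk$, and in particular confirming the flatness of $\Vt_{\a}$ over $\Sym(V)$ through the matroid-basis isomorphism $\Sym(V) \cong \Bbbk[u_{\xx_{\a}}]$. Once this bookkeeping is in place, the combinatorial cube analysis is a direct specialization of the argument proving Corollary~\ref{cor:BTildeExtComputation}, with each Koszul-type factor $\Bbbk[u_{\xx_{\b}}] \xrightarrow{u_{j}} \Bbbk[u_{\xx_{\b}}]$ replaced by its base change $\Bbbk \xrightarrow{0} \Bbbk$.
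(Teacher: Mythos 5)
Your reduction is set up correctly: $\Vt_{\a}$ is free over $\Sym(V)$ (via the isomorphism $\Sym(V)\xrightarrow{\sim}\Bbbk[u_{\xx_{\a}}]$ coming from $\xx_{\a}\in\mathbb{B}$ and simplicity of the arrangement), so $\pr$ of the resolution from Theorem~\ref{thm:proj-resolution} is a projective resolution of $V_{\a}$, and the Hom complex computing $\Ext_{B(\cal{V})}(V_{\a},V_{\b})$ is indeed $\mf{X}_{\a}\otimes_{\Sym(V)}\Bbbk$. The gap is in the last step. After base change every Koszul edge labelled $u_{j}$ with $\alpha(j)\neq\beta(j)$ becomes the zero map $\Bbbk\xrightarrow{0}\Bbbk$, and a two-term complex with zero differential has cohomology in \emph{both} degrees. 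So when the first two conditions hold, your cube gives $\Ext^{i}_{B(\cal{V})}(V_{\a},V_{\b})\cong\Bbbk^{\binom{m}{i-|S_{\min}|}}$ for every $i$ with $|S_{\min}|\le i\le |S_{\min}|+m$, where $m=|\xx_{\a}^{c}\cap\xx_{\b}|$ --- nonzero throughout the range, not only at the top. ``Vanishing outside the range'' is strictly weaker than the corollary's ``vanishing unless $i$ equals the top of the range,'' and your own (correct) computation shows the stronger statement fails whenever $m>0$.

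Concretely: take $n=2$, $k=1$ left cyclic, $\a=(+-)$ (so $\xx_{\a}=\{2\}$) and $\b=(--)$ (so $\xx_{\b}=\{1\}$). Then $V_{\a}=L_{\a}$ is simple, $V_{\b}=P_{\b}$ is two-dimensional with $L_{\a}$ in its socle, so $\Hom_{B(\cal{V})}(V_{\a},V_{\b})\cong\Bbbk\neq 0$ even though $|S_{\min}|+m=1$. The degree concentration in Corollary~\ref{cor:BTildeExtComputation} relies on $(u_{j})_{j\in\xx_{\a}^{c}\cap\xx_{\b}}$ being a regular sequence on $\Z[u_{\xx_{\b}}]$, and regularity is destroyed by $-\otimes_{\Sym(V)}\Bbbk$; equivalently, the universal-coefficient terms $\mathrm{Tor}_{p}^{\Sym(V)}\bigl(\Ext^{\mathrm{top}}_{\Bt(\cal{V})}(\Vt_{\a},\Vt_{\b}),\Bbbk\bigr)$ spread the answer over the whole range. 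What your argument actually establishes is the corollary with ``$i=|S_{\min}|+m$'' replaced by ``$|S_{\min}|\le i\le |S_{\min}|+m$''; as written, the assertion that the range bound ``furnishes the bound on $i$'' conflates these two conditions, and the literal statement cannot be recovered by this (or, it appears, any) method.
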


\subsection{Chain maps between projective resolutions}

\begin{remark}
This section will be used only in Section~\ref{sec:Ainfty} below, where we work over $\F_2$ and ignore gradings, so we will do the same here.
\end{remark}

Assume that $\cal{B}_{\xx_{\a}}\cap \cal{F}_{\xx_{\b}}\neq \emptyset$ and that $\alpha(j) \neq \beta(j)$ for all $j \in \xx_{\a}^c \cap \xx_{\b}$. We can view $\Ext_{\Bt(\cal{V})}(\Vt_{\a},\Vt_{\b})$ as the homology of the space of $\Bt(\cal{V})$-linear maps from the projective resolution of $\Vt_{\a}$ to the projective resolution of $\Vt_{\b}$, with differential given (over $\F_2$) by $d \circ (-) + (-) \circ d$. In this section, for each basis element of $\Ext_{\Bt(\cal{V})}(\Vt_{\a},\Vt_{\b})\cong \F_2[u_{\xx_{\b}}]/( u_j \mid j \in \xx_a^c \cap \xx_{b})$, we will define a chain map between projective resolutions whose class in homology is the given basis element of the Ext group.

\begin{definition}\label{def:ChainMapForExt}
Let $\alpha, \beta \in \cal{P}$; assume that $\cal{B}_{\xx_{\a}^c} \cap \cal{F}_{\xx_{\b}} \neq \emptyset$ and that $\alpha(j) \neq \beta(j)$ for all $j \in \xx_{\a}^c \cap \xx_{\b}$. Define a linear map $\varphi_{\a,\b}$ of $\Bt(\cal{V})$-modules from the projective resolution of $\Vt_{\a}$ to the projective resolution of $\Vt_{\b}$ as follows.
\begin{itemize}
\item Writing
\[
\bigoplus_{S \subset \xx_{\a}^c} \tilde{P}_{\alpha^{S}},
\]
for the projective resolution of $\Vt_{\a}$, the map $\varphi_{\a,\b}$ is nonzero only on the summands $\tilde{P}_{\alpha^{S}}$ for
\[
S = S_{\min} \cup (\xx_{\a}^c \cap \xx_{\b}) \cup S',
\]
where $S_{\min} = \{ i \in \xx_{\a}^c \cap \xx_{\b}^c: \alpha(i) \neq \beta(i) \}$ and $S'$ is any subset of $\{ i \in \xx_{\a}^c \cap \xx_{\b}^c : \alpha(i) = \beta(i) \}$.

\item Writing
\[
\bigoplus_{T \subset \xx_{\b}^c} \tilde{P}_{\beta^{T}},
\]
for the projective resolution of $\Vt_{\b}$, the map $\varphi_{\a,\b}$ sends the summand $\tilde{P}_{\alpha^{S}}$ in the previous item to the summand $\tilde{P}_{\beta^{S'}}$ (i.e. the summand for $T := S'$; this makes sense because $S' \subset \xx_{\b}^c$).

\item As a map from $\tilde{P}_{\alpha^{S}}$ to $\tilde{P}_{\beta^{S'}}$, the map $\varphi_{\a,\b}$ sends the generator $e_{\alpha^{S}}$ of $\tilde{P}_{\alpha^{S}}$ to $p_{\alpha^{S}}^{\beta^{S'}} \in \tilde{P}_{\beta^{S'}}$ (recall that $p_{\alpha^{S}}^{\beta^{S'}}$ denotes any taut path from $\alpha^{S}$ to $\beta^{S'}$ in $Q(\cal{V}^{\vee})$, where we are using the identification $\Bt(\cal{V}) \cong \At(\cal{V}^{\vee})$).

\end{itemize}
\end{definition}

\begin{proposition}\label{prop:VarphiChainMap}
The map $\varphi_{\a,\b}$ above intertwines the differential on the projective resolution of $\Vt_{\a}$ with the differential on the projective resolution of $\Vt_{\b}$.
\end{proposition}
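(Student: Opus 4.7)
\medskip
\noindent \textbf{Proof plan.} Since $\varphi_{\a,\b}$ is $\Bt(\cal{V})$-linear, it suffices to verify that $d\circ\varphi_{\a,\b}(e_{\alpha^S}) = \varphi_{\a,\b}\circ d(e_{\alpha^S})$ over $\F_2$ for each $S$ in the support of $\varphi_{\a,\b}$, i.e.\ for $S = S_{\min}\cup(\xx_{\a}^c\cap\xx_{\b})\cup S'$ with $S'\subset\{i\in\xx_{\a}^c\cap\xx_{\b}^c:\alpha(i)=\beta(i)\}$.

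Unwinding the definitions of $\varphi_{\a,\b}$ and of the two resolutions' differentials, I would obtain
\[
d\circ\varphi_{\a,\b}(e_{\alpha^S}) = \sum_{j\in S'} p_{\alpha^S}^{\beta^{S'}}\cdot p(\beta^{S'},\beta^{S'\setminus\{j\}})
\]
and
\[
\varphi_{\a,\b}\circ d(e_{\alpha^S}) = \sum_{i\in S'} p(\alpha^S,\alpha^{S\setminus\{i\}})\cdot p_{\alpha^{S\setminus\{i\}}}^{\beta^{S'\setminus\{i\}}}.
\]
In the second expression the indices $i\in S_{\min}\cup(\xx_{\a}^c\cap\xx_{\b})$ do not contribute, since $\varphi_{\a,\b}$ vanishes on $\tilde{P}_{\alpha^{S\setminus\{i\}}}$ whenever removing $i$ from $S$ deletes an element of $S_{\min}$ or of $\xx_{\a}^c\cap\xx_{\b}$; for $i\in S'$ the subset $S\setminus\{i\}$ still has the form required by Definition~\ref{def:ChainMapForExt}, and the surviving $\varphi_{\a,\b}$-image is $p_{\alpha^{S\setminus\{i\}}}^{\beta^{S'\setminus\{i\}}}$.

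The crux is then to show that both two-step concatenations above are already taut paths, so by Proposition~\ref{prop:GDKD-Prop3.9} and Corollary~\ref{cor:GDKD-Cor3.9} each equals the canonical taut path $p_{\alpha^S}^{\beta^{S'\setminus\{j\}}}$ with no $u$-variable decoration. For the first concatenation, tautness reduces to the check $\alpha^S(j)=\beta^{S'}(j)$ for $j\in S'$, which is immediate since both sides equal $-\alpha(j)$ (recall $j\in S\cap S'$ and $\alpha(j)=\beta(j)$). For the second, it reduces to the check $\alpha^S(i)\neq\beta^{S'\setminus\{i\}}(i)$ for $i\in S'$, i.e.\ $-\alpha(i)\neq\alpha(i)$. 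Both sums thus collapse to $\sum_{j\in S'} p_{\alpha^S}^{\beta^{S'\setminus\{j\}}}$ and agree modulo $2$.

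The main obstacle is the tautness verification: one must confirm that the length of each concatenated path matches the Hamming distance $d_{\alpha^S,\beta^{S'\setminus\{j\}}}$ between its endpoints, so that no ``returned'' hyperplane crossings from Proposition~\ref{prop:GDKD-Prop3.9} pollute the computation with nontrivial $u$-variable factors. The standing hypotheses---$\alpha(j)\neq\beta(j)$ on $\xx_{\a}^c\cap\xx_{\b}$ and $\alpha(j)=\beta(j)$ on $S'$---control the relevant coordinates cleanly, so this bookkeeping is short but must be performed carefully; everything else is a direct substitution using the differentials of Theorem~\ref{thm:proj-resolution} and Section~\ref{sec:BSide}.
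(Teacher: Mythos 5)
Your proposal is correct and follows essentially the same argument as the paper: both identify the surviving terms of each composite, verify that each two-step concatenation is a taut path (using $\alpha(i)=\beta(i)$ on $S'$ and the definition of $\varphi_{\a,\b}$), and then invoke the well-definedness of taut paths to match terms one-to-one. The only difference is cosmetic indexing — you fix the source $S$ and sum over removed indices $i \in S'$, while the paper fixes $S'$ and adjoins $j \notin S'$ — so the two proofs coincide.
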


\begin{proof}
Denote the differentials on both projective resolutions by $d$. We get a nonzero term of $\varphi_{\a,\b} \circ d$ when we have $S = S_{\min} \cup (\xx_{\a}^c \cap \xx_{\b}) \cup S'$ for some subset $S'$ of $\{i \in \xx_{\a}^c \cap \xx_{\b}^c: \alpha(i) = \beta(i) \}$ such that for some element $j$ of $\{i \in \xx_{\a}^c \cap \xx_{\b}^c: \alpha(i) = \beta(i) \}$, we have $j \notin S''$. Then the composition
\[
\tilde{P}_{\alpha^{S_{\min} \cup (\xx_{\a}^c \cap \xx_{\b}) \cup S' \cup \{j\}}} \xrightarrow{d} \tilde{P}_{\alpha^{S_{\min} \cup (\xx_{\a}^c \cap \xx_{\b}) \cup S'}} \xrightarrow{\varphi_{\a,\b}} \tilde{P}_{\beta^{S'}}
\]
sends the generator $e_{\alpha^{S_{\min} \cup (\xx_{\a}^c \cap \xx_{\b}) \cup S' \cup \{j\}}}$ of $\tilde{P}_{\alpha^{S_{\min} \cup (\xx_{\a}^c \cap \xx_{\b}) \cup S' \cup \{j\}}}$ to the element
\[
p_{\alpha^{S_{\min} \cup (\xx_{\a}^c \cap \xx_{\b}) \cup S' \cup \{j\}}}^{\alpha^{S_{\min} \cup (\xx_{\a}^c \cap \xx_{\b}) \cup S'}} p_{\alpha^{S_{\min} \cup (\xx_{\a}^c \cap \xx_{\b}) \cup S'}}^{\beta^{S'}}
\]
of $\tilde{P}_{\beta^{S'}}$. Since $\alpha(j) = \beta(j)$, $j \notin S'$ so $\beta^{S'}(j) = \beta(j)$, and the element $p_{\alpha^{S_{\min} \cup (\xx_{\a}^c \cap \xx_{\b}) \cup S' \cup \{j\}}}^{\alpha^{S_{\min} \cup (\xx_{\a}^c \cap \xx_{\b}) \cup S'}}$ is a single-step path, the above product is equal to
\[
p_{\alpha^{S_{\min} \cup (\xx_{\a}^c \cap \xx_{\b}) \cup S' \cup \{j\}}}^{\beta^{S''}}.
\]

On the other hand, the composition
\[
\tilde{P}_{\alpha^{S_{\min} \cup (\xx_{\a}^c \cap \xx_{\b}) \cup S' \cup \{j\}}} \xrightarrow{\varphi_{\a,\b}} \tilde{P}_{\beta^{S' \cup \{j\}}} \xrightarrow{d} \tilde{P}_{\beta^{S'}}
\]
sends $e_{\alpha^{S_{\min} \cup (\xx_{\a}^c \cap \xx_{\b}) \cup S' \cup \{j\}}}$ to the element
\[
p_{\alpha^{S_{\min} \cup (\xx_{\a}^c \cap \xx_{\b}) \cup S' \cup \{j\}}}^{\beta^{S' \cup \{j\}}} p_{\beta^{S' \cup \{j\}}}^{\beta^{S'}};
\]
indeed, since $j \in \xx_{\b}^c \setminus S'$, there is a term of $d$ mapping the generator of $\tilde{P}_{\beta^{S' \cup \{j\}}}$ to $p_{\beta^{S' \cup \{j\}}}^{\beta^{S'}} \in \tilde{P}_{\beta^{S'}}$. By the same reasoning as above, the product is equal to
\[
p_{\alpha^{S_{\min} \cup (\xx_{\a}^c \cap \xx_{\b}) \cup S'' \cup \{j\}}}^{\beta^{S'}}.
\]

It thus suffices to show that the terms of $d \circ \varphi_{\a,\b}$ appearing above are all the terms of $d \circ \varphi_{\a,\b}$. Indeed, for an arbitrary subset $S'$ of $\{i \in \xx_{\a}^c \cap \xx_{\b}^c: \alpha(i) = \beta(i)\}$, all terms of $d$ applied to $\tilde{P}_{\beta^{S'}}$ arise from indices $j' \in S'$; the term for $j' \in S'$ sends $e_{\beta^{S'}} \in \tilde{P}_{\beta^{S'}}$ to $p_{\beta^{S'}}^{\beta^{S' \setminus \{j'\}}} \in \tilde{P}_{\beta^{S' \setminus \{j'\}}}$.
\end{proof}

\begin{proposition}\label{prop:ChainMapInducesRightExtElt}
The element of $\Ext_{\Bt(\cal{V})}(\Vt_{\a},\Vt_{\b})$ induced by the map $\varphi_{\a,\b}$ is the element corresponding to $1 \in \F_2[U_{\xx_{\a} \cap \xx_{\b}}]$ under the isomorphism of Corollary~\ref{cor:BTildeExtComputation}.
\end{proposition}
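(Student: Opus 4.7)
The plan is to compute directly the cocycle in the $\Hom$-complex $\Hom_{\Bt(\cal{V})}(P_{\a}^{\bullet}, \Vt_{\b})$ represented by $\varphi_{\a,\b}$, and then match it against the generator $1$ appearing in the top-degree summand of the complex $\mf{X}_{\a}$ from Proposition~\ref{prop:ExtComplexResolveLeft}. Here $P_{\a}^{\bullet} = \bigoplus_{S \subset \xx_{\a}^c} \tilde{P}_{\a^S}$ denotes the projective resolution of $\Vt_{\a}$ from Section~\ref{sec:BSide}, and similarly for $P_{\b}^{\bullet}$.

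First I recall the standard fact that $\Ext_{\Bt(\cal{V})}(\Vt_{\a},\Vt_{\b})$ can be computed either as chain maps $P_{\a}^{\bullet} \to P_{\b}^{\bullet}$ modulo chain homotopy, or as the cohomology of $\Hom_{\Bt(\cal{V})}(P_{\a}^{\bullet}, \Vt_{\b})$; the isomorphism between these two descriptions sends a chain map $\varphi$ to the cocycle $\varepsilon \circ \varphi$, where $\varepsilon \maps P_{\b}^{\bullet} \twoheadrightarrow \Vt_{\b}$ is the augmentation (nonzero only on $\tilde{P}_{\b^{\emptyset}} = \tilde{P}_{\b}$ in homological degree zero). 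Since $\varphi_{\a,\b}$ is a chain map by Proposition~\ref{prop:VarphiChainMap}, applying this recipe to $\varphi_{\a,\b}$ yields a cocycle in $\Hom_{\Bt(\cal{V})}(P_{\a}^{\bullet}, \Vt_{\b})$.

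Next I trace through which summands contribute. By Definition~\ref{def:ChainMapForExt}, $\varphi_{\a,\b}$ lands in $\tilde{P}_{\b^{T}}$ only for $T = S'$ with $S' \subset \{i \in \xx_{\a}^c \cap \xx_{\b}^c : \a(i) = \b(i)\}$, so after postcomposition with $\varepsilon$ only the summand $T = S' = \emptyset$ survives. This corresponds to the unique source $S_* := S_{\min} \cup (\xx_{\a}^c \cap \xx_{\b})$, on which $\varphi_{\a,\b}$ sends $e_{\a^{S_*}}$ to $p_{\a^{S_*}}^{\b} \in \tilde{P}_{\b}$. The composition $\varepsilon \circ \varphi_{\a,\b}$ is therefore the single map $\tilde{P}_{\a^{S_*}} \to \Vt_{\b}$, $e_{\a^{S_*}} \mapsto [p_{\a^{S_*}}^{\b}] \in \Vt_{\b}$, which in the notation of the proof of Proposition~\ref{prop:ExtComplexResolveLeft} is precisely the generator $\kappa_{S_*}$ of $\Hom_{\Bt(\cal{V})}(\tilde{P}_{\a^{S_*}}, \Vt_{\b}) \cong \Z[u_{\xx_{\b}}]$ corresponding to $1$.

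Thus the cocycle representing $\varphi_{\a,\b}$ is the element $1$ in the summand of $\mf{X}_{\a}$ indexed by $S_*$, which is the unique top-homological-degree summand (degree $i = |S_{\min}| + |\xx_{\a}^c \cap \xx_{\b}|$). Passing to cohomology quotients by the image of the incoming differentials (multiplication by $u_j$ for $j \in \xx_{\a}^c \cap \xx_{\b}$), the class of $1$ in $\Z[u_{\xx_{\b}}]$ maps to $1$ in $\Z[u_{\xx_{\b}}]/(u_j \mid j \in \xx_{\a}^c \cap \xx_{\b}) \cong \Z[u_{\xx_{\a} \cap \xx_{\b}}]$, matching the claimed identification in Corollary~\ref{cor:BTildeExtComputation}. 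No step is a real obstacle; the argument is essentially bookkeeping, but one must be careful that the only nonvanishing contribution under postcomposition with $\varepsilon$ comes from the extremal index $S_*$ and that this extremal summand is precisely the one whose cohomology computes $\F_2[u_{\xx_{\a} \cap \xx_{\b}}]$.
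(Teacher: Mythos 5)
Your proposal is correct and follows essentially the same route as the paper: both postcompose $\varphi_{\a,\b}$ with the augmentation $\bigoplus_{T \subset \xx_{\b}^c} \Pt_{\b^T} \to \Vt_{\b}$ (nonzero only on the $T = \emptyset$ summand), observe that the only surviving source is $S_* = S_{\min} \cup (\xx_{\a}^c \cap \xx_{\b})$, and identify the resulting map with the generator $\kappa_{S_*}$ from the proof of Proposition~\ref{prop:ExtComplexResolveLeft}, which corresponds to $1$ under the isomorphism of Corollary~\ref{cor:BTildeExtComputation}.
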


\begin{proof}
The Ext groups computed by resolving both $\Vt_{\a}$ and $\Vt_{\b}$ are identified with the Ext groups computed by resolving only $\Vt_{\a}$ by sending a morphism between projective resolutions to the map from the resolution of $\Vt_{\a}$ to $\Vt_{\b}$ obtained by postcomposition with the map
\[
\bigoplus_{T \subset \xx_{\b}^c} \Pt_{\b^T} \to \Vt_{\b}
\]
which is the quotient projection $\Pt_{\b} \to \Vt_{\b}$ from Section~\ref{sec:HypertoricStandardMods} on the summand $P_{\b}$ (for $T = \emptyset$) and is zero on all other summands. In particular, $\varphi_{\a,\b}$ gets sent to the map
\[
\bigoplus_{S \subset \xx_{\a}^c} \Pt_{\a^S} \to \Vt_{\b}
\]
which, for $S = S_{\min} \cup (\xx_{\a}^c \cap \xx_{\b})$ with $S_{\min} = \{i \in \xx_{\a}^c \cap \xx_{\b} : \alpha(i) \neq \beta(i) \}$, maps the generator $e_{\a^S}$ of $\Pt_{\a^S}$ to the class of $p_{\a^S}^{\b}$ in the quotient $\Vt_{\b}$ of $\Pt_{\b}$, and is zero on the summands $\Pt_{\a^S}$ for all other $S \subset \xx_{\a}^c$. This map is the one called $\kappa_{\xx_{\a}^c \cap \xx_{\b}}$ in the proof of Proposition~\ref{prop:ExtComplexResolveLeft}, corresponding to $1 \in \F_2[u_{\xx_{\a} \cap \xx_{\b}}]$ under the isomorphism of Corollary~\ref{cor:BTildeExtComputation}.
\end{proof}

More generally, if $\mu$ is any monomial in the variables $u_{\xx_{\a} \cap \xx_{\b}}$, define a differential-intertwining morphism $\mu \varphi_{\a,\b}$ from the projective resolution of $\Vt_{\a}$ to the projective resolution of $\Vt_{\b}$ by multiplying the outputs of $\varphi_{\a,\b}$ by $\mu$. Proposition~\ref{prop:ChainMapInducesRightExtElt} implies the following.

\begin{corollary}\label{cor:VarphiInducesRightExtElts}
The element of $\Ext_{\Bt(\cal{V})}(\Vt_{\a},\Vt_{\b})$ induced by $\mu\varphi_{\a,\b}$ is the element corresponding to $\mu \in \F_2[u_{\xx_{\a} \cap \xx_{\b}}]$ under the isomorphism of Corollary~\ref{cor:BTildeExtComputation}.
\end{corollary}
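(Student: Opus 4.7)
The plan is to deduce this corollary directly from Proposition~\ref{prop:ChainMapInducesRightExtElt} by exploiting the $\F_2[u_{\xx_{\a}\cap\xx_{\b}}]$-module structure on the Ext group. The point is that the isomorphism in Corollary~\ref{cor:BTildeExtComputation} is not merely an isomorphism of vector spaces but an isomorphism of $\F_2[u_{\xx_{\a}\cap\xx_{\b}}]$-modules, where the action on the left-hand side comes from the right action of the subalgebra $\F_2[u_{\xx_{\b}}] \cong \End_{\Bt(\cal{V})}(\Vt_{\b})$ on $\Vt_{\b}$ (which descends to a well-defined action of $\F_2[u_{\xx_{\a}\cap\xx_{\b}}]$ after killing the $u_j$ for $j\in \xx_{\a}^c\cap\xx_{\b}$).

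First I would verify that $\mu\varphi_{\a,\b}$ is again a chain map of left $\Bt(\cal{V})$-modules. Each variable $u_i$ for $i \in \xx_{\a}\cap\xx_{\b}$ lies in the center of the appropriate idempotent truncation: concretely, multiplication by $u_i$ on the right at an idempotent $e_{\b^{S'}}$ coincides with multiplication on the left by the element $p(\b^{S'},(\b^{S'})^i,\b^{S'})$, so it commutes with all left $\Bt(\cal{V})$-actions and with the differentials of the resolutions, which are themselves given by right multiplication by path elements. Hence the definition of $\mu\varphi_{\a,\b}$ as postmultiplication of the outputs of $\varphi_{\a,\b}$ by $\mu$ produces a genuine map of complexes of left $\Bt(\cal{V})$-modules.

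Next I would identify the induced Ext class. As in the proof of Proposition~\ref{prop:ChainMapInducesRightExtElt}, we transport to the description of $\Ext$ via only the resolution of $\Vt_{\a}$ by postcomposing with the quotient projection from the resolution of $\Vt_{\b}$ to $\Vt_{\b}$ (the quotient map $\Pt_{\b}\twoheadrightarrow \Vt_{\b}$ on the $T=\emptyset$ summand and zero elsewhere). Under this operation $\mu\varphi_{\a,\b}$ becomes the map sending $e_{\a^{S_0}}\in \Pt_{\a^{S_0}}$ to $\mu\cdot[p_{\a^{S_0}}^{\b}]\in\Vt_{\b}$, where $S_0 := S_{\min}\cup(\xx_{\a}^c\cap\xx_{\b})$, and zero on all other summands. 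In the notation of Proposition~\ref{prop:ExtComplexResolveLeft}, this is precisely $\mu\cdot\kappa_{S_0}$, which by construction corresponds to $\mu\in\F_2[u_{\xx_{\b}}]/(u_j:j\in\xx_{\a}^c\cap\xx_{\b})\cong\F_2[u_{\xx_{\a}\cap\xx_{\b}}]$.

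I do not expect a genuine obstacle here; the corollary is essentially a linearity statement packaged on top of the $\mu=1$ case. The only subtle point is the commutativity verification in step one, which reduces to the observation that $u_i$ at the idempotent $e_{\b^{S'}}$ is equivalently implemented by a left multiplication by a closed taut path, together with the relation (B2) (equivalently (A2) under Gale duality) that guarantees that path compositions around two-variable squares coincide. Once this is noted, steps two and three are immediate translations of the proof of the previous proposition with an extra factor of $\mu$ carried through.
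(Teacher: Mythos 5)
Your proposal is correct and follows essentially the same route as the paper, which simply asserts that the corollary is implied by Proposition~\ref{prop:ChainMapInducesRightExtElt} after defining $\mu\varphi_{\a,\b}$ as postmultiplication of outputs by $\mu$. Your elaboration --- checking that $\mu\varphi_{\a,\b}$ is a chain map via centrality of the $u_i$ and then carrying the factor $\mu$ through the identification with $\mu\cdot\kappa_{S_{\min}\cup(\xx_{\a}^c\cap\xx_{\b})}$ in the complex of Proposition~\ref{prop:ExtComplexResolveLeft} --- is a faithful and correct unpacking of that one-line deduction.
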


\section{Cyclic arrangements}\label{sec:Cyclic}

Now we restrict attention to certain special arrangements, related to total positivity, for which the bounded feasible regions admit alternate combinatorial descriptions. We also characterize the partial order as well as bounded and feasible cones for such arrangements.

\subsection{Definitions}

\subsubsection{Cyclic arrangements}

We let $\Grr_{k,n}^{> 0}$ denote the positive Grassmannian consisting of positive (i.e. totally positive) $k$-dimensional subspaces of $\R^n$, i.e. the set of subspaces whose Pl{\"u}cker coordinates are all nonzero and have the same sign.
An element in $\Grr_{k,n}^{> 0}$ can be represented as the column span of  an $n\times k$ matrix with strictly positive maximal minors.

\begin{definition}[cf. \cite{Shannon, Ziegler, RamirezAlfonsin, FRA, KarpWilliams, LLM}]
An arrangement $(V,\eta)$ is called \emph{cyclic} if:
\begin{itemize}
\item $V \in \Grr_{k,n}^{> 0}$,
\item $V + \langle \eta \rangle \in \Grr_{k+1,n}^{> 0}$, and
\item $\eta$ is \emph{positively oriented} with respect to $V$, which means that the first coordinate of the orthogonal projection of some, or equivalently every, representative $w \in \R^n$ of $\eta$ onto $V^{\perp}$ is positive.
\end{itemize}
\end{definition}

\subsubsection{Left and right cyclic polarized arrangements}\label{sec:LRCyclicDefs}
In \cite{LLM} we introduced two types of cyclicity for polarized arrangements; we review these below.

\begin{definition}\label{def:LeftRightCyclic}
Let $\cal{V} = (V,\eta,\xi)$ be a polarized arrangement. We say that $\V$ is \emph{left cyclic} if:
\begin{itemize}
\item $V + \langle \eta \rangle \in \Grr_{k+1,n}^{>0}$,
\item $(\xi,\id)(V) \in \Grr_{k,n+1}^{>0}$, and
\item $\eta$ is positively oriented with respect to $V$.
\end{itemize}
where $(\xi,\id)$ is the linear map from $V$ to $\R^{n+1}$ whose first coordinate is given by the linear functional $\xi$ on $V$. Similarly, we say that $\V$ is \emph{right cyclic} if:
\begin{itemize}
\item $V + \langle \eta \rangle \in \Grr_{k+1,n}^{>0}$,
\item $(\id,(-1)^k \xi)(V) \in \Grr_{k,n+1}^{>0}$, and
\item $\eta$ is positively oriented with respect to $V$.
\end{itemize}
\end{definition}

\begin{remark}
By \cite[Corollary 3.21]{LLM}, left cyclic arrangements $\cal{V}$ constitute an equivalence class of polarized arrangements as defined in \cite[Section 2.2.1]{LLM}, and thus the algebras $\Bt(\cal{V})$ for left cyclic $\cal{V}$ are all isomorphic. The same holds for right cyclic $\cal{V}$.
\end{remark}

An example of a right cyclic arrangement is given in Example~\ref{ex:n4k2}.  For more examples, see \cite[Corollary 3.21]{LLM}. 

\subsection{Sign variation}

\begin{definition}
For $\alpha \in \{+,-\}^n$, we say $\var(\alpha) = k$ if the signs in $\alpha$ change from $+$ to $-$ or $-$ to $+$ exactly $k$ times when reading from left to right (or from right to left).
\end{definition}

If $\alpha \in \{+,-\}^n$, we write $+\alpha$ and $-\alpha$ for the elements of $\{+,-\}^{n-1}$ with a plus or minus appended in the first entry; we define $\alpha+$ and $\alpha-$ similarly using the last entry.

\begin{definition}
For $\alpha \in \{+,-\}^n$, and given $0 \leq k \leq n$, we define $\var_l(\alpha) := \var(+\alpha)$ and $\var_r(\alpha) := \var(\alpha(-1)^k)$.
\end{definition}

The following results were demonstrated in \cite{LLM} using results of \cite{KarpWilliams}.

\begin{corollary}
A sign sequence $\alpha \in \{+,-\}^n$ is feasible for a left cyclic arrangement $(V,\eta,\xi)$ (with $\dim(V) = k$) if and only if $\var_l(\alpha) \leq k$ and is bounded if and only if $\var_l(\alpha) \geq k$.
\end{corollary}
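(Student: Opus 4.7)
The plan is to reduce each direction of the corollary separately to the Karp--Williams sign-variation characterization of feasible chambers in cyclic arrangements, following the strategy employed in \cite{LLM}. Since $\cal{F}(\cal{V})$ depends only on $(V,\eta)$ and $\cal{B}(\cal{V})$ depends only on $(V,\xi)$, the three bullet points of left cyclicity split cleanly between the two subproblems.

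For the feasibility direction, the conditions $V + \langle \eta \rangle \in \Grr_{k+1,n}^{>0}$ together with positive orientation of $\eta$ with respect to $V$ are precisely what make $(V,\eta)$ a cyclic (unpolarized) arrangement; the Karp--Williams description of its feasible chambers (i.e.\ the $m=1$ amplituhedron cell decomposition) then reads off as $\alpha \in \cal{F}(\cal{V}) \iff \var_l(\alpha) \leq k$, as already recorded in \cite{LLM}.

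For the boundedness direction, I would pass to the auxiliary $k$-dimensional positive subspace $W := (\xi,\id)(V) \subset \R^{n+1}$, which lies in $\Grr_{k,n+1}^{>0}$ by the third bullet point of left cyclicity. The region $\Sigma_\alpha \subset V$ fails to be bounded above by $\xi$ exactly when there exists a nonzero $v \in V$ with $\alpha(i) v_i \geq 0$ for all $i$ together with $\xi(v) > 0$. Under the linear isomorphism $(\xi, \id)\maps V \xrightarrow{\cong} W$, this translates to the realizability of the sign vector $+\alpha \in \{+,-\}^{n+1}$ by a nonzero vector of $W$. The classical sign-variation bound for totally positive subspaces, or equivalently Karp--Williams applied to $W$, identifies this realizability with $\var(+\alpha) = \var_l(\alpha) \leq k - 1$, and negating yields $\alpha \in \cal{B}(\cal{V}) \iff \var_l(\alpha) \geq k$.

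I expect the principal technical subtlety to be the strict-versus-closed bookkeeping: the sharpest sign-variation bounds for positive subspaces concern the interior of chambers, whereas $\cal{F}$ and $\cal{B}$ use closed half-space cones $\Delta_\alpha$ and $\Sigma_\alpha$, and the boundedness criterion mixes a closed condition on the $v_i$ with a strict condition on $\xi(v)$. This should be handled by appealing to the genericity hypotheses (a) and (b) in the definition of polarized arrangement, which ensure that closed and open chamber realizations carry the same $\var_l$ information. The technical heart of the argument is already carried out in \cite{LLM}; the present corollary is essentially a repackaging of those results in the language of sign variation.
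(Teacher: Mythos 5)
Your proposal is consistent with the paper's treatment: the paper offers no independent proof of this corollary, stating only that it was demonstrated in \cite{LLM} using the Karp--Williams results, and your sketch (feasibility from the cyclicity of $(V,\eta)$ via the $m=1$ amplituhedron description, boundedness by transporting $\Sigma_\alpha$ into the totally positive subspace $(\xi,\id)(V)\subset\R^{n+1}$ and applying the Gantmakher--Krein sign-variation bound, with the strict-versus-closed issues deferred to the genericity hypotheses) is exactly the argument those sources carry out. Since the technical content is correctly attributed to \cite{LLM} and \cite{KarpWilliams}, this is essentially the same approach as the paper.
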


\begin{corollary}
A sign sequence $\alpha \in \{+,-\}^n$ is feasible for a right cyclic arrangement $(V,\eta,\xi)$ (with $\dim(V) = k$) if and only if $\var_r(\alpha) \leq k$ and is bounded if and only if $\var_r(\alpha) \geq k$.
\end{corollary}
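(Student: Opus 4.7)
The plan is to mirror the preceding corollary for the left cyclic case, invoking the Karp--Williams characterization of sign vectors realized by totally positive subspaces. The only structural difference between left and right cyclicity is whether the auxiliary $\xi$-coordinate is prepended to $V$ (with no sign twist, for left cyclic) or appended on the right with a $(-1)^k$ twist (for right cyclic); this matches exactly the distinction between $\var_l(\alpha) = \var(+\alpha)$ and $\var_r(\alpha) = \var(\alpha(-1)^k)$, so the two corollaries are formally parallel.

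First I would recall the Karp--Williams theorem: if $W \in \Grr_{k,n}^{>0}$, then the realized sign vectors of nonzero elements of $W$ are precisely those $\alpha \in \{\pm\}^n$ with $\var(\alpha) \leq k$. This is the engine of both corollaries. For the feasibility claim, I would then observe that $\alpha$ is feasible for $\cal{V}$ iff some element of the $(k{+}1)$-plane $V + \langle \eta \rangle \in \Grr_{k+1,n}^{>0}$ has $\eta$-coordinate $1$ and sign pattern $\alpha$ in the remaining $n$ coordinates. Feeding the $(-1)^k\xi$ coordinate in on the right (as in the right cyclic hypothesis) together with the positive orientation of $\eta$ allows Karp--Williams to be applied to a $(k{+}1)$-dimensional positive subspace of $\R^{n+1}$; unwinding the constraint that the extra coordinate be positive yields exactly $\var(\alpha(-1)^k) \leq k$, i.e.\ $\var_r(\alpha) \leq k$.

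For the boundedness claim, I would invoke Gale duality. By \cite[Theorem 2.4]{Gale} the bounded sign vectors for $\cal{V}$ are the feasible sign vectors for $\cal{V}^{\vee} = (V^{\perp},-\xi,-\eta)$, and by \cite[Corollary 3.21]{LLM} the Gale dual of a right cyclic arrangement is (equivalent to) a right cyclic arrangement with $k$ replaced by $n-k$. Applying the feasibility case already established to $\cal{V}^{\vee}$ gives $\var_r(\alpha) \leq n-k$ for the dual sign vector; standard identities for sign variation on a vector with an appended sign translate this back to the inequality $\var_r(\alpha) \geq k$ on the original arrangement.

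The main obstacle is the precise bookkeeping of the $(-1)^k$ twist and of the positive orientation of $\eta$ as one passes through Karp--Williams and then through Gale duality. In particular, one must verify that adjoining $(-1)^k\xi$ on the right (rather than $\xi$ on the left) exactly accounts for the difference between $\var_l$ and $\var_r$, and that this difference is preserved under Gale duality. Once that dictionary is in place, both implications follow by direct citation of \cite{KarpWilliams} and \cite{LLM}, in complete parallel with the left cyclic corollary.
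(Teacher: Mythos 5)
The paper itself does not prove this corollary; it is quoted from \cite{LLM}, where it is deduced from the Gantmacher--Krein/Karp--Williams sign-variation characterization of totally positive subspaces. Your overall strategy --- variation bounds for positive subspaces plus duality --- is the right one and matches that source, but two points in your sketch need repair. First, the variation bound is misquoted: for $W \in \Grr_{k,n}^{>0}$, nonzero vectors of $W$ satisfy $\var \leq k-1$, not $\var \leq k$ (take $k=1$: the realized sign vectors are the two constant ones, with $\var = 0$). Applied to the $(k+1)$-plane $V + \langle\eta\rangle$ the correct bound gives $\var \leq k$, which is what you want; as written, your lemma applied to a $(k+1)$-plane would yield $\leq k+1$, so the constants in your argument do not close up.

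Second, and more substantively, you route the \emph{feasibility} claim through the coordinate $(-1)^k\xi$ appended on the right, producing a positive subspace of $\R^{n+1}$. But $\cal{F}$ depends only on $(V,\eta)$ --- the paper says so explicitly --- so $\xi$ cannot enter the feasibility argument. The totally positive subspace relevant to feasibility is $V + \langle\eta\rangle \subset \R^{n}$ (or its augmentation by the $\eta$-coefficient), and the extra prepended/appended sign in $\var_l$ or $\var_r$ is forced by the positive orientation of $\eta$, which breaks the symmetry between $\alpha$ and $-\alpha$ among sign vectors realized on $V_\eta \subset (V+\langle\eta\rangle)\setminus V$; note that $\var(+\alpha)\le k$ and $\var(\alpha(-1)^k)\le k$ are equivalent conditions, consistent with $\cal{F}$ being identical for left and right cyclic arrangements sharing $(V,\eta)$. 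The hypothesis $(\id,(-1)^k\xi)(V) \in \Grr_{k,n+1}^{>0}$ is instead what drives the \emph{boundedness} half directly: a nonzero $v \in \Sigma_\alpha$ with $\xi(v) > 0$ gives a vector of that positive $k$-plane whose sign vector is weakly $\alpha(-1)^k$, forcing $\var_r(\alpha)\le k-1$. Your alternative route to boundedness via Gale duality can be made to work, but it needs the separate fact that the Gale dual of a right cyclic arrangement is equivalent, after the alternating sign flip, to a cyclic arrangement, together with the identity $\var(\alpha)+\var(\hat{\alpha})=n-1$; the bookkeeping you defer to the end is precisely where the content of that version of the argument lies.
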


\subsection{Dots in regions and partial orders}

\subsubsection{Sign sequences and dots in regions}\label{sec:SignSeqAndOSzIdems}

We review an alternate combinatorial description of bounded feasible sign sequences $\alpha$ in the left and right cyclic cases from \cite{LLM}. Let $V_l(n,k)$ denote the set of $k$-element subsets of $\{0,\ldots,n-1\}$ and let $V_r(n,k)$ denote the set of $k$-element subsets of $\{1,\ldots,n\}$. Following Ozsv{\'a}th--Szab{\'o} \cite{OSzNew}, we draw elements of $V_l(n,k)$ as sets of $k$ dots in the regions $\{0,\ldots,n-1\}$ on the left of Figure~\ref{fig:Idems}; see the right of Figure~\ref{fig:Idems} for an example. Elements of $V_r(n,k)$ are drawn similarly as sets of $k$ dots in the regions $\{1,\ldots,n\}$ on the left of Figure~\ref{fig:Idems}.

\begin{figure}
\includegraphics[scale=0.62]{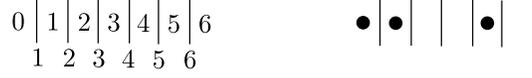}
\caption{Left: $n$ lines and $n+1$ regions between them. Right: a set of $3$ dots in the regions $\{0,1,4\} \subset \{0,\ldots,6\}$.}
\label{fig:Idems}
\end{figure}

\begin{definition}\label{def:StatesFromAlpha}[cf. Definition 3.24 of \cite{LLM}] \hfill
\begin{enumerate}[(i.)]
  \item \label{def:LeftIStatesFromAlpha} Let $\cal{V}$ be a left cyclic polarized arrangement so that $\cal{P}$ is the set of $\alpha \in \{+,-\}^n$ with $\var_l(\alpha) = k$.  There is a bijection  $\kappa_l\maps \cal{P}\to V_l(n,k)$ given by sending $\alpha \in \cal{P}$ to $\x_{\alpha} \subset \{0,\ldots,n-1\}$ with  $i \in \x_{\alpha}$ if there is a change after sign $i+1$ in the sequence $+\alpha$. The inverse sends $\x \in V_l(n,k)$ to $\alpha_{\x}$, obtained by omitting the first entry of the sign sequence defined by starting with $+$ as the leftmost entry (``step zero'') and writing $n$ signs to the right, introducing a sign change at step $i$ if and only if $i-1 \in \x$.

      \item \label{def:RightIStatesFromAlpha}Let $\cal{V}$ be a right cyclic polarized arrangement so that $\cal{P}$ is the set of $\alpha \in \{+,-\}^n$ with $\var_r(\alpha) = k$.  There is a bijection
           $\kappa_r\maps \cal{P} \to V_r(n,k)$ sending $\alpha$ to $\x_{\alpha} \subset \{1,\ldots,n\}$ defined by $i \in \x_{\alpha}$ if there is a change after sign $i$  in the sequence $\alpha(-1)^k$. The inverse sends $\x \in V_r(n,k)$ to $\alpha_{\x}$, obtained by omitting the last entry of the sign sequence defined by starting with $(-1)^k$ as the rightmost entry (``step zero'') and writing $n$ signs from right to left, introducing a sign change at step $i$ if and only if $n-i+1 \in \x$.
\end{enumerate}
\end{definition}

\subsubsection{The partial order for cyclic arrangements} \label{sec:partial-dots}

Let $(V,\eta,\xi)$ be a left cyclic polarized arrangement. We have a partial order on the set $\cal{P}$ of bounded feasible sign sequences for $(V,\eta,\xi)$ from Section~\ref{subsec:partial_order}. Identifying $\cal{P}$ with the set $V_l(n,k)$ of $k$-element subsets of $\{0,\ldots,n-1\}$ from Section~\ref{sec:SignSeqAndOSzIdems}, viewed as sets of dots in regions, we also have the lexicographic partial order on $V_l(n,k)$ generated by the relations $\x < \y$ when $\y$ is obtained from $\x$ by moving a dot one step to the right.

\begin{proposition}\label{prop:VandermondeOrders}[cf. Proposition 3.25 of \cite{LLM}]
 For a left cyclic arrangement, the partial order on $\cal{P}$ induced by $\xi$ agrees with the order induced from the lexicographic order on $V_l(n,k)$ from the bijection $\kappa_l \maps \cal{P} \to V_l(n,k)$.
\end{proposition}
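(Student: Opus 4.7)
The strategy is to identify the covering relations of each partial order and verify they correspond under the bijection $\kappa_l$. Since both orders are determined by their covering relations via transitive closure, this suffices.

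First, in the lex order on $V_l(n,k)$, the covering relations are generated by sliding a single dot one step to the right: $\x \lessdot \x'$ iff $\x' = (\x \setminus \{i\}) \cup \{i+1\}$ for some $i \in \x$ with $i+1 \notin \x$. By the construction of $\kappa_l$ in Definition~\ref{def:StatesFromAlpha}\ref{def:LeftIStatesFromAlpha}, this translates to saying that $\alpha_{\x'}$ is obtained from $\alpha_{\x}$ by flipping a single sign, specifically the sign at position $i+1$ of $\alpha_{\x}$ (this is the sign immediately to the right of the sign change indexed by $i \in \x$).

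Second, in the $\xi$-order on $\mathbb{B}$, the footnote of Section~\ref{subsec:partial_order} indicates that the order is generated by $\mathbbm{x} \prec \mathbbm{x}'$ when $H_\mathbbm{x}$ and $H_{\mathbbm{x}'}$ lie on a common one-dimensional flat and $\xi(H_\mathbbm{x}) < \xi(H_{\mathbbm{x}'})$. Transferring through $\mu$, this corresponds to pairs $\alpha, \beta \in \cal{P}$ whose $\xi$-maximizing vertices $H_{\mathbbm{x}_\alpha}, H_{\mathbbm{x}_\beta}$ are adjacent in the arrangement, with $\xi$ larger at the latter.

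Third, I would verify that these two sets of generating relations match under the composed bijection $\kappa_l \circ \mu \maps \mathbb{B} \to V_l(n,k)$. In one direction, if $\alpha_{\x}$ and $\alpha_{\x'}$ differ only at a single coordinate $j$ as described above, then $\Delta_{\alpha_{\x}}$ and $\Delta_{\alpha_{\x'}}$ share a codimension-one face contained in $H_j$, so the $\xi$-maxima are vertices of the arrangement related by an edge. Using the total positivity of the defining matrix of $V$ and the positive orientation conditions of Definition~\ref{def:LeftRightCyclic}, one checks that $\xi$ takes a strictly larger value on $H_{\mathbbm{x}_{\alpha_{\x'}}}$ than on $H_{\mathbbm{x}_{\alpha_{\x}}}$; this computation reduces to an inequality among specific minors of the totally positive matrix representing $V$. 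For the reverse direction, I would show that every generating relation $\alpha \prec \beta$ in the $\xi$-order on $\cal{P}$ can be realized, under $\kappa_l$, as a finite sequence of ``slide a dot right'' moves, by applying the same positivity argument to each edge of the arrangement traversed between $H_{\mathbbm{x}_\alpha}$ and $H_{\mathbbm{x}_\beta}$.

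The main obstacle is the third step, which requires a careful analysis of how $\xi$ varies across adjacent vertices of a cyclic hyperplane arrangement. The positivity hypotheses in Definition~\ref{def:LeftRightCyclic} are essential: they ensure the $\xi$-maxima of bounded feasible regions are arranged monotonically with respect to the moment-curve-like structure of the cyclic arrangement, so that the lex order on dot configurations captures precisely the $\xi$-order on bounded feasible regions. Once both sets of generating relations are matched with compatible directions, the equality of the partial orders follows formally by taking transitive closures, giving the proposition.
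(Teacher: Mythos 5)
First, a point of comparison: the paper does not actually prove this proposition here — it is quoted from \cite{LLM} (Proposition 3.25 there), with the surrounding text noting that these results "were demonstrated in \cite{LLM} using results of \cite{KarpWilliams}." So there is no internal proof to measure your argument against; I can only assess it on its own terms. Your overall skeleton — reduce both partial orders to their generating relations and match them under $\kappa_l$ — is reasonable, and your identification of the lex covering relation (slide one dot from $i$ to $i+1$) with a single sign flip in position $i+1$ of $\alpha$ is correct.

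However, the proposal has two genuine gaps, and they are exactly where the content of the proposition lives. (1) You assert that when $\alpha_{\x}$ and $\alpha_{\x'}$ differ in a single coordinate, the $\xi$-maximizing vertices of $\Delta_{\alpha_{\x}}$ and $\Delta_{\alpha_{\x'}}$ lie on a common one-dimensional flat. This is false for general polarized arrangements — two adjacent chambers can attain their maxima at vertices sharing no hyperplanes — so the claim requires the cyclic hypothesis and you give no argument for it. The natural shortcut, namely $\xx_{\alpha} = \x_{\alpha}+1$ so that a dot slide changes $\xx_{\alpha}$ in exactly one element, is Corollary~\ref{cor:BijectionsAgree}, which in the paper's logical order is \emph{deduced from} the present proposition; invoking it here would be circular. (2) The heart of the matter is the inequality $\xi(H_{\xx_{\alpha_{\x}}}) < \xi(H_{\xx_{\alpha_{\x'}}})$ precisely when the dot moves to the right, and you only remark that this "reduces to an inequality among specific minors of the totally positive matrix." That is indeed the right kind of computation — since left cyclic arrangements form a single equivalence class, one may realize $(V,\eta,\xi)$ by an explicit Vandermonde-type totally positive matrix, solve for $H_{\xx}$ by Cramer's rule, and compare values of $\xi$ via signs and ratios of maximal minors — but you have not set up coordinates, identified which minors appear, or verified the signs, and the converse direction (that every $\xi$-comparable pair of vertices on a common line decomposes into rightward dot slides) rests on the same unperformed computation. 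As written, this is an outline of a strategy rather than a proof.
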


An analogous result holds in the right cyclic case: for $\alpha \leftrightarrow \beta$ corresponding to $\x,\y \subset \{1,\ldots,n\}$, we have $\alpha < \beta$ if and only if $\y$ is obtained from $\x$ by moving a dot one step to the left.

\begin{corollary}\label{cor:BijectionsAgree}[cf. Corollary 3.26 of \cite{LLM}]
If $(V,\eta,\xi)$ is a left cyclic polarized arrangement and we have $\alpha \in \cal{P}$, then the element $\mathbbm{x} \in \mathbb{B}$ associated to $\alpha$ in Section~\ref{subsec:partial_order} is obtained from the element $\x \in V_l(n,k)$ associated to $\alpha$ in Definition~\ref{def:StatesFromAlpha} by adding one to each $i \in \x$.
\end{corollary}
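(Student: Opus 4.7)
The plan is to verify that the composite bijection $\phi \circ \kappa_l \maps \cal{P} \to \mathbb{B}$, where $\phi(\x) := \{i+1 : i \in \x\}$, coincides with the bijection $\alpha \mapsto \mathbbm{x}_{\alpha}$ from Section~\ref{subsec:partial_order}. Both maps are bijections between the same finite sets, so it suffices to prove they agree by inducting on cover relations in the $\xi$-induced partial order on $\cal{P}$.

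First I would handle the base case. Since left cyclic polarized arrangements form a single equivalence class as noted after Definition~\ref{def:LeftRightCyclic}, the combinatorial data $\cal{P}$, $\mathbb{B}$, the map $\mu$, and the partial order depend only on this class, so one may work with a convenient standard representative (for example, a Vandermonde-style $V$ with suitable $\eta$, $\xi$). I would verify the claim directly at the $\xi$-minimum of $\cal{P}$, which corresponds under $\kappa_l$ to $\x = \{0, 1, \ldots, k-1\}$ and hence to $\phi(\x) = \{1, \ldots, k\}$, and use total positivity of the standard cyclic model to confirm that the first $k$ hyperplanes meet at the $\xi$-maximum vertex of the corresponding region.

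For the inductive step, Proposition~\ref{prop:VandermondeOrders} identifies cover relations $\alpha \lessdot \beta$ in $\cal{P}$ with moves of a single dot $i \in \x_{\alpha}$ one step to the right, so $\x_{\beta} = (\x_{\alpha} \setminus \{i\}) \cup \{i+1\}$, and hence $\phi(\x_{\beta}) = (\phi(\x_{\alpha}) \setminus \{i+1\}) \cup \{i+2\}$. On the $\mathbb{B}$-side, $\alpha$ and $\beta$ differ by crossing a single hyperplane, so the vertices $H_{\mathbbm{x}_{\alpha}}$ and $H_{\mathbbm{x}_{\beta}}$ are joined by an edge of the arrangement lying in the $k-1$ common hyperplanes indexed by $\mathbbm{x}_{\alpha} \cap \mathbbm{x}_{\beta}$; thus $\mathbbm{x}_{\beta}$ differs from $\mathbbm{x}_{\alpha}$ by a single basis exchange, and it remains to identify which indices are swapped.

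The hard part will be this last local identification of indices. Concretely, one must show that in a left cyclic arrangement the basis exchange realizing the $\xi$-cover that crosses $H_{i+1}$ genuinely removes $i+1$ from $\mathbbm{x}_{\alpha}$ and inserts $i+2$. This amounts to a direct computation in the standard cyclic representative, tracking how the $\xi$-maximum vertex moves along the arrangement as one crosses a single hyperplane; the total-positivity of $V+\langle\eta\rangle$ and of $(\xi,\id)(V)$ is what guarantees the sign-change combinatorics of $+\alpha$ faithfully record the supporting hyperplanes of that vertex. Combining the base case with this inductive step forces $\phi \circ \kappa_l = \mu^{-1}$ globally, which is the claim.
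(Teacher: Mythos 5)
The paper does not actually prove this corollary; it is imported from \cite{LLM} (Corollary 3.26), so I am judging your argument on its own terms. Your skeleton --- reduce to a standard left cyclic representative, check the claim at the $\xi$-minimum, and induct along cover relations using Proposition~\ref{prop:VandermondeOrders} --- is a legitimate strategy, but as written it has one faulty inference and one genuine gap at the crucial step.

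The faulty inference: in the inductive step you pass from ``$\alpha$ and $\beta$ differ by crossing a single hyperplane'' to ``$H_{\mathbbm{x}_{\alpha}}$ and $H_{\mathbbm{x}_{\beta}}$ are joined by an edge.'' That implication is false for general arrangements: two adjacent chambers can have $\xi$-maxima supported on disjoint sets of hyperplanes (picture two large polygons sharing one long edge, with their maxima at opposite ends). The conclusion you need, namely $|\mathbbm{x}_{\alpha}\cap\mathbbm{x}_{\beta}|=k-1$, is still true, but for a different reason: the partial order on $\mathbb{B}$ is by definition the transitive closure of the relation $\mathbbm{x}\preceq\mathbbm{x}'$ with $|\mathbbm{x}\cap\mathbbm{x}'|=k-1$, and any cover of such an order must be one of the generating relations. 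You should route the argument through that observation rather than through chamber adjacency. The genuine gap is what you yourself call the hard part. Knowing that $\mathbbm{x}_{\beta}$ differs from $\mathbbm{x}_{\alpha}=\phi(\x_{\alpha})$ by a single exchange and that $\beta=\alpha^{i+1}$ does not determine which index leaves and which enters; the assertion that $i+1$ leaves and $i+2$ enters is precisely the corollary, localized to a cover. Saying this ``amounts to a direct computation'' in which ``total positivity guarantees the sign-change combinatorics faithfully record the supporting hyperplanes'' restates the goal rather than establishing it. To close this you must actually carry out the geometric computation in a concrete left cyclic model --- e.g.\ show that the $\xi$-maximum of $\Delta_{\alpha}$ lies on $H_j$ exactly when $+\alpha$ changes sign between positions $j$ and $j+1$ --- and once you can do that for one cover you can do it for all $\alpha$ at once, making the induction unnecessary. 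The base case has the same status: asserted, not verified.
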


If $(V,\eta,\xi)$ is a right cyclic arrangement, one can show similarly that the elements $\mathbbm{x} \in \mathbb{B}$ and $\x \in V_r(n,k)$ associated to $\alpha$ agree as subsets of $\{1,\ldots,n\}$.

\subsection{Bounded and feasible cones for cyclic arrangements}

\begin{lemma}\label{lem:ConesForCyclic} Let $(V,\eta,\xi)$ be left cyclic and let $\alpha,\beta \in \mathcal{P}$; let $\x,\y$ be the corresponding elements of $V_l(n,k)$ (viewed as sets of $k$ dots in regions) and let $\mathbbm{x}$, $\mathbbm{y}$ be the corresponding elements of $\mathbb{B}$. Then:
\begin{itemize}
\item We have $\beta \in \mathcal{B}_{\mathbbm{x}}$ if and only if $\y$ is obtained from $\x$ by moving each dot of $\x$ some number of steps to the left, such that no dot moves far enough to reach the starting point of its leftmost neighbor.

\item We have $\beta \in \mathcal{F}_{\mathbbm{x}}$ if and only if $\y$ is obtained from $\x$ by moving each dot of $\x$ at most one step to the right.
\end{itemize}
\end{lemma}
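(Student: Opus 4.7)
The plan is to translate each bullet into an explicit condition on sign sequences via Corollary~\ref{cor:BijectionsAgree} and then analyze it combinatorially in terms of the dot sets $\x, \y$. Throughout, write $\x = \{x_1 < \cdots < x_k\}$ and $\y = \{y_1 < \cdots < y_k\}$; by Corollary~\ref{cor:BijectionsAgree} and the left cyclic case of Definition~\ref{def:StatesFromAlpha}, one has $\mathbbm{x} = \{x_1+1, \ldots, x_k+1\}$, and the sign sequence $\mu(\mathbbm{x})$ is given by $\mu(\mathbbm{x})(i) = (-1)^{|\{x \in \x : x < i\}|}$ for $1 \le i \le n$ (with the convention $\mu(\mathbbm{x})(0) := +$). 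The analogous formula describes $\beta$ in terms of $\y$.

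For the first bullet, $\beta \in \cal{B}_{\mathbbm{x}}$ unfolds to the requirement $\beta(x_\ell + 1) = \mu(\mathbbm{x})(x_\ell + 1)$ for every $\ell$. Setting $f(\ell) := |\{y \in \y : y \leq x_\ell\}|$, this becomes the parity condition $f(\ell) \equiv \ell \pmod 2$ for $1 \leq \ell \leq k$. I would then show that this parity condition, combined with $0 \leq f(1) \leq \cdots \leq f(k) \leq |\y| = k$, forces $f(\ell) = \ell$ for every $\ell$: if $f(\ell_0) \geq \ell_0 + 2$ for some $\ell_0$, an easy induction using parity and monotonicity of $f$ yields $f(\ell_0 + j) \geq \ell_0 + 2 + j$, contradicting $f(k) \leq k$; symmetrically, $f(\ell_0) \leq \ell_0 - 2$ forces $f(1) \leq -1$, also a contradiction. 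The equality $f(\ell) = \ell$ for all $\ell$ is exactly $y_\ell \leq x_\ell$ together with $y_{\ell+1} > x_\ell$ for $\ell < k$, which is the desired ``each dot moves weakly to the left, with no dot reaching the starting point of its left neighbor'' description.

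For the second bullet, $\beta \in \cal{F}_{\mathbbm{x}}$ says that $\beta$ and $\mu(\mathbbm{x})$ agree on $\mathbbm{x}^c$, i.e.\ differ only at positions in $\mathbbm{x} = \{x_\ell + 1 : \ell\}$; let $T \subset \{1, \ldots, k\}$ record the indices $\ell$ at which they disagree. I would verify by direct computation with the sign-sequence formula that the dot set of any such $\beta$ must equal $\{x_\ell + [\ell \in T] : 1 \leq \ell \leq k\}$ whenever this assignment is collision-free. The hypothesis $\beta \in \cal{P}$ (equivalently $|\y| = k$) then forces precisely the non-collision condition that whenever $\ell \in T$ and $x_{\ell+1} = x_\ell + 1$, also $\ell + 1 \in T$; conversely, every $T$ satisfying this condition arises from a unique such $\beta$. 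This yields the claimed bijection between $\cal{F}_{\mathbbm{x}} \cap \cal{P}$ and dot sets obtained by shifting each dot of $\x$ right by $0$ or $1$. The main obstacle will be this last verification, which requires careful bookkeeping for how flipping $\mu(\mathbbm{x})$ at $x_\ell+1$ toggles the two dots at positions $x_\ell$ and $x_\ell + 1$, and for how overlapping flips at $x_\ell + 1$ and $x_{\ell+1} + 1$ interact in the consecutive-dot case $x_{\ell+1} = x_\ell + 1$; once this is handled, both directions of the feasible-cone statement are immediate, while the bounded-cone case reduces to the clean parity-and-monotonicity argument above.
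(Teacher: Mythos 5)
Your argument is correct, but it takes a genuinely different route from the paper's. The paper's proof is short and geometric: it reinterprets $\mathcal{B}_{\mathbbm{x}}$ (resp.\ $\mathcal{F}_{\mathbbm{x}}$) as the set of regions reachable from $\alpha$ by successively crossing hyperplanes, none of which (resp.\ each of which) passes through the vertex $H_{\mathbbm{x}}$, and then invokes Corollary~\ref{cor:BijectionsAgree} to translate ``crossing $H_i$'' into ``moving a dot across line $i$''; since the forbidden (resp.\ mandatory) lines are exactly those immediately to the right of the dots of $\x$, the two dot-motion descriptions drop out immediately. You instead unfold the literal defining equalities $\beta(i)=\mu(\mathbbm{x})(i)$ on $\mathbbm{x}$ (resp.\ on $\mathbbm{x}^c$) and compute directly with the sign-change encoding of Definition~\ref{def:StatesFromAlpha}. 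For the bounded cone your parity-plus-monotonicity argument forcing $f(\ell)=\ell$ is clean, self-contained, and arguably more airtight than the paper's reachability claim (which quietly assumes the intermediate regions in the crossing sequence are feasible). The price is paid in the feasible-cone bullet, where the paper's approach avoids entirely the toggle bookkeeping you must carry out; that bookkeeping is correct as outlined, but note one boundary case your stated non-collision condition omits: if $x_k=n-1$ and $k\in T$, the last dot is pushed to region $n\notin\{0,\ldots,n-1\}$, so the dot set again drops below $k$ elements and $\beta\notin\mathcal{P}$. This is handled by the same $\lvert\y\rvert=k$ argument you already use for adjacent-dot collisions, but it should be stated explicitly in the final write-up.
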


\begin{proof}
By definition, $\mathcal{B}_{\mathbbm{x}}$ is the set of those regions $\beta$ that can be reached from $\alpha$ by successively crossing hyperplanes, none of which is incident with the maximal point of $\alpha$. Thus, by Corollary~\ref{cor:BijectionsAgree}, $\mathcal{B}_{\mathbbm{x}}$ is the set of those $\beta$ corresponding to elements $\y \in V_l(n,k)$ that can be reached by successively moving dots (one step at a time) such that the line immediately to the right of each dot of $\x$ is never crossed. This set of $\y$ is the same as the one described in the statement of the lemma.

Similarly, $\mathcal{F}_{\mathbbm{x}}$ is the set of those regions $\beta$ that can be reached from $\alpha$ by successively crossing hyperplanes, each of which is incident with the maximal point of $\alpha$. Thus, by Corollary~\ref{cor:BijectionsAgree}, $\mathcal{F}_{\mathbbm{x}}$ is the set of those $\beta$ corresponding to elements $\y \in V_l(n,k)$ that can be reached by successively moving dots (one step at a time) such that each step crosses the line immediately to the right of some dot of $\x$. This set of $\y$ is the same as the one described in the statement of the lemma.
\end{proof}

\section{Applications to bordered Heegaard Floer homology}\label{sec:BorderedFloerApps}

We now discuss various applications of the above results arising from the isomorphism exhibited in \cite{LLM} between $\tilde{B}(\cal{V})$ for left and right cyclic $\cal{V}$ and algebras introduced by Ozsv{\'a}th--Szab{\'o} related to bordered Heegaard Floer homology.

\subsection{Ozsv{\'a}th--Szab{\'o} algebras as hypertoric convolution algebras}
We recall the definition of the graded algebra $\B(n,k)$ from \cite{OSzNew} using the generators-and-relations description from \cite{MMW1}. Let $V(n,k)$ be the set of $k$-element subsets $\x \subset \{0,\ldots,n\}$.

\begin{definition}\label{def:SmallStep}
Let $\B(n,k)$ be the path algebra of the quiver with vertex set $V(n,k)$ and arrows
\begin{itemize}
\item for $1 \leq i \leq n$, $R_i$ from $\x$ to $\y$ and $L_i$ from $\y$ to $\x$ if $\x \cap \{i-1,i\} = \{i-1\}$ and $\y = (\x \setminus \{i-1\}) \cup \{i\}$,
\item for $1 \leq i \leq n$, $U_i$ from $\x$ to $\x$ for all $\x \in V(n,k)$
\end{itemize}
modulo the relations
\begin{enumerate}
\item\label{it:UCommute} $R_i U_j = U_j R_i$, $L_i U_j = U_j L_i$, $U_i U_j = U_j U_i$,
\item\label{it:RLU} $R_i L_i = U_i$, $L_i R_i = U_i$,
\item\label{it:DistantCommute} $R_i R_j = R_j R_i$, $L_i L_j = L_j L_i$, $L_i R_j = R_j L_i$ ($|i-j|>1$),
\item\label{it:TwoLinePass} $R_{i-1} R_{i} = 0$, $L_i L_{i-1} = 0$,
\item\label{it:UVanish} $U_i \Ib_{\x} = 0$ if $\x \cap \{i-1,i\} = \emptyset$.
\end{enumerate}
The relations are assumed to hold for any linear combination of quiver paths with the same starting and ending vertices and labels $R_i,L_i,U_i$ as described; $\Ib_{\x}$ denotes the trivial path at $\x \in V(n,k)$.  The elements $\Ib_{\x} \in \B(n,k)$ give a complete set of orthogonal idempotents. We define a grading on $\B(n,k)$ by setting $\deg(R_i) = \deg(L_i) = 1$ and $\deg(U_i) = 2$.
\end{definition}

\begin{remark}
In \cite{MMW2} the usual grading setup in bordered Heegaard Floer homology is shown to recover Ozsv{\'a}th--Szab{\'o}'s ``Alexander multi-grading'' by $\Z^n$ on $\B(n,k)$ (related to the multi-variable Alexander polynomial in the same way that our single $\Z$ grading is related to the single-variable Alexander polynomial).
\end{remark}

Recall from Section~\ref{sec:SignSeqAndOSzIdems} that $V_l(n,k)$ denotes the subset of $V(n,k)$ consisting of $k$-element subsets of $\{0,\ldots,n-1\}$ and that $V_r(n,k)$ denotes the set of $k$-element subsets of $\{1,\ldots,n\}$. Similarly, we let $V'(n,k)$ denote the set of $k$-element subsets of $\{1,\ldots,n-1\}$; by \cite[Section 3.6.1]{LLM}, for $(V,\eta,\xi)$ left or right cyclic, $\alpha \in \cal{K} \subset \cal{P}$ if and only if $\x_{\alpha} \in V'(n,k)$ as a subset of $V_l(n,k)$ or $V_r(n,k)$.

\begin{definition} \label{def:variants-OSz}
Define the following variants of the Ozsv{\'a}th--Szab{\'o} algebra by
\begin{align}
  \B_l(n,k) &= \bigoplus_{\x, \x' \in V_l(n,k)} \Ib_{\x} \cdot \B(n,k) \Ib_{\x'} \nn \\
  \B_r(n,k) &= \bigoplus_{\x, \x' \in V_r(n,k)} \Ib_{\x} \cdot \B(n,k) \Ib_{\x'} \nn \\
  \B'(n,k) &= \bigoplus_{\x, \x' \in V'(n,k)} \Ib_{\x} \cdot \B(n,k) \Ib_{\x'}. \nn
\end{align}
\end{definition}

In \cite[Section 3.6]{OSzNew}, Ozsv{\'a}th--Szab{\'o} define an anti-automorphism of $\B(n,k)$, restricting to anti-automorphisms of $\B_l(n,k)$, $\B_r(n,k)$ and $\B'(n,k)$, as follows.

\begin{definition} \label{def:PsiOS}
The anti-automorphism $\psi_{OSz}\colon \B(n,k) \to \B(n,k)^{{\rm opp}}$ sends the generators of Definition~\ref{def:SmallStep} by $R_i \mapsto L_i$, $L_i \mapsto R_i$, and $U_i \mapsto U_i$.
\end{definition}

\begin{theorem}[cf. Theorems 4.9, 4.13 of \cite{LLM}] \label{thm:CyclicIsom}
Let $(V,\eta,\xi)$ be left cyclic (respectively right cyclic) for a given $(n,k)$. There exists an isomorphism of graded algebras $\tilde{B}(\cal{V}) \cong \B_l(n,k)$ (respectively $\tilde{B}(\cal{V}) \cong \B_r(n,k)$) such that:
\begin{itemize}
\item the basic idempotent $e_{\alpha}$ of $\tilde{B}(\cal{V})$ for left cyclic $(V,\eta,\xi)$ is sent to the basic idempotent $\Ib_{\x_{\alpha}}$ of $\B_l(n,k)$ (respectively, the basic idempotent $e_{\alpha}$ for right cyclic $(V,\eta,\xi)$ is sent to $\Ib_{\x_{\alpha}} \in \B_r(n,k)$)
\item the isomorphism intertwines the $\Z[u_1,\ldots,u_n]$ action on $\Bt(\cal{V})$ with the $\Z[U_1,\ldots,U_n]$ action on $\B_l(n,k)$ (respectively $\B_r(n,k)$) under the identification $u_i \leftrightarrow U_i$.
\item the isomorphism intertwines the anti-involution $\psi^{\cal{V}}$ of $\Bt(\cal{V})$ from Definition~\ref{def:hypertoric-involution} with the involution $\Psi_{OSz}$ from Definition~\ref{def:PsiOS}.
\end{itemize}
\end{theorem}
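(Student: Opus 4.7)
The plan is to invoke the graded algebra isomorphism already established in \cite[Theorems 4.9 and 4.13]{LLM} and then verify the two additional compatibility statements (polynomial action, anti-involution). The isomorphism from \cite{LLM} is constructed so as to match the idempotent decompositions via the bijections $\kappa_l$, $\kappa_r$ of Definition~\ref{def:StatesFromAlpha}; hence the first bullet point of the theorem is essentially the output of that construction. I would begin the proof by recording this and by passing to the generators-and-relations presentation of Proposition~\ref{prop:BTildeGensRels}, which expresses $\Bt(\cal{V})$ as the quotient of $P(Q) \otimes_\Z \Z[u_1,\dots,u_n]$ by relations B1--B3, while Definition~\ref{def:SmallStep} presents $\B(n,k)$ via generators $\Ib_{\xx}, R_i, L_i, U_i$ with relations (\ref{it:UCommute})--(\ref{it:UVanish}).

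Next I would describe the arrow-level correspondence: for $\alpha \leftrightarrow \beta = \alpha^i$ with $\alpha,\beta \in \cal{P}$, under the LLM identification the arrow $p(\alpha,\beta)$ maps to $R_i$ or $L_i$ according to whether the $i$-th coordinate of $\alpha$ flips from $+$ to $-$ or $-$ to $+$ (for the appropriate left/right convention); this assignment is exactly what makes $\kappa_l$ (resp.\ $\kappa_r$) translate the hyperplane-crossing combinatorics on $\cal{F} \cap \cal{B}$ into the dot-in-regions combinatorics used to define $\B(n,k)$. Compatibility of gradings is immediate from $\deg p(\alpha,\beta) = 1 = \deg R_i = \deg L_i$ and $\deg u_i = 2 = \deg U_i$.

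For the second bullet point, consider a bounded feasible $\alpha \in \cal{P}$ and an index $i$. If $\alpha^i \in \cal{P}$ then relation B3 gives $p(\alpha,\alpha^i,\alpha) = u_i\, e_\alpha$ in $\Bt(\cal{V})$, while the arrow correspondence shows the image of $p(\alpha,\alpha^i,\alpha)$ is either $R_iL_i$ or $L_iR_i$, both equal to $U_i\Ib_{\xx_\alpha}$ by (\ref{it:RLU}); so $u_i e_\alpha \mapsto U_i \Ib_{\xx_\alpha}$ for all such $\alpha$. If on the other hand there is no $i$-th neighbor in $\cal{P}$, then either $\alpha^i \in \cal{B}\setminus\cal{F}$ (so by B1 combined with B3 we see $u_i e_\alpha$ lies in the ideal generated by the zero idempotent, hence vanishes) or $H_i$ does not touch $\Delta_\alpha$ at all; in either case the corresponding $\xx_\alpha$ has $\xx_\alpha \cap \{i-1,i\} = \emptyset$, so relation (\ref{it:UVanish}) gives $U_i\Ib_{\xx_\alpha}=0$, matching on both sides. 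Since the $u_i$ (resp.\ $U_i$) act centrally, this idempotent-by-idempotent check extends to the full ring homomorphism $\Z[u_1,\dots,u_n] \to \Z[U_1,\dots,U_n]$.

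For the third bullet point, $\psi^{\cal{V}}$ fixes each $e_\alpha$ and sends $p(\alpha,\beta) \mapsto p(\beta,\alpha)$ by Definition~\ref{def:hypertoric-involution}, while $\Psi_{OSz}$ fixes each $\Ib_{\xx}$ and swaps $R_i \leftrightarrow L_i$ by Definition~\ref{def:PsiOS}; since the arrow correspondence already sends $p(\alpha,\alpha^i)$ and $p(\alpha^i,\alpha)$ to the same labelled pair $\{R_i,L_i\}$ in opposite orders, the two anti-involutions agree on a generating set, hence on all of $\Bt(\cal{V})$. The main obstacle will be bookkeeping the direction conventions in the arrow-level correspondence: one must check that with the left-cyclic (resp.\ right-cyclic) choice of $\kappa_l$ (resp.\ $\kappa_r$), the crossing $p(\alpha,\alpha^i)$ really is sent to $R_i$ versus $L_i$ consistently with the dot-motion description in Lemma~\ref{lem:ConesForCyclic}, and that relation B2 (the square relation) then matches up with the distant-commutation relation (\ref{it:DistantCommute}) together with (\ref{it:UCommute}); once this combinatorial matching is verified for a single neighbor and propagated using the explicit bijections from \cite{LLM}, the remaining relations fall out automatically.
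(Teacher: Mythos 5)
The paper offers no proof of this theorem at all: it is imported wholesale from \cite{LLM} (Theorems 4.9 and 4.13), as the ``cf.'' in the statement indicates. So your strategy of citing \cite{LLM} for the core isomorphism and then verifying the two extra compatibilities is consistent with the paper's treatment, and your arguments for the grading, the idempotent matching, and the anti-involution (bullet three) are sound: both anti-involutions fix idempotents and the $u_i$/$U_i$, and swap the two arrows over each edge, so they agree on a generating set.

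There is, however, a concrete error in your case analysis for the second bullet. When $\alpha\in\cal{P}$ but $\alpha^i\notin\cal{P}$, your stated dichotomy (``either $\alpha^i\in\cal{B}\setminus\cal{F}$ or $H_i$ does not touch $\Delta_\alpha$'') omits the case $\alpha^i\in\cal{F}\setminus\cal{B}$, i.e.\ $\alpha^i$ feasible but unbounded. In that case $\Delta_\alpha$ and $\Delta_{\alpha^i}$ share a facet on $H_i$, so $u_ie_\alpha\neq 0$ in $\Bt(\cal{V})$ even though $\alpha^i$ is not a vertex of the quiver in Proposition~\ref{prop:BTildeGensRels} and relation B3 gives you nothing; correspondingly $\x_\alpha\cap\{i-1,i\}\neq\emptyset$ and $U_i\Ib_{\x_\alpha}\neq 0$. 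Your conclusion ``in either case $\x_\alpha\cap\{i-1,i\}=\emptyset$, matching on both sides'' is therefore false as stated for this missing case, and since $u_ie_\alpha$ is not a product of arrows here, you cannot deduce its image from the arrow-level correspondence alone. The clean fix is to work with the original presentation of $\Bt(\cal{V})$ from Definition~\ref{def:BStyle}, where $\Bt(\cal{V})$ is by construction a $\Z[u_1,\ldots,u_n]$-algebra with $u_if_{\alpha\alpha}=0$ exactly when $H_i\cap\Delta_\alpha=\emptyset$; the isomorphism of \cite{LLM} is built at that level, so the intertwining $u_i\leftrightarrow U_i$ can be read off directly rather than reconstructed from B1--B3.
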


\begin{remark}
While in this paper we work only with $\Z$ gradings, in \cite{LLM} we also considered a multi-grading by $\Z^n$ on $\Bt(\cal{V})$ recovering Ozsv{\'a}th--Szab{\'o}'s Alexander multi-grading for cyclic $\cal{V}$.
\end{remark}

\subsection{Oszv{\'a}th-Szab{\'o} algebras are affine quasi hereditary}
Theorem~\ref{thm:affine} established that hypertoric convolution algebras are affine (polynomial) quasi hereditary; Corollary~\ref{cor:hyp-affine-cellular}  shows these are affine cellular with anti-automorphism $\psi^{\cal{V}}$.  The isomorphism from Theorem~\ref{thm:CyclicIsom} shows that the  Oszv{\'a}th-Szab{\'o} algebras  $\cal{B}_l(n,k)$ (respectively $\cal{B}_r(n,k)$) are isomorphic to hypertoric convolution algebras associated to left (respectively right) cyclic arrangements. This isomorphism intertwines the anti-automorphism $\psi^{\cal{V}}$ with the anti-automorphism $\Psi_{OSz}$ from Definition~\ref{def:PsiOS}.
Section~\ref{sec:partial-dots} identifies the partial order of any left (resp. right) cyclic polarized arrangement with the lexicographic (resp. reverse lexicographic order) on the set of bases $\mathbb{B}$.

\begin{theorem} \label{thm_OSz-aqh}
The Oszv{\'a}th-Szab{\'o} algebras $\cal{B}'(n,k)$, $\cal{B}_l(n,k)$, $\cal{B}_r(n,k)$, with partially ordered sets $(V'(n,k), \leq_{{\rm lex}})$ or $(V'(n,k), \leq_{{\rm rev lex}})$, $(V_l(n,k), \leq_{{\rm lex}})$, and $(V_r(n,k), \leq_{{\rm rev lex}})$ and anti-automorphism $\Psi_{OSz}$ from Definition~\ref{def:PsiOS}
are polynomial quasi hereditary, standardly stratified, and affine cellular algebras (the partial orders indicate the ordering used in constructing polynomial hereditary chains; see Remark~\ref{rem:hereditary-po}).
\end{theorem}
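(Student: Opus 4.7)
My plan is to reduce the claims for $\cal{B}_l(n,k)$ and $\cal{B}_r(n,k)$ to corresponding results for equivariant hypertoric convolution algebras via Theorem~\ref{thm:CyclicIsom}, and then to deduce the result for $\cal{B}'(n,k)$ via an idempotent truncation argument.

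First I would apply Theorem~\ref{thm:CyclicIsom} to fix left and right cyclic polarized arrangements $\cal{V}_l, \cal{V}_r$ for the given $n,k$ together with graded algebra isomorphisms $\Bt(\cal{V}_l) \cong \cal{B}_l(n,k)$ and $\Bt(\cal{V}_r) \cong \cal{B}_r(n,k)$ sending $e_{\alpha} \mapsto \Ib_{\x_{\alpha}}$ and intertwining the hypertoric anti-involution $\psi^{\cal{V}}$ of Definition~\ref{def:hypertoric-involution} with $\Psi_{OSz}$. By Corollary~\ref{cor:ABPQH} these hypertoric algebras are polynomial quasi hereditary; by Corollary~\ref{cor:hyp-affine-cellular} they are affine cellular with respect to the balanced anti-involution $\psi^{\cal{V}}$; and the discussion following Corollary~\ref{cor:hyp-affine-cellular} (citing \cite{Maz-standardly}) shows they are positively graded standardly stratified. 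Proposition~\ref{prop:VandermondeOrders} and its right cyclic analogue, via the bijections $\kappa_l,\kappa_r$ from Definition~\ref{def:StatesFromAlpha}, identify the partial order on $\cal{P}$ from Section~\ref{subsec:partial_order} with $\leq_{{\rm lex}}$ on $V_l(n,k)$ and with $\leq_{{\rm rev lex}}$ on $V_r(n,k)$, matching the partial orders stated in the theorem. Transporting along these isomorphisms then yields the three structural properties for $\cal{B}_l(n,k)$ and $\cal{B}_r(n,k)$.

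To handle $\cal{B}'(n,k)$, I would realize it as the idempotent truncation $e\cal{B}_l(n,k)e$ (with partial order $\leq_{{\rm lex}}$) or $e\cal{B}_r(n,k)e$ (with partial order $\leq_{{\rm rev lex}}$), where $e := \sum_{\x \in V'(n,k)} \Ib_{\x}$. The key combinatorial observation is that $V'(n,k)$ is an upper set (filter) in both partial orders: by Definition~\ref{def:StatesFromAlpha}, $V_l(n,k) \setminus V'(n,k) = \{\x : 0 \in \x\}$ and $V_r(n,k) \setminus V'(n,k) = \{\x : n \in \x\}$, and since the dot at position $0$ (resp.\ $n$) cannot move further left (resp.\ right), these complements are lower sets in the respective orders. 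This filter property lets me reorder a polynomial hereditary chain of $\cal{B}_l(n,k)$ so that its last $|V'(n,k)|$ ideals all lie inside $J_k := \cal{B}_l(n,k)\, e\, \cal{B}_l(n,k)$ and correspond to simples in $V'(n,k)$. Multiplying the tail $J_k \supsetneq J_{k+1} \supsetneq \cdots \supsetneq 0$ on both sides by $e$ gives a candidate polynomial hereditary chain $e\cal{B}_l(n,k)e = eJ_k e \supsetneq eJ_{k+1}e \supsetneq \cdots \supsetneq 0$ for $\cal{B}'(n,k)$; the right cyclic case is treated identically.

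The main obstacle will be verifying that this truncated chain is indeed a polynomial hereditary chain for $\cal{B}'(n,k)$. Concretely, one must check that each subquotient $eJ_{i-1}e/eJ_ie$ remains idempotent as an ideal, is isomorphic to $m(q) \cdot e\cal{B}_l(n,k)e_{\alpha}$ for some $\alpha \in V'(n,k)$, and that $e\cal{B}_l(n,k)e_{\alpha}$ is free of finite rank over its endomorphism ring. The last point is the key check: one has $\End_{e\cal{B}_l(n,k)e}(e\cal{B}_l(n,k)e_{\alpha}) = e_{\alpha}\cal{B}_l(n,k)e_{\alpha} = \End_{\cal{B}_l(n,k)}(\cal{B}_l(n,k)e_{\alpha})$, so the polynomial endomorphism rings (the Gale-dual version of Proposition~\ref{prop:endV}) are unchanged, and freeness of the appropriate rank is detected by graded dimension and therefore survives truncation. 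Finally, the anti-automorphism $\Psi_{OSz}$ fixes all idempotents, hence fixes $e$, so it restricts to a balanced anti-involution of $\cal{B}'(n,k)$ (balancedness persists because the simples of $\cal{B}'(n,k)$ form a subset of those of $\cal{B}_l(n,k)$); this gives affine cellularity by applying Corollary~\ref{cor:hyp-affine-cellular} in the truncated setting, and standardly stratifiedness follows from the truncated chain.
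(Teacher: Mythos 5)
Your proposal is correct and follows essentially the same route as the paper: the cases of $\cal{B}_l(n,k)$ and $\cal{B}_r(n,k)$ are immediate from Theorem~\ref{thm:CyclicIsom} together with Corollaries~\ref{cor:ABPQH} and \ref{cor:hyp-affine-cellular} and the identification of partial orders, and $\cal{B}'(n,k)$ is handled by truncating a polynomial hereditary chain by the idempotent $e = \sum_{\x \in V'(n,k)} \Ib_{\x}$, using exactly the observation the paper makes (phrased there as the existence of a compatible total order placing $\cal{P}\setminus\cal{K}$ first) that $V'(n,k)$ is an upper set in the relevant order. Your extra verifications of the truncated chain's hereditary properties are sound and only slightly more detailed than the paper's.
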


\begin{proof}
The only case that does not follow from the remarks above is that of $(\cal{B}'(\cal{V}), \leq_{{\rm lex}})$ and $(\cal{B}'(\cal{V}), \leq_{{\rm rev lex}})$.

With respect to the lexicographic or reverse lexicographic order arising in the left/right cyclic case, there exists a choice of total order   $\cal{P}=\{ \alpha_1,\dots ,\alpha_m\}$ with $\alpha_i < \alpha_j$ only if $i<j$ with the property that there
exists some $\ell \in \{1, \dots, n\}$ such that $\alpha_i \in \cal{P}\setminus \cal{K}$ for $i <\ell$   and $\alpha_j \in \cal{K}$ for $\ell \leq j$.

For $1 \leq j \leq m$, let $e^{(j)} = \sum_{i=j}^m e_{\alpha_i}$. By \cite[Theorem 6.7]{Klesh-affine}
the polynomial quasi hereditary structure on $H:=\Bt(\cal{V})$ implies that
\[
H
=
H e^{(1)} H
\supsetneq H e^{(2)} H
\supsetneq \dots
\supsetneq H e^{(m)} H \supsetneq 0
\]
is a polynomial stratifying chain of ideals.
Let $e:= \sum_{\alpha \in \cal{K}}e_{\alpha} = \sum_{i=\ell}^m e_{\alpha_i}$ so that $\Bt'(\cal{V}):= e\Bt(\cal{V})e =: eHe$.
Then $e e^{(i)}e=e$ for $i\leq \ell$  and $e e^{(j)} e = e^{(j)}$ for $\ell \leq j$, so that
\[
\Bt'(\cal{V}):= eHe = eH e^{(\ell)} He
\supsetneq
eH e^{(\ell+1)} He
\supsetneq eH e^{(\ell+2)} He
\supsetneq \dots
\supsetneq eH e^{(m)} He \supsetneq 0
\]
is  a polynomial stratifying chain of ideals for $\Bt'(\cal{V})$.
\end{proof}
We will describe the resulting standard modules for $\B_l(n,k)$ more explicitly in Section~\ref{subsec:OSz-standard}. The above corollary has a number of homological implications and provides additional insights into the Grothendieck groups of these algebras  elaborated on  in the following sections.

\subsection{Finite global dimension}
The \emph{global dimension} $\gldim(H)$ of an algebra $A$ is the supremum over all $A$-modules of the minimum length of a projective resolution.
By \cite[Theorem 5.24]{Klesh-affine} the global dimension of an affine highest weight category is at most
 $2\ell(\Pi) + d_{\Pi}$, where  $d_{\alpha} = {\gldim B_{\alpha}}$, $d_{\Pi}=\max\{ d_{\alpha} \mid \alpha \in \Pi \}$,
and
\[
\ell(\Pi) = \max\{l \mid \text{there exist elements $\alpha_0<\alpha_1 < \dots < \alpha_{l}$ in $\Pi$} \}.
\]
For the Ozsv{\'a}th--Szab{\'o} algebras we have
$d_{\Pi} = \gldim(\Z[y_1,\dots, y_{k}]) = k$.
We also have
 \begin{align*}
\ell(\Pi_{\cal{B}_{\ell}'(n,k)})  =  (n-k-1)\cdot k,
\qquad
\ell(\Pi_{\cal{B}_{\ell}(n,k)})=\ell(\Pi_{\cal{B}_{r}(n,k)}) =  (n-k)\cdot k,
\qquad
\ell(\Pi_{\cal{B}(n,k)})  &=  (n-k+1)\cdot k.
\end{align*}

\begin{corollary} \label{cor:OSz-finite-gd}
The global dimensions of Ozsv{\'a}th--Szab{\'o} bordered algebras satisfy the inequalities
\begin{align*}
\gldim \cal{B}'(n,k)  &\leq  2(n-k-1)\cdot k +k
\\
\gldim \cal{B}_{\ell}(n,k) =\gldim \cal{B}_{r}(n,k) &\leq   2(n-k)\cdot k +k
\\
\gldim  \cal{B}(n,k)   &\leq  2(n-k+1)\cdot k +k
\end{align*}
\end{corollary}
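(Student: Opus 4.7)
The plan is to invoke Kleshchev's inequality $\gldim H \leq 2\ell(\Pi) + d_\Pi$ from \cite[Theorem 5.24]{Klesh-affine} (already quoted in the paragraph preceding the corollary) for each of the four Ozsv\'ath--Szab\'o variants. Three of them, $\cal{B}'(n,k)$, $\cal{B}_l(n,k)$, and $\cal{B}_r(n,k)$, are known to be affine quasi hereditary by Theorem~\ref{thm_OSz-aqh}, so for these the task reduces to computing $d_\Pi$ and $\ell(\Pi)$.

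For the endomorphism factor $d_\Pi = \max_{\alpha}\gldim B_\alpha$, Proposition~\ref{prop:endV} together with Gale duality says that for $\Bt(\cal{V})$ the endomorphism ring of each standard module $\Vt_\alpha$ is the polynomial ring $\Z[u_{\mathbbm{x}_\alpha}]$, which has $|\mathbbm{x}_\alpha|=k$ variables and hence global dimension $k$. Transporting this via the isomorphism $\Bt(\V)\cong \cal{B}_l(n,k)$ (resp.\ $\cal{B}_r(n,k)$) of Theorem~\ref{thm:CyclicIsom}, and then restricting to the idempotent-truncated $\cal{B}'(n,k)$, gives $d_\Pi=k$ in each of the three cases. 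For the chain length $\ell(\Pi)$, the partial order on $V_l(n,k)$, $V_r(n,k)$, and $V'(n,k)$ is generated by moving a single dot one step in the ``ascending'' direction (Proposition~\ref{prop:VandermondeOrders}). The longest chain runs from the minimal arrangement of dots to the maximal one: for $V_l(n,k)$ this is $\{0,\ldots,k-1\}\leftrightarrow \{n-k,\ldots,n-1\}$, giving $\ell = k(n-k)$; for $V_r(n,k)$ the same count yields $k(n-k)$; and for $V'(n,k)$ the restricted interval $\{1,\ldots,k\}\leftrightarrow \{n-k,\ldots,n-1\}$ gives $\ell = k(n-k-1)$. Plugging these values into $2\ell(\Pi)+d_\Pi$ yields the first two lines of the corollary.

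The remaining inequality concerns $\cal{B}(n,k)$, which was not included in Theorem~\ref{thm_OSz-aqh}. The cleanest path is to identify $\cal{B}(n,k)$ as $\Bt(\V)$ for a (not necessarily cyclic) polarized arrangement with $n+1$ hyperplanes whose bounded feasible sign sequences are in bijection with $V(n,k)=\binom{\{0,\ldots,n\}}{k}$ and whose induced partial order is again generated by moving a dot one step to the right; then Theorem~\ref{thm:affine} and Corollary~\ref{cor:ABPQH} supply the polynomial quasi hereditary structure, $d_\Pi=k$ as before, and $\ell(\Pi)=k(n-k+1)$ from the chain $\{0,\ldots,k-1\}\leftrightarrow \{n-k+1,\ldots,n\}$, after which Kleshchev's inequality produces the final bound. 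An alternative is to run the same stratifying-chain argument used inside the proof of Theorem~\ref{thm_OSz-aqh} directly on $\cal{B}(n,k)$, using a total refinement of the ``move a dot right'' order compatible with the nested sets $V'(n,k)\subset V_l(n,k),V_r(n,k)\subset V(n,k)$.

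The main obstacle is this last step: producing (or bypassing) the geometric realization of $\cal{B}(n,k)$ as a hypertoric convolution algebra and verifying that the resulting poset length really is $k(n-k+1)$. Everything else is bookkeeping built from two inputs already in hand, namely the value $d_\alpha = k$ coming from Proposition~\ref{prop:endV} and the description of the partial order via dot configurations from Section~\ref{sec:partial-dots}.
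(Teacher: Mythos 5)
Your proposal follows essentially the same route as the paper: the corollary is obtained by plugging $d_\Pi = k$ (from the polynomial endomorphism rings of standard modules) and the chain lengths $\ell(\Pi)$ of the dot-configuration posets into Kleshchev's bound $\gldim \leq 2\ell(\Pi)+d_\Pi$, and your computations of $\ell(\Pi) = k(n-k-1)$, $k(n-k)$, $k(n-k+1)$ for $V'(n,k)$, $V_l(n,k)=V_r(n,k)$, $V(n,k)$ match the values asserted in the text. The one point where you are more careful than the paper is the third inequality: you correctly observe that $\cal{B}(n,k)$ is not covered by Theorem~\ref{thm_OSz-aqh}, whereas the paper simply lists $\ell(\Pi_{\cal{B}(n,k)})=(n-k+1)k$ and applies the bound without separately establishing the affine quasi hereditary structure of $\cal{B}(n,k)$; so the "obstacle" you flag is genuinely left implicit in the paper, and either of your proposed remedies (a hypertoric realization on $n+1$ hyperplanes, or a direct stratifying chain as in the proof of Theorem~\ref{thm_OSz-aqh}) would be needed to make that line fully rigorous.
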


\subsection{Categorification of \texorpdfstring{$\mf{gl}(1|1)$}{gl(1|1)} canonical bases} \label{sec:bases-gl}
In \cite{ManionDecat} it is shown that the algebras $\B_{l}(n,k)$ categorify the tensor product $V^{\otimes n}$ of the two-dimensional $U_{q}(\mf{gl}(1|1))$-module $V$ of highest weight $\epsilon_1=(1,0)$ in the weight lattice $\Z^2$ of $\mf{gl}(1|1)$.  More explicitly, using the conventions of
\cite[Section 8]{LaudaManion} we have an isomorphism
\begin{align*}
   \bigoplus_{k=0}^n K_0^{\C(q)}(\B_{l}(n,k)) &\longrightarrow V^{\otimes n} \\
   [P_{\x}] &\mapsto v^{\diamondsuit}_{\x}
\end{align*}
where $\{ v^{\diamondsuit}_{\x} \}$ denotes the $U_q(\mf{gl}(1|1))$ canonical or crystal basis of $V^{\otimes n}$ from \cite[Section 4.3]{Sar-tensor}.
Under this isomorphism, in \cite[Section 8]{LaudaManion} the classes in $K_0^{\C(q)}(\B_l(n,k))$ of several families of modules over $\B_l(n,k)$ were identified with elements of various bases for $V^{\otimes n}$. Indecomposable projectives and simples gave the $U_q(\mf{gl}(1|1))$ canonical basis for $V^{\otimes n}$ and its dual with respect to the bilinear pairing on $V^{\otimes n}$ induced by \eqref{eq:form-HOM}.

Sartori gave an alternative categorification of $V^{\otimes n}$ using $\mf{q}$-presentable quotients of parabolic category $\cal{O}$ \cite{Sar-diagrams,Sar-tensor}; this categorification can be described using categories of modules over certain  properly stratified algebras.  In \cite[Theorem 7.6]{LaudaManion} it was shown that the algebras $\B_l(n,k)$ are graded flat deformations of Sartori's algebras $\mathbb{A}_{n,k}$ over the ring of symmetric functions in $k$ variables. Inflations of Sartori's proper standard modules from \cite{Sar-diagrams} along the quotient map were identified with the dual of the standard basis of $V^{\otimes n}$ with respect to the bilinear form on $K_0^{\C(q)}(\B_l(n,k))$; inflations of Sartori's standard and indecomposable projective modules gave slightly less natural elements of $V^{\otimes n}$. In particular, while we would like to categorify at least the canonical and standard bases of $V^{\otimes n}$ and their duals using $\B_l(n,k)$, only three of these bases were categorified in \cite{LaudaManion}; a categorification of the (most natural) standard basis was missing, unlike in Sartori's picture where his algebras admit a different bilinear form, see Remark~\ref{rem:form-compare} .

Sartori's categorification of the standard basis comes from the properly stratified structure
of his algebras; similarly, we show here that the standard modules over $\B_l(n,k)$ coming from its affine quasi-hereditary structure categorify the standard tensor-product basis of $V^{\otimes n} \cong K_0^{\C(q)}(\B_l(n,k))$.

\begin{lemma}\label{lem:Sartori-inflate}
Let $\cal{V}$ be a left cyclic polarized arrangement.
The inflation of Sartori's proper standard module $\bar{\Delta}^S(\alpha)$ along the quotient map $\Bt(\cal{V}) \to \mathbb{A}_{n,k}$ from \cite{LaudaManion} is isomorphic to the proper standard module $\bar{\Vt}(\alpha)$ for $\Bt(\cal{V})$.
\end{lemma}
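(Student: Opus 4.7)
The plan is to establish the lemma by using the factorization of quotient maps $\Bt(\cal{V}) \twoheadrightarrow \mathbb{A}_{n,k} \twoheadrightarrow B(\cal{V})$ coming from \cite[Theorem 7.6]{LaudaManion}, together with the identification $\bar{\Vt}_\alpha = \infl V_\alpha$ recorded just below Proposition~\ref{prop:pr-infl}, where $\infl$ denotes inflation from $B(\cal{V})$-modules to $\Bt(\cal{V})$-modules and $V_\alpha$ is the standard $B(\cal{V})$-module. The goal is to factor the inflation through Sartori's intermediate properly stratified algebra.

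First I would make precise that the composite $\Bt(\cal{V}) \twoheadrightarrow \mathbb{A}_{n,k} \twoheadrightarrow B(\cal{V})$ agrees with the quotient $\pr$ of Section~\ref{sec:Grothendieck-affine}: by \cite[Theorem 7.6]{LaudaManion}, $\Bt(\cal{V})$ is a graded flat deformation of $\mathbb{A}_{n,k}$ over the ring of symmetric functions in $k$ variables, while $\mathbb{A}_{n,k}$ is itself obtained from $B(\cal{V})$ by the remaining deformation parameters, so the two successive quotients are both given by setting positive-degree deformation parameters to zero. Correspondingly, the inflation functors compose: $\infl_B^{\Bt} \cong \infl_{\mathbb{A}_{n,k}}^{\Bt}\circ\infl_B^{\mathbb{A}_{n,k}}$, where the superscripts indicate the target and the subscripts the source of the inflation.

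The key intermediate step is to show that Sartori's proper standard module satisfies $\bar{\Delta}^S(\alpha) \cong \infl_B^{\mathbb{A}_{n,k}} V_\alpha$, i.e., it is already the inflation from $B(\cal{V})$ of the standard module $V_\alpha$. This is the properly stratified analogue of the identification $\bar{\Vt}_\alpha = \infl V_\alpha$. It can be deduced from Sartori's diagrammatic description in \cite{Sar-diagrams}: his proper standard $\bar{\Delta}^S(\alpha)$ is defined as the largest quotient of the indecomposable projective whose composition factors $L_\beta$ satisfy $\beta\leq\alpha$ with $L_\alpha$ appearing exactly once, and on which the positive-degree part of $\End(\Delta^S(\alpha))$ acts as zero. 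Under the quotient $\mathbb{A}_{n,k}\twoheadrightarrow B(\cal{V})$, the kernel kills precisely these positive-degree endomorphisms, so the inflation $\infl_B^{\mathbb{A}_{n,k}} V_\alpha$ of the $B(\cal{V})$-standard (whose endomorphism ring is $\Bbbk$) satisfies all of these characterizing properties. Uniqueness of the proper standard in a properly stratified algebra then gives the desired isomorphism.

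Given the intermediate step, the lemma follows by functoriality:
\[
\infl_{\mathbb{A}_{n,k}}^{\Bt} \bar{\Delta}^S(\alpha) \;\cong\; \infl_{\mathbb{A}_{n,k}}^{\Bt}\infl_B^{\mathbb{A}_{n,k}} V_\alpha \;\cong\; \infl_B^{\Bt} V_\alpha \;\cong\; \bar{\Vt}(\alpha).
\]
The main obstacle will be the careful bookkeeping needed to match Sartori's diagrammatic conventions for $\mathbb{A}_{n,k}$ and its proper standards with the hypertoric presentation of $\Bt(\cal{V})$ provided by \cite[Theorem 7.6]{LaudaManion}; once the identification of quotient maps and of $B(\cal{V})$-standards on both sides is settled, the rest is a formal manipulation of inflation functors.
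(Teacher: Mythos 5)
Your argument has a genuine gap at its foundation: the factorization $\Bt(\cal{V}) \twoheadrightarrow \mathbb{A}_{n,k} \twoheadrightarrow B(\cal{V})$ is never established and is far from automatic. The kernel of $\Bt(\cal{V}) \to \mathbb{A}_{n,k}$ is the ideal generated by the elementary symmetric functions $e_1,\dots,e_k$ in the central elements $u_1,\dots,u_n$, while the kernel of $\Bt(\cal{V}) \to B(\cal{V})$ is generated by the linear forms $c_1u_1+\cdots+c_nu_n$ with $(c_1,\dots,c_n)\in V$. These are two different central subalgebras giving two different flat deformation directions (of two non-isomorphic finite-dimensional algebras: $\dim \mathbb{A}_{n,k}=k!\cdot\dim B(\cal{V})$), and neither \cite{LaudaManion} nor this paper asserts that one ideal contains the other. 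Already in degree $4$ you would need $e_2(u)$ to lie in the ideal generated by $k$ linear forms inside the quotient ring $\Bt(\cal{V})$, which requires a genuine argument (if it is true at all). Without this, the functor $\infl_B^{\mathbb{A}_{n,k}}$ does not exist and your key intermediate step does not typecheck.

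Even granting the factorization, the step $\bar{\Delta}^S(\alpha)\cong\infl_B^{\mathbb{A}_{n,k}}V_\alpha$ is where the actual content lives, and your justification only yields half of it. Since $\infl V_\alpha$ is a quotient of $P^S(\alpha)$ whose composition factors satisfy the defining constraints, the maximality in the definition of the proper standard gives a surjection $\bar{\Delta}^S(\alpha)\twoheadrightarrow \infl V_\alpha$; to upgrade this to an isomorphism you must compare graded dimensions or composition multiplicities of the two modules. That comparison is precisely what the paper's proof does, and by a different and more robust route: it observes that \emph{every} $u_i$ acts as zero on $\bar{\Vt}(\alpha)=\Vt_\alpha/\Vt_\alpha(\Z[u_{\xx_\alpha}])_{>0}$ (so in particular the $e_j$ do, and the module descends to $\mathbb{A}_{n,k}$ with no need for any algebra-level factorization through $B(\cal{V})$), and then matches the explicit taut-path basis of $\bar{\Vt}(\alpha)$, indexed by $\beta\le\alpha$ with $\Delta_\alpha\cap\Delta_\beta\neq\emptyset$, against Sartori's fork-diagram basis of $\bar{\Delta}^S(\alpha)$ from \cite[Equation 5.59]{Sar-diagrams} via the partial-order-preserving bijections of \cite[Remark 3.1]{LaudaManion} and \cite[Definition 3.24]{LLM}. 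If you want to salvage your outline, replace the algebra factorization by the module-level observation that the $u_i$ annihilate $\bar{\Vt}(\alpha)$, and then supply the basis (or multiplicity) comparison explicitly rather than by appeal to a universal property.
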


\begin{proof}
Theorem 7.6 in \cite{LaudaManion} identifies $\mathbb{A}_{n,k}$ as the quotient of the algebra $\B_l(n,k) \cong \Bt(\cal{V})$ by the ideal generated by the elementary symmetric functions $e_1,\ldots,e_k$ in the central elements $U_i := \oplus_{|\x| = k} (U_i)\cdot \mathbf{I}_{\x}$ of $\B_l(n,k)$.   The basis  elements of the proper standard module $\bar{\Vt}(\alpha)$ are indexed by $\beta \in\cal{P}$ with $\alpha \geq \beta$ and $\Delta_{\alpha}\cap \Delta_{\beta} \neq \emptyset$, and they are fixed under the quotient map.  The claim then follows from
  explicit description of Sartori's proper standard modules $\bar{\Delta}^S(\alpha)$ given in \cite[Equation 5.59]{Sar-diagrams} in terms of the combinatorics of ``fork diagrams.''  To translate the indexing sets for Sartori's bases into the indexing set of proper standard modules for $\Bt(\cal{V})$ we combining the partial order preserving bijections from Sartori's wedge sequences to left $I$-states $\x$ from \cite[Remark 3.1]{LaudaManion}, with the bijection $\kappa_l^{-1}$ from $I$-states to $\cal{P}$ from \cite[Definition 3.24 (i)]{LLM}; this gives a partial order preserving bijection from oriented fork diagrams to $\cal{P}$.

\end{proof}

\begin{theorem} \label{thm:bases}
Under the identification $K_0^{\C(q)}(\Bt(\cal{V}))\cong K_0^{\C(q)}(\B_l(n,k)) \cong V^{\otimes n}$ for left cyclic polarized arrangements $\cal{V}$, the bases  for $K_0^{\C(q)}(\Bt(\cal{V}))$  map to bases of the $U_q(\mf{gl}(1|1))$-module $V^{\otimes n}$ as follows:
\begin{align*}
 \{ \text{indecomposable projective modules $\Pt_{\alpha}$} \}
 &\leftrightarrow
 \{  \text{canonical basis elements $v_{\alpha}^{\diamondsuit}$}\}
\\
 \{ \text{simple modules $L_{\alpha}$} \}
&\leftrightarrow
  \{  \text{(Ozsv{\'a}th--Szab{\'o}) dual canonical basis elements $v_{\alpha}^{\heartsuit}$}\}
\\
 \{ \text{standard modules $\Vt_{\alpha}$} \}
&\leftrightarrow
  \{  \text{standard basis elements $v_{\alpha}$}\}
\\
 \{ \text{proper standard modules $\bar{\Vt}_{\alpha}$} \}
&\leftrightarrow
  \{  \text{(Ozsv{\'a}th--Szab{\'o}) dual standard basis elements $v_{\alpha}^{\ast}$}\}
\end{align*}
(see \cite[Section 4]{Sar-tensor} and \cite[Section 8]{LaudaManion} for a description of these bases).
In particular, the hypertoric canonical basis elements from Theorem~\ref{thm:canonical} map to the corresponding basis elements $v_{\a}^{\diamondsuit}$ of the $U_q(\mf{gl}(1|1))$-module $V^{\otimes n}$ from \cite[Section 4.3]{Sar-tensor}.
\end{theorem}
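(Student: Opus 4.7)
The first two identifications, between classes of indecomposable projectives (resp. simples) and canonical (resp. dual canonical) basis elements of $V^{\otimes n}$, are already established in \cite[Section 8]{LaudaManion} using the isomorphism $\Bt(\cal{V}) \cong \B_l(n,k)$ from Theorem~\ref{thm:CyclicIsom}. So the plan is to focus on the two new identifications, namely $[\bar{\Vt}_{\alpha}] \leftrightarrow v_{\alpha}^{\ast}$ and $[\Vt_{\alpha}] \leftrightarrow v_{\alpha}$, and to deduce the second from the first by the duality pairing developed in Section~\ref{subsec:bilinearform}.

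For the proper standards, I would apply Lemma~\ref{lem:Sartori-inflate}, which identifies $\bar{\Vt}_{\alpha}$ with $\infl \bar{\Delta}^S(\alpha)$, the inflation of Sartori's proper standard module under the quotient $\Bt(\cal{V}) \to \mathbb{A}_{n,k}$. By Proposition~\ref{prop:pr-infl}, the inflation functor induces an isomorphism $[\infl]\maps G_0(B(\cal{V})) \xrightarrow{\cong} G_0(\Bt(\cal{V}))$, so the class $[\bar{\Vt}_{\alpha}]$ maps under this isomorphism to the class of $\bar{\Delta}^S(\alpha)$. In \cite[Section 8]{LaudaManion} (building on Sartori's description in \cite{Sar-tensor,Sar-diagrams}), Sartori's proper standard modules are shown to categorify the dual standard basis $\{v_{\alpha}^{\ast}\}$ of $V^{\otimes n}$, so the asserted identification follows.

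For the standards themselves, I would argue by duality. Proposition~\ref{prop:pairing}(2) gives $([\Vt_{\alpha}],[\bar{\Vt}_{\beta}]) = \delta_{\alpha\beta}$ in the bilinear form from \eqref{eq:form-HOM}, while on the representation-theoretic side the dual standard basis is characterized by $(v_{\alpha}, v_{\beta}^{\ast}) = \delta_{\alpha\beta}$ with respect to the natural bilinear form on $V^{\otimes n}$ (cf. \cite[Section 3]{Sar-tensor} and Remark~\ref{rem:form-compare}). Once it is verified that the bilinear form on $K_0^{\C(q)}(\Bt(\cal{V}))$ and the one on $V^{\otimes n}$ are compatible under the identification of $[\bar{\Vt}_{\alpha}]$ with $v_{\alpha}^{\ast}$, the identification $[\Vt_{\alpha}] \leftrightarrow v_{\alpha}$ is forced.

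The main obstacle will be tracking the normalization factor $\tfrac{1}{(1-q^2)^k}$ that appears in Proposition~\ref{prop:pr-infl} when relating the forms on $K_0(\Bt(\cal{V}))$ and $K_0(B(\cal{V}))$. This factor is precisely $\dim_q B_{\alpha}$ for the endomorphism algebra $B_{\alpha} = \End_{\Bt(\cal{V})}(\Vt_{\alpha})$ from Proposition~\ref{prop:endV} (after identifying $\Bt(\cal{V}) \cong \At(\cal{V}^{\vee})$), and it also governs the $K_0^{\C(q)}$-identity $[\Vt_{\alpha}] = \dim_q B_{\alpha} \cdot [\bar{\Vt}_{\alpha}]$ from Section~\ref{sec:Grothendieck-affine}. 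These two appearances of the same scalar cancel against each other in the duality computation, so no spurious normalization remains and the final statement about the standard basis follows from the proper standard case together with the already established projective and simple identifications of \cite{LaudaManion}.
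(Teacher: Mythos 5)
Your proposal is correct and follows essentially the same route as the paper: cite \cite[Section 8]{LaudaManion} for the projective, simple, and (via Lemma~\ref{lem:Sartori-inflate}) proper standard identifications, then obtain the standard basis identification by duality under the form \eqref{eq:form-HOM} using Proposition~\ref{prop:pairing}(2). Your extra discussion of the $\tfrac{1}{(1-q^2)^k}$ normalization is a reasonable precaution but is not needed once one notes, as the paper does, that the form on $K_0^{\C(q)}(\Bt(\cal{V}))$ is exactly the (deformed) form used in \cite[Section 8.7]{LaudaManion}, under which $v_{\alpha}$ and $v_{\alpha}^{\ast}$ are already dual.
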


\begin{proof}
The form $(-,-)$ on $K_0^{\C(q)}(\Bt(\cal{V}))$ agrees with the form on $V^{\otimes n}$ from \cite[Section 8.7]{LaudaManion} where it was shown that under the identification $K_0(\B_l(n,k)) \cong V^{\otimes n}$, the bases $\{[\Pt_{\alpha}]\}$, $\{[L_{\alpha}]\}$, $\{[\bar{\Vt}_{\alpha}]\}$ agree with the canonical, dual canonical, and dual standard bases $\{v_{\alpha}^{\diamondsuit}\}$, $\{v_{\alpha}^{\heartsuit}\}$, $\{v_{\alpha}^{\ast}\}$ of $V^{\otimes n}$\footnote{In \cite{LaudaManion}, the bases $\{v_{\alpha}^{\heartsuit}\}$ and $\{v_{\alpha}^{\ast}\}$ were called $\{v_{\alpha}^{\heartsuit \heartsuit}\}$ and $\{v_{\alpha}^{\clubsuit \clubsuit}\}$ respectively; other bases $\{v_{\alpha}^{\heartsuit}\}$ and $\{v_{\alpha}^{\clubsuit}\}$ were also considered following \cite{Sar-tensor}, dual to $\{v_{\alpha}^{\diamondsuit}\}$ and $\{v_{\alpha}\}$ under a different bilinear form.} (the identification of $[\bar{\Vt}_{\alpha}]$ with $v_{\alpha}^{\ast}$ makes use of Lemma~\ref{lem:Sartori-inflate} identifying the inflation of Sartori's proper standard modules with the proper standard modules of $\Bt(\cal{V})$). Since $[\Vt_{\alpha}]$ is dual to $[\bar{\Vt}_{\alpha}]$ under $(-,-)$, it gets identified with the dual $v_{\alpha}$ of $v_{\alpha}^{\ast}$ in $V^{\otimes n}$.

 \end{proof}

Theorem~\ref{thm:bases} provides a new description of the canonical basis $v_{\alpha}^{\diamondsuit}$.  First we need the following lemma.

\begin{lemma}
Let $\alpha \in \cal{P}$. In $K_0(\tilde{B}(\cal{V})) = K_0(\tilde{A}(\cal{V}^{\vee}))$, we have
\[
[\Pt_{\alpha}] = \sum_{\mathbbm{x} \in \mathbb{B} \,\, : \,\, H_{\mathbbm{x}} \cap \Delta_{\alpha} \neq \emptyset} q^{d_{\alpha \mu(\mathbbm{x})}} [\Vt_{\mu(\mathbbm{x})}]
\]
(recall that $d_{\alpha \mu(\xx)}$ denotes the number of sign changes required to transform $\alpha$ into $\mu(\xx)$).
\end{lemma}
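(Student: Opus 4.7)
The plan is to derive the formula by applying Lemma~\ref{lem:Kfiltration} to the Gale dual arrangement $\cal{V}^{\vee}$, and then to translate the resulting combinatorial data back to $\cal{V}$ using Gale duality.

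First I would pass to the isomorphism $\Bt(\cal{V}) \cong \At(\cal{V}^{\vee})$ so that the projective $\Pt_{\alpha}$ and the standards $\Vt_{\mu(\xx)}$ (for $\xx \in \mathbb{B}$) are interpreted as modules over $\At(\cal{V}^{\vee})$ indexed by $\alpha,\mu^{\vee}(\xx^{\vee}) \in \cal{P} = \cal{P}^{\vee}$. Lemma~\ref{lem:Kfiltration} applied to $\At(\cal{V}^{\vee})$ says that the kernel $K_{>\alpha}$ of $\Pt_{\alpha} \twoheadrightarrow \Vt_{\alpha}$ admits a filtration whose subquotients are exactly the modules $q^{d_{\alpha,\mu^{\vee}(\xx^{\vee})}}\Vt_{\mu^{\vee}(\xx^{\vee})}$, each occurring with multiplicity one, ranging over those $\xx^{\vee} \in \mathbb{B}^{\vee}$ with $\alpha \in \cal{B}^{\vee}_{\xx^{\vee}}$ and $\mu^{\vee}(\xx^{\vee}) \neq \alpha$. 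Taking Grothendieck classes yields
\[
[\Pt_{\alpha}] = [\Vt_{\alpha}] + \sum_{\substack{\xx^{\vee} \in \mathbb{B}^{\vee} \\ \alpha \in \cal{B}^{\vee}_{\xx^{\vee}},\, \mu^{\vee}(\xx^{\vee}) \neq \alpha}} q^{d_{\alpha,\mu^{\vee}(\xx^{\vee})}}[\Vt_{\mu^{\vee}(\xx^{\vee})}].
\]
Since $\alpha$ itself corresponds to $\xx^{\vee} = \xx_{\alpha}^{c}$ (with $\cal{B}^{\vee}_{\xx_{\alpha}^{c}} \ni \alpha$ tautologically), the constraint $\mu^{\vee}(\xx^{\vee}) \neq \alpha$ can be dropped by absorbing the leading $[\Vt_{\alpha}]$ into the sum.

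Next I would apply the bijection $\mathbb{B} \to \mathbb{B}^{\vee}$ given by $\xx \mapsto \xx^{c}$, together with the identities $\mu^{\vee}(\xx^{c}) = \mu(\xx)$ (from \cite[Lemma 2.9]{Gale}) and $\cal{B}^{\vee}_{\xx^{c}} = \cal{F}_{\xx}$ recorded in Section~\ref{subsec:partial_order}. Re-indexing the sum by $\xx \in \mathbb{B}$ via $\xx^{\vee} = \xx^{c}$ gives
\[
[\Pt_{\alpha}] = \sum_{\substack{\xx \in \mathbb{B} \\ \alpha \in \cal{F}_{\xx}}} q^{d_{\alpha,\mu(\xx)}}[\Vt_{\mu(\xx)}].
\]
Finally, by definition $\cal{F}_{\xx}$ consists of precisely those feasible sign sequences $\alpha$ such that $H_{\xx} \in \Delta_{\alpha}$, i.e.\ $H_{\xx} \cap \Delta_{\alpha} \neq \emptyset$. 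Substituting this condition into the indexing set yields the asserted formula.

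I do not anticipate a serious obstacle, since the main structural input (the $\Delta$-filtration of $\Pt_{\alpha}$) is already established in Lemma~\ref{lem:Kfiltration}; the only potentially delicate step is bookkeeping the Gale-dual translation of the cone $\cal{B}^{\vee}_{\xx^{\vee}}$ into the geometric condition $H_{\xx} \cap \Delta_{\alpha} \neq \emptyset$, but this is handled directly by the identities $\cal{F}_{\xx} = \cal{B}^{\vee}_{\xx^{c}}$ and $\mu(\xx) = \mu^{\vee}(\xx^{c})$.
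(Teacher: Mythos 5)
Your proof is correct and follows essentially the same route as the paper: both apply Lemma~\ref{lem:Kfiltration} (for $\At(\cal{V}^{\vee})$) to get $[\Pt_{\alpha}]=[\Vt_{\alpha}]+[K_{>\alpha}]$ as a sum of standard classes indexed by cones, and then translate the condition $\alpha\in\cal{B}^{\vee}_{\xx^{c}}$ into $H_{\xx}\cap\Delta_{\alpha}\neq\emptyset$ via Gale duality. The only cosmetic difference is that the paper cites Lemma~\ref{lem:cone-adjacent} for this last translation, whereas you unwind it directly from the identities $\cal{F}_{\xx}=\cal{B}^{\vee}_{\xx^{c}}$ and $\mu(\xx)=\mu^{\vee}(\xx^{c})$, which is exactly how that lemma is proved.
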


\begin{proof}
By property {\sf PHW1}, we have $[\Pt_{\alpha}] = [\Vt_{\alpha}] + [K_{> \alpha}]$. By  the proof of Lemma~\ref{lem:Kfiltration},
\[
[K_{> \alpha}] = \sum_{\mathbbm{x} \in \mathbb{B} \,\, : \,\, \alpha \in \cal{B}^{\vee}_{\mathbbm{x}^c}} q^{d_{\alpha \mu^{\vee}(\mathbbm{x}^c)}} [\Vt_{\mu^{\vee}(\mathbbm{x}^c)}];
\]
recall that $\mu^{\vee}(\mathbbm{x}^c) = \mu(\mathbbm{x})$. The lemma now follows from Lemma~\ref{lem:cone-adjacent}.
\end{proof}

\begin{corollary}
  For $\alpha \in \cal{P}$, let $v^{\diamondsuit}_{\alpha}$ and $v_{\alpha}$ be the corresponding canonical and standard basis elements of $V^{\otimes n}$. We have
\[
v^{\diamondsuit}_{\alpha} = \sum_{\mathbbm{x} \in \mathbb{B} \,\, : \,\, H_{\mathbbm{x}} \cap \Delta_{\alpha} \neq \emptyset} q^{d_{\alpha \mu(\mathbbm{x})}} v_{\mu(\mathbbm{x})}.
\]
\end{corollary}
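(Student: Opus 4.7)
The plan is to combine the immediately preceding lemma with the identifications established in Theorem~\ref{thm:bases}. Specifically, the preceding lemma gives an explicit decomposition
\[
[\Pt_{\alpha}] = \sum_{\mathbbm{x} \in \mathbb{B} \,\, : \,\, H_{\mathbbm{x}} \cap \Delta_{\alpha} \neq \emptyset} q^{d_{\alpha \mu(\mathbbm{x})}} [\Vt_{\mu(\mathbbm{x})}]
\]
in $K_0(\Bt(\cal{V}))$, while Theorem~\ref{thm:bases} provides, under the isomorphism $K_0^{\C(q)}(\Bt(\cal{V})) \cong V^{\otimes n}$, the identifications $[\Pt_{\alpha}] \leftrightarrow v^{\diamondsuit}_{\alpha}$ and $[\Vt_{\mu(\mathbbm{x})}] \leftrightarrow v_{\mu(\mathbbm{x})}$.

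First, I would specialize to a left cyclic polarized arrangement $\cal{V}$ of the appropriate type $(n,k)$, so that Theorem~\ref{thm:CyclicIsom} gives the graded algebra isomorphism $\Bt(\cal{V}) \cong \B_l(n,k)$ and hence an isomorphism $K_0^{\C(q)}(\Bt(\cal{V})) \cong V^{\otimes n}$ as a weight space. Next, I would extend scalars in the preceding lemma from $\Z[q,q^{-1}]$ to $\C(q)$, obtaining the same identity in $K_0^{\C(q)}(\Bt(\cal{V}))$. The formula then transports directly across the identification of Theorem~\ref{thm:bases}, yielding
\[
v^{\diamondsuit}_{\alpha} = \sum_{\mathbbm{x} \in \mathbb{B} \,\, : \,\, H_{\mathbbm{x}} \cap \Delta_{\alpha} \neq \emptyset} q^{d_{\alpha \mu(\mathbbm{x})}} v_{\mu(\mathbbm{x})}
\]
as desired.

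Since both ingredients are already proved, there is no real obstacle here; the corollary is essentially a restatement of the preceding lemma under the representation-theoretic dictionary of Theorem~\ref{thm:bases}. The only mild subtlety worth checking is that the indexing of standard modules and the indexing of canonical/standard basis elements match on the nose under the bijection of Corollary~\ref{cor:BijectionsAgree} (that is, that $\mu(\mathbbm{x})$ on the hypertoric side corresponds to the correct tensor-product basis label on the $V^{\otimes n}$ side), but this is already built into the statement of Theorem~\ref{thm:bases}.
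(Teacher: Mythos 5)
Your proposal is correct and follows exactly the route the paper intends: the corollary is stated without proof precisely because it is the preceding lemma's identity in $K_0$ transported through the dictionary of Theorem~\ref{thm:bases}. The one subtlety you flag (matching of indexings) is indeed already absorbed into the statement of that theorem, so nothing further is needed.
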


\subsection{Standard modules for Ozsv{\'a}th--Szab{\'o} algebras } \label{subsec:OSz-standard}

The description of standard modules over $\Bt(\cal{V})$ given in Section~\ref{sec:BSide} above can be translated into modules for $\B_l(n,k)$ via the isomorphism from Theorem~\ref{thm:CyclicIsom}.     Recall from Section~\ref{sec:SignSeqAndOSzIdems} the   lexicographic partial order on $V_l(n,k)$ generated by the relations $\x < \y$ when $\y$ is obtained from $\x$ by moving a dot one step to the right. Let
\[
\cal{R}_{\x} := \left\{
\y \in V_l(n,k) \mid \x \leq \y
\right\}
\]
denote the set of dot sequences obtained from $\x$ by sliding dots to the right. Then define an idempotent in $\B_l(n,k)$ by
\[
\mathbf{\cal{I}}_{\x} := \sum_{\y < \x} \Ib_{\y}.
\]
The standard modules $\Vt_{\x}$ (see Remark~\ref{rem:VtVersusDelta}), expressed in the language of Ozsv{\'a}th--Szab{\'o}, take the form
\[
\Vt_{\x} :=  (\B_l(n,k) \Ib_{\x}) /  (\B_l(n,k) \mathbf{\cal{I}}_{\x}\B_l(n,k) \Ib_{\x}) =  \left( \bigoplus_{\y \in \cal{R}_{\x}} \Ib_{\y} \B_l(n,k) \Ib_{\x} \right) / (U_{i+1} \mid i \notin \x  )
\]
Intuitively, the standard module $\Vt_{\x}$ is the Ozsv{\'a}th--Szab{\'o} projective module $P_{\x} := \B_l(n,k) \Ib_{\x}$ quotiented by any element that factors through an idempotent where dots in $\x$ slide to the left.


\begin{remark}
Since $\Vt_{\x}$ is not projective, it cannot be described as the bordered Heegaard Floer invariant $\widehat{CFD}$ of any (bordered sutured) 3-manifold. The invariant $\widehat{CFD}$ can see the projective resolution of $\Vt_{\x}$ but not $\Vt_{\x}$ itself, so that algebraic arguments working directly with standard modules $\Vt_{\x}$ can sometimes give more information than what is immediately visible from the Heegaard diagrams. We do not know any bordered sutured 3-manifold whose invariant $\widehat{CFA}$ gives the standard module $\Vt_{\x}$ directly, although we do not have an argument that no such 3-manifold exists.
\end{remark}

\subsection{Ext groups between standard modules as homology of a strands algebra}\label{sec:ExtAndStrandsHomology}

We will show in this section that for left cyclic $\cal{V}$, the direct sum of the Ext groups between standard modules over $\tilde{B}(\cal{V})$ is isomorphic (disregarding the multiplicative structure for now and working over $\F_2$) to the homology of a higher tensor product algebra
\[
\A(\Zc^{\sta}_n) \cong \A(\Zc^{\sta}_1) \ootimes \cdots \ootimes \A(\Zc^{\sta}_1)
\]
with a natural Heegaard Floer interpretation as a strands algebra, where $\ootimes$ is taken in the sense of \cite{ManionRouquier}. The algebra $\A := \A(\Zc^{\sta}_n)$ can be given a concrete description as follows; we will discuss its Heegaard Floer origins in more detail in Section~\ref{sec:HeegaardDiags}.

\begin{definition}
Let $\A(n,k)$ be Lipshitz--Ozsv{\'a}th--Thurston's unmatched strands algebra over $\F_2$ from \cite[Section 3.1]{LOTBorderedOrig}; recall that $\A(n,k)$ has an idempotent $I_S$ for each $k$-element subset $S$ of $\{1,\ldots,n\}$. We have
\[
\A = \A(\Zc^{\sta}_n) := \frac{\A(n,k) \otimes \F_2[U_1,\ldots,U_n]}{J}
\]
where $J$ is the ideal of $\A(n,k) \otimes \F_2[U_1,\ldots,U_n]$ generated by $U_i$ times any idempotent $I_S$ with $i \notin S$ as well as $U_i$ times any strands basis element with a moving strand starting or ending at position $i$.
\end{definition}

Let $S,T \subset \{1,\ldots,n\}$ and consider $I_S \cdot \A \cdot I_T$. As in \cite[Section 4]{LOTBimodules}, let $h$ be the unique homology class in $H_1(Z,a)$ such that
\[
\partial h = T - S,
\]
where $Z$ is an interval and $a$ is a subset of $Z$ consisting of $n$ distinct points labeled $1,\ldots,n$ in order (we identify $S,T$ with subsets of $a$).  Write $S \cap T \cap \mathrm{int}(\supp(h))= \{i_1, \ldots, i_a\}$. The complex $I_S \cdot \A \cdot I_T$ can be written as
\[
\bigoplus_{C \subset \{i_1,\ldots,i_a\}} \left(I_{S \setminus C} \cdot \A(n-|C|,k-|C|) \cdot I_{T \setminus C} \right) \otimes \F[U_j | j \in C] \otimes \F[U_i: i \in S \cap T \setminus \{i_1,\ldots,i_a\}];
\]
indeed, each basis element of $I_S \cdot \A \cdot I_T$ is or is not divisible by $U_{i_1},\ldots,U_{i_a}$ (diagrammatically: there is or is not at least one strand wrapping around the $i^{th}$ cylinder).

For the summand $C = \{i_1,\ldots,i_a\}$, we get
\[
\left(I_{S \setminus \{i_1,\ldots,i_a\}} \cdot \A(n-a,k-a) \cdot I_{T \setminus \{i_1,\ldots,i_a\}} \right) \otimes \F[U_{i_1}, \ldots, U_{i_a}].
\]
By \cite[proof of Proposition 4.2]{LOTBimodules}, the homology of $I_{S \setminus \{i_1,\ldots,i_a\}} \cdot \A(n-a,k-a) \cdot I_{T \setminus \{i_1,\ldots,i_a\}}$ is zero if $h$ has multiplicity $\geq 2$ anywhere or has negative multiplicity anywhere, and the homology is 1-dimensional otherwise. In the second case, we get a summand of
\[
\F[U_i: i \in S \cap T \setminus \{i_1,\ldots,i_a\}] \otimes \F[U_{i_1}, \ldots, U_{i_a}] = \F[U_i : i \in S \cap T]
\]
in the homology of $I_S \cdot \A \cdot I_T$.

For any summand $C \neq \{i_1,\ldots,i_a\}$, the homology of $I_{S \setminus C} \cdot \A(n-|C|,k-|C|) \cdot I_{T \setminus C}$ is zero by \cite[proof of Proposition 4.2]{LOTBimodules}.

\begin{lemma}\label{lem:WhenIsAlgZero}
Viewing $S$ and $T$ as elements of the set $\mathbb{B}$, the homology class $h$ above has multiplicity $2$ or negative multiplicity somewhere if and only if $\mathcal{B}_S \cap \mathcal{F}_T = \emptyset$.

\end{lemma}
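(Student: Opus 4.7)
The plan is to reduce both sides of the claimed equivalence to the same combinatorial interleaving condition on $S$ and $T$, exploiting the explicit description of the bounded/feasible cones available in the left cyclic case (which is the setting of Section~\ref{sec:ExtAndStrandsHomology}).

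First I would compute the multiplicities of $h$ directly. Listing $S = \{s_1 < \cdots < s_k\}$ and $T = \{t_1 < \cdots < t_k\}$ in $\{1,\ldots,n\}$ and writing $h = \sum_{i=1}^{n-1} h_i\,[i,i+1]$, the equation $\partial h = T - S$ forces
\[
h_i \;=\; |T \cap \{1,\ldots,i\}| \;-\; |S \cap \{1,\ldots,i\}|.
\]
Thus $h$ has a multiplicity $\geq 2$ or a negative multiplicity somewhere if and only if $h_i \notin \{0,1\}$ for some $i$. A short combinatorial check (tracking, as $i$ increases, when $h_i$ jumps up at each element of $T$ and down at each element of $S$) shows that $h_i \in \{0,1\}$ for every $i$ if and only if the interleaving condition
\[
(\ast)\qquad t_1 \leq s_1 \leq t_2 \leq s_2 \leq \cdots \leq t_k \leq s_k
\]
holds.

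Next I would characterize $\mathcal{B}_S \cap \mathcal{F}_T$ via the cyclic combinatorics. By Corollary~\ref{cor:BijectionsAgree} the bijection $\mathbb{B} \to V_l(n,k)$ sends $S$ to $\x_S = \{s-1 : s \in S\}$ and similarly for $T$. By Lemma~\ref{lem:ConesForCyclic}, a configuration $\gamma \in \cal{P}$ with dot sequence $\y = \{y_1 < \cdots < y_k\}$ lies in $\mathcal{B}_S$ exactly when $s_{j-1} \leq y_j \leq s_j - 1$ for all $j$ (with $s_0 := 0$), and lies in $\mathcal{F}_T$ exactly when $y_j \in \{t_j - 1, t_j\}$ for all $j$. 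Therefore $\mathcal{B}_S \cap \mathcal{F}_T$ is nonempty if and only if, for each $j$, the set $\{t_j - 1, t_j\} \cap [s_{j-1},\, s_j - 1]$ is nonempty.

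Finally I would show that this joint solvability is precisely $(\ast)$. For a given $j$, the intersection $\{t_j - 1, t_j\} \cap [s_{j-1},\, s_j - 1]$ is nonempty exactly when $s_{j-1} \leq t_j \leq s_j$ (option $y_j = t_j$ needs $s_{j-1} \leq t_j \leq s_j - 1$; option $y_j = t_j - 1$ needs $s_{j-1} + 1 \leq t_j \leq s_j$; their union is $s_{j-1} \leq t_j \leq s_j$, noting the boundary case $t_j = s_j$ is covered by the second option and $t_j = s_{j-1}$ by the first). The conjunction over all $j$, together with $t_1 \geq 1 > 0 = s_0$, is exactly $(\ast)$. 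Combining with Step~1 gives the lemma.

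The main obstacle will be the careful bookkeeping between the two conventions for indexing bases (the shift by one between $\mathbb{B} \subset \{1,\ldots,n\}$ and $V_l(n,k) \subset \{0,\ldots,n-1\}$), together with the boundary cases $j = 1$, $j = k$, and the requirement that $\y$ be a valid $k$-subset of $\{0,\ldots,n-1\}$ (distinctness of the $y_j$'s is automatic from the chain of strict inequalities $y_j \leq s_j - 1 < s_j \leq y_{j+1}$ once $(\ast)$ is in force).
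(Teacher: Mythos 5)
Your proof is correct and follows essentially the same route as the paper's: both sides are translated into the dots-in-regions combinatorics via Lemma~\ref{lem:ConesForCyclic}, and the paper's version of your interleaving condition $(\ast)$ is its statement that the dots of $T$ are obtained from those of $S$ by moving each dot left by at most the distance to its leftmost neighbor's starting point, with the intermediate configuration $\z$ playing the role of your $\y = \{y_1 < \cdots < y_k\}$. Your write-up is simply a more explicit, inequality-level version of the same argument (the paper asserts the multiplicity characterization of $h$ directly in dot language rather than via the running sums $h_i$).
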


\begin{proof}
Let $\x, \y$ be the elements of $V_l(n,k)$ corresponding to $S,T$, viewed as sets of $k$ dots in regions. The class $h$ has all its multiplicities in $\{0,1\}$ if and only if $\y$ is obtained from $\x$ by moving each dot some number of steps to the left, such that each dot moves at most as far as the starting point of its leftmost neighbor. This holds if and only if there exists $\z \in V_l(n,k)$ such that:
\begin{itemize}
\item $\z$ is obtained from $\x$ by moving each dot some number of steps to the left, such that no dot moves far enough to reach the starting point of its leftmost neighbor, and
\item $\y$ is obtained from $\z$ by moving each dot at most one step to the left.
\end{itemize}
By Lemma~\ref{lem:ConesForCyclic}, the set of such $\z$ is in bijection with $\mathcal{B}_S \cap \mathcal{F}_T$.
\end{proof}

\begin{proposition}\label{prop:BFIntersectionForCyclic}
If $\cal{V}$ is left cyclic and $\alpha,\beta \in \cal{P}$ with $\cal{B}_{\xx_{\a}} \cap \cal{F}_{\xx_{\b}} \neq \varnothing$, then we have $\alpha(i) \neq \beta(i)$ for all $i \in \xx_{\a}^c \cap \xx_{\b}$.
\end{proposition}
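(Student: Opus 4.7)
The plan is to translate everything into the dot-in-regions picture for $V_l(n,k)$ and work out the result at the level of prefix counts. Let $\x,\y \in V_l(n,k)$ be the dot sequences corresponding to $\alpha,\beta$ under the bijection $\kappa_l$ from Definition~\ref{def:StatesFromAlpha}\eqref{def:LeftIStatesFromAlpha}. First, by combining the two descriptions in Lemma~\ref{lem:ConesForCyclic}, the hypothesis $\cal{B}_{\xx_{\a}} \cap \cal{F}_{\xx_{\b}} \neq \varnothing$ is equivalent to the existence of an intermediate $\mathfrak{z} \in V_l(n,k)$ obtained from $\x$ by sliding each dot leftwards (not reaching its leftmost neighbor's starting position) and such that $\y$ is obtained from $\mathfrak{z}$ by sliding each dot at most one step to the left. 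Composing these moves, one obtains the clean statement that $\y$ is obtained from $\x$ by moving each dot left, with each dot permitted to reach, but not to pass, the starting position of its leftmost neighbor; writing $\x = \{x_1 < \cdots < x_k\}$ and $\y = \{y_1 < \cdots < y_k\}$, this is the assertion that $y_j \leq x_j$ for all $j$ and $y_j \geq x_{j-1}$ for all $j \geq 2$.

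From this sandwich one deduces the key numerical bound
\[
|\y \cap \{0,\ldots,m\}| - |\x \cap \{0,\ldots,m\}| \in \{0,1\} \qquad \text{for all } m \geq 0.
\]
Indeed, $y_j \leq x_j$ gives $|\y \cap \{0,\ldots,m\}| \geq |\x \cap \{0,\ldots,m\}|$, while $y_{j} \geq x_{j-1}$ gives $|\x \cap \{0,\ldots,m\}| \geq |\y \cap \{0,\ldots,m\}| - 1$. On the sign-sequence side, the inverse description of $\kappa_l$ in Definition~\ref{def:StatesFromAlpha}\eqref{def:LeftIStatesFromAlpha} shows that $\alpha_j = +$ iff $|\x \cap \{0,\ldots,j-1\}|$ is even, and similarly for $\beta_j$, so $\alpha_j \neq \beta_j$ iff the two counts $|\x \cap \{0,\ldots,j-1\}|$ and $|\y \cap \{0,\ldots,j-1\}|$ have different parities.

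Now take $i \in \xx_{\a}^c \cap \xx_{\b}$. By Corollary~\ref{cor:BijectionsAgree}, $i-1 \notin \x$ and $i-1 \in \y$, so passing from $m = i-2$ to $m = i-1$ increases $|\y \cap \{0,\ldots,m\}|$ by $1$ but leaves $|\x \cap \{0,\ldots,m\}|$ unchanged. Hence the difference $|\y \cap \{0,\ldots,i-1\}| - |\x \cap \{0,\ldots,i-1\}|$ exceeds the difference $|\y \cap \{0,\ldots,i-2\}| - |\x \cap \{0,\ldots,i-2\}|$ by exactly $1$; since both differences lie in $\{0,1\}$ by the bound above, we must have $|\y \cap \{0,\ldots,i-2\}| = |\x \cap \{0,\ldots,i-2\}|$ and $|\y \cap \{0,\ldots,i-1\}| = |\x \cap \{0,\ldots,i-1\}| + 1$. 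In particular the two prefix counts at $j = i$ have opposite parities, so $\alpha_i \neq \beta_i$. The only subtlety — and the step requiring the most care — is keeping the two dot-shift conditions of Lemma~\ref{lem:ConesForCyclic} straight and verifying that their composition yields the sandwich $x_{j-1} \leq y_j \leq x_j$; everything else is bookkeeping with prefix counts and parities.
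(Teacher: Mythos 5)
Your proof is correct, but it takes a genuinely different route from the paper's. Both arguments start from Lemma~\ref{lem:ConesForCyclic}, but they diverge immediately afterwards. The paper stays inside the taut-path formalism: for $\gamma \in \cal{B}_{\xx_{\a}} \cap \cal{F}_{\xx_{\b}}$ the concatenation of taut paths $\b \to \gamma \to \a$ moves all dots monotonically rightward and is therefore itself taut, and Lemma~\ref{lem:BFIntersection} then supplies, for each $i \in \xx_{\a}^c \cap \xx_{\b}$, intermediate regions $\gamma$ with either sign in coordinate $i$; since a taut path changes each coordinate at most once, this forces $\a(i) \neq \b(i)$. You bypass Lemma~\ref{lem:BFIntersection} and the taut-path algebra entirely: composing the two dot-motion descriptions of Lemma~\ref{lem:ConesForCyclic} gives the sandwich $x_{j-1} \leq y_j \leq x_j$, from which all prefix-count differences $|\y \cap \{0,\ldots,m\}| - |\x \cap \{0,\ldots,m\}|$ lie in $\{0,1\}$, and you finish with the parity reading of $\kappa_l^{-1}$ together with Corollary~\ref{cor:BijectionsAgree}, which converts $i \in \xx_{\a}^c \cap \xx_{\b}$ into $i-1 \in \y \setminus \x$ and forces the difference to jump by one, so the prefix counts governing $\a(i)$ and $\b(i)$ have opposite parities. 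I checked the individual steps (the sandwich inequalities do follow from $z_j \leq x_j$, $z_j > x_{j-1}$, $y_j \in \{z_j-1,z_j\}$; the parity description of $\kappa_l^{-1}$ is as you state; the boundary case $i=1$ is harmless since the prefix difference at $m=-1$ is vacuously $0$). What each approach buys: yours is more elementary and self-contained, with everything reduced to explicit bookkeeping on subsets of $\{0,\ldots,n-1\}$, at the cost of length; the paper's is shorter but leans on Lemma~\ref{lem:BFIntersection} and on the observation that monotone dot motions yield taut paths.
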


\begin{proof}
By Lemma~\ref{lem:ConesForCyclic}, for any $\gamma \in \cal{B}_{\xx_{\a}} \cap \cal{F}_{\xx_{\b}}$, the concatenation of a taut path from $\b$ to $\gamma$ with a taut path from $\gamma$ to $\a$ can be viewed as a motion of dots in which dots are always moving to the right, never the left, and is thus taut itself. By Lemma~\ref{lem:BFIntersection}, if $i \in \xx_{\a}^c \cap \xx_{\b}$ then there exist taut paths from $\b$ to $\a$ through regions $\gamma$ with both $\gamma(i) = +$ and $\gamma(i) = -$. It follows that $\a(i) \neq \b(i)$.
\end{proof}

\begin{theorem}\label{thm:ExtStrands}
The homology $H(\A(\Zc^{\sta}_n))$ is isomorphic (disregarding multiplication) to the direct sum of Ext groups, with coefficients in $\F_2$, between standard modules over $\tilde{B}(\cal{V})$ for left cyclic $\cal{V}$.
\end{theorem}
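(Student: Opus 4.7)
The plan is to establish the isomorphism idempotent-by-idempotent: for each pair $S, T$ of $k$-element subsets of $\{1,\ldots,n\}$, I will identify the strands summand $H(I_S \cdot \A(\Zc^{\sta}_n) \cdot I_T)$ with a single Ext group $\Ext_{\Bt(\V)}(\Vt_{\a}, \Vt_{\b})$ reduced mod $2$, where under the bijections of Theorem~\ref{thm:CyclicIsom} and Corollary~\ref{cor:BijectionsAgree} the sign sequence $\a \in \cal{P}$ corresponds to $S = \xx_{\a}$ and $\b$ to $T = \xx_{\b}$. Summing over all such pairs (and all $k$) then yields the theorem.

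First I compute the strands side. Using the decomposition of $I_S \cdot \A \cdot I_T$ into summands indexed by subsets $C \subset \{i_1, \ldots, i_a\} = S \cap T \cap \operatorname{int}(\supp h)$ already recorded in the discussion preceding the theorem, together with \cite[proof of Proposition 4.2]{LOTBimodules}, only the full-subset summand $C = \{i_1,\ldots,i_a\}$ survives in homology. This surviving summand contributes a single copy of $\F_2[U_i : i \in S \cap T]$ when $h$ has all multiplicities in $\{0,1\}$, and zero otherwise. By Lemma~\ref{lem:WhenIsAlgZero}, the surviving case occurs precisely when $\cal{B}_S \cap \cal{F}_T \neq \emptyset$.

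Next I match this with the Ext side. Corollary~\ref{cor:BTildeExtComputation} computes $\Ext^i_{\Bt(\V)}(\Vt_{\a}, \Vt_{\b})$: it vanishes unless both $\cal{B}_{\xx_{\a}} \cap \cal{F}_{\xx_{\b}} \neq \emptyset$ and $\a(j) \neq \b(j)$ for every $j \in \xx_{\a}^c \cap \xx_{\b}$, in which case the entire Ext is concentrated in a single cohomological degree and is isomorphic to $\Z[u_{\xx_{\a} \cap \xx_{\b}}]$, which is torsion-free and so reduces modulo $2$ to $\F_2[u_{\xx_{\a} \cap \xx_{\b}}]$. The crucial cyclic-specific input is Proposition~\ref{prop:BFIntersectionForCyclic}, which ensures that for left cyclic $\V$ the first of these two conditions already implies the second. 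Therefore the vanishing loci on the two sides coincide, and when nonzero both sides are polynomial rings on the variables indexed by $S \cap T = \xx_{\a} \cap \xx_{\b}$, yielding the desired isomorphism of each $(S,T)$-summand.

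What remains is bookkeeping: confirming that the identification of idempotents from Theorem~\ref{thm:CyclicIsom} matches $S$ with $\xx_{\a}$ in a way compatible with the homology-class condition used in Lemma~\ref{lem:WhenIsAlgZero}, and checking that the homological-degree count matches in the non-vanishing case. These checks are routine given the description of $\xx_{\a}$ from Section~\ref{sec:SignSeqAndOSzIdems} and Corollary~\ref{cor:BijectionsAgree}. The conceptual heart of the argument, and the only non-routine step, is Proposition~\ref{prop:BFIntersectionForCyclic}: it is this cyclic-specific statement that collapses the two a priori independent non-vanishing conditions on the Ext side into the single non-vanishing condition on the strands side.
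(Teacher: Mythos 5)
Your proposal is correct and follows essentially the same route as the paper: the summand-by-summand comparison over pairs $(S,T)$, with Lemma~\ref{lem:WhenIsAlgZero} matching the vanishing loci and Proposition~\ref{prop:BFIntersectionForCyclic} together with Corollary~\ref{cor:BTildeExtComputation} identifying the nonzero summands with $\F_2[U_i : i \in S \cap T]$. You correctly single out Proposition~\ref{prop:BFIntersectionForCyclic} as the cyclic-specific step that collapses the two Ext-vanishing conditions into one; the paper's proof is just a terser version of exactly this argument.
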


\begin{proof}
For $(S,T)$, the corresponding summand of the homology algebra is zero if and only if the corresponding Ext group is zero, by Lemma~\ref{lem:WhenIsAlgZero}. When the summand is nonzero, it is $\F_2[U_i | i \in S \cap T]$, agreeing with the Ext group over $\F_2$ by Proposition~\ref{prop:BFIntersectionForCyclic} and Corollary~\ref{cor:BTildeExtComputation}.
\end{proof}

\section{Heegaard diagrams}\label{sec:HeegaardDiags}

In this section we show how Heegaard Floer ideas offer additional insight on multiplicative structure in Theorem~\ref{thm:ExtStrands}; along a way we see how the projective resolutions of standard modules from Section~\ref{sec:BSide} (in the cyclic case) appear naturally in Heegaard Floer homology.

\subsection{Arc diagrams and sutured surfaces}

\subsubsection{Arc diagrams}

The algebra $\A(\Zc^{\sta}_n)$ above is part of a general family of dg algebras $\A(\Zc)$, called strands algebras, associated to arc diagrams $\Zc$ in bordered sutured Floer homology.

\begin{definition}[cf. \cite{Zarev}, \cite{ManionRouquier}]
An arc diagram (or chord diagram) $\Zc$ is a compact oriented 1-manifold $Z$ with boundary (disjoint union of intervals and circles) equipped with a 2-to-1 matching of finitely many points in its interior (while Zarev does not allow closed circles in arc diagrams, the strands algebra construction still makes sense in this generality as discussed in \cite{ManionRouquier}).
\end{definition}

\begin{figure}
\includegraphics[scale=0.6]{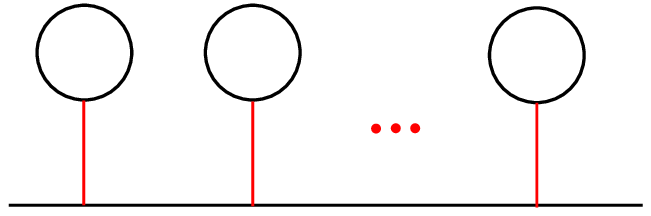}
\caption{The diagram $\Zc^{\sta}_n$, with $n$ circles.}
\label{fig:StandardDiag}
\end{figure}

\begin{figure}
\includegraphics[scale=0.6]{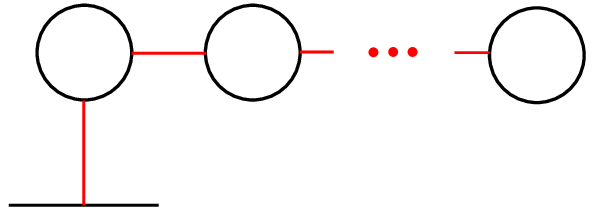}
\caption{The diagram $\Zc^{\can}_n$, with $n$ circles.}
\label{fig:CanonicalDiag}
\end{figure}

\begin{remark}
One can show that $\A(\Zc^{\sta}_n)$, as defined above, is isomorphic to the strands algebra of the arc diagram shown in Figure~\ref{fig:StandardDiag} (pairs of matched points are connected by red arcs). Note that, topologically, $\Zc^{\sta}_n$ is formed from $n$ copies of $\Zc^{\sta}_1$ by simple end-to-end gluings; correspondingly, the results of \cite{ManionRouquier} imply that
\[
\A(\Zc^{\sta}_n) \cong \A(\Zc^{\sta}_1) \ootimes \cdots \ootimes \A(\Zc^{\sta}_1)
\]
where $\ootimes$ is defined using the 2-representation structure on strands algebras $\A(\Zc)$ from \cite{ManionRouquier}.

In fact, the Ozsv{\'a}th--Szab{\'o} algebras we have been studying are also related to strands algebras $\A(\Zc)$, this time for the arc diagram $\Zc^{\can}_n$ shown in Figure~\ref{fig:CanonicalDiag}. As shown in \cite{LP} and independently in \cite{MMW2}, Ozsv{\'a}th--Szab{\'o}'s algebra $\B_l(n,k)$ is the homology of $\A(\Zc^{\can}_n)$, which is a formal dg algebra (similar results hold for the other variants of the Ozsv{\'a}th--Szab{\'o} algebra).
\end{remark}

\subsubsection{Sutured surfaces}

In bordered sutured Floer homology, arc diagrams are used as combinatorial representatives of sutured surfaces, which are surfaces with extra boundary structure.

\begin{definition}
A sutured surface $(F,S^+,S^-,\Lambda)$ is a compact oriented surface $F$ with boundary equipped with a finite subset $\Lambda$ of $\partial F$ such that each component of $\partial F$ has an even number of points of $\Lambda$, together with a partition of the (closures of) the connected components of $\partial F - \Lambda$ into two sets $S^+$ and $S^-$, such that each point of $\Lambda$ is contained in both an $S^+$ component and an $S^-$ component.
\end{definition}

An example of a sutured surface is shown in Figure~\ref{fig:SuturedSurface}; following Zarev \cite{Zarev}, we draw $S^+$ in orange and $S^-$ in black.

\begin{figure}
\includegraphics[scale=0.6]{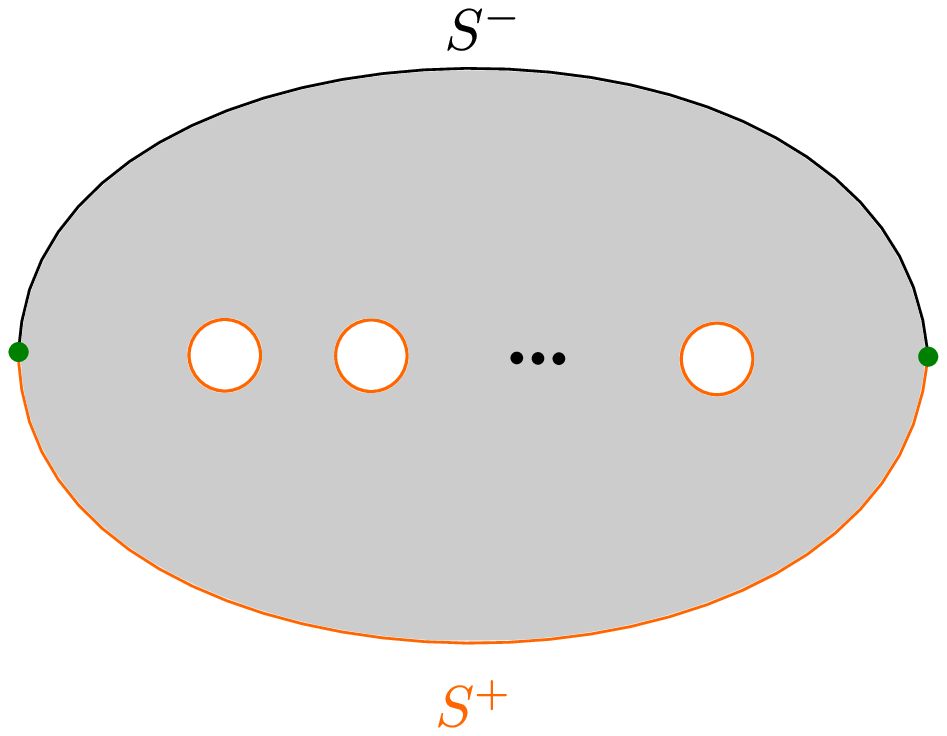}
\caption{The sutured surface represented by both $\Zc^{\sta}_n$ and $\Zc^{\can}_n$.}
\label{fig:SuturedSurface}
\end{figure}

To get a sutured surface from an arc diagram $\Zc$, one adds 2-dimensional 1-handles to $Z \times \{1\} \subset Z \times [0,1]$ according to the matching to obtain $F$; one can then take $\Lambda = \partial Z \times \{0\}$, $S^+ = Z \times \{0\}$, and $S^-$ to be the rest of the boundary of $F$. One says that the arc diagram is a representative for the sutured surface; a sutured surface $(F,S^+,S^-,\Lambda)$ can be represented by an arc diagram if and only if each component of $F$ (not $\partial F$) intersects both $S^+$ and $S^-$ nontrivially (in particular, no component of $F$ can be closed). There is a standard way to modify an arc diagram, known as an arc slide; two arc diagrams related by an arc slide represent the same sutured surface, and any two arc diagrams representing the same sutured surface can be related by a sequence of arc slides.

\begin{remark}
The diagrams $\Zc^{\sta}_n$ and $\Zc^{\can}_n$ both represent the sutured surface shown in Figure~\ref{fig:SuturedSurface}. One can view this surface as a disk minus a neighborhood of $n$ interior points (e.g. tangle endpoints for a tangle in $D^2 \times [0,1]$); there are two points of $\Lambda$ on the outer boundary, and all the internal circles are in $S^+$. (For this reason, the algebras of these diagrams are useful when trying to compute knot Floer homology using local tangle decompositions.) Since they represent the same sutured surface, $\Zc^{\sta}_n$ and $\Zc^{\can}_n$ should be related by a sequence of arc slides, and indeed there is a natural sequence of arc slides relating these two arc diagrams.
\end{remark}

\subsection{Derived equivalences and Heegaard diagrams}\label{sec:HDDerivedEq}

\begin{theorem}[Auroux \cite{Auroux}] If $\Zc$ represents a sutured surface $(F,S^+,S^-,\Lambda)$, the derived category of $\A(\Zc)$ is equivalent to the derived partially wrapped Fukaya category of $\sqcup_k \Sym^k(F)$, with stops for the partial wrapping determined by the sutured structure.
\end{theorem}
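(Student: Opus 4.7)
The plan is to construct an $A_\infty$-functor from a suitable full subcategory of the partially wrapped Fukaya category of $\sqcup_k \Sym^k(F)$ into the dg category of perfect $\A(\Zc)$-modules, then promote this to the desired derived equivalence. The first step is to set up the partial wrapping: choose a Liouville structure on the completion of $F$ so that $S^-$ gives the stops (equivalently, wrapping is permitted only along $S^+$ and arrested at the matched points of $\Zc$ on $\partial Z$). This produces a partially wrapped Fukaya category $\cW(F)$ on $F$ with stops at $\Lambda$, and an induced partially wrapped category on each $\Sym^k(F)$ with stops along the divisors coming from $\Lambda$ and from the ``big diagonal'' in the standard way.

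Next, for each $k$-element subset $S$ of the $1$-handle cores of $\Zc$ (equivalently, each generator $I_S$ of the idempotent ring of $\A(\Zc)$), one constructs a Lagrangian $L_S \subset \Sym^k(F)$ as the (quotient of the) product of $k$ disjoint arcs, one per chosen handle, each arc being a cocore properly embedded between two sutures. The intersection points of $L_S$ and $L_T$ in $\Sym^k(F)$ should be in bijection with a generating set for $I_S \cdot \A(\Zc) \cdot I_T$: each intersection point is a $k$-tuple of points in $F$, and after projecting to $Z$ one reads off a ``strands'' picture with incoming idempotent $S$ and outgoing idempotent $T$, including the data of the $U_i$ variables via local multiplicities near each matched pair. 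The Floer differential on $CF^*(L_S,L_T)$ should then recover the differential on $\A(\Zc)$ by Lipshitz's cylindrical reformulation: holomorphic disks in $\Sym^k(F)$ with boundary on these product Lagrangians correspond to holomorphic maps from bordered Riemann surfaces into $F \times [0,1] \times \R$, whose combinatorics in the complement of the $\Sym^k$-branch locus is exactly the combinatorics of strand resolutions used to define $\A(\Zc)$. Higher $A_\infty$-operations arise from counting disks with more boundary punctures, and matching them with the (intrinsically associative) strands multiplication requires a degeneration/gluing argument in the spirit of Abouzaid--Seidel.

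The two steps I expect to be the main obstacles are \emph{(i)} identifying Floer-theoretic disk counts with combinatorial strand resolutions in the presence of the $U_i$ (i.e.\ for homology classes $h$ whose multiplicities are $\geq 1$, matching them with wrapping around the $S^+$-circles), and \emph{(ii)} \emph{generation} of the partially wrapped Fukaya category by $\{L_S\}$. For (i), the correct package is the identification, under the cylindrical correspondence, of a holomorphic disk in $\Sym^k(F)$ with domain carrying local multiplicity $1$ near a matched pair $\{i,i'\}$ as a strands basis element in which the ``active'' strand crosses the matching, and of local multiplicity $n$ as the insertion of $U_i^n$; this ultimately reduces to the local model of $\Sym^k$ near a branch divisor. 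For (ii), one invokes an Abouzaid-style generation criterion: it suffices to check that the open-closed map hits the unit in symplectic cohomology of $\sqcup_k \Sym^k(F)$, or equivalently that the product Lagrangians $L_S$ split-generate all cotangent fibers, which one verifies by a handle-slide argument on $\Zc$ together with the invariance of $\A(\Zc)$ (up to derived equivalence) under arc slides.

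Finally, to promote the resulting fully faithful $A_\infty$-functor to a derived equivalence, one takes twisted complexes and Karoubi completions on the Fukaya side and uses that $\bigoplus_S L_S$ is a compact generator (by the generation step), so that the induced functor on derived categories is an equivalence, $\cD^b \cW(\sqcup_k \Sym^k(F)) \simeq \cD^b(\A(\Zc)\textrm{-mod})$. Throughout, invariance under arc slides on the $\Zc$ side and Hamiltonian isotopies on the symplectic side let one reduce to a convenient model arc diagram (for example $\Zc^{\sta}_n$ or $\Zc^{\can}_n$) in which the strands-to-disks dictionary is most transparent.
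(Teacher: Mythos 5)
This statement is not proved in the paper at all: it is imported verbatim from Auroux's ICM address and used as a black box, so there is no internal argument to compare yours against. Judged on its own terms, your outline does capture the architecture of Auroux's actual argument (product Lagrangians $L_S$ indexed by $k$-element subsets of the arcs, identification of their endomorphism algebra with $\A(\Zc)$ via the cylindrical reformulation of holomorphic disks in $\Sym^k(F)$, and a generation statement to upgrade the Yoneda embedding to an equivalence). But as written it has several genuine gaps. First, the stops: the partial wrapping on $\Sym^k(F)$ is induced only by the divisors $\{z\}\times\Sym^{k-1}(F)$ for the marked points coming from $\Lambda$; the big diagonal is the branch locus of the symmetric product, not a stop, and declaring it a stop would produce the wrong (much smaller) category. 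Second, your claim that the intersection points of $L_S$ and $L_T$ biject with a generating set of $I_S\cdot\A(\Zc)\cdot I_T$ cannot be literally correct, since two fixed Lagrangians have finitely many intersection points while $I_S\cdot\A(\Zc)\cdot I_T$ is typically infinite-dimensional (free over polynomial rings in the $U_i$); the $U_i^n$ correspond to Reeb chords obtained by wrapping $n$ times around the $S^+$ circles, so the morphism spaces must be taken as direct limits over wrapping Hamiltonians before any such bijection holds.

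The most serious gap is generation, which is exactly the step Auroux himself could not complete at the time (the paper even remarks on his ``careful language''). Invoking Abouzaid's split-generation criterion via the open-closed map hitting the unit in symplectic cohomology is not a proof: that criterion was formulated for wrapped (and compact) Fukaya categories, its adaptation to the stopped/partially wrapped setting is nontrivial, and actually verifying the hypothesis for $\Sym^k(F)$ is not something one can ``check'' by a handle-slide argument. Your fallback --- that the $L_S$ split-generate cotangent fibers, verified using invariance of $\A(\Zc)$ under arc slides --- is close to circular, since the derived invariance of the Fukaya-categorical side under arc slides is part of what the theorem is meant to deliver. The generation statement was ultimately established by structural results on partially wrapped Fukaya categories (generation by cocores and linking disks of the stops, in the sense of Ganatra--Pardon--Shende) combined with a decomposition argument for product Lagrangians; without an input of that strength, your functor is at best a fully faithful embedding onto the subcategory split-generated by the $L_S$, not the claimed equivalence.
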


It follows that if $\Zc$ and $\Zc'$ are related by a sequence of arc slides the analogue of handle slides when one is sliding open arcs instead of closed attaching circles), then $\A(\Zc)$ and $\A(\Zc')$ are derived equivalent; this follows abstractly from Auroux's work and is no longer conjectural despite the careful language used by Auroux at the time. Furthermore, given an arc-slide sequence, bordered Floer homology offers holomorphic-curve methods for understanding these derived equivalences explicitly, so that one may use them for computations these methods are a bit different from Auroux's, which are based on relating both sides with an abstract partially wrapped Fukaya category).

In particular, since $\Zc^{\sta}_n$ and $\Zc^{\can}_n$ can be related by a sequence of arc slides, we know abstractly from Auroux's work that $\A(\Zc^{\sta}_n)$ and $\A(\Zc^{\can}_n)$ are derived equivalent. Explicitly, there are certain Heegaard diagrams (one is shown in Figure~\ref{fig:COBdiagram}) in which holomorphic curve counts should yield $A_{\infty}$ bimodules realizing this derived equivalence. For $n=2$, these bimodules were defined and shown to give equivalences in \cite[Section 11]{ManionTrivalent}. At the decategorified level (for $n=2$ and presumably for general $n$) the bimodules do recover the change of basis between the canonical and standard bases; this can be determined from the gradings of bimodule generators, without needing to understand the holomorphic geometry. Topologically, the diagram $\cal{H}$ of Figure~\ref{fig:COBdiagram} represents an identity ``bordered sutured cobordism'' with input sutured surface parametrized by $\Zc^{\sta}_n$ and output sutured surface parametrized by $\Zc^{\can}_n$; see Figure~\ref{fig:COBcobordism} for an illustration.

\begin{figure}
\includegraphics[scale=0.6]{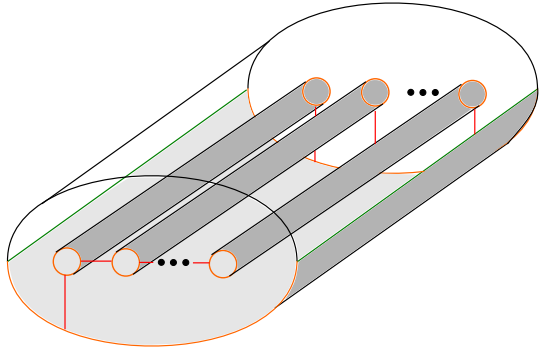}
\caption{Bordered sutured cobordism represented by the Heegaard diagram of Figure~\ref{fig:COBdiagram}.}
\label{fig:COBcobordism}
\end{figure}

Due to the presence of circles in our arc diagrams, the well-definedness of these Heegaard Floer bimodules for general $n$ is still conjectural, but there are good indicators in the literature as to how they should work, so that one can still use them as a useful source of motivation.

\begin{remark}
The reason well-definedness of the bimodules is conjectural is that nobody has attempted to define bimodules holomorphically for any general family of Heegaard diagrams that include the ones we're looking at. To do so, one would have to bring in ideas that go beyond the basic papers of Lipshitz--Ozsv{\'a}th--Thurston and Zarev; this is evident from recent work \cite{OSzHolo} of Ozsv{\'a}th--Szab{\'o} that can be viewed as establishing the new holomorphic-geometry ideas needed to treat one specific Heegaard diagram $\cal{H}_{\textrm{OSz}}$, where $\cal{H}_{\textrm{OSz}}$ also goes beyond Lipshitz--Ozsv{\'a}th--Thurston and Zarev's papers and shares many features with the diagrams we're studying here. We are not sure whether the ideas of \cite{OSzHolo} will end up being sufficient to holomorphically define bimodules from our diagrams; even if so, computations from \cite{ManionTrivalent} indicate that already for $n=2$ our diagrams are far from ``nice'' in the technical sense that would make their bimodules easy to compute, and instead one sees higher-weight domains for holomorphic curves resulting in complicated $A_{\infty}$ actions on the bimodules.
\end{remark}

\begin{figure}
\includegraphics[scale=0.6]{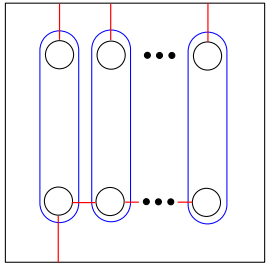}
\caption{Heegaard diagram for the change of basis from the standard basis to the canonical basis}
\label{fig:COBdiagram}
\end{figure}

\subsection{Standard-basis strands algebra and standard modules}

Since $\A(\Zc^{\sta}_n)$ and $\A(\Zc^{\can}_n)$ can both be used to categorify the $U_q(\gl(1|1))$ representation $V^{\otimes n}$ (with half of the quantum group action; see \cite{ManionDecat,ManionTrivalent}), with indecomposable projectives over $\A(\Zc^{\sta}_n)$ and $\A(\Zc^{\can}_n)$ categorifying the standard tensor-product basis and the canonical basis for $V^{\otimes n}$ respectively, we can view the desired explicit derived-equivalence bimodules in Section~\ref{sec:HDDerivedEq} as giving a ``categorified change of basis'' between the standard and canonical bases.

Since standard modules over the Ozsv{\'a}th--Szab{\'o} algebras also categorify the standard basis for $V^{\otimes n}$, it is natural to look for a relationship between standard modules and $\A(\Zc^{\sta}_n)$; indeed, we have already shown in Theorem~\ref{thm:ExtStrands} that the homology of $\A(\Zc^{\sta}_n)$ describes Ext groups between standard modules (disregarding multiplication).

It turns out that the change-of-basis Heegaard diagrams, with their conjectural $A_{\infty}$ bimodules from counting holomorphic curves, suggest a nontrivial way to relate multiplication on Ext groups between standards and multiplication in $\A(\Zc^{\sta}_n)$. Specifically, we show in Proposition ~\ref{prop:FSameAsDelta}
that the conjectured right $A_{\infty}$ action on the DA bimodule of the diagram $\cal{H}$ in Figure~\ref{fig:COBdiagram} 
is equivalent to the data of an $A_{\infty}$ homomorphism from $\A(\Zc^{\sta}_n)$ to the (opposite) dg endomorphism algebra of the projective resolutions of standard modules computed in Section~\ref{sec:BSide}, inducing our above isomorphism on homology. This observation reduces the algebraic problem of relating multiplicative structures on Ext groups and the strands algebra to the Heegaard Floer problem of defining an $A_{\infty}$ action based on (actual or heuristic) holomorphic disk counts.

The equivalence between the homomorphism and the right $A_{\infty}$ action follows from the appearance of our projective resolutions of standard modules in the ``easier-to-manage'' (and non-conjectural) aspects of the Heegaard Floer bimodule. After discussing these projective resolutions in Heegaard diagram terms in Sections \ref{sec:HD}--\ref{sec:HDProjResDiff} below, we briefly outline what we know about the right $A_{\infty}$ action and how it should arise in general in Sections \ref{sec:Strict}--\ref{sec:Ainfty}.

\subsection{Heegaard diagrams and DA bimodules}\label{sec:HD}
Let $n$ be the number of circles on the top and bottom of the Heegaard diagram $\cal{H}$ in Figure~\ref{fig:COBdiagram}. There are incoming (top) and outgoing (bottom) arc diagrams associated to $\cal{H}$, based on the matching pattern of red arcs in $\cal{H}$. The incoming arc diagram of $\cal{H}$ is $\Zc^{\sta}_n$, and the outgoing arc diagram is $\Zc^{\can}_n$.

Conjecturally, there is a type of $A_{\infty}$ bimodule known as a DA bimodule (see \cite{LOTBimodules}) associated to $\cal{H}$. For our purposes we can use the following definition.

\begin{definition}\label{def:DABimod}
A DA bimodule over dg algebras $(A,B)$ is an $A_{\infty}$ bimodule $X$ over $(A,B)$ such that the left action of $A$ has no higher $A_{\infty}$ terms and such that, as a non-differential left $A$-module, $X$ is finitely generated and projective. The right action of $B$ is allowed to have higher $A_{\infty}$ terms.
\end{definition}

Ozsv{\'a}th--Szab{\'o}'s disk-counting heuristics for Heegaard diagrams with input and output arc diagram $\Zc^{\can}_n$ actually produce DA bimodules over the homology algebra $H(\A(\Zc^{\can}_n)) \cong \oplus_k \B_l(n,k)$; correspondingly, we want to use $\cal{H}$ to produce a DA bimodule over $(H(\A(\Zc^{\can}_n)), \A(\Zc^{\sta}_n))$.

The structure of the DA bimodule associated to $\cal{H}$ as a non-differential left module, along with the right action on this module of the basic idempotents of $\A(\Zc^{\sta}_n)$, can be defined straightforwardly without needing any conjectural holomorphic curve counts.

\begin{definition}\label{def:XLeftNonDiff}
Consider the set $\cal{G}$ of sets of red-blue intersection points in $\cal{H}$ such that each red or blue circle has exactly one point chosen (although there are no red circles in $\cal{H}$), and such that each red arc has either zero or one points chosen.

For each element $\gamma$ of $\cal{G}$, we associate a ``left idempotent'' in $\A(\Zc^{\can}_n)$ (or its homology) and a ``right idempotent'' in $\A(\Zc^{\sta}_n)$. For the right idempotent of $\gamma$, one labels the arc as occupied if and only if it is occupied in $\gamma$ (then $S \subset \{1,\ldots,n\}$ is defined to be the set of occupied arcs, which are in natural bijection with $\{1,\ldots,n\}$, and the idempotent is $I_S$). For the left idempotent the convention is reversed, so that one labels the arc as occupied if and only if it is unoccupied in $\gamma$ (then $\x \subset \{0,\ldots,n-1\}$ is defined to be the set of occupied arcs, which are in natural bijection with $\{0,\ldots,n-1\}$, and the idempotent is $\Ib_{\x}$).

As a non-differential left module over $H(\A(\Zc^{\can}_n))$, the DA bimodule of $\cal{H}$ is defined as a direct sum of indecomposable projectives, one for each element of $\cal{G}$; the projective associated to an element $\gamma$ of $\cal{G}$ is specified by the left idempotent of $\gamma$. The right action of a basic idempotent $I_S$ of $\A(\Zc^{\sta}_n)$ on the projective summand coming from $\gamma$ is the identity if $I_S$ is the right idempotent of $\gamma$ and is zero otherwise.
\end{definition}

\subsection{Projective resolutions and Heegaard diagrams: no differentials}\label{sec:HDProjResNonDiff}

If we let $X$ denote the conjectural DA bimodule of $\cal{H}$ whose left module structure was specified above, then the functor on derived categories induced by $X$ is $X \boxtimes -$ where $\boxtimes$ is a pairing operation on DA bimodules related to the derived tensor product (see \cite{LOTBimodules}). If $P_S$ is the indecomposable projective corresponding to some basic idempotent $I_S$ of $\A(\Zc^{\st}_n)$, then $X \boxtimes P_S$ can be computed as a non-differential module over $H(\A(\Zc^{\can}_n))$ using only what we already know about $X$. The result is the left $H(\A(\Zc^{\can}_n))$-submodule of $X$ consisting of the projective summands for those $\gamma \in \cal{G}$ with right idempotent $I_S$.

\begin{lemma}\label{lem:ProjResNonDiff}
Let $S \subset \{1,\ldots,n\}$; let $X_S$ be the submodule of the left module over $H(\A(\Zc^{\can}_n))$ from Definition~\ref{def:XLeftNonDiff} consisting of the projective summands for those $\gamma \in \cal{G}$ with right idempotent $I_S$. Identify $H(\A(\Zc^{\can}_n)$ with $\oplus_k \B_l(n,k)$ and thus with $\oplus_k \Bt(\cal{V}_k)$ where $\cal{V}_k$ is left cyclic for $0 \leq k \leq n$. There is a natural identification of $X_S$ with the projective resolution of the standard module $\Vt_{\alpha}$ from Section~\ref{sec:BSide} (with coefficients taken in $\F_2$), where $\alpha \in \{+,-\}^n$ corresponds to $S \subset \{1,\ldots,n\}$ under the usual identification between $\cal{P}$ and $\mathbb{B}$ from Section~\ref{subsec:partial_order}.
\end{lemma}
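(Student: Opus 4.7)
The plan is to match the projective summands of $X_S$ bijectively with those of the resolution $\bigoplus_{S' \subset \xx_\alpha^c} \tilde{P}_{\alpha^{S'}}$ of $\Vt_\alpha$ from Section~\ref{sec:BSide}, under the convention that $\tilde{P}_{\beta} = 0$ whenever $\beta \notin \cal{P}$. Since $X_S$ is defined as the direct sum of indecomposable projectives indexed by the set $\{\gamma \in \cal{G} : \text{right idempotent}(\gamma) = I_S\}$, with the summand for $\gamma$ specified by its left idempotent $\Ib_{\x(\gamma)}$, the claim reduces to constructing a bijection between this set of generators and $\B_{\xx_\alpha} \cap \cal{F} = \{\alpha^{S'} : S' \subset \xx_\alpha^c,\ \alpha^{S'} \in \cal{P}\}$ that sends $\gamma$ to the sign sequence $\beta$ with $\x_\beta = \x(\gamma)$.

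First, I would read off a concrete parameterization of $\cal{G}$ from the Heegaard diagram $\cal{H}$ of Figure~\ref{fig:COBdiagram}. Each blue circle meets the red arcs/circles in a small set of crossings, and an element of $\cal{G}$ is determined by assigning one crossing to each blue circle together with a decision of which red arcs are occupied (each at most once). Fixing the right idempotent to be $I_S = I_{\xx_\alpha}$ pins down the occupied red arcs as exactly $\xx_\alpha$, leaving only a local binary choice of crossing at each blue circle indexed by $j \in \xx_\alpha^c$.

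Next, I would identify these local binary choices with subsets $S' \subset \xx_\alpha^c$: the choice at position $j$ toggles whether $j \in S'$. Extracting the left idempotent from the unoccupied blue arcs and translating via the dictionary of Section~\ref{sec:SignSeqAndOSzIdems} should give exactly $\Ib_{\x_{\alpha^{S'}}}$, and the putative generator lies in $\cal{G}$ (at the relevant $k$) precisely when $\alpha^{S'} \in \cal{P}$. By the description of $\B_{\xx_\alpha} \cap \cal{F}$ from Section~\ref{sec:BSide} (with Lemma~\ref{lem:ConesForCyclic} translating between the sign and dot pictures), the feasible choices $S'$ index exactly the nonzero summands of the projective resolution.

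The main obstacle will be the diagrammatic bookkeeping in the middle step: justifying that the crossing choices at the blue circles are independent and exhaust $\cal{G}$ for the given right idempotent, that the resulting left idempotent is precisely $\Ib_{\x_{\alpha^{S'}}}$, and that infeasible $\alpha^{S'}$ correspond exactly to configurations in which no valid generator exists. Granted this combinatorial identification, the lemma follows by matching summands term by term, since both sides have been exhibited as direct sums of indecomposable projectives indexed by the same set $\B_{\xx_\alpha} \cap \cal{F}$ with matching left idempotents.
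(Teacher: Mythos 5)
Your proposal follows essentially the same route as the paper's proof: fix the right idempotent $I_S$, parameterize the remaining generators of $\cal{G}$ by admissible choices of intersection points on the blue circles indexed by $\xx_\alpha^c$, read off the left idempotents via the dot/sign dictionary, and invoke Lemma~\ref{lem:ConesForCyclic} to identify the resulting index set with $\cal{B}_{\xx_\alpha}\cap\cal{F}$, i.e.\ with the nonzero terms $\tilde{P}_{\alpha^{S'}}$ of the resolution. The only caution is in your bookkeeping step: the choices at the blue circles are \emph{not} independent (the paper organizes them by maximal runs of $\ell$ consecutive circles, each run contributing $\ell+1$ configurations — one per unoccupied bottom arc, with a degenerate boundary run allowing only one), but since you already impose the no-double-occupancy constraint this is a matter of phrasing rather than a gap.
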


\begin{proof}
Write $S = \{i_1,\ldots,i_k\}$. The set $\{1,\ldots,n\} \setminus \{i_1,\ldots,i_k\}$ decomposes into finitely many sequences $j,j+1,\ldots,j+\ell-1$ of consecutive elements. The indices $j,\ldots,j+\ell-1$ correspond to blue circles in the Heegaard diagram $\cal{H}$ of Figure~\ref{fig:COBdiagram}. To specify an indecomposable projective summand of $X_S$, one must specify an intersection point on each of these blue circles such that no red arc gets more than one intersection point. There are $\ell$ of the blue circles and (for $j + \ell - 1 < n$) they are incident to $\ell + 1$ red arcs on the bottom half of $\cal{H}$; one chooses one of these $\ell+1$ arcs to be unoccupied and this determines the intersection points of these $\ell$ blue circles (then one does this for each sequence). For $j + \ell - 1 = n$ there are only $\ell$ red arcs incident to these $\ell$ blue circles on the bottom side of $\cal{H}$ and there is only one valid choice of intersection points for the blue circles in this case.

Say that $\alpha$ corresponds to $\x \in V_l(n,k)$ as in Definition~\ref{def:StatesFromAlpha} (by Corollary~\ref{cor:BijectionsAgree}, $\x$ is obtained from $S$ by subtracting $1$ from each of its elements). We get a decomposition of $X_S$ as a direct sum of indecomposable projective modules (disregarding the differential):
\[
X_S \cong \bigoplus_{\y} P_{\y}
\]
where the sum runs over those $\y \in V_l(n,k)$ obtained from $\x$ by moving each dot some number $\geq 0$ of steps to the left without ever running into the starting point of another dot, and $P_{\y}$ is the indecomposable projective $\B_l(n,k)$-module corresponding to $\y$. Specifically, the summand of $X_S$ corresponding to $\gamma \in \cal{G}$ gets identified with $P_{\y}$ where $\y \in V_l(n,k)$ can be viewed as the set of red arcs on the bottom half of the Heegaard diagram $\cal{H}$ that are unoccupied in $\gamma$. By Lemma~\ref{lem:ConesForCyclic}, we have
\[
X_S \cong \bigoplus_{\beta} \tilde{P}_{\beta}
\]
where the sum runs over $\beta$ in the bounded cone $\B_{\mathbbm{x}_{\alpha}} = \B_S$ and $\tilde{P}_{\beta}$ is the indecomposable projective $\Bt(\cal{V})$-module corresponding to $\beta$. By definition, these are the terms of the projective resolution of $\Vt_{\alpha}$.
\end{proof}

\begin{remark}
In Lemma~\ref{lem:ProjResNonDiff} and the other results related to Heegaard diagrams below, we have ignored gradings. One could define a bigrading on the module $X$ of Definition~\ref{def:XLeftNonDiff} such that Lemma~\ref{lem:ProjResNonDiff} gives an isomorphism of bigraded modules (the projective resolution has one grading from the algebra and another from degree in the resolution). The extra structure on $X$ coming from holomorphic disk counts should respect this bigrading; for example, the bimodules for $n=2$ in \cite{ManionTrivalent} are bigraded. Since gradings on Heegaard diagram bimodules are a complicated subject in general, we will not say more about them here.
\end{remark}

\subsection{Projective resolutions and Heegaard diagrams: differentials}\label{sec:HDProjResDiff}

Unlike the structure on $X_S$ we have considered so far, the differential on $X_S$ is based on holomorphic disk counts.

For $\gamma \in \cal{G}$, write $x_{\gamma}$ for the generator of the corresponding projective summand of $X_S$ as a left module. For any pair $(\gamma,\gamma')$ where $\gamma, \gamma' \in \cal{G}$ have right idempotent $I_S$, one can consider ``domains from $\gamma$ to $\gamma'$,'' which can be viewed as certain $\Z$-linear combinations of the connected components of the complement of the red and blue curves in the Heegaard diagram. Various restrictions on the domains must hold; one is that the domain must have zero multiplicity on the two regions adjacent to the left and right vertical boundaries of the Heegaard diagram. To contribute to the differential on $X_S$, the domain must also have zero multiplicity on any region adjacent to the top boundary or any of the top circle boundary components of the diagram.

As a result, one can check that for the differential, one only need consider domains from $\gamma$ to $\gamma'$ when $\gamma'$ is obtained from $\gamma$ by moving an intersection point one step to the left along a blue circle, like how the hollow-dot generator $\gamma'$ is obtained from the solid-dot generator $\gamma$ in Figure~\ref{fig:COBdifferential}. Furthermore, the one domains one need consider look like the one shown in Figure~\ref{fig:COBdifferential}. These domains are bigons, and their contribution to the differential will be $d(a x_{\gamma}) = a R_i x_{\gamma'} + \cdots$ in any reasonable holomorphic definition of the DA bimodule $X$, where $a \in \B_l(n,k)$ and $R_i$ is a generator of $\B_l(n,k)$ from Definition~\ref{def:SmallStep} (the left and right idempotents of this $R_i$ generator are $\Ib_{\y}$, $\Ib_{\y'}$ respectively where $\gamma$, $\gamma'$ correspond to $\y, \y' \in V_l(n,k)$ as in the proof of Lemma~\ref{lem:ProjResNonDiff}).

\begin{figure}
\includegraphics[scale=0.85]{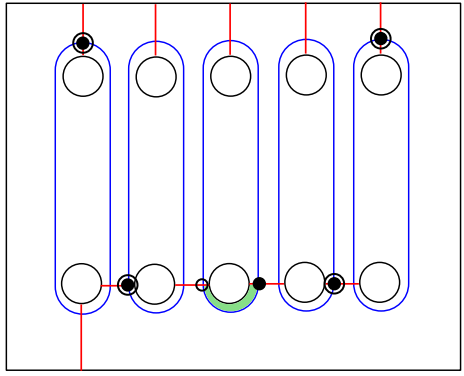}
\caption{A domain from the solid-dot generator $\gamma$ to the hollow-dot generator $\gamma'$.}
\label{fig:COBdifferential}
\end{figure}

\begin{theorem}\label{thm:ProjResDiff}
If we give $X_S$ the differential arising from the above-described bigon maps, then the identification in Lemma~\ref{lem:ProjResNonDiff} of $X_S$ with the projective resolution of $\Vt_{\alpha}$ preserves differentials.
\end{theorem}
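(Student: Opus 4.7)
The plan is to match the two differentials term-by-term by combining the combinatorics of bigons in $\cal{H}$ with the dictionary of Theorem~\ref{thm:CyclicIsom} and Lemma~\ref{lem:ProjResNonDiff}.

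First I would unpack the identification of Lemma~\ref{lem:ProjResNonDiff} on the level of individual summands. To each $\gamma \in \cal{G}$ with right idempotent $I_S$, the lemma associates a subset $\y_\gamma \in V_l(n,k)$ (the red arcs on the bottom half of $\cal{H}$ that are unoccupied by $\gamma$), hence, via Corollary~\ref{cor:BijectionsAgree} and Lemma~\ref{lem:ConesForCyclic}, an element $\beta_\gamma = \alpha^{T_\gamma} \in \B_{\xx_\alpha}$ with $T_\gamma \subset \xx_\alpha^c$. The cardinality $|T_\gamma|$ records how many dots of $\y_\gamma$ have been displaced from their positions in $\x_\alpha$, and matches the homological degree $|S|$ appearing in the resolution of Section~\ref{sec:BSide}.

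Next I would classify the bigons that contribute to the differential. A domain contributing to the Ozsv{\'a}th--Szab{\'o} differential must have zero multiplicity at every region adjacent to the top boundary or to any of the top boundary circles, and zero multiplicity at the regions adjacent to the left and right vertical boundaries; unwinding these constraints on $\cal{H}$, the only surviving domains are the small bigons of Figure~\ref{fig:COBdifferential}, each of which slides a single intersection point on one blue circle by one step across a single red arc. In terms of $\y_\gamma$, such a bigon moves exactly one dot one step back toward its position in $\x_\alpha$, and the displaced coordinate is recorded by a single index $i \in T_\gamma$. I would verify (using Lemma~\ref{lem:ConesForCyclic} once more) that the set of bigons starting at $\gamma$ is in natural bijection with $T_\gamma$, and that the bigon corresponding to $i \in T_\gamma$ terminates at a generator $\gamma'$ with $T_{\gamma'} = T_\gamma \setminus \{i\}$. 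This reproduces precisely the index set of the projective resolution differential $\varphi_{S,i}$.

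Finally I would identify the algebra coefficient. In the Ozsv{\'a}th--Szab{\'o} description the bigon yields $d(a\,x_\gamma) = a\, R_j\, x_{\gamma'}$, where $R_j \in \B_l(n,k)$ is the unique length-one generator with left idempotent $\Ib_{\y_\gamma}$ and right idempotent $\Ib_{\y_{\gamma'}}$; by Theorem~\ref{thm:CyclicIsom} this $R_j$ corresponds under $\Bt(\cal{V}) \cong \B_l(n,k)$ to the length-one taut path in $Q(\cal{V}^{\vee})$ from $\beta_\gamma = \alpha^{T_\gamma}$ to $\beta_{\gamma'} = \alpha^{T_\gamma \setminus \{i\}}$, namely the element $p(\alpha^{T_\gamma}, \alpha^{T_\gamma \setminus \{i\}})$ whose right-multiplication action defines $\varphi_{T_\gamma, i}$ in Section~\ref{sec:BSide}. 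Assembling the three steps shows that the matrix entries of the two differentials agree after the identification of Lemma~\ref{lem:ProjResNonDiff}. The principal obstacle is the second step: one must rule out all ``large'' or higher-multiplicity domains using the boundary-multiplicity constraints imposed by the top boundary and top circles in $\cal{H}$, and then line up the combinatorial bijection between ``sliding a blue intersection point by one step'' and ``removing one element of $T_\gamma$'' with the conventions of Definition~\ref{def:StatesFromAlpha}, Corollary~\ref{cor:BijectionsAgree}, and Lemma~\ref{lem:ConesForCyclic}; once this bookkeeping is set up, the matching of the differentials follows.
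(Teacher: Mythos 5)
Your proposal is correct and follows essentially the same route as the paper's proof: identify each generator $\gamma$ with a summand $\Pt_{\alpha^{T_\gamma}}$ via Lemma~\ref{lem:ProjResNonDiff}, observe that a bigon from $\gamma$ to $\gamma'$ corresponds to removing one index from $T_\gamma$, and match the $R_i$ coefficient of the bigon map with $p(\alpha^T,\alpha^{T'})$ via Theorem~\ref{thm:CyclicIsom}. The only difference is that you fold the classification of contributing domains into the proof, whereas the paper treats that as part of the setup preceding the theorem statement.
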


\begin{proof}
Let $\gamma, \gamma'$ be two elements of $\cal{G}$ (with right idempotent $I_S)$ admitting a bigon domain between them as described above. By the proof of Lemma~\ref{lem:ProjResNonDiff}, we can identify $\gamma, \gamma'$ with elements $\y, \y' \in V_l(n,k)$, each of which is obtained from $\x$ by moving each dot some number $\geq 0$ of steps to the left without ever running into the starting point of another dot (i.e. with elements of the bounded cone $\B_{\mathbbm{x}_{\alpha}} = \B_S$). Explicitly, $\y, \y'$ are the sets of unoccupied arcs appearing along the bottom of Figure~\ref{fig:COBdifferential} in $\gamma,\gamma'$. Figure~\ref{fig:COBdifferential} shows an unoccupied arc moving one step to the right when passing from $\gamma$ to $\gamma'$; equivalently, $\y$ is obtained from $\y'$ by moving one dot one step to the left.

To compute the differential in the projective resolution of $\Vt_{\alpha}$, we let $\beta, \beta' \in \cal{P}$ correspond to $\y, \y' \in V_l(n,k)$. We then have $\beta = \alpha^T$ and $\beta' = \alpha^{T'}$ for $T, T' \subset \xx_{\alpha}^c = S^c$, where $\alpha^T, \alpha^{T'}$ are defined as in the proof of Theorem~\ref{thm:proj-resolution} ($\alpha^T$ is the sign vector that differs from $\alpha$ in exactly the indices in $T$). Since $\y$ is obtained from $\y'$ by moving one dot one step to the left, we have $T = T' \sqcup \{i\}$ for some $i$. The differential on $\Vt_{\alpha}$ then has a term mapping $\tilde{P}_{\alpha^T}$ to $\tilde{P}_{\alpha^{T'}}$ by $\varphi_{T,i}$, where $\varphi_{T,i}$ is right multiplication by the element $p(\alpha^T, \alpha^{T'}) \in \Bt(\cal{V})$. Under the isomorphism $\Bt(\cal{V}) \cong \B_l(n,k)$ of Theorem~\ref{thm:CyclicIsom} (see also \cite[Theorem 4.9]{LLM}), this element $p(\alpha^T, \alpha^{T'})$ corresponds to the $R_i$ generator appearing in the bigon map described above. Conversely, any term in the differential on $\Vt_{\alpha}$ arises in this way.

\end{proof}

\subsection{A conjectured \texorpdfstring{$A_{\infty}$}{A-infinity} morphism}\label{sec:Ainfty}

\subsubsection{Right $A_{\infty}$ actions and $A_{\infty}$ homomorphisms}\label{sec:AInftyHoms}

Conjecturally, holomorphic curve counts may be used to give the differential module $X$ above the structure of a right $A_{\infty}$ module over $\A(\Zc^{\sta}_n)$. Explicitly, this structure can be viewed as a set of left $H(\A(\Zc^{\can}_n))$-module homomorphisms
\[
\delta_i\colon X \otimes \left( \A(\Zc^{\sta}_n) \right)^{\otimes (i-1)} \to X
\]
for $i \geq 2$, where the tensor products are taken over the idempotent ring of $\A(\Zc^{\sta}_n)$, such that the $A_{\infty}$ relations
\begin{align*}
&d \circ \delta_i \\
&+ \delta_i \circ (d \otimes \id^{\otimes(i-1)}) \\
&+ \sum_{i_1 + i_2 = i-2; \,\, i_i, i_2 \geq 0} \delta_i \circ \left(\id_X \otimes \id^{\otimes(i_1)} \otimes d \otimes \id^{\otimes(i_2)}\right) \\
&+ \sum_{i_1 + i_2 = i-3; \,\,i_1, i_2 \geq 0} \delta_{i-1} \circ \left(\id_X \otimes \id^{\otimes(i_1)} \otimes \mathrm{mult} \otimes \id^{\otimes(i_2)} \right) \\
&+ \sum_{i_1 + i_2 = i+1; \,\, i_1, i_2 \geq 2} \delta_{i_2} \circ \left(\delta_{i_1} \otimes \id^{\otimes(i_2 - 1)} \right) \\
&= 0
\end{align*}
hold for all $i \geq 2$. These relations are depicted in Figure~\ref{fig:DeltaRels} (read top to bottom); the dots represent differentials.

\begin{figure}
\includegraphics[scale=0.6]{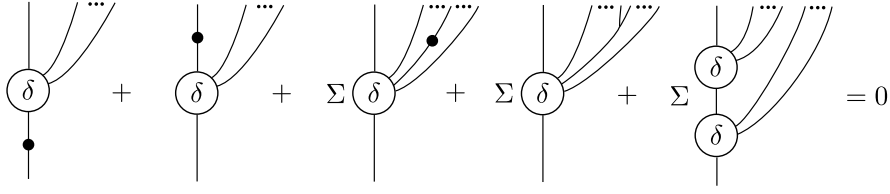}
\caption{$\A_{\infty}$ relations for the maps $\delta_i$.}
\label{fig:DeltaRels}
\end{figure}

Relatedly, recall that for differential algebras $A,B$ over $\F_2$ (ignoring gradings), an $A_{\infty}$ homomorphism $f\colon A \to B$ consists of a set of linear maps
\[
f_i\colon A^{\otimes i} \to B
\]
for $i \geq 1$, such that the $A_{\infty}$ relations
\begin{align*}
&d \circ f_i \\
&+ \sum_{i_1 + i_2 = i-1; \,\, i_1, i_2 \geq 0} f_i \circ \left(\id^{\otimes(i_1)} \otimes d \otimes \id^{\otimes(i_2)}\right) \\
&+ \sum_{i_1 + i_2 = i-2; \,\, i_1, i_2 \geq 0} f_{i-1} \circ \left(\id^{\otimes(i_1)} \otimes \mathrm{mult} \otimes \id^{\otimes(i_2)}\right) \\
&+ \sum_{i_1 + i_2 = i; \,\, i_1, i_2 \geq 1} \mathrm{mult} \circ \left(f_{i_1} \otimes f_{i_2}\right) \\
&= 0
\end{align*}
hold for all $i \geq 1$. These relations are depicted in Figure~\ref{fig:FRels}.

\begin{figure}
\includegraphics[scale=0.6]{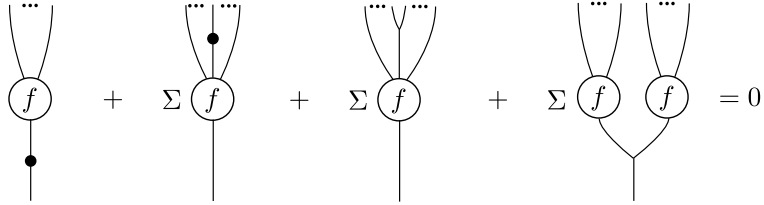}
\caption{Relations for an $\A_{\infty}$ homomorphism between differential algebras.}
\label{fig:FRels}
\end{figure}

If $A$ and $B$ are defined over the same ring of idempotents, we will take the tensor products over this idempotent ring in the above definition. For example, if we let $E$ denote the endomorphism differential algebra of $X$ as a differential left $H(\A(\Zc^{\can}_n))$-module, then for each $S \subset \{1,\ldots,n\}$ (giving a basic idempotent $I_S$ of $\A(\Zc^{\sta}_n)$), there is an idempotent $\phi_S \in E$ acting as the identity on $X_S \subset X$ and acting as zero on all other summands. We use these idempotents to view $E$ as an algebra over the idempotent ring of $\A(\Zc^{\sta}_n)$ (a finite product of copies of $\F_2$).

\begin{proposition}\label{prop:FSameAsDelta}
The above maps $\delta_i$ for $X$, satisfying the $A_{\infty}$ relations, are the same data as an $A_{\infty}$ homomorphism from the differential algebra $\A(\Zc^{\sta}_n)$ to $E^{\op}$, where $E$ is the differential endomorphism algebra of $X$ as a left differential $H(\A(\Zc^{\can}_n))$-module.
\end{proposition}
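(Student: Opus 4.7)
The plan is to exhibit the correspondence explicitly by currying. Given data $\{\delta_i\}_{i \geq 2}$ specifying a right $A_{\infty}$ action of $\A := \A(\Zc^{\sta}_n)$ on $X$ (extending the given left differential module structure over $H := H(\A(\Zc^{\can}_n))$), I would define linear maps
\[
f_i \colon \A^{\otimes i} \longrightarrow E, \qquad f_i(a_1 \otimes \cdots \otimes a_i)(x) := \delta_{i+1}(x \otimes a_1 \otimes \cdots \otimes a_i),
\]
for each $i \geq 1$. The requirement that $\delta_{i+1}$ be a homomorphism of left $H$-modules in the $X$-variable is exactly the requirement that the endomorphism $f_i(a_1 \otimes \cdots \otimes a_i)$ land in $E = \End_{H}(X)$ rather than merely in $\End_{\F_2}(X)$; conversely, any such $\{f_i\}$ assembles into $H$-linear maps $\delta_{i+1}$ by inverting the formula. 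This gives a bijection at the level of underlying data.

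The main step is to check that the $A_{\infty}$ relations for $\{\delta_i\}$ correspond term-by-term to the $A_{\infty}$ homomorphism relations for $\{f_i\}$ into $E^{\op}$. The key calculation concerns the nested composition $\delta_{i_2} \circ (\delta_{i_1} \otimes \id^{\otimes (i_2 - 1)})$ with $i_1 + i_2 = i + 1$: evaluated on $x \otimes a_1 \otimes \cdots \otimes a_{i-2}$, it produces
\[
f_{i_2 - 1}(a_{i_1} \otimes \cdots \otimes a_{i-2}) \circ_E f_{i_1 - 1}(a_1 \otimes \cdots \otimes a_{i_1 - 1})(x),
\]
where $\circ_E$ denotes composition in $E$. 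Rewriting this as a multiplication in $E^{\op}$ reverses the order of the factors, yielding
\[
\mathrm{mult}_{E^{\op}}\bigl(f_{i_1 - 1}(a_1 \otimes \cdots \otimes a_{i_1 - 1}) \otimes f_{i_2 - 1}(a_{i_1} \otimes \cdots \otimes a_{i-2})\bigr),
\]
which matches exactly the composite term in the $A_{\infty}$ homomorphism relation of total input length $i - 2 = (i_1 - 1) + (i_2 - 1)$. This is the one point where the opposite multiplication on $E$ enters, and it is forced by the convention that the $\delta$-action reads its algebra inputs from left to right.

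The remaining terms then match by direct inspection. The two terms $d \circ \delta_i$ and $\delta_i \circ (d \otimes \id^{\otimes (i-1)})$, after currying, combine to produce the map $x \mapsto d_X(f_{i-1}(a_1 \otimes \cdots \otimes a_{i-1})(x)) + f_{i-1}(a_1 \otimes \cdots \otimes a_{i-1})(d_X(x))$, which is exactly $d_E \circ f_{i-1}$ since the differential on $E$ is $\varphi \mapsto d_X \circ \varphi + \varphi \circ d_X$. The remaining $\delta$-terms with an internal $d$ on some algebra input $a_j$, or with an internal $\mathrm{mult}$ on two consecutive inputs $a_j \otimes a_{j+1}$, correspond under currying to the analogous $f$-terms on the inputs of $f_{i-1}$ and $f_{i-2}$ respectively; the indexing $i_1 + i_2 = i - 2$ and $i_1 + i_2 = i - 3$ on the $\delta$ side become $i_1 + i_2 = (i-1) - 1$ and $i_1 + i_2 = (i-1) - 2$ on the $f$ side, matching the conventions recalled in Section~\ref{sec:AInftyHoms}. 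The verification is routine bookkeeping once the correspondence and the $E \leftrightarrow E^{\op}$ convention are fixed, which is where the only real subtlety lies.
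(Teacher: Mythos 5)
Your proposal is correct and takes essentially the same approach as the paper: both proofs curry the higher actions, defining $f_i(a_1\otimes\cdots\otimes a_i)$ as the left $H(\A(\Zc^{\can}_n))$-linear endomorphism $x\mapsto \delta_{i+1}(x\otimes a_1\otimes\cdots\otimes a_i)$ and observing that the composite term $\delta_{i_2}\circ(\delta_{i_1}\otimes\id^{\otimes(i_2-1)})$ becomes composition in $E$, hence multiplication in $E^{\op}$, while the remaining terms match directly (your verification is in fact more explicit than the paper's, which delegates the check to two figures). One cosmetic slip: in the relation for $\delta_i$ the algebra inputs run $a_1,\ldots,a_{i-1}$ and $(i_1-1)+(i_2-1)=i-1$, not $i-2$; this does not affect the argument.
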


\begin{proof}

To define an $A_{\infty}$ homomorphism from $\A(\Zc^{\sta}_n)$ to $E^{\op}$ given $\delta_i$ for $i \geq 2$, suppose that $a_1,\ldots,a_i$ are elements of $\A(\Zc^{\st}_n)$ for $i \geq 1$. Let $f_i(a_1 \otimes \ldots \otimes a_i)$ be the element of $E^{\op}$, i.e. endomorphism of $X$, sending
\[
ax_{\gamma} \mapsto \delta_{i+1}(ax_{\gamma} \otimes a_1 \otimes \cdots \otimes a_i)
\]
where $x_{\gamma}$ is a generator of $X$. Conversely, given $f_i$ for $i \geq 1$, define $\delta_i$ for $i \geq 2$ by
\[
\delta_i(ax_\gamma \otimes a_1 \otimes \cdots \otimes a_{i-1}) = f_{i-1}(a_1 \otimes \cdots \otimes a_{i-1})(ax_{\gamma}).
\]
Visually, we define $f$ from $\delta$ and vice-versa so that the equality in Figure~\ref{fig:FDeltaEquality} holds. One can then check that the $A_{\infty}$ relations for $\delta$ are equivalent to the $A_{\infty}$ relations for $f$, using that the right action of $E^{\op}$ on $X$ is an action of a differential algebra on an ordinary differential module (summarized in Figure~\ref{fig:XERels}).

\end{proof}

\begin{figure}
\includegraphics[scale=0.6]{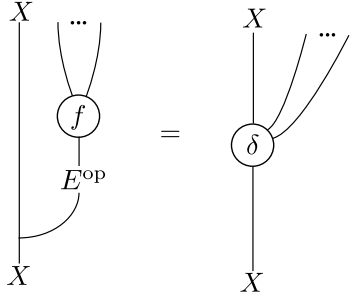}
\caption{Relationship between $f$ and $\delta$ in Proposition~\ref{prop:FSameAsDelta}.}
\label{fig:FDeltaEquality}
\end{figure}

\begin{figure}
\includegraphics[scale=0.6]{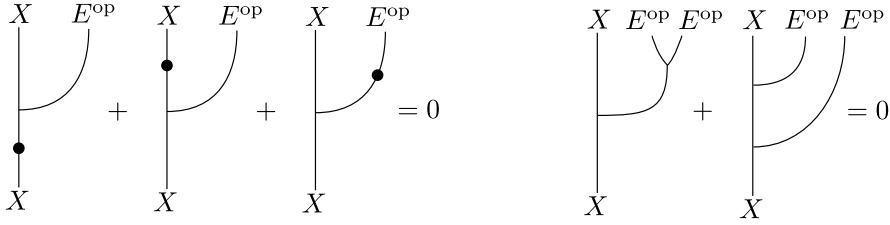}
\caption{Relations for the right action of $E^{\op}$ on $X$.}
\label{fig:XERels}
\end{figure}

\subsubsection{Statement of the conjecture}\label{sec:AinftyConjecture}

By Theorem~\ref{thm:ProjResDiff}, the differential algebra $E$ of Proposition~\ref{prop:FSameAsDelta} is the endomorphism algebra of the direct sum of projective resolutions of all standard modules $\Vt_{\alpha}$ over $\Bt(\cal{V}_k)$ for left cyclic $\cal{V}_k$, $0 \leq k \leq n$. Thus, the homology of $E$ is the direct sum of Ext groups between these standard modules, and the differential algebra structure of $E$ gives rise to the $A_{\infty}$ multiplication on this sum of Ext groups.

We know by Theorem~\ref{thm:ExtStrands} that the homology of $E$ is isomorphic to the homology of $\A(\Zc^{\sta}_n)$, and Proposition~\ref{prop:FSameAsDelta} says that we can get an $A_{\infty}$ homomorphism from $\A(\Zc^{\sta}_n)$ to $E^{\op}$ given the existence of a certain bimodule structure in Heegaard Floer homology.

\begin{conjecture}\label{conj:ExtMultStrands}
Let $X$ be the left differential $H(\A(\Zc^{\can}_n))$-module $X$ from Definition~\ref{def:XLeftNonDiff} with differential from bigon maps as in Section~\ref{sec:HDProjResDiff}, and let $E$ be the endomorphism algebra of $X$. Then $X$ admits the structure of a DA bimodule over $(H(\A(\Zc^{\can}_n)),\A(\Zc^{\sta}_n))$ in such a way that the corresponding $A_{\infty}$ homomorphism $\A(\Zc^{\sta}_n) \to E^{\op}$ from Proposition~\ref{prop:FSameAsDelta} induces the isomorphism $H(\A(\Zc^{\sta}_n)) \to H(E^{\op}) = H(E)$ from Theorem~\ref{thm:ExtStrands} (identifying $X$ with the sum of projective resolutions of standard modules as in Theorem~\ref{thm:ProjResDiff}).
\end{conjecture}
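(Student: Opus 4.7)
The plan is to sidestep the conjectural holomorphic disk counts and construct the required DA bimodule structure --- equivalently, by Proposition~\ref{prop:FSameAsDelta}, the $A_\infty$ homomorphism $f \maps \A(\Zc^{\sta}_n) \to E^{\op}$ --- by purely algebraic means. The $f_1$-level data is handed to us by Section~\ref{sec:ProjRes}, so the work is to produce the higher structure maps $f_i$ for $i \geq 2$ and then verify that the induced map on homology matches the Ext-strands identification of Theorem~\ref{thm:ExtStrands}.

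First I would make $f_1$ explicit using Definition~\ref{def:ChainMapForExt}. Under the isomorphism $\Bt(\V_k) \cong \B_l(n,k)$ of Theorem~\ref{thm:CyclicIsom} and the identification of $X_S$ with the linear projective resolution of $\Vt_\a$ from Theorem~\ref{thm:ProjResDiff}, set
\[
f_1(a) := \mu\,\varphi_{\a,\b} \in \Hom_{\Bt(\V)}(X_S, X_T) \subset E
\]
for any strands basis element $a \in I_S \cdot \A(\Zc^{\sta}_n) \cdot I_T$ corresponding to a monomial $\mu \in \F_2[U_{\xx_\a \cap \xx_\b}]$, where $\a, \b \in \cal{P}$ correspond to $S, T$. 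On summands where $\cal{B}_{\xx_\a} \cap \cal{F}_{\xx_\b} = \varnothing$ (equivalently, by Lemma~\ref{lem:WhenIsAlgZero}, the corresponding summand of $\A(\Zc^{\sta}_n)$ is acyclic) let $f_1$ be zero. Corollary~\ref{cor:VarphiInducesRightExtElts} then guarantees that $[f_1]$ on homology is precisely the isomorphism of Theorem~\ref{thm:ExtStrands}.

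Second I would measure the failure of $f_1$ to be a strict DGA map and construct $f_2$, then iterate. The defect $f_1(a)f_1(b) + f_1(ab)$ is controlled by how taut paths compose in $\Bt(\V)$: by Proposition~\ref{prop:GDKD-Prop3.9} and Corollary~\ref{cor:GDKD-Cor3.9} the composition of two taut chain maps is a taut chain map twisted by a monomial in $U$-variables coming from hyperplanes counted by $\theta_i$, while $f_1(ab)$ is determined by strands multiplication. The defect is a Hom-cocycle whose class in $\Ext(X_S, X_{T'})$ is forced to vanish by the explicit polynomial description of Corollary~\ref{cor:BTildeExtComputation}, so it bounds, giving $f_2$. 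Proceed inductively: the obstruction to extending $\{f_1, \dots, f_{i-1}\}$ to $f_i$ lives in these same Ext groups, whose structure as $\F_2[U_{\xx_\a \cap \xx_\b}]$ gives enough control to produce null-homotopies.

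The cleanest way to run the induction uniformly is to prove intrinsic formality of $\A(\Zc^{\sta}_n)$ and then invoke Kadeishvili's transfer theorem. Each building block $\A(\Zc^{\sta}_1)$ should be seen to be formal directly from its small size (its homology is a rank-two polynomial algebra with no room for higher operations), and the goal would be to show that the $\ootimes$ operation of \cite{ManionRouquier} preserves formality, so that $\A(\Zc^{\sta}_n) \simeq \A(\Zc^{\sta}_1)^{\ootimes n}$ is itself formal, and then intrinsically formal via the vanishing of the appropriate Hochschild cohomology of a polynomial-ring homology. With formality in hand, the known homology-level isomorphism of Theorem~\ref{thm:ExtStrands} lifts uniquely (up to $A_\infty$ homotopy) to an $A_\infty$ quasi-isomorphism $\A(\Zc^{\sta}_n) \to E^{\op}$, and one only needs to check that this abstract lift agrees with the concrete $f_1$ above, which is automatic at the level of homology by construction.

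The main obstacle will be establishing intrinsic formality of $\A(\Zc^{\sta}_n)$, or an adequate substitute. The $n=2$ computations in \cite{ManionTrivalent} show that on the bimodule side the $A_\infty$ operations coming from ``higher-weight domains'' are genuinely nontrivial, so the formality statement is delicate and must be proved rather than observed, and controlling how $\ootimes$ interacts with $A_\infty$ transfer is the subtle point. A fallback, if global formality fails, is to work relatively: establish formality of the sub-DGA of $E^{\op}$ generated by the image of $f_1$ and force the $A_\infty$ homotopy class of $f$ by the $f_1$-level data together with vanishing of Hochschild-type obstructions in each filtration degree of a spectral sequence built from the $\Delta$-filtration. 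In any presentation the heart of the argument is to leverage the polynomial and Koszul structure visible in Sections~\ref{sec:ProjRes}--\ref{sec:ExtAndStrandsHomology} as the rigidity that substitutes for the missing holomorphic-geometry input.
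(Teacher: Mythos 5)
This statement is a \emph{conjecture} in the paper, not a theorem: the authors leave it open in general and report that it has been verified only for $n=2$ (via the explicit $\kappa$ change-of-basis bimodule of \cite{ManionTrivalent}), for $n=3$, and tenuously for $n=4$, by direct computation. So there is no proof in the paper to compare against, and your proposal must be judged as an attempted resolution of an open problem. Your first step is fine and is exactly what the paper does: the map $f_1$ built from the chain maps $\mu\varphi_{\a,\b}$ of Definition~\ref{def:ChainMapForExt} is a chain map inducing the Theorem~\ref{thm:ExtStrands} isomorphism (this is Definition~\ref{def:F1} and the theorem following it). The gap is everything after that.

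Concretely: (i) Your claim that the multiplicative defect $f_1(a)f_1(b)+f_1(ab)$ is ``forced to vanish'' in $\Ext$ by Corollary~\ref{cor:BTildeExtComputation} is unsubstantiated. Theorem~\ref{thm:ExtStrands} is explicitly an isomorphism \emph{disregarding multiplication}; whether $[f_1]$ is multiplicative on homology is precisely the first nontrivial content of the conjecture, and the paper exhibits (in the final remark, with the element $\lambda$ and $U_1$ for $n=2$) a chain-level defect that is nonzero and requires a genuine $f_2$ to repair. A degree argument in the Ext groups might work here, but you have not given one. (ii) The obstructions to extending $\{f_1,\dots,f_{i-1}\}$ to $f_i$ live in Hochschild cohomology groups of $\A(\Zc^{\sta}_n)$ with coefficients in $E^{\op}$, not in the individual Ext groups, and you give no computation showing these vanish. (iii) The fallback via intrinsic formality is the weakest link: formality of $\A(\Zc^{\sta}_n)$, preservation of formality under $\ootimes$, and vanishing of the relevant $HH^{m,2-m}$ for a tensor product of polynomial rings are all substantial unproven claims, and the paper's own evidence points the other way --- the $n=2$ bimodule already needs a nontrivial $\delta_3$, the authors note that higher $A_{\infty}$ terms ``proliferate'' as $n$ grows, and they cite nontrivial induced $A_{\infty}$ operations on the homology of strands algebras. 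In short, your proposal is a reasonable strategy outline for attacking the conjecture algebraically, but each load-bearing step is asserted rather than proven, so it does not constitute a proof.
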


\begin{remark}
A satisfying conceptual approach to Conjecture~\ref{conj:ExtMultStrands} would be to define DA bimodules holomorphically for a general family of Heegaard diagrams including $\cal{H}$ from Figure~\ref{fig:COBdiagram}, then compute enough of the structure of the resulting DA bimodule $X$ to show that the isomorphism $H(\A(\Zc^{\sta}_n)) \to H(E^{\op}) = H(E)$ from Theorem~\ref{thm:ExtStrands} is recovered. Even better for e.g. relating different tangle-based approaches to knot Floer homology would be to compute all the higher actions on $X$; alternately, given that defining bimodules holomorphically can be difficult, one could try to define the higher actions on $X$ by hand (using holomorphic geometry as motivation and looking for patterns).
\end{remark}

Given Conjecture~\ref{conj:ExtMultStrands}, to understand the $A_{\infty}$ multiplication on the direct sum of Ext groups between standard modules, it suffices to understand the $A_{\infty}$ multiplication on $H(\A(\Zc^{\sta}_n))$. This can be done as follows, although we will not write up the details here: by making various choices, one can define an endomorphism $T$ of $\A(\Zc^{\sta}_n)$ such that $dT + Td = \id$, and using $T$ one can obtain a model for the higher $A_{\infty}$ actions on $H(\A(\Zc^{\sta}_n))$. Higher $A_{\infty}$ actions on the homology of bordered strands algebras have also been studied elsewhere in the literature, e.g. in \cite[Section 4.2]{LOTBimodules}.

\subsubsection{The strict part of an $A_{\infty}$ morphism}\label{sec:Strict}

If $f = (f_i)_{i=1}^{\infty}$ is an $A_{\infty}$ homomorphism of differential algebras from $A$ to $B$, the map induced by $f$ on homology depends only on the ``strict part'' $f_1$ of $f$ which is a chain map from $A$ to $B$. Suppose we have DA bimodule operations $(\delta_i)_{i=2}^{\infty}$ on $X$ as in Conjecture~\ref{conj:ExtMultStrands}; by Proposition~\ref{prop:FSameAsDelta}, to check that the corresponding $A_{\infty}$ homomorphism induces the isomorphism on homology from Theorem~\ref{thm:ExtStrands}, it suffices to understand the operation $\delta_2$ on $X$.

As with the differential on $X$ (often referred to as $\delta_1$), it is easier to understand $\delta_2$ in terms of holomorphic curves than it is to understand $\delta_i$ for general $i$. Using the holomorphic curves as motivation, below we define a chain map from $\A(\Zc^{\sta}_n)$ to the endomorphism algebra $E$ of $X$ (not, however, compatible with multiplication) and check that it induces the isomorphism on homology from Theorem~\ref{thm:ExtStrands}. Afterwards we briefly remark on the domains of the relevant holomorphic curves.

\begin{figure}
\includegraphics[scale=0.6]{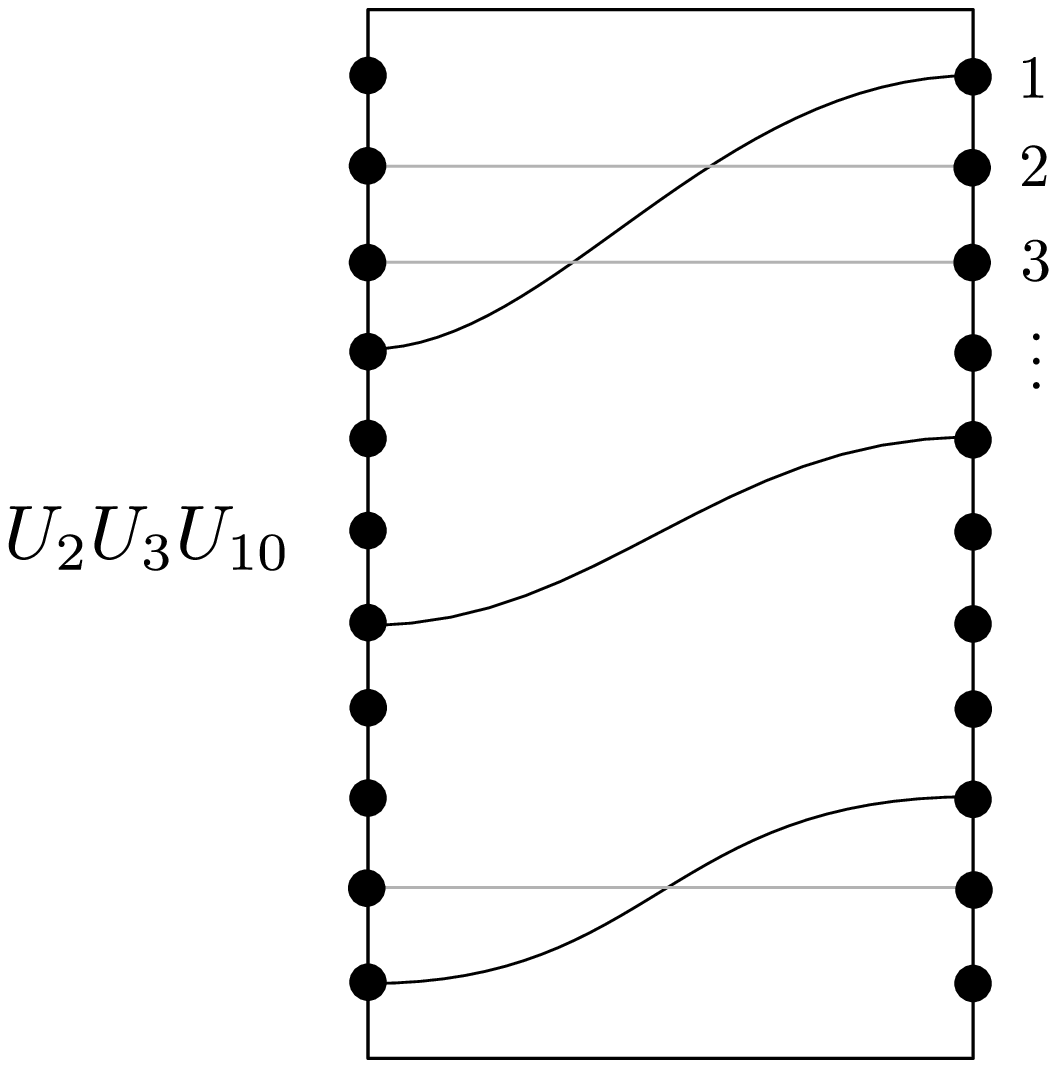}
\caption{Basis element of $\A(\Zc^{\sta}_n)$ on which $f_1$ is nonzero; we have $n = 11$, $k = 6$, $S = \{2,3,4,7,10,11\}$, $T = \{1,2,3,5,9,10\}$, $\mathrm{int}(\supp(h)) = \{2,3,6,10\}$, and $\{i_1,i_2,i_3\} = \{2,3,10\}$.}
\label{fig:StrandsHomologyGen}
\end{figure}

\begin{definition}\label{def:F1}
Define a linear map of $\F_2$-vector spaces
\[
f_1 \colon \A(\Zc^{\sta}_n) \to E
\]
as follows; let $S,T \subset \{1,\ldots,n\}$ with $|S| = |T| = k$.
\begin{itemize}
\item If $\cal{B}_S \cap \cal{F}_T = \emptyset$, define $f_1$ to be zero on $I_S \cdot \A(\Zc^{\sta}_n) \cdot I_T$.

\item If $\cal{B}_S \cap \cal{F}_T \neq \emptyset$, write $I_S \cdot \A \cdot I_T$ as
\[
\bigoplus_{C \subset \{i_1,\ldots,i_a\}} \left(I_{S \setminus C} \cdot \A(n-|C|,k-|C|) \cdot I_{T \setminus C} \right) \otimes \F[U_j | j \in C] \otimes \F[U_i: i \in S \cap T \setminus \{i_1,\ldots,i_a\}]
\]
as in Section~\ref{sec:ExtAndStrandsHomology}, where $S \cap T \cap \mathrm{int}(\supp(h)) = \{i_1,\ldots,i_a\}$. Define $f_1$ to be zero on all summands except the summand for $C = \{i_1, \ldots, i_a\}$.

\item On the summand for $C = \{i_1,\ldots,i_a\}$, the space $I_{S \setminus \{i_1,\ldots,i_a\}} \cdot \A(n-a,k-a) \cdot I_{T \setminus \{i_1,\ldots,i_a\}}$ (not just its homology) is one-dimensional by \cite[proof of Proposition 4.2]{LOTBimodules}. Let $f_1$ send $U_{i_1} \cdots U_{i_a}$ times the unique nonzero element $r$ of this space, shown in an example in Figure~\ref{fig:StrandsHomologyGen}, to the element $\varphi_{\a,\b}$ of $E$ from Definition~\ref{def:ChainMapForExt}, where $\alpha,\beta \in \cal{P}$ correspond to $S,T \in \mathbb{B}$ (we are using Lemma~\ref{lem:ProjResNonDiff}).

\item More generally, if $\mu$ is any monomial in the variables $U_i$ for $i \in S \cap T$ such that $\mu$ is divisible by $U_{i_1} \cdots U_{i_a}$, let
\[
f_1(\mu r) = \frac{\mu}{U_{i_1}\cdots U_{i_a}} f_1(U_{i_1} \cdots U_{i_a} r) = \frac{\mu}{U_{i_1}\cdots U_{i_a}} \varphi_{\a,\b}.
\]
Define $f_1(\mu r) = 0$ when $\mu$ is not divisible by $U_{i_1} \cdots U_{i_a}$.

\end{itemize}

\end{definition}

\begin{theorem}
The map $f_1 \colon \A(\Zc^{\sta}_n) \to E$ from Definition~\ref{def:F1} commutes with the differentials on $\A(\Zc^{\sta}_n)$ and $E$. Its induced map from $H(\A(\Zc^{\sta}_n))$ to $H(E) = \oplus_{\a,\b} \Ext_{\Bt}(\Vt_{\a},\Vt_{\b})$ is the isomorphism from Theorem~\ref{thm:ExtStrands}.

\end{theorem}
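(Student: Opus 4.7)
My strategy is to work within the tensor-product decomposition of $I_S \cdot \A \cdot I_T$ introduced in Section~\ref{sec:ExtAndStrandsHomology} and verify both parts of the theorem directly, using only the formula for $f_1$ in Definition~\ref{def:F1} together with Proposition~\ref{prop:VarphiChainMap} and Corollary~\ref{cor:VarphiInducesRightExtElts}. Under the natural identifications of $H(\A(\Zc^{\sta}_n))$ and $\bigoplus_{\a,\b} \Ext_{\Bt}(\Vt_\a,\Vt_\b)$ with direct sums of polynomial algebras $\F_2[U_i : i \in S \cap T]$, the induced map of $f_1$ will turn out to be the identity, which is the sense in which it reproduces the isomorphism of Theorem~\ref{thm:ExtStrands}.

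For the chain map property, I would show that $f_1 \circ d$ and $d_E \circ f_1$ both vanish. The differential on $\A$ preserves each $C$-summand of $I_S \A I_T$, since resolving double crossings of moving strands does not alter the wrapping pattern at positions in $\{i_1,\ldots,i_a\}$, while the polynomial tensor factors $\F_2[U_j : j \in C]$ and $\F_2[U_i : i \in S \cap T \setminus \{i_1,\ldots,i_a\}]$ have trivial differential. For $C \neq \{i_1,\ldots,i_a\}$ the map $f_1$ is identically zero, so $f_1 \circ d = 0$ on this summand trivially. For $C = \{i_1,\ldots,i_a\}$ the first tensor factor is one-dimensional over $\F_2$, forcing its differential to vanish (as $d^2 = 0$ on a one-dimensional complex), and centrality of the $U_j$ with $dU_j = 0$ then gives $d(\mu r) = 0$ for every monomial $\mu$; hence $f_1 \circ d = 0$ on this summand as well. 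In the other direction, Proposition~\ref{prop:VarphiChainMap} shows $\varphi_{\a,\b}$ is a chain map, and left multiplication by the central polynomial $\mu' \in \F_2[U_1,\ldots,U_n]$ (which has trivial differential and commutes with the differentials on the projective resolutions) preserves this property, so $\mu' \varphi_{\a,\b}$ is a cycle in $E$ and $d_E \circ f_1 = 0$.

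For the identification on homology, recall from the proof of Theorem~\ref{thm:ExtStrands} that only the $C = \{i_1,\ldots,i_a\}$ summand contributes to $H(I_S \A I_T)$, with an $\F_2$-basis $\{[r \otimes \nu_1 \otimes \nu_2]\}$ identified with monomials in $\F_2[U_i : i \in S \cap T]$ via $\nu_1 \nu_2$. In the notation of Definition~\ref{def:F1}, these classes are precisely the $[\mu r]$ for $\mu$ divisible by $U_{i_1}\cdots U_{i_a}$, and under the natural identification $[\mu r]$ corresponds to $\mu/(U_{i_1}\cdots U_{i_a})$. Definition~\ref{def:F1} sends this class to $\bigl(\mu/(U_{i_1}\cdots U_{i_a})\bigr)\varphi_{\a,\b}$, whose class in $\Ext(\Vt_\a, \Vt_\b) \cong \F_2[u_{\xx_\a \cap \xx_\b}]$ is precisely $\mu/(U_{i_1}\cdots U_{i_a})$ by Corollary~\ref{cor:VarphiInducesRightExtElts}. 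Using the equality $\xx_\a = S$, $\xx_\b = T$ available for cyclic $\cal{V}$ (Corollary~\ref{cor:BijectionsAgree}), this means $f_1$ on homology is the identity on $\F_2[U_i : i \in S \cap T]$, in particular an isomorphism, and agrees with the isomorphism of Theorem~\ref{thm:ExtStrands}. When $\B_{\xx_\a} \cap \F_{\xx_\b} = \emptyset$, both sides vanish by Lemma~\ref{lem:WhenIsAlgZero}, Corollary~\ref{cor:BTildeExtComputation}, and Proposition~\ref{prop:BFIntersectionForCyclic}, so no further check is required.

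The main obstacle I anticipate is purely conventional: one must carefully track how the abstract generator $r$ of the one-dimensional first tensor factor in the $C = \{i_1,\ldots,i_a\}$ summand corresponds, inside $I_S \cdot \A \cdot I_T$, to a specific ``minimum wrapping'' basis element, so that the notation ``$\mu r$'' of Definition~\ref{def:F1} is consistent with the tensor decomposition and with the natural identification of $H(I_S \A I_T)$ with $\F_2[U_i : i \in S \cap T]$. Once this bookkeeping is set straight the theorem follows essentially immediately from the already-established results, with no new computation required.
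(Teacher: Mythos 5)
Your proposal is correct and follows essentially the same route as the paper: $d\circ f_1=0$ comes from Proposition~\ref{prop:VarphiChainMap}, and the identification on homology from Corollary~\ref{cor:VarphiInducesRightExtElts} together with the basis of $H(\A(\Zc^{\sta}_n))$ described in Section~\ref{sec:ExtAndStrandsHomology}. Your argument for $f_1\circ d=0$ (the differential acts only on the strands factor and so preserves the $U$-multidegree, hence preserves the summand containing the elements $\mu r$ with $U_{i_1}\cdots U_{i_a}\mid\mu$, where it vanishes because the strands factor is one-dimensional) is just the contrapositive of the paper's observation that these elements cannot arise as terms in the differential of any basis element.
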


\begin{proof}
To see that the map intertwines the differentials, note that none of the basis elements $\mu r$ of $\A(\Zc^{\sta}_n)$ on which $f_1$ is nonzero can arise as a term in the differential of any other basis element of $\A(\Zc^{\sta}_n)$ (for example, the strands element shown in Figure~\ref{fig:StrandsHomologyGen} cannot arise as a term in the differential of another element), so $f_1 \circ d = 0$. On the other hand, we have $d \circ f_1 = 0$ by Proposition~\ref{prop:VarphiChainMap}. The fact that $f_1$ induces the given isomorphism on homology then follows from Corollary~\ref{cor:VarphiInducesRightExtElts} and the fact that the elements $\mu r$ on which $f_1$ is nonzero are the basis elements for $H(\A(\Zc^{\sta}_n))$ from Section~\ref{sec:ExtAndStrandsHomology}.

\end{proof}

Proposition~\ref{prop:BFIntersectionForCyclic} implies that, for an element $\mu r$ of $\A(\Zc^{\sta}_n)$ with $f_1(\mu r) \neq 0$, the map $\varphi_{\a,\b}$ appearing in the definition of $f_1(\mu r)$ admits a slightly simpler description. Letting $p_{\alpha^S}^{\beta^{S'}}$ denote the image of $e_{\alpha^S} \in \Pt_{\alpha^S}$ under $\varphi_{\a,\b}$ as in Definition~\ref{def:ChainMapForExt}, we observe the following:
\begin{itemize}
\item For $i \in \xx_{\a} \cap \xx_{\b}$ with $\a(i) = \b(i)$, we have $i \notin S$ and $i \notin S'$, so $\alpha^S(i) = \beta^{S'}(i)$.
\item For $i \in \xx_{\a} \cap \xx_{\b}$ with $\a(i) \neq \b(i)$, the same reasoning gives $\alpha^S(i) \neq \beta^{S'}(i)$.
\item For $i \in \xx_{\a} \cap \xx_{\b}^c$, we also have $i \notin S$ and $i \notin S'$; it follows from Lemma~\ref{lem:BFIntersection} that $\alpha^S(i) = \beta^{S'}(i)$.
\item For $i \in \xx_{\a}^c \cap \xx_{\b}^c$ with $\a(i) = \b(i)$, by definition of $S$ and $S'$ we have $i \in S'$ if and only if $i \in S$, so $\alpha^S(i) = \beta^{S'}(i)$.
\item For $i \in \xx_{\a}^c \cap \xx_{\b}^c$ with $\a(i) \neq \b(i)$, we have $i \in S_{\min}$ which is contained in $S$ but is disjoint from $S'$, so $\alpha^S(i) = \beta^{S'}(i)$.
\item For $i \in \xx_{\a}^c \cap \xx_{\b}$, we have $i \in S$ and $i \notin S'$; by Proposition~\ref{prop:BFIntersectionForCyclic}, $\alpha^S(i) = \beta^{S'}(i)$.
\end{itemize}
Thus, $\alpha^S$ and $\beta^{S'}$ differ precisely in those indices $i \in \xx_{\a} \cap \xx_{\b}$ with $a(i) \neq b(i)$. Furthermore, defining $h$ to be the unique relative homology class with $\partial(h) = \xx_{\b} - \xx_{\a}$ as usual, for $i \in \xx_{\a} \cap \xx_{\b}$ we have $i \in \mathrm{int}(\supp(h))$ if and only if $a(i) \neq b(i)$. These indices are $i_1,\ldots,i_a$ as in Definition~\ref{def:F1}. It follows that in the language of $\B_l(n,k)$, we can write the element $p_{\alpha^S}^{\beta^{S'}}$ as $L_{i_1} \cdots L_{i_a}$.

\begin{figure}
\includegraphics[scale=0.6]{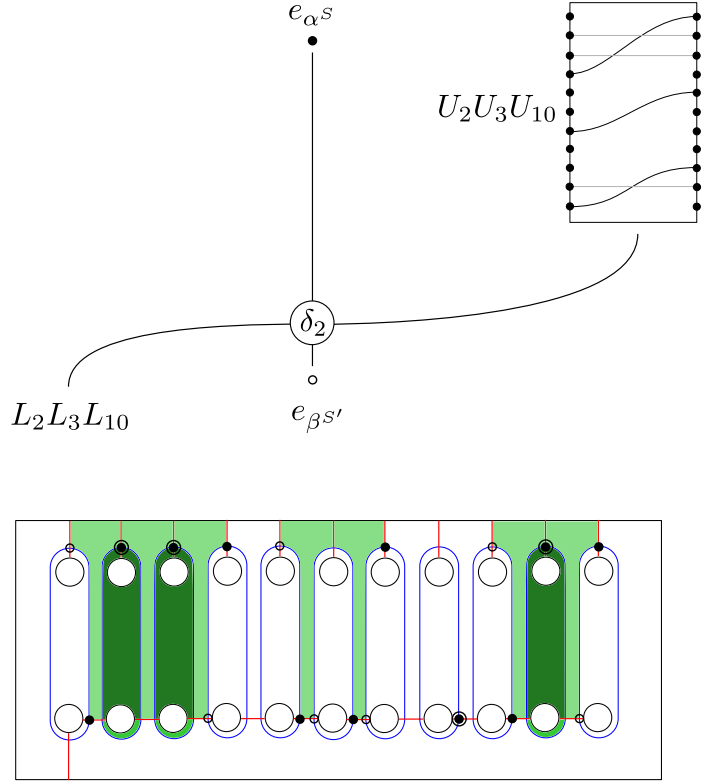}
\caption{Heegaard diagram domain for a term of $\delta_2$ corresponding to a term of $f_1$ as in Remark~\ref{rem:DomainForAction}.}
\label{fig:F1Domain}
\end{figure}

\begin{remark}\label{rem:DomainForAction}
If we reformulate the map $f_1$ in terms of $\delta_2\colon X \otimes \A(\Zc^{\sta}_n) \to X$ using Figure~\ref{fig:FDeltaEquality}, the result should admit an interpretation in terms of holomorphic curve counts. It is possible to identify the Heegaard diagram domains that would be relevant for such a description; Figure~\ref{fig:F1Domain} shows one example. In the notation of Definition~\ref{def:ChainMapForExt}, we have:
\begin{itemize}
\item $\a = (+-+---+++-+)$, $\b = (-+--++++-++)$
\item $\xx_{\a} = \{2,3,4,7,10,11\}$, $\xx_{\b} = \{1,2,3,5,9,10\}$
\item $S_{\min} := \{i \in \xx_{\a}^c \cap \xx_{\b}^c : \alpha(i) \neq \beta(i)\} = \{ 6 \}$
\item $\{i \in \xx_{\a}^c \cap \xx_{\b}^c : \alpha(i) = \beta(i) \} = \{8\}$
\item $S = S_{\min} \cup (\xx_{\a}^c \cap \xx_{\b}) \cup S' = \{6\} \cup \{1,5,9\} \cup \{8\} = \{1,5,6,8,9\}$, with $S' = \{8\}$
\item $\a^{S} = (--+-+++---+)$, $\b^{S'} = (-+--+++--++)$
\item $\xx_{\a^S} = \{1,3,4,5,8,11\}$, $\xx_{\b^{S'}} = \{1,2,3,5,8,10\}$
\item $p_{\a^S}^{\b^{S'}}$ is the algebra element labeled $L_2 L_3 L_{10}$.
\end{itemize}
Indeed, the patterns of occupied red arcs on the top of the diagram is $\xx_{\a}$ for the solid-dot generator, and it is $\xx_{\b}$ for the hollow-dot generator. The pattern of unoccupied red arcs on the bottom of the diagram (numbering the leftmost arc as $1$) is $\xx_{\a^S}$ for the solid-dot generator and $\xx_{\b^{S'}}$ for the hollow-dot generator. We would get a similar picture, differing only in the position of one solid dot and one hollow dot, if we took $S' = \emptyset$ rather than $S' = \{8\}$.

The shading of the domain in Figure~\ref{fig:FDeltaEquality} indicates that if we multiply the algebra input $U_2 U_3 U_{10} \cdot (\textrm{strands gen.})$ by $U_2^p U_3^q U_{10}^r$ before applying $f_1$, then the output is correspondingly multiplied by $U_2^p U_3^q U_{10}^r$. However, if we apply $f_1$ to $\mu \cdot (\textrm{strands gen.})$ where $\mu$ is not divisible by $U_2 U_3 U_{10}$, we get zero.

\end{remark}

\begin{figure}
\includegraphics[scale=0.6]{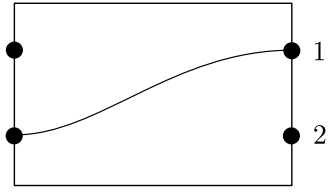}
\caption{Strands element $\lambda$ in the $n=2$ case.}
\label{fig:SimpleStrandsGen}
\end{figure}

\begin{remark}
When $n=1$, both $\A(\Zc^{\sta}_n)$ and $E^{\op}$ are $\F_2 \oplus \F_2[U_1]$, and one can check that $f_1 \colon \A(\Zc^{\sta}_n) \to E^{\op}$ is an isomorphism of algebras.

However, even for $n=2$, the map $f_1$ does not respect multiplication. For example, letting $\lambda$ be the element of $\A(\Zc^{\sta}_2)$ shown in Figure~\ref{fig:SimpleStrandsGen}, we have $\lambda U_1 = 0$ in $\A(\Zc^{\sta}_2)$. However, $f_1(\lambda)$ maps the generator $e_{-+}$ of the summand $\Pt_{-+}$ of the projective resolution of $\Vt_{+-}$ to the generator $e_{-+}$ of the summand $\Pt_{-+}$ of the projective resolution of $\Vt_{-+}$. In turn, $f_1(U_1)$ sends this generator $e_{-+}$ to the nonzero element $U_1 e_{-+}$ (recall that in $E^{\op}$, $f(\lambda) f(U_1)$ means ``do $f(\lambda)$, then $f(U_1)$'').

For $f$ to respect multiplication, we need to include a higher $A_{\infty}$ term; we can let $f_2(\lambda, U_1)$ be the element of $E$ sending the generator $e_{+-}$ of the summand $\Pt_{+-}$ of the projective resolution of $\Vt_{+-}$ to $L_1 e_{-+}$ where $e_{-+}$ is the generator of the summand $\Pt_{-+}$ of the projective resolution of $\Vt_{-+}$. More generally, the ``$\kappa$'' change-of-basis DA bimodule from \cite[Definition 11.2]{ManionTrivalent}, with differential and action terms $\delta_2$ and $\delta_3$, encodes (via Proposition~\ref{prop:FSameAsDelta}) an $A_{\infty}$ homomorphism from $\A(\Zc^{\sta}_n)$ to $E$, with strict part $f_1$, in the case $n=2$. Thus, this change-of-basis bimodule resolves Conjecture~\ref{conj:ExtMultStrands} affirmatively when $n=2$.

A ``backwards'' DA bimodule over $(\A(\Zc^{\sta}_n), H(\A(\Zc^{\can}_n)))$ for $n=2$ is also defined in \cite[Definition 11.3]{ManionTrivalent} (the ``$\lambda$'' change-of-basis bimodule), and the bimodules are checked to be inverse to each other up to homotopy equivalence. The $\lambda$ change-of-basis bimodule should be easier to define in general than the $\kappa$ bimodule we are trying to define here for general $n$; for $n=2$ the $\lambda$ bimodule has no higher $A_{\infty}$ actions, and it may be possible to arrange this in general.

More complicated computations, in the spirit of \cite{ManionTrivalent}, resolve Conjecture~\ref{conj:ExtMultStrands} affirmatively when $n=3$ and (more tenuously) when $n=4$. Even when $n=3$ the computations are quite involved, so we will omit them here. The higher $A_{\infty}$ terms appear to proliferate as $n$ increases; holomorphically, the main issue seems to be that domains such as the one shown in Figure~\ref{fig:F1Domain} can support a large number of relevant $A_{\infty}$ actions, with the $f_1$ (or $\delta_2$) contribution being just the tip of a large iceberg.

\end{remark}

\newcommand{\etalchar}[1]{$^{#1}$}

\bigskip

\end{document}